\title{Lipschitz Bounds and\\ Nonautonomous Integrals}
\author{Cristiana De Filippis \& Giuseppe Mingione}
\definecolor{citation}{rgb}{0,0,0} 
\definecolor{formula}{rgb}{0,0,0}
\definecolor{url}{rgb}{0,0,0}
\newcommand{\reqnomode}{\tagsleft@false}
\def\dx{\, dx}
\def \d{\, d }
\def \diver{\,{\rm div}}
\def\dist{\,{\rm dist}}
\def\supp{\,{\rm supp}}
\newcommand\R{\mathbb{R}}
\newcommand\loc{{\rm loc}}
\newcommand\setm{\texttt{set}}
\newcommand\setuni{\texttt{set}_{\textnormal{u}}}
\newcommand\data{\texttt{data}}
\newcommand\datai{\data}
\newcommand{\laz}{\langle z \rangle}
\newcommand\ia{i_{\textnormal{a}}}
\newcommand\sa{s_{\textnormal{a}}}
\newcommand\tcu{\tilde{c}_{\textnormal{u}}}
\newcommand\ccc{\mathfrak{c}}
\newcommand\cu{c_{\textnormal{u}}}
\newcommand\ca{c_{\textnormal{a}}}
\newcommand\cur{c_{\textnormal{r}}}
\newcommand\cb{c_{\textnormal{b}}}
\newcommand\B{\mathcal B}
\newcommand\datauni{\texttt{data}_{\textnormal{u}}}
\newcommand\rad{\texttt{r}}
\newcommand\oo{\texttt{o}}
\def\ccp{(\cdot)}
\DeclareRobustCommand*{\bfseries}{%
  \not@math@alphabet\bfseries\mathbf
  \fontseries\bfdefault\selectfont
  \boldmath
}
\newlength{\defbaselineskip}
\newcommand{\mint}{\mathop{\int\hskip -1,05em -\, \!\!\!}\nolimits}
\numberwithin{equation}{section}
\newcommand{\kk}{\kappa}
\def\en{\mathbb N}
\def\er{\mathbb R}
\newcommand\eps\varepsilon
\def\eqn#1$$#2$${\begin{equation}\label#1#2\end{equation}}
\newcommand{\go}{\mathfrak{g}}
\newcommand{\be}{\begin{equation}}
\newcommand{\ee}{\end{equation}}
\newcommand{\rr}{\varrho}
\newcommand{\snr}[1]{\lvert #1\rvert}
\newcommand{\nr}[1]{\lVert #1 \rVert}
\def\er{\mathbb R}
\newcommand{\RN}{\mathbb{R}^{N}}
\newcommand{\N}{\mathbb{N}}
\def\name[#1, #2]{#1 #2}
\newcommand{\rif}[1]{(\ref{#1})}
\newcommand{\trif}[1] {\textnormal{\rif{#1}}}
\newcommand{\stackleq}[1]{\stackrel{\rif{#1}}{ \leq}}
\begin{document}



\maketitle

\begin{abstract}
We provide a general approach to Lipschitz regularity of solutions for a large class of vector-valued, nonautonomous variational problems exhibiting nonuniform ellipticity. The functionals considered here range from those with unbalanced polynomial growth conditions to those with fast, exponential type growth. The results obtained are sharp with respect to all the data considered and yield new, optimal regularity criteria also in the classical uniformly elliptic case. We give a classification of different types of nonuniform ellipticity, accordingly identifying suitable conditions to get regularity theorems.
\end{abstract}
\setcounter{tocdepth}{1}
\tableofcontents

\section{Introduction}
The aim of this paper is to provide a comprehensive treatment of Lipschitz regularity of solutions for a very large class of vector-valued nonautonomous variational problems, involving integral functionals of the type
\eqn{genF}
$$
W^{1,1}_{\loc}(\Omega;\er^N) \ni w \mapsto \mathcal F(w;\Omega):=  \int_{\Omega}  \left[ F(x,Dw)-f\cdot w \right]\, dx\,.
$$ 
Here $\Omega \subset \er^n$ is an open subset with $n\geq 2$ and $N\geq 1$. In the following we shall assume the structure condition $F(x,Dw)\equiv \tilde F(x,|Dw|)$, which is natural in the vectorial case, where $\tilde F: \Omega \times [0, \infty) \to [0, \infty)$ is a suitably regular function (see Section \ref{asssec1} below for the precise assumptions). 
The vector field $f\colon \Omega \mapsto \er^N$ will be at least $L^n$-integrable
\begin{flalign}\label{fff}
f\in L^{n}(\Omega;\RN)\,.
\end{flalign}
The notion of local minimizer used in this paper is quite standard in the literature.
\begin{definition}\label{defi-min} A map $u \in W^{1,1}_{\loc}(\Omega;\er^N)$ is a local minimizer of the functional $\mathcal F$ in~\eqref{genF} with $f \in L^n(\Omega;\R^N)$ if, for every open subset $\tilde \Omega\Subset \Omega$, we have $\mathcal F(u;\tilde \Omega) <\infty$ and $\mathcal F(u;\tilde \Omega)\leq \mathcal F(w;\tilde \Omega)$ holds for every competitor $w \in u + W^{1,1}_0(\tilde \Omega; \er^N)$. 
\end{definition}
In the rest of the paper we shall abbreviate local minimizer simply by minimizer. We just remark that, thanks to \rif{fff} and Sobolev embedding, requiring $\mathcal F(u;\tilde \Omega) <\infty$ for every $\tilde \Omega$ in Definition \ref{defi-min} is the same to requiring $F(\cdot, Du)\in L^{1}_{\loc}( \Omega)$. In this paper we deal with the following, classical
\vspace{1mm}

\noindent {\bf Problem.} {\em Find minimal regularity assumptions on $f$ and $x \mapsto F(x, \cdot)$, guaranteeing local Lipschitz continuity of minima of the functional $\mathcal F$ in \trif{genF}, provided this type of regularity holds when $f\equiv 0$ and no $x$-dependence occurs, i.e., $F(x,z)\equiv F(z)$.}
\vspace{1mm}

Eventually, Lipschitz continuity opens the way to higher order regularity. In the problem above, a situation where no dependence on $x$ typically occurs when considering frozen integrands of the type
$z \mapsto F(x_0, z)$, for some $x_0\in \Omega$. We recall that, under suitable growth conditions, the analysis of \rif{genF} usually involves the related Euler-Lagrange system 
\eqn{general1}
$$
-\diver(\tilde a(x,|Du|)Du)=f\,, \quad \ \tilde a(x,|Du|):= \frac{\tilde F'(x,|Du|)}{|Du|}\,.
$$
Here $\tilde F'$ denotes the partial derivative of $\tilde F$ with respect to the second variable.  

Recently, Cianchi \& Maz'ya \cite{CM0, CM1, CM2, CM3, CM4} (global estimates) and Kuusi and the second author \cite{KMstein, KMbull, KMjems} (local estimates), investigated the above problem in the uniformly elliptic, autonomous case, i.e. $\tilde a(x,|Du|)\equiv \tilde a(|Du|)$ and 
\eqn{uni-ell}
$$
\left\{
\begin{array}{c}
\displaystyle -1< \ia \leq \frac{\tilde a'(t)t}{\tilde a(t)}\leq \sa< \infty \quad \mbox{for every $t>0$}\\ [10 pt]
\mbox{$\tilde a\colon (0, \infty) \to [ 0, \infty)$ is of class $C^1_{\loc}(0, \infty)$}
\,.
\end{array}
\right.
$$ 
A special, yet important model, is given by the $p$-Laplacean system with coefficients 
\eqn{plap}
$$
-\diver(\ccc(x)|Du|^{p-2}Du)=f  \,, \quad p>1\,, \quad 0 < \nu \leq \ccc(\cdot) \leq L\,. 
$$
For this we have 
\vspace{1mm}

\noindent {\bf Nonlinear Stein Theorem} \cite{KMstein}. {\em Let $u\in W^{1, p}_{\loc}(\Omega; \er^N)$ be a weak solution to \eqref{plap}. If $f \in L(n,1)(\Omega;\er^N)$, and $\ccc(\cdot)$ is Dini continuous, then $Du$ is continuous. }

\vspace{0.5mm}
In particular, $Du$ is locally bounded. We recall that $f \in L(n,1)(\Omega;\er^N)$ means that \eqn{cond-lo}
$$
\| f\|_{L(n,1)(\Omega)}:=  \int_0^\infty |\{x \in \Omega\,  \colon  |f(x)|> \lambda\}|^{1/n} \, d\lambda <\infty\,,
$$ and also that $L^{q}\subset L(n,1) \subset L^n$ for every $q>n$. Moreover, denoting by $\omega(\cdot)$ the modulus of continuity of $\ccc(\cdot)$, the Dini continuity of $\ccc(\cdot)$ amounts to require that 
\eqn{dini}
$$\int_0 \omega(\varrho)\, \frac{d \varrho}{\varrho} <\infty\,.$$
The above theorem extends to general equations \cite{KMbull} and to systems depending on forms \cite{sil}; it also extends classical results of Uhlenbeck \cite{Uh} and Uraltseva \cite{Ur}; we again refer to Cianchi \& Maz'ya \cite{CM0, CM1, CM3} for global statements. The terminology is motivated by the fact that, for $\ccc(\cdot)\equiv 1$ and $p=2$, this is another classical result of Stein \cite{St}. It is optimal both with respect to condition \rif{cond-lo}, as shown by Cianchi \cite{CiGA}, and with respect to \rif{dini}, as shown by Jin, Maz'ya \& Van Schaftingen \cite{mazyaex}. The relevant fact here is that the conditions on $f$ and $\ccc(\cdot)$ implying local Lipschitz continuity are independent of $p$. In fact, when considering more general equations, they are independent of the vector field the divergence operator is applied to; for this, see \cite{KMbull}, and \cite{Baroni} for conditions \rif{general1}-\rif{uni-ell}. 

In the case of nonuniformly elliptic operators, the problem of deriving sharp conditions with respect to data for Lipschitz regularity is considerably more difficult. This has been attacked only recently by Beck and the second author \cite{BM}, but only for the case of autonomous functionals in the principal part, i.e., $F(x,z)\equiv F(z)$ (with some abuse of terminology, in this paper we shall refer to $F(x,z)\equiv F(z)$ as to the autonomous case, no matter $f(\cdot)$ can still be present in \rif{genF}, to emphasize that coefficients $x$ do not appear in the part of the integrand containing gradient terms). The outcome of \cite{BM} is that, when $n>2$, condition \rif{cond-lo} is still sufficient to guarantee the local Lipschitz regularity of minima, thereby revealing itself as a sort of universal property. In the case $n=2$, the alternative, slightly stonger borderline condition $L^{2}(\log L)^{\mathfrak {a}}(\Omega; \er^N)$ with $\mathfrak {a} >2$, implies Lipschitz continuity:
\eqn{llog}
$$
f \in  L^{2}(\log L)^{\mathfrak {a}}(\Omega; \er^N) \Longleftrightarrow\int_{\Omega} |f|^2\log^{\mathfrak {a}}\left(\textnormal{e}+|f|\right)\, dx < \infty\,.
$$
Therefore, in the nonuniformly, autonomous case, the condition on $f$ can be summarized as 
\eqn{lox}
$$|f|\in \mathfrak X(\Omega)=
\left\{
\begin{array}{cc} 
L(n,1)(\Omega)  & \mbox{ if $n>2$} \\ [4 pt]
L^{2}(\log L)^{\mathfrak {a}}(\Omega) \,,   \ \mathfrak {a} >2&  \mbox{ if $n= 2$}\,.
\end{array}
\right.
$$
In this paper we deal with the general, fully nonautonomous case \rif{genF}. This is by no means a technical extension as, in fact, when passing to the nonuniformly elliptic case, the role of coefficients drastically changes and they can no longer be treated via perturbation as in \cite{KMstein}. To give a glimpse of the situation, let us consider the so called double phase functional 
\eqn{pqfunctional}
$$
\begin{cases}
\displaystyle w \mapsto \int_{\Omega} \left[H(x,Dw)-f\cdot w\right]\dx\\
\displaystyle  H(x,z):= 
\tilde H(x,|z|):=|z|^p/p+a(x)|z|^{q}/q
\end{cases}
$$
with  $1 < p < q$, $0 \leq a(\cdot)\in L^{\infty}(\Omega)$. This functional has been originally introduced by Zhikov \cite{Z1, Z2} in the setting of Homogeneization of strongly anisotropic materials, and the corresponding regularity theory has been studied at length starting by \cite{BCM, CoM0, CoM}. The functional in \rif{pqfunctional} changes its rate of ellipticity/coercivity - from $p$ to $q$ - around the zero set $\{a(x)=0\}$. As shown in \cite{sharp, FMM}, already when $f\equiv 0$, local minima fail to be continuous if the ratio $q/p$ is too far from $1$, in dependence on the rate of H\"older continuity $\alpha$. Specifically, the condition
\eqn{pqbound}
$$
\frac qp \leq 1 + \frac \alpha n\,, \qquad  a(\cdot)\in C^{0, \alpha}(\Omega)\,, \qquad \alpha \in (0,1]
$$
is necessary \cite{sharp, FMM} and sufficient \cite{BCM} to get gradient local continuity, thereby linking growth conditions of the integrand with respect to the gradient variable, to the smoothness of coefficients. In particular, classical Schauder's theory generally fails. This is the main theme of this paper. 
Condition in \rif{pqbound} reveals a typical phenomenon occurring when nonuniform ellipticity is directly generated by the presence of the $x$-variable as in \rif{pqfunctional}. In this case, it is indeed the very presence of $x$ making functionals as in \rif{pqfunctional} fail to meet the standard, two-sided polynomial conditions with the same exponent, i.e., $H(x,z)\not \approx |z|^p$. We shall also deal with more drastic examples of such an interplay, as for instance 
\eqn{exp-mod1}
$$ w \mapsto \int_{\Omega} \left[\ccc_1(x)\exp(\ccc_{2}(x)|Dw|^p)-f\cdot w\right]\, dx\,, \qquad p>1\,,$$
where $0 < \nu\leq \ccc_1(\cdot), \ccc_{2}(\cdot)\leq L$. Here the dependence on $x$ becomes even more delicate as it makes the ellipticity rate vary more drastically.    
Such integrands fail to satisfy the so-called $\triangle_2$-condition, i.e., $\tilde F(x, 2 t)\not \lesssim \tilde F(x, t)$. This reflects in a loss of related integrability conditions on minimizers as one tries to use perturbation methods, that is, considering a specific point $x_0 \in \Omega$ and making small variations of $x$ around $x_0$. In other words, $\exp(\ccc_{2}(\cdot)|Dw|^p)\in L^1$ does not necessarily imply $\exp(\ccc_{2}(x_0)|Dw|^p)\in L^1$, and vice versa, and plain perturbation methods are again banned. Exponential type functionals are classical in the Calculus of Variations starting by the work of Duc \& Eells \cite{DE} and Marcellini \cite{M2}. In the nonautonomous version, they are treated for instance in the setting of weak KAM-theory by Evans \cite{evans}, but under special assumptions and boundary conditions. More recent progress is in \cite{DM}, for $f\equiv 0$. 

Nonuniform ellipticity is a very classical topic in partial differential equations, and it is often motivated by geometric and physical problems. Seminal papers on this subject are for instance \cite{DE, LU, Simon, Z1, Z2}; a classical monograph is Ivanov's \cite{ivanov}. In the setting of the Calculus of Variations there is a wide literature available, starting from the basic papers of Uraltseva \& Urdaletova \cite{UU} and Marcellini \cite{M1, M2, M3, M4}. More recently, the study of the nonautonomous case has intensified; many papers have been devoted to study specific structures as well as genereal non-uniformly elliptic problems \cite{BS, Bi, BM, BF1, BB2, BOR, BO, BY, Cellinastaicu, DeMi1, DeMi, DM, HO, HS}. Connections to related function spaces have been studied too \cite{diening, HH, RR}.  

The results obtained in this paper are very general and cover large classes of different models cases simultaneously. For this, a number of technical assumptions is necessary; see Section \ref{general results} below. Anyway, when applied to single models, such assumptions reveal to be minimal and produce sharp results. In the autonomous case $F(x, z)\equiv F(z)$, they coincide with the sharp ones introduced in \cite{BM}.  For this reason, and also to ease the reading, in this introductory part we shall present a few main corollaries of the general theorems, in connection to some relevant instances of nonuniformly elliptic functionals often considered in the literature. These models fall in three different general classes, detailed in Sections \ref{pqsec}-\ref{unisec0} below. We refer the reader to Section \ref{notsec} for a full account of the notation used in this paper, while more remarks on nonuniform ellipticity are in Section \ref{diffsec} below. 

\subsection{Nonuniform ellipticity at polynomial rates}\label{pqsec} We start considering functionals with unbalanced polynomial growth conditions, of so-called $(p,q)$-type after Marcellini \cite{M1, M2}. The idea is to provide general conditions on the partial map $x \mapsto F(x, \cdot)$ implying regularity of minima and matching those suggested by counterexamples \cite{sharp, FMM}. In this respect, we consider, for exponents $1 < p \leq q$, the conditions 
\eqn{asp1}
$$
\left\{
\begin{array}{c}
\ F(x,z)=\tilde{F}(x,\snr{z})  \quad \mbox{for all} \ (x,z)\in \Omega\times \mathbb{R}^{N\times n}\\ [3 pt]
\nu (|z|^2+\mu^2)^{p/2} \leq F(x,z) \leq \Lambda(|z|^2+\mu^2)^{q/2}+\Lambda(|z|^2+\mu^2)^{p/2}\\[3 pt]
(|z|^2+\mu^2) |\partial_{zz} F(x,z)| \leq \Lambda(|z|^2+\mu^2)^{q/2}+\Lambda(|z|^2+\mu^2)^{p/2}\\ [3 pt]
\nu (|z|^2+\mu^2)^{(p-2)/2}|\xi|^2\leq   \partial_{zz}F(x,z)\xi \cdot \xi\,, 
 \end{array}\right.
$$
for every choice of $z, \xi \in \er^n$ such that $|z|\not=0$. Here $0<\nu\leq 1 \leq \Lambda$ and $\mu \in [0,1]$ are fixed constants. $\tilde{F}$ satisfies \rif{0ass}$_{2,3}$ below. We moreover assume that
\eqn{nondecresce}
 $$t \mapsto \frac{\tilde F'(x,t)}{(t^2+\mu^2)^{(p-2)/2}t}\quad \mbox{is non-decreasing}$$
for every $x\in \Omega$; as in the rest of the paper, we denote $\tilde F'\equiv \partial_t \tilde F$.   
As for the crucial dependence on $x$, we assume that \eqn{asp2}
$$
\begin{cases}
\ |\partial_{x} \tilde F'(x,t)| \leq h(x)
\left[(t^2+\mu^2)^{(q-2)/2}t+(t^2+\mu^2)^{(p-2)/2}t\right]\\
\   h(\cdot) \in L^{d}(\Omega), \  d >n
\end{cases}
$$
holds for a.e.\,$x \in \Omega$ and every $t>0$. Of course, $F, \tilde F',$ are also assumed to be continuous on $\Omega \times (0,\infty)$, while $\tilde F''$ and $\partial_x \tilde F$ are Carath\'eodory regular. Using Sobolev regularity on coefficients, is a natural approach, also considered elsewhere. See for instance the paper \cite{KM} in the case of uniformly elliptic integrals. As for the nonuniformly elliptic setting, this approach has been used in \cite{EMM}; see also Marcellini's survey \cite{M4}  for a general overview. We also mention that, over the last years, Sobolev coefficients have been systematically considered as a replacement of usual Lipschitz ones to find optimal conditions in several other fields of analysis and PDE; see for instance \cite{bcrippa, FGS, crippadelellis}. 
 \begin{theorem}\label{sample1} Let $u \in W^{1,1}_{\loc}(\Omega;\er^N)$ be a minimizer of $\mathcal F$ in \eqref{genF}, under assumptions \trif{asp1}-\trif{asp2}. Assume that \eqref{lox} holds and 
\eqn{lox2}
$$
\frac{q}{p}<1+\min\left\{\frac{1}{n}-\frac{1}{d},\mathfrak{m}_{p}\right\}\quad \mbox{with} \ \ \mathfrak{m}_{p}:=\left\{ 
\begin{array}{ccc}
\frac{4(p-1)}{\vartheta p(n-2)} &\mbox{if}  & n\ge 3 \\ [6 pt]
\ \frac{2(p-1)}{\vartheta p}&\mbox{if}  & n=2\,,
\end{array}
\right.
$$
where $\vartheta=1$ if $p\ge 2$ and $\vartheta=2$ otherwise. Then $Du\in L^{\infty}_{\loc}(\Omega;\er^{N\times n})$. When either $p\geq 2$ or $f \equiv 0$, \trif{lox2} can be replaced by
\eqn{lox2bis}
$$
\frac qp<  1+\frac 1n-\frac1d\,.
$$ 
\end{theorem}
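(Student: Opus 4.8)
The plan is to treat Theorem \ref{sample1} as a special case of the general Lipschitz theorems announced in Section \ref{general results}, so the task reduces to verifying that the double-phase-type assumptions \eqref{asp1}–\eqref{asp2} together with the sharp bound \eqref{lox2} fall inside the structural hypotheses of those general results. First I would normalize: fix a ball $B_{2r}\Subset\Omega$, set $\mu$-regularized integrands if $\mu=0$, and record the ellipticity ratio
\[
\frac{\text{highest eigenvalue of }\partial_{zz}F(x,z)}{\text{lowest eigenvalue of }\partial_{zz}F(x,z)}\lesssim (|z|^2+\mu^2)^{(q-p)/2},
\]
which is the quantitative form of nonuniform ellipticity here; the exponent $q-p$ is exactly what must be controlled by \eqref{lox2}. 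The vector field $f$ enters only through \eqref{lox}, i.e. through membership in the space $\mathfrak X(\Omega)$, so its contribution is handled verbatim by the nonautonomous Stein-type machinery once the a priori estimate is in place.

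The core is an a priori Lipschitz estimate for solutions of a regularized problem, obtained by differentiating the Euler–Lagrange system \eqref{general1} and testing. I would work on second differences or on the differentiated system for $D_s u$, multiply by a test function built from $(|Du|^2+\mu^2)^{\gamma}D_s u\,\eta^2$ with $\eta$ a cutoff, and absorb. Three kinds of terms appear: (i) the good elliptic term, bounded below using \eqref{asp1}$_4$ and the monotonicity \eqref{nondecresce}; (ii) the coefficient term, where the $x$-derivative of the integrand is estimated by \eqref{asp2}, producing a factor $h(x)(|Du|^2+\mu^2)^{(q-1)/2}$ that by Hölder with exponent $d>n$ costs a gain of $1/n-1/d$ derivatives — this is where the $1/n-1/d$ in \eqref{lox2} and \eqref{lox2bis} comes from; (iii) the $f$-term, handled by the $L(n,1)$ (or $L^2(\log L)^{\mathfrak a}$ in dimension two) potential estimates, contributing the threshold $\mathfrak X(\Omega)$. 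Iterating the resulting reverse-Hölder-type inequality along a sequence of shrinking balls (a Moser/De Giorgi scheme, or the Caccioppoli-plus-Sobolev iteration tuned to the $(p,q)$ gap) yields $Du\in L^\infty_{\loc}$, provided each iteration step does not lose summability, which is precisely the requirement $q/p<1+\min\{1/n-1/d,\mathfrak m_p\}$.

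The quantity $\mathfrak m_p$ is the "self-improving" budget of the homogeneous problem: it measures how much higher integrability of $Du$ one already has from the energy bound ($Du\in L^p$) combined with Sobolev embedding in dimension $n$, before any use of \eqref{lox} or \eqref{asp2}. When $p\ge 2$ the Caccioppoli inequality for the $p$-structure is strong enough that this budget is not the binding constraint and can be dropped — this is why \eqref{lox2bis} suffices for $p\ge 2$; likewise when $f\equiv 0$ there is no need to feed extra integrability into the $f$-term, so again only $1/n-1/d$ survives. The factor $\vartheta$ ($=1$ for $p\ge2$, $=2$ for $p<2$) records the loss in the Sobolev exponent coming from the degenerate-versus-singular dichotomy in the Caccioppoli estimate for $|Du|$.

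\textbf{Main obstacle.} The hard part will be the coupling in step (ii)–(iii): closing the iteration requires that the extra integrability of $Du$ produced by one Caccioppoli step be enough both to make sense of the coefficient term $h(x)(|Du|^2+\mu^2)^{(q-1)/2}$ after Hölder and to run the nonlinear potential estimate for the $f$-term, \emph{simultaneously}, without circular dependence. Quantitatively one must verify that the gain $1/n-1/d$ per step, diluted over the $(p,q)$-gap, stays positive and summable; this is exactly the bookkeeping encoded in \eqref{lox2}. One must also be careful that the regularization (adding $\varepsilon(|z|^2+1)^{q/2}$ or mollifying coefficients) preserves all four conditions in \eqref{asp1} and \eqref{asp2} with constants independent of $\varepsilon$, so that the a priori bound passes to the limit; this is routine but must be checked since $F$ need not satisfy $\triangle_2$ uniformly in $x$ once coefficients are involved — though in the polynomial-growth case \eqref{asp1} the $\triangle_2$ property does hold, so here the limiting procedure is comparatively mild and the real fight is the exponent arithmetic.
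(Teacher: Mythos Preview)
Your reduction strategy is exactly the paper's: Theorem \ref{sample1} is derived from the abstract Theorem \ref{t1} by setting $\gamma=p$, $g_1(t)=\nu(t^2+\mu^2)^{(p-2)/2}$, $g_2(t)=\Lambda[(t^2+\mu^2)^{(q-2)/2}+(t^2+\mu^2)^{(p-2)/2}]$, $g_3(t)=(t^2+\mu^2)^{(q-2)/2}t+(t^2+\mu^2)^{(p-2)/2}t$, so that $\hat\sigma=0$ (the $g_i$ are $x$-independent) and $\sigma=q/p-1$ verify \eqref{7}--\eqref{9}; then \eqref{sigma} becomes \eqref{lox2}, and Remark \ref{megliobound0} gives the improvement to \eqref{lox2bis}.

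However, your heuristic for $\mathfrak m_p$ is inverted and would mislead you if you tried to fill in the details. The constraint $\sigma<\mathfrak m_p$ does \emph{not} come from the Moser step (Proposition \ref{lp}), which needs only $\sigma+\hat\sigma<1/n-1/d$; it appears solely in the pointwise potential-theoretic step (Proposition \ref{linf1}), where one must reabsorb a term of size $\|G_\varepsilon(\cdot,|Du|)\|_\infty^{(n-2)\vartheta\sigma/4+1/\gamma}\,\mathbf P_1^{f}$ via Young's inequality---see \eqref{ledue}. The exponent $1/\gamma$ enters through $|Du|\lesssim \bar G_\varepsilon^{1/\gamma}$ (cf.\ \eqref{sos0.1}), and requiring $(n-2)\vartheta\sigma/4+1/\gamma<1$ is exactly $\sigma<\mathfrak m_p$. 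So $\mathfrak m_p$ is entirely an artifact of the $f$-term, not of the homogeneous problem, which is why it disappears when $f\equiv 0$; and for $p\ge 2$ the simplification to \eqref{lox2bis} is pure arithmetic ($\mathfrak m_p\ge 2/(n-2)>1/n>1/n-1/d$), not a structurally stronger Caccioppoli inequality. Relatedly, a single Moser iteration does not reach $L^\infty$ here: the smallness condition \eqref{a23} ties the ball radius to the target exponent $p$, so the paper first uses Moser for arbitrarily large but finite $L^p$ (Proposition \ref{lp}) and then a separate Caccioppoli-on-level-sets plus nonlinear-potential argument (Lemmas \ref{caclem}, \ref{stimapp}, \ref{noniter}) to pass from $L^p$ to $L^\infty$. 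Your ``main obstacle'' about closing the iteration is real, but it is resolved by this two-stage structure rather than by a single reverse-H\"older scheme.
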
 
Theorem \ref{sample1} actually follows from Theorem \ref{t1} in Section \ref{general results} below and, as all the other results presented in this Introduction, comes along with explicit local a priori estimates. In particular, for splitting structures as 
\eqn{split}
$$w \mapsto  \int_{\Omega}  \left[ \ccc(x)F(Dw)-f\cdot w \right]\, dx \,, \qquad 0 < \nu \leq \ccc(\cdot)\leq L\,,$$
Theorem \ref{sample1} becomes
\begin{theorem}\label{sample2} Let $u \in W^{1,1}_{\loc}(\Omega;\er^N)$ be a minimizer of the functional in \eqref{split}, under assumptions \trif{asp1}-\trif{nondecresce} with $F(\cdot)\equiv F(z)\equiv \tilde F(|z|)$, and $f$ as in \eqref{lox}. Assume that  $\ccc(\cdot) \in W^{1,d}(\Omega)$ with $d>n$ and that \trif{lox2} is satisfied. Then $Du\in L^{\infty}_{\loc}(\Omega;\er^{N\times n})$. When either $p\geq 2$ or $f \equiv 0$, \trif{lox2} can be replaced by \trif{lox2bis}. 
 \end{theorem}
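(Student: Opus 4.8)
The plan is to obtain Theorem \ref{sample2} as a direct specialization of Theorem \ref{sample1}, by recognizing the splitting integrand as a particular nonautonomous one of the class \eqref{asp1}--\eqref{asp2}. Accordingly, set $\G(x,z):=\ccc(x)F(z)$ and $\tilde{\G}(x,t):=\ccc(x)\tilde F(t)$, so that the functional in \eqref{split} is of the form \eqref{genF} with integrand $\G$, and $\G(x,z)\equiv\tilde{\G}(x,\snr{z})$ by \eqref{asp1}$_1$ for $F$. The entire task is then to verify that $\G$, $\tilde{\G}$ satisfy \eqref{asp1}, \eqref{nondecresce}, \eqref{asp2} together with the attendant Carath\'eodory/continuity requirements; once this is done, the Lipschitz bound and its local a priori estimate follow verbatim from Theorem \ref{sample1}, since the exponent conditions \eqref{lox}, \eqref{lox2} (and \eqref{lox2bis} when $p\ge2$ or $f\equiv0$) are assumed exactly as stated there.

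First I would check the structural inequalities. Because $0<\nu\le\ccc(\cdot)\le L$, each of the two-sided bounds in \eqref{asp1}$_{2,3,4}$ for $F(z)=\tilde F(\snr{z})$ is multiplied only by the $t$-independent factor $\ccc(x)\in[\nu,L]$; hence \eqref{asp1} holds for $\G$ after replacing $\nu$ by $\nu^2\le1$ and $\Lambda$ by $L\Lambda\ge1$, which are still admissible constants. The monotonicity \eqref{nondecresce} is preserved as well, since
\[
\frac{\partial_t\tilde{\G}(x,t)}{(t^2+\mu^2)^{(p-2)/2}t}=\ccc(x)\,\frac{\tilde F'(t)}{(t^2+\mu^2)^{(p-2)/2}t}
\]
and $\ccc(x)>0$ is independent of $t$. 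Finally, $\ccc\in W^{1,d}(\Omega)$ with $d>n$ forces $\ccc\in C^0(\Omega)$, so that $\G,\partial_z\G,\partial_{zz}\G,\partial_x\G$ inherit the required continuity/Carath\'eodory properties from those of $F$.

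The only point demanding a small computation is \eqref{asp2}. Here $\partial_x\tilde{\G}'(x,t)=\dd\ccc(x)\,\tilde F'(t)$, so it suffices to bound $\snr{\tilde F'(t)}\le c\big[(t^2+\mu^2)^{(q-2)/2}t+(t^2+\mu^2)^{(p-2)/2}t\big]$; this follows from \eqref{asp1} and the structure conditions on $\tilde F$ recorded in \eqref{0ass} below, by integrating the Hessian bound \eqref{asp1}$_3$ from the origin and absorbing lower-order terms via \eqref{asp1}$_2$, the behaviour near $t=0$ (and near the origin when $\mu=0$) being governed precisely by those structure conditions. Taking then $h(x):=c\,\snr{\dd\ccc(x)}$ yields \eqref{asp2}, with $h\in L^d(\Omega)$, $d>n$, exactly because $\ccc\in W^{1,d}(\Omega)$. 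With \eqref{asp1}, \eqref{nondecresce}, \eqref{asp2} in force, Theorem \ref{sample1} applies and gives $Du\in L^\infty_{\loc}(\Omega;\er^{N\times n})$. I do not expect a genuine obstacle in this reduction: the analytic content rests entirely on Theorem \ref{sample1} (equivalently Theorem \ref{t1}), and the only mild care needed is in tracking the ellipticity constants under multiplication by $\ccc$ and in the elementary gradient bound for $\tilde F'$ in the degenerate regime.
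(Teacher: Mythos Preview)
Your proposal is correct and follows essentially the same route as the paper: both reduce Theorem \ref{sample2} to Theorem \ref{sample1} by verifying that the splitting integrand $\ccc(x)F(z)$ satisfies \eqref{asp1}--\eqref{asp2} with $h(\cdot)\approx|D\ccc(\cdot)|$, the only nontrivial point being the bound $\tilde F'(t)\lesssim (t^2+\mu^2)^{(q-2)/2}t+(t^2+\mu^2)^{(p-2)/2}t$. The paper derives this bound from convexity of $z\mapsto F(z)$ together with the growth condition \eqref{asp1}$_2$ (a standard argument), whereas you integrate the Hessian bound \eqref{asp1}$_3$; both are valid and yield the same conclusion.
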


For double phase functionals in \rif{pqfunctional}, condition \rif{asp2} again amounts to assume that $a(\cdot)\in W^{1,d}(\Omega)$, and indeed we have 
\begin{theorem}\label{sample3} Let $u \in W^{1,1}_{\loc}(\Omega;\er^N)$ be a minimizer of the functional in 
\trif{pqfunctional}, such that $0 \leq a(\cdot) \in W^{1,d}(\Omega)$, $d>n$, and that 
 \trif{lox} holds together with 
\eqn{lox22}
$$
q/p\le 1+1/n-1/d\,,  \ \mbox{if $n\ge2$} \ \   \mbox{and, when $n=2$, also $q/p < p$}\,.
$$
Then $Du\in L^{\infty}_{\loc}(\Omega;\er^{N\times n})$. 
\end{theorem}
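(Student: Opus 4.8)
The plan is to deduce Theorem \ref{sample3} from the general theory of Section \ref{general results} (Theorem \ref{t1}, of which Theorem \ref{sample1} is itself a corollary), specialized to the model integrand $H(x,z)=\tilde H(x,\snr{z})$, $\tilde H(x,t)=t^{p}/p+a(x)t^{q}/q$. Thus the work splits into three parts: (a) checking that $\tilde H$ fulfils the structure conditions \eqref{asp1}--\eqref{asp2} (with $\mu=0$); (b) rewriting the bound \eqref{lox22} as the restriction on the gradient variable needed by the abstract statements; (c) applying the latter. The one genuinely delicate point will be the borderline case $q/p=1+1/n-1/d$, and, for $n=2$, the alternative $q/p<p$: these are not a black box consequence of Theorem \ref{sample1} and rely on the extra structure of a $W^{1,d}$ coefficient with $d>n$.

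\emph{Step (a).} With $\mu=0$, the four lines of \eqref{asp1} reduce to elementary inequalities. One has $t^{p}/p\le\tilde H(x,t)\le t^{p}+\nr{a}_{L^{\infty}}t^{q}$; differentiating twice, $\snr{\partial_{zz}(\snr z^{p}/p)}\lesssim\snr z^{p-2}$ and $\snr{\partial_{zz}(a(x)\snr z^{q}/q)}\lesssim\nr{a}_{L^{\infty}}\snr z^{q-2}$, whence the second order upper bound with $\Lambda\sim 1+\nr{a}_{L^{\infty}}$; and $\partial_{zz}H(x,z)\xi\cdot\xi\ge c(p)\snr z^{p-2}\snr\xi^{2}$ because $z\mapsto a(x)\snr z^{q}/q$ is convex and thus contributes a nonnegative quadratic form. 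Condition \eqref{nondecresce} is immediate: $\tilde H'(x,t)/t^{p-1}=1+a(x)t^{q-p}$ is nondecreasing in $t$ as $q\ge p$ and $a\ge 0$. Finally $\partial_{x}\tilde H'(x,t)=Da(x)\,t^{q-1}\le\snr{Da(x)}\,(t^{q-1}+t^{p-1})$, so \eqref{asp2} holds with $h=\snr{Da}\in L^{d}(\Omega)$, $d>n$ --- which is exactly the assumption $a\in W^{1,d}(\Omega)$. The remaining qualitative conditions on $\tilde H$, cf. Section \ref{asssec1}, are standard for this model.

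\emph{Steps (b)--(c).} If \eqref{lox22} holds with strict inequality, then $q/p<1+1/n-1/d$, i.e. \eqref{lox2bis}; when $p\ge 2$ or $f\equiv 0$ this is exactly the hypothesis of Theorem \ref{sample1}, so $\snr{Du}\in L^{\infty}_{\loc}$ and we are done. In the remaining regimes --- $1<p<2$ with $f\not\equiv 0$, and the borderline $q/p=1+1/n-1/d$ --- I would not quote Theorem \ref{sample1}, where the $\mathfrak m_{p}$ term is binding and the inequality in \eqref{lox2} is strict, but argue directly from the general estimate of Section \ref{general results}, using that minimizers of \eqref{pqfunctional} enjoy, under the present bound, stronger a priori regularity than a generic $(p,q)$ functional (this is precisely the role the $\mathfrak m_{p}$ term plays in Theorem \ref{sample1}, and it is available for the double phase model by the by now standard theory of \cite{CoM0,CoM,BCM}). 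For the borderline case one moreover invokes the Sobolev--Morrey embedding $W^{1,d}\hookrightarrow C^{0,1-n/d}$, after which $q/p\le 1+1/n-1/d$ becomes exactly the sharp Schauder relation $q/p\le 1+\alpha/n$, $\alpha=1-n/d$, i.e. \eqref{pqbound}, handled by the Schauder-type part of the analysis (in the spirit of \cite{BCM}, adapted to $f\in\mathfrak X$). When $n=2$ the critical embedding $W^{1,2}\hookrightarrow L^{\infty}$ fails, which is why the separate condition $q/p<p$ (equivalently $q<p^{2}$) is required; with it the a priori higher integrability and comparison scheme can still be closed in dimension two, where $W^{1,p}$ functions lie in every $L^{r}$.

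\emph{Main obstacle.} The verification in Step (a) is routine; the crux is that Theorem \ref{sample3} is sharp --- through $W^{1,d}\hookrightarrow C^{0,1-n/d}$, the inequality $q/p\le 1+1/n-1/d$ is precisely the threshold that the counterexamples of \cite{sharp,FMM} show cannot be relaxed --- so the equality case cannot be recovered for free from the strict inequality statement of Theorem \ref{sample1}, but has to be dealt with through the Schauder-type branch of the theory, with the restriction $q/p<p$ disposing of the two dimensional borderline.
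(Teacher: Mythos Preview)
Your verification in Step (a) is fine, but it sets the problem up in the wrong framework. By taking $g_1(t)=\nu t^{p-2}$ you are forcing the ellipticity ratio $g_2/g_1$ to grow like $1+a(x)t^{q-p}$, which puts you in the regime of Theorem~\ref{t1}/Theorem~\ref{sample1} and hence brings in the $\mathfrak m_p$ restriction you yourself flag. The paper instead exploits the specific structure of the double phase integrand by choosing $g_1(x,t)=\min\{p-1,1\}[t^{p-2}+a(x)t^{q-2}]$ and $g_2(x,t)=c[t^{p-2}+a(x)t^{q-2}]$, so that $g_2/g_1\le c_{\rm b}$ \emph{pointwise}. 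This places the problem under Theorem~\ref{t4}, whose hypothesis \eqref{sigma-int} is simply $\hat\sigma<1/n-1/d$ (resp.\ $\hat\sigma<\min\{1/2-1/d,\gamma-1\}$ when $n=2$) with $\hat\sigma=q/p-1$, and no $\mathfrak m_p$ term appears. This already disposes of the strict inequality case for every $p>1$ and nonzero $f$, which your proposal leaves open.

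Your handling of the borderline $q/p=1+1/n-1/d$ is too vague to be a proof. Saying ``invoke Sobolev--Morrey and the Schauder-type part of the analysis in the spirit of \cite{BCM}, adapted to $f\in\mathfrak X$'' is a description of what one hopes to achieve, not a mechanism; \cite{BCM} treats $f\equiv 0$ with H\"older coefficients and does not transfer for free. The paper's actual mechanism is concrete and different: first prove a Gehring-type higher integrability $H(\cdot,Du)\in L^{s_{**}}_{\loc}$ for some $s_{**}>1$ (Lemma~\ref{ilprimoh}), then an intrinsic Sobolev--Poincar\'e inequality for the approximating integrands $H_\varepsilon$ with constants independent of $\varepsilon$ (Lemma~\ref{sopolem}), then a uniform-in-$\varepsilon$ higher integrability for the approximants $u_\varepsilon$ (Lemma~\ref{bougeh.c}). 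With this in hand one reruns Proposition~\ref{lp} with $s_*>1$ rather than $s_*=1$: the condition \eqref{sigmasss} becomes $\hat\sigma<s_*/n-s_*/d$, which is now satisfied when $\hat\sigma\le 1/n-1/d$, recovering the equality case. This is the missing idea in your proposal. Finally, the two-dimensional restriction $q/p<p$ is not a vague embedding issue: it is exactly the condition $\hat\sigma<\gamma-1$ coming from \eqref{sceglidelta} (with $\sigma=0$ there, since $g_2/g_1$ is bounded) in the $n=2$ branch of the a priori estimates.
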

Theorem \ref{sample3} allows to clarify in which sense assumptions \rif{asp2} and \rif{lox2bis} \& \rif{lox22} are sharp. Indeed, Sobolev-Morrey embedding yields $a(\cdot)\in C^{0, \alpha}$ with $\alpha =1-n/d$. This last identity makes conditions \rif{pqbound} and \rif{lox2bis} coincide. In turn, \rif{pqbound} is sharp by the counterexamples in \cite{sharp, FMM}. Therefore, \rif{asp2} is the sharp differentiable version of \rif{pqbound}, which is stronger than \rif{pqbound}, but weaker than assuming that $a(\cdot)$ is Lipschitz. Lipschitz continuity of coefficients is typically  assumed in the literature in the nonautonomous case (see for instance \cite{M2, dark} and related references).

\subsection{Nonuniform ellipticity at fast rates and a different phenomenon}\label{fastsec} A prototype we have in mind here is given by \rif{exp-mod1}. Looking at the case of polynomial growth in Section \ref{pqsec}, from \rif{asp2} and \rif{lox2} we see that the required integrability rate of coefficients $d$ increases with the ratio $q/p$. A naive, but seemingly natural bet, would then assert that the exponential case needs more stringent conditions on the integrability exponent $d$. On the contrary, the situation reverses, and any $d>n$ implies local Lipschitz continuity:
\begin{theorem}\label{sample4} Let $u \in W^{1,1}_{\loc}(\Omega; \er^N)$ be a minimizer of the functional in 
\trif{exp-mod1}, such that $\ccc_1(\cdot), \ccc_{2}(\cdot) \in W^{1,d}(\Omega)$ with $d>n$ and $f$ satisfies \trif{lox}. Then $Du\in L^{\infty}_{\loc}(\Omega;\er^{N\times n})$. 
\end{theorem}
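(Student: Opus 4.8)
\noindent\emph{Strategy of proof.} The plan is to deduce Theorem \ref{sample4} from the general Lipschitz theorem of Section \ref{general results} governing fast growth, non-$\triangle_2$ integrands (a companion of Theorem \ref{t1}, of which Theorems \ref{sample1}--\ref{sample3} are also particular instances): one checks that the integrand $\tilde F(x,t):=\ccc_1(x)\exp(\ccc_2(x)t^p)$, $F(x,z)=\tilde F(x,|z|)$, falls in the corresponding structure class, reads off the ensuing a priori Lipschitz bound, and transfers it to the original minimizer via the approximation scheme of Section \ref{general results} — first proving the estimate for minimizers of uniformly elliptic regularizations $\tilde F_\varepsilon$ of $\tilde F$, then letting $\varepsilon\to 0$. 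A minimizer here satisfies $F(\cdot,Du)\in L^1_{\loc}(\Omega)$, so that at the very outset one has the strong information $\exp(\ccc_2(\cdot)|Du|^p)\in L^1_{\loc}(\Omega)$, whence $|Du|$ is exponentially — in particular $L^s_{\loc}$ for every finite $s$ — integrable; this will be decisive.

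\smallskip
Differentiating, for $t>0$ one gets $\tilde F'(x,t)=p\,\ccc_1\ccc_2\,t^{p-1}e^{\ccc_2 t^p}$ and $\tilde F''(x,t)=p\,\ccc_1\ccc_2[(p-1)+p\,\ccc_2 t^p]t^{p-2}e^{\ccc_2 t^p}$, so the eigenvalues of $\partial_{zz}F(x,z)$ are comparable to $\mu_1(|z|)\approx \ccc_1\ccc_2|z|^{p-2}e^{\ccc_2|z|^p}$ from below and to $\mu_2(|z|)\approx \ccc_1\ccc_2(1+|z|^p)|z|^{p-2}e^{\ccc_2|z|^p}$ from above, while the monotonicity in the spirit of \eqref{nondecresce} is immediate. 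The point is that the ellipticity ratio $\mu_2(t)/\mu_1(t)\approx 1+t^p$ is only \emph{polynomial} in $t$: the exponential factor is common to both bounds and cancels. For the coefficient dependence, $\partial_x\tilde F'(x,t)=p\,t^{p-1}e^{\ccc_2 t^p}[\ccc_2 D\ccc_1+(1+\ccc_2 t^p)\ccc_1 D\ccc_2]$, hence, using $\nu\le\ccc_1,\ccc_2\le L$,
\[
|\partial_x\tilde F'(x,t)|\le h(x)\,(1+t^p)\,t^{p-1}e^{\ccc_2(x) t^p},\qquad h:=c(p,L)(|D\ccc_1|+|D\ccc_2|)\in L^d(\Omega),\ \ d>n,
\]
where again the same exponential factor appears. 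The remaining growth, coercivity and Carath\'eodory requirements of the general theorem follow at once from $\nu\le\ccc_1,\ccc_2\le L$ and the explicit form of $\tilde F$.

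\smallskip
Running the Caccioppoli-type energy inequality of Section \ref{general results} for (regularizations of) $\tilde F$, one controls a quantity like $\int\mu_1(|Du|)|D^2u|^2\zeta^2$ by $r^{-2}\int\mu_2(|Du|)|Du|^2$ plus an $x$-term essentially of the form $\int \mu_1(|Du|)^{-1}|\partial_x\tilde F'(x,|Du|)|^2\zeta^2$, plus an $f$-term handled, under \eqref{lox}, by the rearrangement and potential estimates already used in the uniformly elliptic Stein-type results. By the computations above the $x$-term is, up to constants, $\int h^2(1+|Du|^p)^2|Du|^p e^{\ccc_2|Du|^p}\zeta^2$, i.e. it differs from the leading term $\int\mu_2(|Du|)|Du|^2$ only by the \emph{polynomial} factor $h^2(1+|Du|^p)$. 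This is the reversal anticipated in Section \ref{fastsec}: in the $(p,q)$-regime of Section \ref{pqsec} the analogous loss is a power $|Du|^{\,\textup{gap}}$ with $\textup{gap}\sim q/p-1$, so a positive gap must be compensated by the surplus integrability $1/n-1/d$ of the coefficients, whence \eqref{lox2}; in the exponential case the a priori exponential integrability of $|Du|$ makes \emph{every} such polynomial loss absorbable — split by H\"older, placing $h^2$ in $L^{d/2}$ and the rest in the conjugate exponent, which is harmless precisely because $e^{\ccc_2|Du|^p}$ times any power of $|Du|$ is locally integrable. Hence no quantitative ``gap'' condition survives, and the only genuine requirement left is the subcriticality $d>n$, needed so that the final De Giorgi--Moser-type iteration (equivalently, the Morrey-type decay carried by $h\in L^d$) self-improves up to $Du\in L^\infty_{\loc}(\Omega;\er^{N\times n})$.

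\smallskip
The main obstacle is precisely the failure of the $\triangle_2$-condition, $\tilde F(x,2t)\not\lesssim\tilde F(x,t)$, combined with the impossibility of freezing the coefficients as in the uniformly elliptic theory (cf. the discussion after \eqref{exp-mod1}): $\exp(\ccc_2(\cdot)|Dw|^p)\in L^1$ does not transfer to $\exp(\ccc_2(x_0)|Dw|^p)\in L^1$. One must therefore work throughout with the uniformly elliptic approximations and the a priori machinery of Section \ref{general results}, keeping the exponential factors under control so that they genuinely cancel between the lower ellipticity bound $\mu_1$ and the $x$-perturbation $|\partial_x\tilde F'|$, and tracking the dependence of all constants on $\|\ccc_1\|_{W^{1,d}}$, $\|\ccc_2\|_{W^{1,d}}$, on $\nu,L,p,n,d$ and on the a priori energy $\int F(x,Du)\dx$ on the relevant subdomain; the passage to the limit $\varepsilon\to0$ then preserves the Lipschitz bound. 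Dealing with $f$ under \eqref{lox}, including the borderline case $n=2$, is by contrast routine once the nonuniformly elliptic a priori estimate is in place, being identical to the corresponding step for autonomous $F$.
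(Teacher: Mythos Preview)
Your computations of $\tilde F'$, $\tilde F''$ and $\partial_x\tilde F'$ are correct, and you have isolated the key structural fact: the ellipticity ratio is only polynomial, $\mu_2/\mu_1\approx 1+t^p$, because the exponential factor is common to both eigenvalue bounds and cancels. This is exactly the mechanism behind Theorem~\ref{sample4}. The gap is in how you propose to \emph{exploit} this cancellation.

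You write that the $x$-term, essentially $\int h^2(1+|Du|^p)^2|Du|^p e^{\ccc_2|Du|^p}$, can be controlled ``by H\"older, placing $h^2$ in $L^{d/2}$ and the rest in the conjugate exponent, which is harmless precisely because $e^{\ccc_2|Du|^p}$ times any power of $|Du|$ is locally integrable''. But $F(\cdot,Du)\in L^1_{\loc}$ only gives $e^{\ccc_2|Du|^p}\in L^1_{\loc}$; it does \emph{not} give $e^{\ccc_2|Du|^p}\in L^{d/(d-2)}_{\loc}$, with or without polynomial weights. This is precisely the $\triangle_2$-failure you flagged a few lines later: the energy does not self-improve to higher powers, and no Gehring-type argument is available here. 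Working at the level of the regularizations $u_\varepsilon$ does not help either, since the bounds you would obtain this way depend on $\|Du_\varepsilon\|_{L^\infty}$, which is the quantity to be controlled.

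The paper's route is different and avoids any call on higher integrability of the energy. One absorbs the polynomial factor \emph{pointwise into the energy} via the elementary bound
\[
1+t^p+\,t^{p}\log t\ \le\ C_\sigma\,[\bar G(x,t)]^{\sigma}\qquad\text{for every }\sigma>0,
\]
since $\bar G(x,t)\approx e^{\ccc_2(x)t^p}$ dominates any polynomial. With $g_1(x,t)\approx {\bf e}_0'(x,t)/t$, $g_2\approx t^{p}\,g_1$, $g_3\approx {\bf e}_0'(x,t)\,t^{p}\log t$ and $h\approx |D\ccc_1|+|D\ccc_2|\in L^d$, the structural conditions \eqref{7}--\eqref{9} of $\setm$ are then satisfied for \emph{every} $\sigma,\hat\sigma>0$, hence in particular for $\sigma+\hat\sigma$ small enough that \eqref{sigma} holds --- which is possible for any $d>n$. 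Theorem~\ref{t1} applies directly; there is no separate ``fast growth'' companion, and Theorem~\ref{sample4} is simply the case $k=0$ of Theorem~\ref{sample6} as treated in Section~\ref{applications}. The higher integrability of $G(\cdot,|Du|)$ you tried to use as an input is in fact an \emph{output} of the Moser iteration inside Theorem~\ref{t1} (cf.\ Theorem~\ref{t3}).
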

The same applies to more general functionals with faster growth, involving arbitrary compositions of exponentials, and therefore even faster growth conditions. 
Specifically, we fix exponent functions $\{p_k(\cdot)\}$ and coefficients  $\{\ccc_k(\cdot)\}$, all defined on the open subset $\Omega \subset \er^n$, such that
\begin{flalign}\label{assexp}
\begin{cases}
1 < p_{\mathfrak{m}} \leq p_0(\cdot) \leq p_M \,, \quad  0 < p_m \leq p_k(\cdot)\leq p_M\,, \ \mbox{for $k \geq 1$} \\
 0 < \nu_{\mathfrak{m}} \leq \ccc_k(\cdot)\leq L \,, \quad p_k(\cdot), \ccc_k(\cdot) \in W^{1,d}(\Omega)\,, \ \  d>n\,, \ \mbox{for $k \geq 0$}\,.
\end{cases}
\end{flalign}
We then inductively define, for every $k\in \en$, the functions ${\bf e}_{k}\colon \Omega \times  [ 0, \infty) \to \er$ as
\eqn{esponenciali}
 $$\left\{
 \begin{array}{ccc}
 {\bf e}_{k+1}(x,t) & := & \exp \left(\ccc_{k+1}(x)\left[{\bf e}_{k}(x,t)\right]^{p_{k+1}(x)}\right) \\ [6 pt]{\bf e}_{0}(x,t) &:= & \exp\left(\ccc_0(x)t^{p_0(x)}\right)\,,
 \end{array}
 \right.
 $$
and consider the variational integrals
\eqn{espik}
$$
w \mapsto   \int_{\Omega}  \left[ {\bf e}_{k}(x,|Dw|)-f\cdot w \right]\, dx\,.
$$
Functionals as in \rif{espik} have been studied at length in the literature also because they provide the best case study to test how far one can go in relaxing the standard uniform ellipticity assumptions; see \cite{BM, M2} and related references. The nonautonomous case is of special interest as the sensitivity to the $x$-dependence is magnified by taking multiple compositions of exponentials; see comments after display \rif{exp-mod1}. We have the following result, which, as also Theorem \ref{sample4}, is completely new already in the case $f\equiv 0$:
\begin{theorem}\label{sample6} Let $u \in W^{1,1}_{\loc}(\Omega; \er^N)$ be a minimizer of the functional in 
\trif{espik} for some $k \in \en$, under assumptions \trif{assexp} and such that $f$ satisfies \trif{lox}. Then $Du\in L^{\infty}_{\loc}(\Omega;\er^{N\times n})$. 
\end{theorem}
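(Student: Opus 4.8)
The plan is to obtain Theorem \ref{sample6} as a corollary of the general Lipschitz criteria of Section \ref{general results} --- exactly as Theorem \ref{sample1} descends from Theorem \ref{t1} --- so that the whole work consists in checking that, for each fixed $k\in\en$, the integrand $(x,z)\mapsto {\bf e}_k(x,|z|)$ and the partial map $x\mapsto {\bf e}_k(x,\cdot)$ meet the structural growth, nonuniform-ellipticity and $x$-regularity hypotheses of that framework (the fast-rate analogues of \rif{asp1}--\rif{asp2}), with a force vector $f$ satisfying just \rif{lox}, which is already built into the general statement. First I would set this up as an induction on $k$: the recursion \rif{esponenciali} shows that $\partial_t{\bf e}_k$, $\partial_{tt}{\bf e}_k$, the ellipticity ratio $t\partial_{tt}{\bf e}_k/\partial_t{\bf e}_k$, and $\partial_x{\bf e}_k$, $\partial_x\partial_t{\bf e}_k$ are all \emph{polynomial expressions} in ${\bf e}_0,\dots,{\bf e}_k$ (and in the $\ccc_j$, $p_j$ and their gradients), so the relevant one-sided and two-sided bounds can be propagated from level $k$ to level $k+1$.

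For the base case $k=0$ one has ${\bf e}_0=\exp(\ccc_0 t^{p_0})$, so $\log {\bf e}_0=\ccc_0 t^{p_0}$, and $\partial_t{\bf e}_0=\ccc_0 p_0 t^{p_0-1}{\bf e}_0$ with a similar formula for $\partial_{tt}{\bf e}_0$; a direct computation gives $t\partial_{tt}{\bf e}_0/\partial_t{\bf e}_0=(p_0-1)+p_0\log {\bf e}_0$, which for large $t$ is comparable to $\log {\bf e}_0$, and this is precisely the borderline ``fast rate'' admissible behaviour, while monotonicity and the growth bounds are immediate from \rif{assexp}. The $x$-derivatives bring in $\partial_x\ccc_0$-terms and $\partial_x p_0$-terms, the latter carrying an extra $\log t$ which is absorbed through the elementary bound $\log t\le c(\varepsilon)t^{\varepsilon}$ for $t\ge 1$; this yields $|\partial_x\partial_t{\bf e}_0(x,t)|\le h_0(x)\,\Psi_0(t,{\bf e}_0(x,t))$ with $h_0:=|\nabla\ccc_0|+|\nabla p_0|\in L^d(\Omega)$, $d>n$, which is the shape required by the general $x$-assumption. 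In the inductive step, from ${\bf e}_{k+1}=\exp(\ccc_{k+1}{\bf e}_k^{p_{k+1}})$ one differentiates once and twice in $t$, and once in $t$ and $x$; every resulting factor is a product of powers of ${\bf e}_{k+1}$, ${\bf e}_k$, $\partial_t{\bf e}_k$, $\partial_{tt}{\bf e}_k$, $\partial_x{\bf e}_k$ and of $\log {\bf e}_k=\ccc_k{\bf e}_{k-1}^{p_k}$, all controlled either by the inductive hypothesis or, being polynomial in the lower-level ${\bf e}$'s, by the structural bounds; one then reads off the admissibility of ${\bf e}_{k+1}$ and closes the recursion with $h_{k+1}:=h_k+|\nabla\ccc_{k+1}|+|\nabla p_{k+1}|\in L^d(\Omega)$. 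The continuity and Carath\'eodory requirements on $F,\tilde F',\tilde F'',\partial_x\tilde F$ follow from \rif{assexp}, which keeps the exponents bounded, bounded away from $0$, and $p_0$ away from $1$.

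The hard part will be the bookkeeping in the nonuniform-ellipticity condition. One must certify two things simultaneously: that the gap between the upper and lower ellipticity bounds of ${\bf e}_k$, which after $k$ compositions behaves like an iterate of exponentials, still lies in the fast-rate admissible class of the general theorem; and that this class is stable under the passage from $\partial_t{\bf e}_k$ to $\partial_x{\bf e}_k$. The subtlety is that differentiating the \emph{exponent} $p_{k+1}(x)$ in the $x$-variable produces an extra $\log {\bf e}_k$ factor --- morally one more level of exponential --- so that $\partial_x{\bf e}_k$ carries one more power of the growth function than $\partial_t{\bf e}_k$ does; one has to verify that this excess is exactly what the $x$-dependence hypothesis of the general theorem is calibrated to absorb. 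This is, ultimately, the structural reason why --- in sharp contrast with the polynomial $(p,q)$ case of Theorem \ref{sample1} --- no gap restriction on the exponents appears and any Sobolev exponent $d>n$ for the coefficients and exponent functions suffices (cf.\ Theorem \ref{sample4} and the discussion following \rif{exp-mod1}).
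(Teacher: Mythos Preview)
Your proposal is correct and follows essentially the same route as the paper: reduce to Theorem \ref{t1} by verifying the hypotheses of $\setm$ via an inductive computation of ${\bf e}_k'$, ${\bf e}_k''$, $\partial_x{\bf e}_k'$, and then exploit that the ellipticity ratio and the $x$-derivative bounds involve only products of \emph{lower}-level exponentials, which are dominated by $[{\bf e}_k]^{\varepsilon}$ for every $\varepsilon>0$, so that \rif{7}--\rif{9} hold for arbitrarily small $\sigma,\hat\sigma>0$ and hence \rif{sigma} is satisfied for any $d>n$. The paper organizes the same computation through the auxiliary quantities $\Pi_k$, $\mathcal D_k$, $\mathcal L_k$ and makes the choices $T=\mathrm{e}$, $\gamma=p_{\mathfrak m}$, $\mu=0$, $g_1\approx {\bf e}_k'/t$, $g_2\approx t^{p_0}\Pi_k\,{\bf e}_k'/t$, $g_3\approx {\bf e}_k'\,t^{p_0}\Pi_k\log t$ explicit, but the mechanism you describe is exactly theirs.
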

In other words, this fact brings functionals as in \rif{exp-mod1} closer to the realm of uniformly elliptic ones. The next step comes in fact in the subsequent section. 
\subsection{New results in the uniformly elliptic setting}\label{unisec0}
New results follow in the classical uniformly elliptic setting too. This time the model is 
\eqn{funz-uni}
$$
w \mapsto \int_{\Omega}  \left[A(x,|Dw|)-f\cdot w \right]\, dx\,, \quad A(x,t)
:=  \ccc(x)\int_0^t \tilde a(s)s\, ds\,,
$$
for $t \geq 0$, with \rif{uni-ell} being in force and such that $ 0 < \nu \leq \ccc(\cdot) \leq L$. Under such conditions, every solution to the system in \rif{general1} is a minimizer of the functional in \rif{funz-uni}  and the second identity in \rif{general1} is automatically satisfied.
\begin{theorem}\label{sample7} Let $u \in W^{1,1}_{\loc}(\Omega; \er^N)$ be a minimizer of the functional in 
\trif{funz-uni}, under assumptions \trif{uni-ell}. If $|f|, |D\ccc|\in \mathfrak X(\Omega)$ as defined in \trif{lox}, then $Du\in L^{\infty}_{\loc}(\Omega;\er^{N\times n})$. Moreover, there exists a positive radius $R_{*}\equiv R_{*}(n,N,\ia, \sa,\linebreak\ccc(\cdot)) \leq 1$ such that if $\B \Subset \Omega$ is a ball with $\rad(\B)\leq R_*$, then the following estimate holds for 
$c \equiv c(n,N,\nu, L, \ia, \sa)$:
\eqn{stimaAA}
$$
\|A(\cdot, \snr{Du}) \|_{L^\infty(\B/2)} \leq 
c \mint_{\B} A(\cdot, \snr{Du})\, dx+c \|f\|_{\mathfrak X(\B)}^{\frac{\ia+2}{\ia+1}}+c\,.
$$
\end{theorem}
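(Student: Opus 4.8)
\medskip
\noindent\textbf{Proof strategy for Theorem~\ref{sample7}.}
The plan is to prove \eqref{stimaAA} first as an a priori estimate for smooth, nondegenerate approximations, and then to pass to the limit; the scheme is a simpler instance of the one behind the general results of Section~\ref{general results}, the novelty lying in the weaker, borderline regularity $\dd\ccc\in\mathfrak X$ of the coefficient. Write $G(t):=\int_0^t\tilde a(s)s\,ds$, so that $A(x,t)=\ccc(x)G(t)$; then \eqref{uni-ell} gives $G(t)\approx\tilde a(t)t^2$, $\tilde a(t)t\approx G(t)/t$, and $G(t)\approx t^{\ia+2}$ as $t\to0$. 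Since $A$ is uniformly elliptic it satisfies a $\triangle_2$-condition, so no Lavrentiev phenomenon occurs and a standard regularization suffices: replace $\tilde a$ by nondegenerate, polynomially bounded $\tilde a_\varepsilon$ still obeying \eqref{uni-ell} with ellipticity exponents independent of $\varepsilon$, and $\ccc,f$ by mollifications $\ccc_\varepsilon,f_\varepsilon$ with $\nu\le\ccc_\varepsilon\le L$ and $\nr{\dd\ccc_\varepsilon}_{\mathfrak X(\B)}\le\nr{\dd\ccc}_{\mathfrak X(\B)}+o(1)$, $\nr{f_\varepsilon}_{\mathfrak X(\B)}\le\nr{f}_{\mathfrak X(\B)}+o(1)$. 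The minimizers $u_\varepsilon$ of the regularized functionals then lie in $W^{2,2}_{\loc}\cap W^{1,\infty}_{\loc}$ by classical theory, converge to $u$ in $W^{1,1}_{\loc}$, and lower semicontinuity together with the $\triangle_2$-property transfers the resulting bound to $u$; the threshold radius $R_*$ is produced by the iteration below.

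\medskip
Next I would differentiate the Euler--Lagrange system $-\diver(\ccc_\varepsilon\tilde a_\varepsilon(\snr{\dd u_\varepsilon})\dd u_\varepsilon)=f_\varepsilon$ in each direction $x_\ell$, test with $\eta^2\partial_\ell u_\varepsilon$ for a standard cut-off $\eta$, and sum over $\ell$. The radial, Uhlenbeck-type structure $\ccc_\varepsilon(x)G(\snr z)$ is exactly what makes these manipulations yield, through the pointwise estimates in \eqref{uni-ell}, a Caccioppoli inequality of the shape
\begin{equation*}
\int\ccc_\varepsilon\,\eta^2\,\snr{\dd(V_\varepsilon(\dd u_\varepsilon))}^2\dx\le c\int\snr{\dd\eta}^2 A_\varepsilon(\cdot,\snr{\dd u_\varepsilon})\dx+c\int(\eta^2+\snr{\dd\eta}^2)\,\frac{\snr{\dd\ccc_\varepsilon}^2}{\ccc_\varepsilon}\,G(\snr{\dd u_\varepsilon})\dx+\mathcal{R}_{f},
\end{equation*}
where $V_\varepsilon$ is the nonlinear map attached to $\tilde a_\varepsilon$ (so $\snr{V_\varepsilon(z)}^2\approx G(\snr z)$), the coefficient term has been split off by Young's inequality using $\snr{\tilde a_\varepsilon(t)t}\approx G(t)/t$, and $\mathcal{R}_{f}$ collects, after one integration by parts, pairings of $f_\varepsilon$ with $\eta^2\dd^2 u_\varepsilon$ and with $\eta\snr{\dd\eta}\dd u_\varepsilon$. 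The pairing with $\eta^2\dd^2u_\varepsilon$ is the delicate one and is absorbed via a Riesz potential of $\snr{f_\varepsilon}$; this is also the only spot where the degenerate/singular case $\mu=0$ calls for the extra device of a Caccioppoli inequality of the second kind, with weights $(\mu^2+\snr{\dd u_\varepsilon}^2)^{-\delta/2}$.

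\medskip
The third step is a Moser/De Giorgi iteration for $\go:=G(\snr{\dd u_\varepsilon})$. By \eqref{uni-ell} the inequality above says, in essence, that $\go$ is a subsolution of a linear uniformly elliptic equation with drift bounded by $c\snr{\dd\ccc_\varepsilon}$ and right-hand side controlled by a quantity built from $\snr{f_\varepsilon}$; testing with powers $\go^\beta\eta^2$ and combining with Sobolev embedding upgrades the integrability of $\go$ step by step. The decisive input is that $\mathfrak X$ is precisely the borderline space keeping the drift and forcing terms summable against the iterated energies: for $n>2$ the conditions $\nr{f}_{L(n,1)},\nr{\dd\ccc}_{L(n,1)}<\infty$ bound the relevant $1$-Riesz potentials in $L^\infty$, whereas for $n=2$ the sharper $L^2(\log L)^{\mathfrak a}$-condition, $\mathfrak a>2$, does the same via a logarithmic potential; equivalently, $\dd\ccc\in\mathfrak X$ forces $\ccc$ to be continuous with a controlled modulus — the embedding $W^1\mathfrak X\hookrightarrow C^0$ that makes $\mathfrak X$ sharp — and it is this that both fixes $R_*$ and governs the coefficient drift. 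Choosing $R_*$ so small that the iteration converges, one arrives at $\nr{A_\varepsilon(\cdot,\snr{\dd u_\varepsilon})}_{L^\infty(\B/2)}\le c\mint_{\B}A_\varepsilon(\cdot,\snr{\dd u_\varepsilon})\dx+c\nr{f_\varepsilon}_{\mathfrak X(\B)}^{(\ia+2)/(\ia+1)}+c$. The exponent $(\ia+2)/(\ia+1)$ is dictated by the homogeneities $\tilde a(t)t\approx t^{\ia+1}$ and $G(t)\approx t^{\ia+2}$ near the origin: a pointwise bound of size $\snr{\dd u}\approx(\mathbf{I}_1^{\snr{f}})^{1/(\ia+1)}$ converts, through $G$, into a power $(\ia+2)/(\ia+1)$ of $\nr{f}_{\mathfrak X}$. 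Letting $\varepsilon\to0$ gives \eqref{stimaAA}.

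\medskip
The hard part is the two borderline contributions — the forcing $f$ and the coefficient gradient $\dd\ccc$ — which sit exactly at the integrability threshold $\mathfrak X$, leaving no slack to absorb them by crude H\"older/Young estimates: one has to exploit the sharp Lorentz/Orlicz--Sobolev duality (equivalently, sharp potential bounds) and to arrange the iteration so that these terms enter linearly and with the correct homogeneity. A second difficulty, genuinely new compared with the uniformly elliptic autonomous theory of \cite{KMstein,BM}, is that $\ccc$ is merely in the Stein-type class $W^1\mathfrak X$ and not Lipschitz, hence it cannot be frozen or treated perturbatively — it must be carried along the whole iteration, with its modulus of continuity controlling the smallness of $R_*$.
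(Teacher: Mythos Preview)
Your overall plan --- approximate, differentiate, derive a Caccioppoli inequality, iterate --- is the right shape, and treating $h:=|D\ccc|$ as a drift in $\mathfrak X$ is exactly how the paper frames it. But the paper's proof of Theorem~\ref{sample7} is actually a two-line reduction to Theorem~\ref{t2}: one checks $\setuni$ with $g_1(t)=\nu\min\{1,\ia+1\}\tilde a(t)$, $g_2(t)=LNn\max\{1,\sa+1\}\tilde a(t)$, $g_3(t)=\tilde a(t)t$, $h=|D\ccc|$, $\gamma=\ia+2$, $\mu=0$, and then \eqref{stimaAA} is \eqref{finafinapre}. The real work is Proposition~\ref{unile2}, and there the mechanism is \emph{not} Moser with powers: the test map is $\varphi_s=\eta^2(G_\varepsilon(|Du|)-\kk)_+D_su$, a level-set truncation. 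The resulting Caccioppoli inequality \eqref{a45uni} matches the hypothesis \eqref{varine} of the De~Giorgi--potential Lemma~\ref{noniter}, which outputs the nonlinear potentials $\mathbf P_1^h,\mathbf P_1^f$; their $L^\infty$ bound by $\|\cdot\|_{\mathfrak X}$ via \eqref{potlore} is exactly what makes $\mathfrak X$ the sharp space. In \eqref{uni8} the $h$-term carries the full factor $[\|G_\varepsilon\|_{L^\infty}+1]$ and is absorbed only through the smallness choice \eqref{a23uni}, after which Young's inequality and Lemma~\ref{l0l} close the estimate.

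The Moser scheme you describe (testing with powers $\go^\beta\eta^2$) runs into the obstacle already visible in Proposition~\ref{lp}: the absorption condition at each step, cf.~\eqref{a23}, reads $c_{\textnormal{ab}}(\bar\kk+1)^2\|f\|_{L^n(\B)}^2\le 1/2$ and depends on the power $\bar\kk$, so the admissible radius $R_*$ shrinks as the exponent grows and one cannot reach $L^\infty$ on a fixed ball --- this is precisely why Theorem~\ref{t3}, which relies on exactly that Moser machinery, delivers only $L^p$ for every finite $p$ under $f\in L^n$. The Riesz-potential treatment you invoke for the $f\cdot D^2u$ pairing is not a Moser step; it is the De~Giorgi device encoded in Lemma~\ref{noniter}. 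In the uniformly elliptic case the paper bypasses Moser entirely and works on level sets from the outset, which is what makes the borderline $\mathfrak X$ hypothesis on both $D\ccc$ and $f$ sharp and simultaneously fixes a single $R_*$ independent of any iteration index.
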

In other words, $f$ and $D\ccc$ are this time required to have the same degree of regularity. Theorem \ref{sample7} applies to \rif{plap} by taking $A(x,t)\equiv \ccc(x)t^{p}/p$ and it is sufficient to require that $D\ccc \in L(n,1)(\Omega)$ for $n>2$. This is, already when $f\equiv 0$, a new regularity criterion on coefficients, which goes beyond the known and classical one in \rif{dini}. Indeed, $D\ccc \in L(n,1)$ implies that $\ccc(\cdot)$ is continuous \cite{St}, but not necessarily with a modulus of continuity $\omega(\cdot)$ satisfying \rif{dini}. Moreover, this criterion works for the general cases as in \rif{funz-uni}, to which methods from \cite{KMstein} do not apply under the only considered structure assumption \rif{uni-ell}. When considered in the special case \rif{plap}, it is $\ia=p-2$ and estimate \rif{stimaAA} gives back the classical one valid for the $p$-Laplacean system in \rif{plap}; see for instance \cite{KMstein, KMjems}. 
\subsection{Calder\'on-Zygmund theory} 
In Theorems \ref{sample1}-\ref{sample7}, we can replace \rif{lox} by the weaker $f \in L^{n}(\Omega;\er^N)$, getting, as a corresponding outcome, that $Du \in L^p_{\loc}(\Omega; \er^{N\times n})$ for every $p\geq 1$; see Theorem \ref{t3} below. This result is new in the nonuniformly elliptic case and is in perfect accordance with the Nonlinear Calder\'on-Zygmund theory known for the uniformly elliptic one \cite{KMbull}. For instance, considering the system in \rif{plap}, $D\ccc\in L^n$ implies that $\ccc(\cdot)\in$ VMO, the space of functions with vanishing mean oscillations \cite{sarason}. At this point, $Du \in L^p_{\loc}(\Omega)$, for every $p\geq 1$, follows from the standard theory (see \cite{BOR, DoK} and references). In fact, we provide the first Calder\'on-Zygmund type estimates in problems with non-polynomial growth conditions. An example is the following result, which is completely new already in the autonomous case:
\begin{theorem}\label{sample8} Let $u \in W^{1,1}_{\loc}(\Omega; \er^N)$ be a minimizer of the functional in 
\trif{espik} for some $k \in \en$, under assumptions \trif{assexp} with $n>2$, and such that $f\in L^n(\Omega; \er^N)$. Then ${\bf e}_{k}(\cdot,|Dw|)\in L^{p}_{\loc}(\Omega)$ for every $p \geq 1$. 
\end{theorem}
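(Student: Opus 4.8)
The plan is to reduce the Calderón–Zygmund statement to the Lipschitz a priori estimate that underpins Theorem \ref{sample6}, combined with a standard comparison/iteration scheme. First I would recall that, by the general machinery developed for Theorem \ref{sample6}, one has an a priori local bound of the form $\|{\bf e}_{k}(\cdot,|Du|)\|_{L^\infty(\B/2)} \le \Psi\left( \mint_{\B} {\bf e}_{k}(\cdot,|Du|)\, dx, \|f\|_{\mathfrak X(\B)}\right)$ on small balls $\B \Subset \Omega$, where $\Psi$ is an explicit, increasing function of its two arguments; the proof of Theorem \ref{sample6} in fact produces this estimate together with the regularity. The point of the present theorem is that $\mathfrak X$ is replaced by the strictly weaker $L^n$, so one cannot hope for an $L^\infty$ bound on $Du$, only higher integrability of ${\bf e}_k(\cdot,|Du|)$. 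The natural strategy is therefore: (i) fix an exponent $p\ge 1$; (ii) on each small ball, freeze the problem by comparing $u$ with the minimizer $v$ of the functional with $f\equiv 0$ and with coefficients $p_j(\cdot),\ccc_j(\cdot)$ frozen at the center (or, better, of the homogeneous Dirichlet problem with the same, non-frozen, integrand but zero right-hand side); (iii) use the a priori $L^\infty$ bound for $v$ — which is legitimate because the comparison problem has $f\equiv 0$, so $\|f\|_{\mathfrak X}=0$ and the Lipschitz bound applies unconditionally; (iv) estimate the comparison error $u-v$ in terms of $\|f\|_{L^n(\B)}$ via the minimality of $u$ and a Sobolev–Poincaré inequality adapted to the Orlicz structure of ${\bf e}_k$; (v) feed this into a Fefferman–Stein / maximal-function or a Calderón–Zygmund covering argument on the level sets of ${\bf e}_k(\cdot,|Du|)$.

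More concretely, for a ball $2\B\Subset\Omega$ let $v\in u+W^{1,?}_0(2\B;\er^N)$ solve the Dirichlet problem for the functional in \eqref{espik} with $f$ removed. Minimality of $u$ gives $\int_{2\B}{\bf e}_k(x,|Du|)\,dx \le \int_{2\B}{\bf e}_k(x,|Dv|)\,dx + \int_{2\B} f\cdot(u-v)\,dx$, and the usual convexity/monotonicity argument turns the difference of energies into an $L^1$ control of a suitable "$V$-function" distance between $Du$ and $Dv$, dominated by $\|f\|_{L^n(2\B)}\|u-v\|_{L^{n'}(2\B)}$ and then, by Sobolev embedding and absorption, by a power of $\|f\|_{L^n(2\B)}$ times a power of the excess energy. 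Because $v$ has $f\equiv 0$, Theorem \ref{sample6}'s a priori estimate yields ${\bf e}_k(\cdot,|Dv|)\in L^\infty(\B)$ with a bound by the average of ${\bf e}_k(\cdot,|Dv|)$ over $2\B$, hence (by minimality of $v$) by the average of ${\bf e}_k(\cdot,|Du|)$ over $2\B$. Thus on $\B$ the "bad part" ${\bf e}_k(\cdot,|Du|)$ is split into a bounded piece plus a piece governed by $\|f\|_{L^n}$; a Gehring-type or Calderón–Zygmund–Vitali covering lemma applied to the super-level sets $\{{\bf e}_k(\cdot,|Du|)>\lambda\}$ then upgrades the a priori $L^1_{\loc}$ integrability of ${\bf e}_k(\cdot,|Du|)$ to $L^p_{\loc}$ for every finite $p$, using that $f\in L^n$ has arbitrarily good local integrability in the scale relevant to the estimate (one iterates, raising the exponent at each step). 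The hypothesis $n>2$ enters exactly where it does in Theorem \ref{sample6}: it is what makes the "universal" condition on the datum be $L^n$ rather than the borderline $L^2(\log L)^{\mathfrak a}$, so that the comparison estimate closes.

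The main obstacle I expect is the comparison step in the genuinely non-$\triangle_2$ setting: because ${\bf e}_k(x,2t)\not\lesssim {\bf e}_k(x,t)$, one cannot freely manipulate the Orlicz norms of $Du$ and $Dv$, and the standard trick of absorbing $\|u-v\|$ into the energy via Sobolev embedding must be carried out very carefully — one has to work with the natural "shifted" $N$-functions and exploit that, once the a priori $L^\infty$ bound for $v$ is in hand, everything happens at a bounded energy level where the exponential integrand is comparable to a polynomial one with controlled constants. In other words, the non-$\triangle_2$ difficulty is defused precisely by first establishing the Lipschitz bound for the $f\equiv 0$ comparison map and then doing all remaining analysis below that ceiling. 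A secondary technical point is that the exponent functions $p_j(\cdot)$ and coefficients $\ccc_j(\cdot)$ are merely in $W^{1,d}$, $d>n$; but $W^{1,d}\hookrightarrow C^{0,1-n/d}$ so they are continuous, and the oscillation of the integrand over a small ball is controlled by a power of the radius, which is exactly the ingredient needed to make the frozen comparison quantitatively small — this is handled in the same way as in the proof of Theorem \ref{sample6}, so no new idea is required there.
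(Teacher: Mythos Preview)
Your plan takes a genuinely different route from the paper. The paper does not use any comparison with a homogeneous problem, nor a Calder\'on--Zygmund covering. Theorem~\ref{sample8} is obtained as a direct corollary of the general $L^p$-result Theorem~\ref{t3}: one verifies---exactly as in the derivation of Theorem~\ref{sample6} from Theorem~\ref{t1}---that the integrands ${\bf e}_k$ satisfy the structural assumptions $\setm$ for arbitrarily small $\sigma,\hat\sigma>0$, and then Theorem~\ref{t3} gives $G(\cdot,|Du|)\in L^p_{\loc}$ for every finite $p$, which amounts to ${\bf e}_k(\cdot,|Du|)\in L^p_{\loc}$. The engine behind Theorem~\ref{t3} is Proposition~\ref{lp}, a finite-step Moser iteration performed directly on (the regularizations of) $u$: testing the differentiated system with $\eta^2[G_\varepsilon(x,|Du|)]^{\kk+1}D_su$ produces a reverse-H\"older chain for powers of $G_\varepsilon$, and the only $f$-contribution is a term $c(\bar\kk+1)^2\|f\|_{L^n(\B)}^2$ multiplying the top-order quantity. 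For each target $p$ one fixes the iteration depth $\bar\kk=\bar\kk(p)$ and then chooses the ball radius small enough that the absorption condition~\eqref{a23} holds. The threshold radius $R_*(p)\to 0$ as $p\to\infty$, which is exactly why $f\in L^n$ yields every finite $L^p$ but not $L^\infty$.

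Your comparison scheme may be salvageable with substantial extra work, but as written it has a gap at the step you yourself flag. The minimality comparison only gives the \emph{scalar} inequality $\int_{2\B}{\bf e}_k(x,|Du|)\,dx-\int_{2\B}{\bf e}_k(x,|Dv|)\,dx\le \|f\|_{L^n}\|u-v\|_{L^{n'}}$; to run a covering argument you need level-set decay for ${\bf e}_k(\cdot,|Du|)$, and extracting this from a difference of integrals is precisely where non-$\triangle_2$ bites, since ${\bf e}_k(\cdot,|Du|)-{\bf e}_k(\cdot,|Dv|)$ has no sign pointwise. Your assertion that once $Dv$ is bounded ``all remaining analysis happens below that ceiling'' is incorrect: $Dv$ is bounded but $Du$ is not, and it is the behaviour of the integrand at the unbounded values of $Du$ that must be controlled. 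Also, the ``Sobolev--Poincar\'e adapted to the Orlicz structure of ${\bf e}_k$'' cannot be invoked for $u-v$, because ${\bf e}_k(x,|D(u-v)|)\not\lesssim {\bf e}_k(x,|Du|)+{\bf e}_k(x,|Dv|)$ in the absence of $\triangle_2$; one must fall back on the polynomial coercivity ${\bf e}_k(x,t)\gtrsim t^\gamma$ to bound $\|u-v\|_{L^{n'}}$, and then supply a separate argument (via the pointwise-nonnegative Bregman divergence $[\partial_zF(\cdot,Du)-\partial_zF(\cdot,Dv)]\cdot(Du-Dv)$) to recover information on the high set. The paper's direct Moser approach sidesteps all of this: it never introduces a comparison map and never needs $\triangle_2$.
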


\subsection{Obstacles} Applications follow to obstacle problems, leading to completely new and sharp results, already in the classical, uniformly elliptic case. For instance, we give the first results for fast growth functionals as in \rif{exp-mod1}, and these are new already in the case of smooth obstacles. For this we consider the functional 
\eqn{genF0}
$$\mathcal F_0(w;\Omega):=  \int_{\Omega}   F(x,Dw) \, dx\;
$$
defined on $W^{1,1}(\Omega)$, where $F$ is for instance one of the integrands considered in Theorems \ref{sample1}-\ref{sample7}; here we of course consider the scalar case $N=1$. Next we consider a measurable function $\psi \colon \Omega \to \er$  
and the convex set $\mathcal{K}_{\psi}(\Omega):=\{w\in W^{1,1}_{\loc}(\Omega)\colon  w(x)\ge \psi(x) \  \mbox{for a.e.} \ x\in \Omega \}$. We then say that a function $u \in W^{1,1}_{\loc}(\Omega)\cap \mathcal{K}_{\psi}(\Omega)$ is a constrained local minimizer of $\mathcal F_0$ if, for every open subset $\tilde \Omega\Subset \Omega$, we have $\mathcal F_0(u;\tilde \Omega) <\infty$ and if $\mathcal F_0(u;\tilde \Omega)\leq \mathcal F_0(w;\tilde \Omega)$ holds for every competitor $w \in u + W^{1,1}_0(\tilde \Omega)$ such that $w \in \mathcal{K}_{\psi}(\tilde \Omega)$. We then have the following, far reaching extension of classical theorems from \cite{choe, CL, fuobs, KS, liebe2}:
\begin{theorem}\label{sample9}
Let  $u\in W^{1,1}_{\loc}(\Omega)$ be a constrained local minimizer of $\mathcal F_0$ in \trif{genF0}, where $F\colon \Omega\times \mathbb{R}^{n}\to \mathbb{R}$ is one of the integrands from Theorems \ref{sample1}-\ref{sample7} with $p\geq 2, \ia\geq 0$ (whenever such parameters are involved). If $\psi\in W^{2,1}_{\loc}(\Omega)$ and $\snr{D^{2}\psi}\in \mathfrak{X}(\Omega)$, then $Du\in L^{\infty}_{\loc}(\Omega;\er^n)$. \end{theorem}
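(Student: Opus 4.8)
The plan is to reduce the obstacle problem for $\mathcal F_0$ to an unconstrained, forced problem of the type already covered by Theorems \ref{sample1}--\ref{sample7}, by absorbing the obstacle into the right-hand side. First I would exploit the variational inequality satisfied by a constrained minimizer $u \in \mathcal K_\psi$: for all $w \in u + W^{1,1}_0(\tilde\Omega)$ with $w \geq \psi$, one has the Euler--Lagrange inequality $\int_{\tilde\Omega} \partial_z F(x,Du)\cdot D(w-u)\,dx \geq 0$. Testing with $w = u + \eta$ for $\eta \geq 0$ and $w = u - \eta$ where $u > \psi$, one identifies the obstacle reaction: $-\diver(\partial_z F(x,Du)) = \tilde f$ as a measure supported on the contact set $\{u=\psi\}$, and on that set, in a weak/a.e. sense, $\partial_z F(x, Du) = \partial_z F(x, D\psi)$, so one can compare $\diver \partial_z F(x,Du)$ with $\diver \partial_z F(x, D\psi)$ there. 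This is the standard Kinderlehrer--Stampacchia type device (cf. \cite{KS, fuobs}) for converting an obstacle constraint into a forcing term.

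Next I would carry out the penalization argument rigorously: approximate $\mathcal F_0$ on $\mathcal K_\psi$ by the penalized functionals $w \mapsto \mathcal F_0(w;\tilde\Omega) + \varepsilon^{-1}\int_{\tilde\Omega} (\psi - w)_+^{s}\,dx$ over the whole space $u + W^{1,1}_0(\tilde\Omega)$, for a suitable exponent $s$ matching the growth of $F$. The minimizers $u_\varepsilon$ of the penalized problems solve genuine Euler--Lagrange systems $-\diver(\partial_z F(x,Du_\varepsilon)) = f_\varepsilon$ with $f_\varepsilon := \varepsilon^{-1} s (\psi - u_\varepsilon)_+^{s-1} \geq 0$, and the key is to bound $\|f_\varepsilon\|_{\mathfrak X(\tilde\Omega)}$ uniformly in $\varepsilon$. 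Here is where $\psi \in W^{2,1}_{\loc}$ with $|D^2\psi| \in \mathfrak X$ enters: using that $F(x,z) \equiv \tilde F(x,|z|)$ and the monotonicity/ellipticity of $\partial_z F$, one compares $u_\varepsilon$ with $\psi$ on the approximate contact region via the comparison/maximum principle, obtaining $f_\varepsilon \leq c\,(|\diver(\partial_z F(x,D\psi))| + \text{lower order})$ pointwise, and $|\diver(\partial_z F(x, D\psi))| \lesssim h(x)\,[\cdots] + (\text{ellipticity factor})\,|D^2\psi|$, which lies in $\mathfrak X$ once $|D^2\psi| \in \mathfrak X$, $h \in L^d$ with $d>n$ (so $h \in \mathfrak X$ too in the relevant regime), and $D\psi$ is already bounded a posteriori (this requires a bootstrap: first get $Du_\varepsilon \in L^\infty_{\loc}$ with non-optimal constants, then upgrade — or run the whole scheme on a fixed subball where $D\psi$ is controlled). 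The condition $p \geq 2$, $\ia \geq 0$ is used precisely to make the chain rule computation of $\diver(\partial_z F(x,D\psi))$ and the ensuing $\mathfrak X$-estimate clean, since then the ellipticity weight $(|D\psi|^2+\mu^2)^{(p-2)/2}$ (resp. $\tilde a(|D\psi|)$) is bounded on bounded sets and does not blow up as $|D\psi| \to 0$.

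With the uniform bound $\|f_\varepsilon\|_{\mathfrak X(\B)} \leq M$ in hand on a suitable ball $\B$, I would apply the relevant theorem among Theorems \ref{sample1}--\ref{sample7} to $u_\varepsilon$ — since $u_\varepsilon$ is an honest minimizer of the forced functional $\int_\B [F(x,Dw) - f_\varepsilon \cdot w]\,dx$ and $f_\varepsilon \in \mathfrak X(\B)$ with controlled norm — to get $\|Du_\varepsilon\|_{L^\infty(\B/2)} \leq c(M, \text{data})$, uniformly in $\varepsilon$. Then $u_\varepsilon \to u$ (in $W^{1,1}$, hence a.e. for a subsequence, using standard convergence of penalized obstacle problems together with the uniform $W^{1,p}$-bound coming from coercivity of $F$), and the uniform Lipschitz bound passes to the limit by weak-$*$ lower semicontinuity of the $L^\infty$ norm of gradients, giving $Du \in L^\infty_{\loc}(\Omega;\er^n)$ with the a priori estimate inherited from the unconstrained theorems.

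The main obstacle I anticipate is the uniform control of the penalization term $\|f_\varepsilon\|_{\mathfrak X}$: it couples a comparison principle (to localize where $u_\varepsilon < \psi$ and to bound $f_\varepsilon$ by quantities involving only $\psi$) with the a posteriori boundedness of $D\psi$ — itself a consequence of $\psi \in W^{2,1}_{\loc} \hookrightarrow$ continuity of $D\psi$ only in favorable dimensions, so in general one must work on balls where $D\psi$ is controlled or extract the bound from the Lipschitz conclusion for $u_\varepsilon$ via a self-improving iteration. Handling the borderline $n=2$ Orlicz space $L^2(\log L)^{\mathfrak a}$ in this comparison, and ensuring the algebra of $\diver(\partial_z F(x,D\psi))$ stays in $\mathfrak X$ for all the model integrands simultaneously (polynomial $(p,q)$, exponential, uniformly elliptic $A(x,t)$), is the delicate technical core; everything else is a routine assembly of penalization, comparison, and the already-established unconstrained regularity theorems.
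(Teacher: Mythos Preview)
Your overall strategy — reduce the constrained problem to an unconstrained forced one and invoke the already-proved Lipschitz theorems — is exactly right, and your first paragraph correctly identifies the Kinderlehrer--Stampacchia/Fuchs mechanism. But the route you commit to (penalization of the obstacle) is \emph{not} the one the paper takes, and the gap you yourself flag in the last paragraph is real.

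The paper proceeds via the Fuchs--Mingione \emph{linearization} of the variational inequality rather than penalization. Concretely (see Theorem~\ref{t1ob} and Section~\ref{obsec}): one first replaces $F$ by the truncated integrands $F_\varepsilon$ of Section~\ref{apro} (this step is essential in the nonuniformly elliptic and fast-growth cases and is missing from your plan), solves the \emph{constrained} problem \eqref{o.0} for $F_\varepsilon$, and then uses the result of \cite{fuobs,fumin} that the variational inequality \eqref{o.1} is equivalent to the equation \eqref{o.2} with the \emph{explicit} right-hand side
\[
\texttt{f}_\varepsilon(x)=-\theta_\varepsilon(x)\,\mathds{1}_{\{u_\varepsilon=\psi\}}\,\diver\,\partial_z F_\varepsilon(x,D\psi),\qquad \theta_\varepsilon\colon\B\to[0,1].
\]
This formula gives the pointwise bound $|\texttt{f}_\varepsilon|\le c\,\go_\psi(\B)\,[\,|D^2\psi|+h\,]\in\mathfrak X$ for free, with no comparison principle and no uniform-in-$\varepsilon$ issue, after which one applies the a~priori estimates of Propositions~\ref{linf1}/\ref{duedi} directly to the linearized problem \eqref{o.3} and passes to the limit. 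The assumptions $p\ge2$, $\ia\ge0$ (and the local boundedness of $\partial_{zz}F$, $g_3$) are used exactly where you say: to keep $\go_\psi(\B)$ in \eqref{settig} finite.

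By contrast, your penalization scheme produces $f_\varepsilon=\varepsilon^{-1}s(\psi-u_\varepsilon)_+^{s-1}$, and the pointwise Lewy--Stampacchia-type bound $f_\varepsilon\le(-\diver\,\partial_zF(x,D\psi))_+$ you need is not immediate for the general integrands here; you would have to justify it model by model, and it is precisely this step you leave open. The linearization bypasses it entirely. Two smaller corrections: (i) $|D^2\psi|\in\mathfrak X(\Omega)\subset L(n,1)$ in every dimension, so $D\psi$ is continuous by Stein \cite{St} and locally bounded --- no bootstrap is needed; (ii) you cannot apply Theorems~\ref{sample1}--\ref{sample7} directly to $u_\varepsilon$ with the original $F$ in the exponential-growth cases without first passing to the polynomial-growth approximants $F_\varepsilon$, since the a~priori machinery of Section~\ref{apri} is built for those.
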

This last result is new already in the classical $p$-Laplacean case $F(x,z)\equiv |z|^p/p$, for which it offers a criterion which is alternative to those given in \cite{choe, CL} -- see also \cite{KL} for double phase type functionals. In such papers Lipschitz estimates are obtained assuming that $D\psi$ is locally H\"older continuous. Here we trade this last condition with 
$\snr{D^{2}\psi}\in \mathfrak{X}(\Omega)$, that in turn implies the mere continuity of $D\psi$. This is essentially the same phenomenon 
seen in Theorem \ref{sample7}, where the condition $\snr{D\ccc}\in \mathfrak{X}(\Omega)$ replaces the Dini-continuity of  $\ccc(\cdot)$. We note that in the constrained versions of Theorems \ref{sample1} and \ref{sample2} we can allow for $p\geq 1$ provided $\mu>0$; for this see Remark \ref{obre} below. 

\subsection{Remarks and organization of the paper} 
Some of the methods here also extend to general scalar functionals, i.e., when minima and competitors are real valued functions. In this case there is no need to assume the radial structure $F(x,Dw)\equiv \tilde F(x,|Dw|)$. On the other hand, additional conditions ensuring the absence of the so-called Lavrentiev phenomenon are needed to build suitable approximation arguments, see for instance \cite{DeMi, sharp}. The radial structure is usually assumed in the vectorial case, otherwise singular minimizers might occur, even when data are smooth \cite{dark, SY1}. Again in the scalar case, we mention the recent, very interesting paper \cite{HO}, where gradient regularity results are obtained for minimizers of functionals as in \rif{genF0}. These results cover functionals with polynomial growth and special structure - the double phase functional is an instance - under H\"older continuity assumptions on coefficients. Anyway, they miss to cover all the classes of integrands described in Sections \ref{pqsec}-\ref{fastsec}. 

The rest of the paper goes as follows. After Sections \ref{notsec} and \ref{notsec2}, containing notations and preliminaries, respectively, in Section \ref{resultsec} we describe in detail the assumptions and the main results of the paper, that is Theorems \ref{t1}-\ref{t4} in Section \ref{general results}. These will imply, directly or with a few additional arguments, Theorems \ref{sample1}-\ref{sample9} above. We then proceed to Section \ref{apro}, that contains the necessary approximation tools for the proofs. One word here: this is a delicate point, as the approximations considered must carefully match the shape of the a priori estimates found later, on one side, and reflect the original structure assumptions on the other. The core of the paper is Section \ref{apri}, where we derive all the main a priori estimates. The proofs here involve a series of ingredients. First, we employ a delicate version of Moser's iteration scheme developed in Proposition \ref{lp}. This is  based on a peculiar choice of test maps suited to the structural assumptions considered. It goes via a finite step procedure taking advantage of suitable smallness conditions; by the way, this is sufficient to get the basic $L^p$-estimates of Theorem \ref{t3}. In turn, this is a preliminary ingredient used to make a nonlinear potential theoretic approach work; this last one is encoded in the abstract iteration Lemma \ref{noniter} below. This approach works in the case $n>2$, but breaks down in two dimensions $n=2$, where more difficulties appear, as for instance already noted in \cite{BM, CM0, CM1, CM2, CM3}. In this case we take another path, as devised in Section \ref{casetwo} below. We use a different interpolation type approach, eventually culminating in the use of Lemma \ref{noniter} again. Sections \ref{pt1} features the proofs of Theorems \ref{t1} and \ref{t3}, combining the approximation scheme of Section \ref{apro} and the estimates in Section \ref{apri}. Section \ref{unisec} contains the results and the proofs for the uniformly elliptic case, i.e., the proof of Theorem \ref{t2}; additional a priori estimates are included here. In Section \ref{applications} we demonstrate the derivation of Theorems \ref{sample1}-\ref{sample8}. These are all direct corollaries of the main results, but Theorem \ref{sample3}. This in fact requires some additional arguments to reach the equality borderline case in the first bound from \rif{lox22},  thereby reconnecting Theorem \ref{sample3} to the known literature \cite{BCM, CoM0, CoM} in the case $f\equiv 0$; the details can be found in Section \ref{sample3sec}. Section \ref{obsec} contains applications to obstacle problems and the proof of Theorem \ref{sample9}. Finally, Section \ref{appendix} features some auxiliary technical facts aimed at making certain computations in Sections \ref{apri} and \ref{unisec} legal.

\section{Basic notation}\label{notsec}
In the following, $\Omega\subset \er^n$ denotes an open domain, and $n \geq 2$ and there is no loss of generality, in assuming that~$\Omega$ is also bounded, as all our results are local in nature.
We denote by $c$ a general constant larger than~$1$. Different occurrences from line to line will be still denoted by $c$. Special occurrences will be denoted by $c_*,  \tilde c$ or likewise. Relevant dependencies on parameters will be as usual emphasized by putting the corresponding parameters in parentheses. We denote by $ B_r(x_0):= \{x \in \er^n  :   |x-x_0|< r\}$ the open ball with center $x_0$ and radius $r>0$; we omit denoting the center when it is not necessary, i.e., $B \equiv B_r \equiv B_r(x_0)$; this especially happens when various balls in the same context will share the same center. Given a ball $B$, we denote by $\rad(B)$ its radius; with $\gamma$ being a positive number, we denote by $\gamma B$ the ball concentric to $B$, with radius $\gamma \rad(B)$, and set $B/\gamma \equiv (1/\gamma)B$. In denoting several function spaces like $L^p(\Omega), W^{1,p}(\Omega)$, we shall denote the vector valued version by $L^p(\Omega;\er^k), W^{1,p}(\Omega;\er^k)$ in the case the maps considered take values in $\er^k$, $k\in \en$. We shall often abbreviate $L^p(\Omega;\er^k)\equiv L^p(\Omega), W^{1,p}(\Omega;\er^k)\equiv W^{1,p}(\Omega)$. We denote $\{e^\alpha\}_{\alpha=1}^N$ and $\{e_i\}_{i=1}^n$ standard bases for $\er^N$ and $\er^n$, respectively; we shall always assume $n\geq 2$ and $N \geq 1$. The general second-order tensor of size $(N,n)$ as $\zeta = \zeta_i^\alpha e^\alpha \otimes e_i$ is identified with an element of $\er^{N\times n}$. The Frobenius product of second-order tensors $z$ and $\xi$ is defined as $ z \cdot \xi = z_i^\alpha \xi_i^\alpha$ so that $  \xi \cdot \xi = |\xi|^2$, and this is the norm we use here for tensors, vectors, matrixes; needless to say, we are using Einstein's convention on repeated indexes. For instance, if $v, u \in \er^k$, then $v\cdot u= v_iu_i$. The gradient of a map $u = u^\alpha e^\alpha$ is thus defined as 
$
Du =  \partial_{x_i} u^\alpha e^\alpha \otimes e_i\equiv D_i u^\alpha e^\alpha \otimes e_i\,,
$
and the divergence of a tensor $\zeta = \zeta_i^\alpha e^\alpha \otimes e_i$ as $\diver\,  \zeta = \partial_{x_i} \zeta_i^\alpha  e^\alpha$. When dealing with the integrands of the type $F \colon \Omega \times \er^{N\times n} \to [0, \infty)$, as the one considered in \rif{genF}, second differential of $\partial_{zz} F(\cdot,z)$ is interpreted as $
\partial_{zz} F(\cdot,z) =  \partial_{z_j^\beta z_i^\alpha}F(\cdot, z)(e^\alpha \otimes  e_i) \otimes (e^\beta \otimes e_j) 
$, whenever $z \in \er^{N\times n}$.  For the rest of the paper we shall keep the following notation:
\eqn{Hnot}
$$
E(t):=\sqrt{t^{2}+\mu^{2}}\,,\qquad E_{\varepsilon}(t):=\sqrt{t^{2}+(\mu+\varepsilon)^{2}}
$$
for $ t>0$, $\mu\in [0,1]$,  $\eps >0$.
With $\mathcal B \subset \er^{n}$ being a measurable subset with bounded positive measure $0<|\mathcal B|<\infty$, and with $g \colon \mathcal B \to \er^{k}$, $k\geq 1$, being a measurable map, we denote  
$$
   (g)_{\mathcal B} \equiv \mint_{\mathcal B}  g(x) \, dx  :=  \frac{1}{|\mathcal B|}\int_{\mathcal B}  g(x) \, dx\,.
$$ 
Finally, in the following we denote
\begin{flalign}\label{2***}
2^{*}:=\begin{cases}
\ \frac{2n}{n-2}\quad &\mbox{if} \ \ n>2\\
\ \mbox{any number larger than} \ \ 2\quad &\mbox{if} \ \ n=2\,.
\end{cases}
\end{flalign}
The actual value of $2^{*}$ when $n=2$ will be clear from the context. 
\section{Potentials, functions spaces, iteration lemmas}\label{notsec2}
With $g \in L^{2}(B_r(x_0); \er^k)$ and $B_r(x_0)\subset \er^n$ being any ball, $n\geq 2$,  we consider the following nonlinear potential, that will play a crucial role in this paper:
$$
{\bf P}_1^g(x_0,r) := \int_0^r \left( \varrho^{2} \mint_{B_\varrho(x_0)} |g|^2 \dx \right)^{1/2} \frac{d \varrho}{\varrho} \,.
$$
This quantity naturally relates to the standard, truncated Riesz potential in the sense that
$$
\int_{B_r(x_0)} \frac{|g(x)|}{|x-x_0|^{n-1}}\, dx \lesssim  {\bf P}_1^g(x_0,2r) \,.
$$
As a matter of fact, $ {\bf P}_1^g$ can be used as an effective replacement of the original Riesz potential when dealing with nonlinear problems. Actually, its mapping properties coincide with those of the classical Riesz potentials on those function spaces that are in a sense smaller than $L^2$. We refer to \cite{KMbull, MH} for more information on such nonlinear potentials and for recent results on Nonlinear Potential Theory in the setting of this paper. The space $\mathfrak X(\cdot)$ in \rif{lox} plays a special role to ensure the local boundedness of $ {\bf P}_1$. Indeed, given concentric balls $B_R \subset B_{R+r} \subset \er^n$, with $r>0$ and $R+r \leq 2$, and $g \in L^{2}(B_{R+r}; \er^k)$, the following inequalities hold:
\eqn{potlore}
$$
\begin{cases}
\left\|{\bf P}_1^{g}(\cdot, r)\right\|_{L^{\infty}(B_{R})}\leq c(n) \| g\|_{L(n,1)(B_{R+r})} \quad n>2\\
\left\|{\bf P}_1^{g}(\cdot, r)\right\|_{L^{\infty}(B_{R})} \leq c(\mathfrak {a})\|g\|_{L^{2}(\log L)^{\mathfrak {a}}(B_{R+r})} \quad n =2\,,
 \end{cases}
 $$
 where, in the last of the two inequalities, it is $c(\mathfrak {a})\to \infty$ when $\mathfrak {a} \searrow 2$. For this we refer to \cite{BM,KMbull}. Note that in the right-hand side of \rif{potlore}$_2$ we find the Luxemburg norm of the Orlicz space $L^{2}(\log L)^{\mathfrak {a}}$ in \rif{llog}, which is defined as 
 $$
\|g\|_{{L^{2}(\log L)^{\mathfrak {a}}(B_{R+r})}}:=  \inf\left\{ \lambda >0 \, \colon  \int_{B_{R+r}}\left( \frac{|g|^2}{\lambda^2}\right)
\log^{\mathfrak {a}}\left(\textnormal{e}+\frac{|g|}{\lambda}\right)
\, dx \leq 1\right\}.
$$
The two following inequalities are well-known and hold for any $\tau >0$:
\eqn{normine}
$$
\begin{cases}
\| g\|_{L(n,1)(B_{R+r})} \leq c(n, \tau)\| g\|_{L^{n+\tau}(B_{R+r})} 
\\
\|g\|_{{L^{2}(\log L)^{\mathfrak {a}}(B_{R+r})}}\leq c(\mathfrak a, \tau)\| g\|_{L^{2+\tau}(B_{R+r})}\,.
\end{cases}
$$
Recalling the standard notation $(v-\kk)_{+} := \max\{v-\kk, 0\}$, we select the following result from  \cite[Lemma 3.1]{BM}, that incorporates the basic elements of De Giorgi's iteration, as used in Nonlinear Potential Theory since the basic work of Kilpel\"ainen \& Mal\'y \cite{kilmal}:
\begin{lemma}\label{noniter}
Let $B_{r_{0}}(x_{0})\subset \mathbb{R}^{n}$, $n\ge 2$, $\delta\in \left(0,1/2\right)$ and $v\in W^{1,2}(B_{r_{0}}(x_{0}))$ be non-negative and $f_{1}, f_{2}\in L^{2}(B_{2r_0}(x_0))$. Assume that there exist positive constants $\tilde{c}, M_{1}, M_{2},M_{3}\ge 1$ and a number $\kk_{0}\ge 0$ such that for all $\kk\ge \kk_{0}$ and every ball $B_{r}(x_{0})\subset B_{r_{0}}(x_{0})$ the inequality
\begin{flalign}\label{varine}
& \mint_{B_{r/2}(x_{0})}\snr{D(v-\kk)_{+}}^{2} \dx \le \frac{\tilde{c}M_{1}^{2}}{r^{2}}\mint_{B_{r}(x_{0})}(v-\kk)_{+}^{2} \dx\nonumber \\
&\qquad\qquad \qquad  + \tilde{c}M_{2}^{2}\mint_{B_{r}(x_{0})}\snr{f_{1}}^{2}\dx+\tilde{c}M_{3}^{2}\mint_{B_{r}(x_{0})}\snr{f_{2}}^{2} \dx
\end{flalign}
holds. If $x_{0}$ is a Lebesgue point for $v$, then
\begin{flalign}\label{var8}
v(x_{0})& \le \kk_{0}+cM_{1}^{1+\max\left\{\delta,\frac{n-2}{2}\right\}}\left(\mint_{B_{r_{0}}(x_{0})}(v-\kk_{0})_{+}^{2} \dx\right)^{1/2}\nonumber\\
&\qquad  \ +cM_{1}^{\max\left\{\delta,\frac{n-2}{2}\right\}}\left[M_{2}\mathbf{P}_{1}^{f_{1}}(x_{0},2r_{0})+M_{3}\mathbf{P}_{1}^{f_{2}}(x_{0},2r_{0})\right]
\end{flalign} holds too, with $c\equiv c(n,\tilde{c},\delta)$.
\end{lemma}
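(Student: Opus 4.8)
The plan is to run a De Giorgi--Kilpel\"ainen--Mal\'y iteration at the Lebesgue point $x_0$, with the level increments chosen so that, on one hand, the resulting recursion for the excess becomes \emph{linear}, and on the other the sum of the increments reproduces exactly the right-hand side of \eqref{var8}. First I would fix dyadic radii $r_j:=2^{-j}r_0$ and balls $B_j:=B_{r_j}(x_0)$, and set $\beta:=1-2/2^{*}$, so that $\beta=2/n$ when $n>2$, while for $n=2$ the exponent $2^{*}$ in \eqref{2***} is kept free and tied to $\delta$ only at the end. Starting from the given $\kk_0$ I would build simultaneously a non-decreasing sequence of levels $\kk_j$ and the quantities
$$
A_j:=\Big(\mint_{B_j}(v-\kk_j)_+^2\dx\Big)^{1/2}\,,\qquad P_j^{(i)}:=\Big(r_j^2\mint_{B_j}|f_i|^2\dx\Big)^{1/2}\quad (i=1,2)\,,
$$
by declaring $\kk_{j+1}:=\kk_j+d_j$ with $d_j:=c_*M_1^{1/\beta}A_j$, where $c_*\equiv c_*(n,\tilde{c},\delta)\geq 1$ is a large constant to be fixed. (If some $A_j=0$ we stop: then $v\le\kk_j$ a.e.\ in $B_j$, hence $v(x_0)\le\kk_j\le\kk_\infty:=\sup_j\kk_j$ since $x_0$ is a Lebesgue point; and if $\mathbf{P}_1^{f_1}(x_0,2r_0)$ or $\mathbf{P}_1^{f_2}(x_0,2r_0)$ is infinite, \eqref{var8} is trivial. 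So I may assume $A_j>0$ for all $j$ and both potentials finite.)

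The core of the argument is the recursive estimate
\begin{equation}\label{sketch-rec}
A_{j+1}\le \frac{c\,A_j^{\beta}}{d_j^{\beta}}\Big(M_1 A_j+M_2 P_j^{(1)}+M_3 P_j^{(2)}\Big)\,,\qquad c\equiv c(n,\tilde{c},\delta)\,.
\end{equation}
To prove it I would apply the Sobolev--Poincar\'e inequality to $(v-\kk_{j+1})_+\in W^{1,2}(B_{j+1})$, together with H\"older's inequality (exploiting that this function is supported in the set of relative measure $\mu_{j+1}:=|B_{j+1}\cap\{v>\kk_{j+1}\}|/|B_{j+1}|$) and the Chebyshev bound $\mu_{j+1}\le 2^n A_j^2/d_j^2$, which altogether give
$$
A_{j+1}^2\le c\,\mu_{j+1}^{\beta}\,r_{j+1}^2\mint_{B_{j+1}}|D(v-\kk_{j+1})_+|^2\dx+c\,\mu_{j+1}^{1+\beta}A_{j+1}^2\,.
$$
The last term is absorbed on the left, which is legitimate precisely because the choice $d_j=c_*M_1^{1/\beta}A_j$ forces $\mu_{j+1}\le 2^n/(c_*^2 M_1^{2/\beta})\le 2^n/c_*^2$, small once $c_*$ is large. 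Inserting then the hypothesis \eqref{varine} on the pair $B_{r_j/2}=B_{j+1}\subset B_j=B_{r_j}$ at level $\kk_{j+1}\geq\kk_0$, using $(v-\kk_{j+1})_+\le (v-\kk_j)_+$ and $\mu_{j+1}^{\beta}\le 2^{n\beta}A_j^{2\beta}/d_j^{2\beta}$, and taking square roots, I arrive at \eqref{sketch-rec}. Substituting $d_j=c_*M_1^{1/\beta}A_j$ into \eqref{sketch-rec} turns it, for $c_*$ large enough in terms of $n,\tilde{c},\delta$, into the \emph{linear} recursion $A_{j+1}\le \tfrac12 A_j+\tfrac{1}{2M_1}(M_2 P_j^{(1)}+M_3 P_j^{(2)})$.

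From here, iteration gives $A_j\le 2^{-j}A_0+\tfrac{1}{2M_1}\sum_{i<j}2^{-(j-1-i)}(M_2 P_i^{(1)}+M_3 P_i^{(2)})$, hence $A_j\to 0$: indeed the dyadic comparison $\sum_{i\geq 0}P_i^{(k)}\le c(n)\,\mathbf{P}_1^{f_k}(x_0,2r_0)$ for $k=1,2$ — which follows from $\varrho\mapsto(\varrho^2\mint_{B_\varrho(x_0)}|f_k|^2)^{1/2}$ varying by bounded factors over each dyadic annulus, and which is exactly why the potential is truncated at $2r_0$ — makes $\sum_i(M_2 P_i^{(1)}+M_3 P_i^{(2)})$ convergent. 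Summing the iterated inequality over $j$ and using the same comparison,
\begin{align*}
\kk_\infty-\kk_0=\sum_{j\geq 0}d_j&=c_*M_1^{1/\beta}\sum_{j\geq 0}A_j\\
&\le c\,M_1^{1/\beta}A_0+c\,M_1^{1/\beta-1}\big(M_2\mathbf{P}_1^{f_1}(x_0,2r_0)+M_3\mathbf{P}_1^{f_2}(x_0,2r_0)\big)\,,
\end{align*}
with $c\equiv c(n,\tilde{c},\delta)$. Since $x_0$ is a Lebesgue point for $v$, Jensen's inequality gives $v(x_0)=\lim_j\mint_{B_j}v\le\lim_j(\kk_j+A_j)=\kk_\infty$, and combining this with the last display together with $1/\beta=1+\tfrac{1-\beta}{\beta}$ and $\tfrac{1-\beta}{\beta}=\tfrac{n-2}{2}$ for $n>2$, this is \eqref{var8}. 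For $n=2$ one has $\tfrac{1-\beta}{\beta}=\tfrac{2/2^{*}}{1-2/2^{*}}$, which is $\le\delta$ provided $2^{*}\ge 2(1+\delta)/\delta$; since $M_1\geq 1$, the exponents $1/\beta$ and $1/\beta-1$ may then be enlarged to $1+\delta$ and $\delta$, i.e.\ to $1+\max\{\delta,(n-2)/2\}$ and $\max\{\delta,(n-2)/2\}$, matching \eqref{var8}.

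The step I expect to be most delicate is the self-consistency inside the core estimate: the absorption of the Sobolev--Poincar\'e boundary term is admissible only because $\mu_{j+1}$ is uniformly small, and this smallness is itself a consequence of the very definition $d_j=c_*M_1^{1/\beta}A_j$ — so the choice of the level increments, the admissibility of the Caccioppoli-type step \eqref{varine} at each scale $r_j\leq r_0$, and the linearization of \eqref{sketch-rec} must all be made compatible through one single choice of $c_*=c_*(n,\tilde{c},\delta)$. Everything else — the Sobolev--Poincar\'e and Chebyshev inequalities, the dyadic comparison $\sum_i P_i^{(k)}\lesssim\mathbf{P}_1^{f_k}(x_0,2r_0)$, and the exponent bookkeeping (including the choice of $2^{*}$ when $n=2$) — is routine.
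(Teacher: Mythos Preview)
Your proof is correct and follows exactly the De Giorgi--Kilpel\"ainen--Mal\'y scheme that the paper alludes to; note however that the paper does not actually prove this lemma but cites it verbatim from \cite[Lemma~3.1]{BM}, so there is no in-paper proof to compare against. Your reconstruction---dyadic radii, level increments $d_j=c_*M_1^{1/\beta}A_j$ chosen to simultaneously force the smallness of the superlevel measure and linearize the recursion, then the dyadic comparison $\sum_i P_i^{(k)}\lesssim \mathbf{P}_1^{f_k}(x_0,2r_0)$---is precisely the argument of \cite{BM}, including the handling of $n=2$ via the free choice of $2^*$ in \eqref{2***}.
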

We conclude with another, more classical iteration lemma (see references of \cite{BM}). 
\begin{lemma}\label{l0l}
Let $\mathcal{Z}\colon [\rr,\xi]\to [0,\infty)$ be a bounded function. Let $\varepsilon\in (0,1)$, $a_1,a_2,\mathfrak{b} \ge 0$ be numbers. If
$$
\mathcal{Z}(\tau_1)\le \varepsilon \mathcal{Z}(\tau_2)+ \frac{a_1}{(\tau_2-\tau_1)^{\mathfrak{b}}}+a_2
$$ holds whenever $\xi \le \tau_1<\tau_2\leq \rr$, 
then
$$
\mathcal{Z}(\rr)\le  \frac{ca_1}{(\varrho-\xi)^{\mathfrak{b}}}+ ca_2$$
holds too, with $c\equiv c(\varepsilon,\mathfrak{b})$.
\end{lemma}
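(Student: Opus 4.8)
The plan is to run the by-now standard geometric (``hole-filling''/dyadic) iteration scheme; this is the classical Giaquinta--Giusti type argument, already used in the references of \cite{BM}. Assume first $\mathfrak{b}>0$, the case $\mathfrak{b}=0$ being obtained by the very same argument with an arbitrary geometric ratio in place of the one chosen below. Since only the two endpoints of the interval enter the statement, I regard $\mathcal{Z}$ as a bounded function on the interval with endpoints $\rr$ and $\xi$ and I shall estimate its value at the endpoint $\rr$. The first step is to calibrate a geometric ratio $\lambda\in(0,1)$, depending only on $\varepsilon$ and $\mathfrak{b}$, in such a way that $\lambda^{\mathfrak{b}}>\varepsilon$: concretely, $\lambda:=\varepsilon^{1/(2\mathfrak{b})}$ does the job, since then $\lambda^{\mathfrak{b}}=\varepsilon^{1/2}>\varepsilon$, so that $\vartheta:=\varepsilon\lambda^{-\mathfrak{b}}=\varepsilon^{1/2}\in(0,1)$.

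Next I would introduce the sequence of intermediate points $\tau_{0}:=\rr$ and, for $i\ge 0$, $\tau_{i+1}$ obtained from $\tau_i$ by moving towards $\xi$ by a fraction $(1-\lambda)\lambda^{i}$ of the remaining length, so that the $\tau_i$ all lie in the interval of definition, converge monotonically to $\xi$, and satisfy $\snr{\tau_{i+1}-\tau_{i}}=(1-\lambda)\lambda^{i}\snr{\rr-\xi}$. Applying the assumed inequality to each consecutive pair $(\tau_i,\tau_{i+1})$ gives
\begin{flalign*}
\mathcal{Z}(\tau_{i})\le \varepsilon\,\mathcal{Z}(\tau_{i+1})+\frac{a_1}{(1-\lambda)^{\mathfrak{b}}\lambda^{i\mathfrak{b}}\snr{\rr-\xi}^{\mathfrak{b}}}+a_2\,,
\end{flalign*}
and iterating this bound $k$ times, so that the factors $\varepsilon^i$ accumulate on the terms of index $i$, yields
\begin{flalign*}
\mathcal{Z}(\rr)=\mathcal{Z}(\tau_{0})\le \varepsilon^{k}\mathcal{Z}(\tau_{k})+\frac{a_1}{(1-\lambda)^{\mathfrak{b}}\snr{\rr-\xi}^{\mathfrak{b}}}\sum_{i=0}^{k-1}\vartheta^{\,i}+a_2\sum_{i=0}^{k-1}\varepsilon^{i}\,.
\end{flalign*}

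For the final step I would use that $\mathcal{Z}$ is bounded, so that $\varepsilon^{k}\mathcal{Z}(\tau_{k})\to 0$ as $k\to\infty$, and that both geometric series are summable thanks to $\vartheta<1$ and $\varepsilon<1$, with sums bounded by $(1-\vartheta)^{-1}$ and $(1-\varepsilon)^{-1}$ respectively. Passing to the limit $k\to\infty$ then produces $\mathcal{Z}(\rr)\le \frac{c\,a_1}{\snr{\rr-\xi}^{\mathfrak{b}}}+c\,a_2$ with $c:=\max\{[(1-\vartheta)(1-\lambda)^{\mathfrak{b}}]^{-1},(1-\varepsilon)^{-1}\}$, which depends only on $\varepsilon$ and $\mathfrak{b}$ once $\lambda$ is fixed as above, as required. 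I do not expect any real obstacle here: the only points deserving (mild) care are the calibration of $\lambda$ against $\varepsilon$ and $\mathfrak{b}$ guaranteeing convergence of $\sum_i\vartheta^{\,i}$, and the use of the a priori boundedness of $\mathcal{Z}$ to discard the tail term $\varepsilon^{k}\mathcal{Z}(\tau_{k})$ — an hypothesis which cannot be dispensed with.
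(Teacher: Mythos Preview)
Your argument is correct and is precisely the standard Giaquinta--Giusti iteration; the paper does not supply its own proof of this lemma but simply refers to the references of \cite{BM}, so there is nothing further to compare. Your handling of the endpoint ambiguity via $\snr{\rr-\xi}$ is sensible given the evident typos in the stated ordering, and the calibration $\lambda=\varepsilon^{1/(2\mathfrak{b})}$ together with the boundedness of $\mathcal{Z}$ is exactly what is needed.
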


\section{Assumptions and general results}\label{resultsec}
In this section we are going to describe a number of conditions on the integrand $F$ in \rif{genF} implying our main results, that is Theorems \ref{t1}-\ref{t4} in Section \ref{general results} below; in turn, these will imply Theorems \ref{sample1}-\ref{sample9} from the Introduction.

\subsection{Basic structural assumptions, and consequences}\label{asssec1} 
We assume that $F$ has radial structure, i.e., there exists $\tilde F: \Omega \times [0, \infty) \to [0, \infty)$
\begin{flalign}\label{0ass}
\begin{cases}
\ F(x,z)=\tilde{F}(x,\snr{z}) \ \mbox{for all $(x,z)\in \Omega\times \mathbb{R}^{N\times n}$}\\
\ t\mapsto \tilde{F}(x,t)\in C^{1}_{\textnormal{loc}}[0,\infty)\cap C^{2}_{\textnormal{loc}}(0,\infty)\quad \mbox{for all $ x\in \Omega$}\\ 
\ x\mapsto \tilde{F}'(x,t)  \in W^{1,n}(\Omega) \ \ \mbox{for  every $t>0$}\,.
\end{cases}
\end{flalign}
We assume in general that $\tilde F, \tilde F'$ are continuous on $\Omega \times (0,\infty)$, while $\tilde F'', \partial_x \tilde F'$ are instead Carath\'eodory regular on the same set. Of course we also assume that $\sup_{\Omega \times K} \partial_{zz}{F}$ is finite for every compact subset $K \subset \er^{N\times n}\setminus\{0_{R^{N\times n}}\}$. Now we describe the minimal and standard assumptions qualifying the functional in \rif{genF} as elliptic. For this, we use three locally bounded functions $g_{i}\colon \Omega\times (0,\infty)\to[0,\infty)$, for $i\in \{1,2,3\}$, in the sense that $\sup_{\Omega \times K} g_{i}$ is finite, for every compact subset $K\subset (0, \infty)$. The first two $g_1,g_2$ are continuous and serve to bound the lowest and the largest eigenvalues of $\partial_{zz}F$, respectively. The third one $g_3$ is Carath\'eodory regular and controls the growth of derivatives with respect to $x$. Specifically, we assume that there exists $T>0$ such that
\begin{flalign}\label{growth}
\begin{cases}
\ z\mapsto F(x,z) \ \ &\mbox{is convex for all $x\in \Omega$}\\
\ \snr{\partial_{zz}F(x,z)}\le g_{2}(x,\snr{z}) \ \ &\mbox{for all} \ x\in \Omega \ \mbox{on} \ \{\snr{z} \geq  T \}\\
\ g_{1}(x,\snr{z})\snr{\xi}^{2}\le  \partial_{zz}F(x,z)\xi\cdot\xi  \ \ &\mbox{on} \ \{\snr{z} \geq  T \} \ \mbox{and for all $x\in \Omega$}\\
\ |\partial_x\tilde{F}'(x,t)| \le h(x)g_{3}(x,t)  \ \ & \mbox{for a.e.\,$x\in \Omega$ and every $t>0$}\,,
\end{cases}
\end{flalign}
where \rif{growth}$_3$ holds for every $\xi \in \mathbb{R}^{N\times n}$, $h(\cdot)\in L^{n}(\Omega)$ and we assume also that 
\eqn{infima}
$$\inf_{x\in \Omega} g_1(x, T)>0\,.$$ 
Using as in \rif{general1} the notation
$
\tilde{a}(x,t):=\tilde{F}'(x,t)/t$, defined for $(x,t)\in \Omega\times (0,\infty)$,
we assume that for fixed numbers $\gamma>1$, $\mu\in [0,1]$, and for every $x \in \Omega$, the minimal $\gamma$-superlinear growth of the lowest eigenvalue of $\partial_{zz}F(x, \cdot)$, that is
\eqn{00}
$$
 t\mapsto \frac{\tilde{a}(x,t)}{(t^{2}+\mu^{2})^{\frac{\gamma-2}{2}}} \  \mbox{and} \  t\mapsto \frac{g_{1}(x,t)}{(t^{2}+\mu^{2})^{\frac{\gamma-2}{2}}}\ \ \mbox{are non-decreasing on} \ (0,\infty).
$$
Note that the global convexity of $F(\cdot)$ in \rif{growth}$_1$ implies, arguing as in \rif{z0}-\rif{23} below, that $\tilde a(\cdot)\geq 0$, that is, $t \mapsto \tilde F(x, t)$ is non-decreasing on $[0, \infty)$, for every $x\in \Omega$. 
\begin{remark}\label{whenrere} When considering partial derivatives of $F$ and $g_1$ in the $x$-variable, as well as the function $g_3$, properties will be given a.e. with respect to $x$ in the Carat\'heodory sense. For instance, \rif{growth}$_4$ means there exists a negligible set $N \subset \Omega $ such that \rif{growth}$_4$ holds for every $t>0$, whenever $x \in \Omega\setminus N $. The same will apply later when considering for instance \rif{7}-\rif{8} and \rif{growthd}. 
\end{remark}
\begin{remark}\label{trire1}
In most of the relevant models it turns out to be $\tilde a (\cdot)\equiv g_1(\cdot)$, and this justifies the double assumption in \rif{00}. Assumptions in \rif{growth} are bound to quantify ellipticity of $\partial_{zz}F(\cdot,z)$ only outside the ball $\{|z|<T\}$ (recall we are assuming $\nu >0$), and this allows to cover functionals loosing their ellipticity properties on a bounded set of $z\in \er^{N\times n}$. This condition not only adds more generality, but helps simplifying the treatment in the case of nonuniformly elliptic problems. We could do the same also with respect to $\partial_x\tilde{F}'$ in \rif{growth}$_4$, but we prefer not, as this would only add technical difficulties, while not covering more examples.
\end{remark}
Let us derive a few consequences of \rif{0ass}-\rif{00}. First, for every $x\in \Omega$ it holds that
\eqn{1bis}
$$
0< s\leq  t\Longrightarrow  \begin{cases} g_{1}(x,s)s\le g_{1}(x,t)t\\
 a(x,s)s\le a(x,t)t\,.\end{cases}
 $$
Indeed, as $t \mapsto (t^{2}+\mu^{2})^{(\gamma-2)/2}t$ is non-decreasing, we have 
\begin{flalign*}
g_{1}(x,s)s  &= \frac{g_{1}(x,s)}{(s^{2}+\mu^{2})^{\frac{\gamma-2}{2}}}(s^{2}+\mu^{2})^{\frac{\gamma-2}{2}}s\\
& \stackrel{\eqref{00}}{\leq} 
\frac{g_{1}(x,t)}{(t^{2}+\mu^{2})^{\frac{\gamma-2}{2}}}(t^{2}+\mu^{2})^{\frac{\gamma-2}{2}}t =g_{1}(x,t)t \,.
\end{flalign*}
The same argument works for $t \mapsto a(\cdot, t)t$. 
Moreover, again from \rif{00}, it readily follows that
\eqn{00bis}
$$
\begin{cases}
 \nu(t^2+\mu^2)^{\frac{\gamma-2}{2}}\leq g_1(x,t)  \  \mbox{for} \ t\geq T\\
 0\stackrel{\rif{infima}}{<} \nu:=\displaystyle\min\left\{\frac{\inf_{x\in \Omega} g_1(x, T)}{(T^2+\mu^2)^{\frac{\gamma-2}{2}}}, 1\right\}\leq 1\,.
 \end{cases}
$$
From the very definition of $\tilde a(\cdot)$, for $(x,t)\in \Omega\times (0,\infty)$ we have 
\eqn{wehaweha}
$$
F(x,z)-F(x,0_{\er^{N\times n}}) =\tilde{F}(x,|z|)-\tilde{F}(x,0) =\int_{0}^{|z|}\tilde{a}(x,s)s \d s
$$ 
and therefore $\partial_{xz}F(x,|z|) = \partial_x \tilde a(x,|z|) \otimes z= [\partial_x \tilde F'(x,|z|) /|z|]\otimes z$ holds for $|z|\not =0$.   
It follows that \rif{0ass}$_3$ and \eqref{growth}$_4$ imply that $x\mapsto \partial_{z}F(x,z)  \in W^{1,n}(\Omega;\mathbb{R}^{N\times n})$ for all $z\in \mathbb{R}^{N\times n}$ such that $|z| \not= 0$, and 
\eqn{growthzero}
$$|\partial_{xz}F(x,z)| , |\partial_{x}\tilde{a}(x,\snr{z})||z|\le h(x)g_{3}(x,\snr{z}) \ \mbox{on $\{|z| > 0\}$, a.e. $x\in \Omega$}\,. 
$$
Again from the second-last display it follows that
\begin{flalign}\label{z0}
\partial_{zz}F(x,z)=\partial_{z}[\tilde{a}(x,\snr{z})z]=\tilde{a}(x,\snr{z})\mathds{I}_{N\times n}+\tilde{a}'(x,\snr{z})\snr{z}\frac{z\otimes z}{\snr{z}^{2}}
\end{flalign}
holds for $|z| \not= 0$, 
so that, using \eqref{growth}$_{2,3}$ with $\xi=z$ and $\xi \perp z$ in \rif{z0}, we obtain
\begin{flalign}\label{22}
\begin{cases}
\ \tilde{a}(x,\snr{z})+\tilde{a}'(x,\snr{z})\snr{z}\ge g_{1}(x,\snr{z})\\
\ \tilde{a}(x,\snr{z})\ge g_{1}(x,\snr{z})
\end{cases} \  \mbox{for $(x,z)\in \Omega\times\left\{\snr{z}\geq T\right\}$}
\end{flalign}and 
\begin{flalign}\label{23}
\begin{cases}
\ \tilde{a}(x,\snr{z})+\tilde{a}'(x,\snr{z})\snr{z}\le g_{2}(x,\snr{z})\\
\ \tilde{a}(x,\snr{z})\le g_{2}(x,\snr{z})
\end{cases}\  \mbox{for $(x,z)\in \Omega\times\left\{\snr{z}\geq T\right\}$}\,.
\end{flalign}
In particular, it follows that 
\eqn{banaleg1g2}
$$
\begin{cases}
\ 0< g_1(\cdot, t) \leq g_2(\cdot, t) & \mbox{for $t\geq T$}\\
\   \tilde a(\cdot, t) \leq c (t^2+\mu^2)^{(\gamma-2)/2} &\mbox{for $0< t \leq T$}.
\end{cases}
$$ 
The first inequality in \rif{banaleg1g2} is a consequence of \rif{00bis}. In \rif{banaleg1g2}$_2$, as a constant $c$ we can take 
$c = \|\tilde a(\cdot, T)\|_{L^{\infty}(\Omega)}(T^2+\mu^2)^{(2-\gamma)/2}\leq \|g_2(\cdot, T)\|_{L^{\infty}(\Omega)}(T^2+\mu^2)^{(2-\gamma)/2}$. This follows from \rif{00} and again from \rif{23}. 
\subsection{The energy functions}
These are two functions bound to quantify the minimal energy controlled by the functional in \eqref{genF}; they will play a crucial role in the rest of the paper. For every $(x,t)\in \Omega\times (0,\infty)$, we set
\begin{flalign}\label{GG}
\begin{cases}
\displaystyle G(x,t):=\int_{T}^{\max\{t,T\}}g_{1}(x,s)s \d s \\
 \ \bar{G}(x,t):=G(x,t)+(T^2+1)^{\gamma/2}\,.
 \end{cases}
\end{flalign}
\subsection{Quantifying nonuniform ellipticity} \label{gi} Here we quantify ellipticity and growth of $F$ in the nonuniformly elliptic case. For this, we consider numbers $d, \sigma, \hat \sigma \geq 0$ such that
\eqn{sigmad}
$$
h(\cdot)\in L^{d}(\Omega) \,, \qquad d >n
$$
and 
\eqn{sigma}
$$
  \sigma + \hat \sigma <
\left\{
\begin{array}{ccc}
\min\left\{\frac{1}{n}-\frac{1}{d},\frac{4}{\vartheta(n-2)}\left(1-\frac{1}{\gamma}\right)\right\}  &\mbox{if}& n>2\\  [8 pt]
 \min\left\{\frac{1}{2}-\frac{1}{d},\frac{2}{\vartheta}\left(1-\frac{1}{\gamma}\right)\right\}
 &\mbox{if}& n=2\,.
 \end{array}
 \right.
$$
Here it is $\vartheta\equiv \vartheta(\gamma)=1$ when $\gamma \geq 2$, and $\vartheta=2$ otherwise.
We assume that $x\mapsto g_{1}(x,t)\in W^{1,d}(\Omega)$ for all $t\geq T$ and that $\partial_{x}g_1(\cdot)$ is  Carath\'eodory regular on $\Omega \times [T, \infty)$. Then, for a fixed constant $\cb\geq 1$, we consider the assumptions
\eqn{7}
$$
\snr{\partial_{x}g_{1}(x,t)}\le  h(x)[\bar{G}(x,t)]^{\hat{\sigma}}g_{1}(x,t) \,,
$$
\eqn{8}
$$
\left\{
\begin{array}{c}
\ g_{3}(x,t)t\le \cb[\bar{G}(x,t)]^{1+\sigma}\\ [9 pt]
\displaystyle \ \frac{[g_{3}(x,t)]^{2}}{g_{1}(x,t)}\le \cb[\bar{G}(x,t)]^{1+2\sigma}\,,
\end{array} 
\right.
$$
\eqn{9}
$$
\frac{g_{2}(x,t)}{g_{1}(x,t)}\le \cb[\bar{G}(x,t)]^{\sigma}
$$
for every $t \geq T$, every $x \in \Omega$ in the case of \rif{9}, and for a.e.\,$x\in \Omega$ in the case of \rif{7}-\rif{8}.  
Comments on the meaning of \rif{7}-\rif{9} can be found in Section \ref{diffsec} below. 
\subsection{The uniformly elliptic case} \label{gigi3} Here we instead describe conditions relevant to the uniformly elliptic case, and therefore to models as in \rif{funz-uni}. Such conditions are a slightly more general version of those considered in \cite{CM0, CM1, liebe1}, that are classical. We retain the structure conditions \rif{0ass}-\rif{00} from Section \ref{asssec1} but, instead of using \rif{sigmad}-\rif{9} from Section \ref{gi}, for a fixed constant $\cu\geq 1$ this time we consider
\begin{flalign}\label{uni0}
\begin{cases}
\ t>0 \Longrightarrow g_{i}(x,t)\equiv g_{i}(t), \ \  i\in \{1,2,3\}\\
\ T\leq  t\Longrightarrow   g_{2}(t)\le \cu g_{1}(t)\\
\ T\leq  t\Longrightarrow   g_{1}(t)t\le g_{3}(t)\le \cu g_{1}(t)t\,.
\end{cases}
\end{flalign}
\subsection{General results}\label{general results}
In the following, we abbreviate the assumptions considered as
\eqn{ipotesi}
$$
\begin{cases}
\setm:=\left\{\rif{0ass}-\rif{00}, \rif{sigmad}-\rif{9} 
\right\}\\
\setuni:= \left\{\rif{0ass}-\rif{00}, \rif{uni0}\right\}
\end{cases}
$$
and these are going to be used in the nonuniformly elliptic setting, and in the uniformly elliptic one, respectively.
Accordingly, we also gather the parameters influencing the constants in the a priori estimates as
\eqn{dati}
$$
\begin{cases}
\data:=\left(n,N,\nu,\gamma,T, \cb,d,\sigma,\hat{\sigma}, \mathfrak{a}\right) \\
\datauni:=\left(n,N,\nu, \gamma,T,\cu, g_1(1),\mathfrak{a}\right).
\end{cases}
$$
The presence of $\mathfrak{a}$ only occurs in the two-dimensional case $n=2$. 
\begin{theorem}\label{t1}
Let $u\in W^{1,1}_{\loc}(\Omega;\RN)$ be a minimizer of $\mathcal F$ in \eqref{genF}, under assumptions $\setm$ in \trif{ipotesi}, with $f \in \mathfrak X(\Omega)$ as defined in \trif{lox}. Then $Du\in L^{\infty}_{\textnormal{loc}}(\Omega;\R^{N\times n})$. Moreover,  there exists a positive radius $R_{*}\equiv R_{*}(\data,f(\cdot))\leq 1$ such that if $\B \Subset \Omega$ is a ball with $\rad(\B)\leq R_*$, then
\begin{flalign}
\notag &\nr{G (\cdot,\snr{Du})}_{L^{\infty}(s\B)} \\
&\qquad   \le \frac{c}{(1-s)^{\beta}[\rad(\B)]^{\beta}}
\left[\|F(\cdot,Du)\|_{L^1(\B)}^{ \theta}+\nr{f}_{\mathfrak X(\B)}^{ \theta}+1\right] \label{mainest1}
\end{flalign}
holds for every $s \in (0,1)$, where $c\equiv c(\data,\nr{h}_{L^{d}(\B)})\geq 1$, $ \beta,\theta\equiv \beta,\theta(n,\gamma, \linebreak d,\sigma,\hat{\sigma})>0$.
\end{theorem}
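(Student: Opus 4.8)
\textbf{Proof strategy for Theorem \ref{t1}.}

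The plan is to derive the a priori estimate \eqref{mainest1} first for a suitable regularized problem, where minimizers are smooth enough to justify the differentiation and testing procedures, and then to pass to the limit using the approximation scheme of Section \ref{apro}; the bulk of the work is the a priori estimate, so I focus on that. Fix a ball $\B \Subset \Omega$ of small radius and work on concentric sub-balls. The starting point is the second variation: differentiating the Euler--Lagrange system \eqref{general1} in the direction $e_s$ and testing with an appropriate map built from $D_s u$ times a power of the energy density and a cutoff, one obtains a Caccioppoli-type inequality on level sets of $G(\cdot, |Du|)$. The crucial algebraic input is that \eqref{growth}, \eqref{00}, together with the nonuniform ellipticity bounds \eqref{7}--\eqref{9}, let one control the ``bad'' terms — those coming from $\partial_x \tilde F'$ and from the ratio $g_2/g_1$ — by a small power $[\bar G]^{\sigma+\hat\sigma}$ of the energy times $h(x)$; the smallness constraint \eqref{sigma} is exactly what makes these powers reabsorbable after Sobolev embedding in the iteration.

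The first main step is a Moser-type iteration, carried out in the regularized setting, to upgrade the natural energy $G(\cdot,|Du|)\in L^1$ to $G(\cdot,|Du|)\in L^\chi_{\loc}$ for some $\chi>1$ (in fact for every finite exponent after finitely many steps, which is what gives the Calderón--Zygmund statement Theorem \ref{t3}); here one exploits $h\in L^d$ with $d>n$ and the gain coming from $2^*$ in \eqref{2***}, and a smallness condition on the radius $R_*$ absorbs the constants that would otherwise blow up. The second main step feeds this higher integrability into the De Giorgi--Nirenberg--type scheme encoded in Lemma \ref{noniter}: one verifies that $v := G(\cdot,|Du|)$ (or a comparable quantity, with $\mu$-regularization built in via \eqref{Hnot}) satisfies the level-set inequality \eqref{varine} with $f_1, f_2$ built out of $f$ and $h\,[\bar G]^{\sigma+\hat\sigma}$ respectively, and with the constants $M_1,M_2,M_3$ depending on $\|h\|_{L^d(\B)}$ and the already-established higher integrability. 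Then \eqref{var8} gives the pointwise bound on $v(x_0)$ at Lebesgue points, with the potential terms $\mathbf P_1^{f_i}$ controlled via \eqref{potlore}--\eqref{normine} precisely because $f\in\mathfrak X(\Omega)$ and $h\in L^d\subset L(n,1)$. A covering/rescaling argument plus the iteration Lemma \ref{l0l} then converts the pointwise bound into the stated $L^\infty$ estimate on $s\B$ with the explicit dependence $(1-s)^{-\beta}[\rad(\B)]^{-\beta}$, and the exponent $\theta$ tracks the homogeneities.

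The case $n=2$ must be handled separately, as the nonlinear potential approach via Lemma \ref{noniter} degenerates (the term $\frac{n-2}{2}$ vanishes and the relevant Sobolev embedding is borderline); here one replaces it by the interpolation-type argument of Section \ref{casetwo}, using the stronger Orlicz condition $L^2(\log L)^{\mathfrak a}$ with $\mathfrak a>2$ in \eqref{lox} and in \eqref{potlore}$_2$, ultimately still invoking Lemma \ref{noniter}. Finally, the passage from the regularized minimizers to the original minimizer $u$ requires the approximation lemmas of Section \ref{apro}: one must check that the approximating integrands $F_\varepsilon$ both inherit the structure conditions $\setm$ uniformly and converge so that the a priori bounds are stable, using lower semicontinuity to transfer $\|F(\cdot,Du)\|_{L^1(\B)}$ on the right-hand side. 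The main obstacle, I expect, is the a priori Caccioppoli/second-variation step under the weak, Sobolev-only dependence of $F$ on $x$: one cannot freeze coefficients and perturb as in the uniformly elliptic theory, so the $\partial_x$-terms must be carried through the whole iteration and reabsorbed quantitatively, which is exactly where the precise form of \eqref{7}--\eqref{9} and the sharp threshold \eqref{sigma} are consumed.
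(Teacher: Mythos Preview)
Your proposal is correct and follows essentially the same route as the paper: Moser iteration on powers of $G_\varepsilon(\cdot,|Du|)$ (Proposition~\ref{lp}) to get arbitrary $L^p$ integrability, then the level-set Caccioppoli inequality (Lemma~\ref{caclem}) feeding into Lemma~\ref{noniter}, with the potential terms $\mathbf P_1^{\mathfrak h}$ and $\mathbf P_1^{f}$ controlled via \eqref{potlore} and the higher integrability, a separate interpolation argument for $n=2$, and finally the approximation scheme of Section~\ref{apro} to pass to the original minimizer. One small imprecision: the constants $M_1, M_3$ in the application of Lemma~\ref{noniter} actually depend on the \emph{unknown} $L^\infty$ norms $\|\bar G_\varepsilon(\cdot,|Du|)\|_{L^\infty}^{\vartheta\sigma/2}$ and $\|Du\|_{L^\infty}$, not on already-established quantities; the point is that these appear with powers strictly less than $1$ (precisely by \eqref{sigma}), so after Young's inequality one obtains an inequality of the form ``$\|\cdot\|_{L^\infty(B_{\tau_1})}\le \tfrac12\|\cdot\|_{L^\infty(B_{\tau_2})}+\ldots$'' and Lemma~\ref{l0l} closes the loop.
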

\begin{remark}\label{megliobound0} When either $\gamma \geq 2$ or $f\equiv 0$, in Theorem \ref{t1} we can replace the upper bound on $\sigma +\hat \sigma $ in \rif{sigma} by the less restrictive $\sigma + \hat \sigma < 1/n-1/d$. See Remark \ref{megliobound} below.
\end{remark}
The main Lipschitz regularity result in the uniformly elliptic case is in the next
\begin{theorem}\label{t2}Let $u\in W^{1,1}_{\loc}(\Omega;\RN)$ be a minimizer of $\mathcal F$ in \eqref{genF}, under assumptions $\setuni$ in \trif{ipotesi}, and with $f, h \in \mathfrak X(\Omega)$ as defined in \trif{lox}. Then $Du\in L^{\infty}_{\textnormal{loc}}(\Omega;\R^{N\times n})$. Moreover, there exists a positive radius  $R_{*}\equiv R_{*}(\datauni,h(\cdot))\leq 1$ such that if $\B \Subset \Omega$ is a ball with $\rad(\B)\leq R_*$, then 
\begin{flalign}
&
\|F(\cdot, Du) \|_{L^\infty(s\B)}\notag \\
& \qquad  \le
 \frac{c}{(1-s)^{n}[\rad(\B)]^{n}}\|F(\cdot, Du) \|_{L^1(\B)}+ c\|f\|_{\mathfrak X(\B)}^{\gamma/(\gamma-1)}+c \label{finafinapre}
\end{flalign}
holds  for every $s \in (0,1)$, where $c\equiv c(\datauni)$.
\end{theorem}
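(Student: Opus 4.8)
\textbf{Plan of proof for Theorem \ref{t2}.} The strategy is to reduce the uniformly elliptic case to the machinery already set up for the nonuniformly elliptic one, exploiting that the structural parameters are now essentially bounded. First I would observe that under $\setuni$ the energy function $\bar G(x,t)$ is comparable to $F(x,t)$ up to constants depending on $\datauni$, and — crucially — that \rif{uni0} forces the nonuniform ellipticity exponents to collapse: one may take $\sigma=\hat\sigma=0$, with $\cb$ replaced by $\cu$, so that \rif{7}--\rif{9} become trivially satisfied with $[\bar G]^{\sigma}\equiv 1$ etc. The only genuine difference with Theorem \ref{t1} is that now the coefficient field $h(\cdot)$ controlling the $x$-derivatives is no longer assumed in $L^d$ with $d>n$, but merely in the borderline class $\mathfrak X(\Omega)$, exactly like $f$; correspondingly $D\ccc$-type terms and $f$-type terms must be treated on the same footing and fed into the potential $\mathbf P_1$.

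The core is a Moser-type / De Giorgi-type energy inequality for $v:=[\bar G(\cdot,|Du|)]^{\kappa}$ (or a suitable power of $F(\cdot,Du)$), of the precise form \rif{varine} in Lemma \ref{noniter}. To obtain it I would differentiate the Euler--Lagrange system \rif{general1} in the direction $e_s$, test with $D_s u$ multiplied by appropriate cut-offs and truncations $(v-\kk)_+$, and use the uniform ellipticity bounds \rif{uni0} (which, unlike in the nonuniform case, lose nothing quantitatively). The terms coming from $\partial_x\tilde a$ are estimated via \rif{growth}$_4$ and \rif{growthzero} by $h(x)$ times lower-order gradient quantities; by Young's inequality these split into a reabsorbable piece and a piece of the shape $\mint |h|^2 (\text{energy})$, plus the contribution of $f$ of the shape $\mint |f|^2$. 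This is where the a priori $L^p$-bounds matter: one needs $\bar G(\cdot,|Du|)\in L^p_{\loc}$ for all $p$ in order to absorb the $h$- and $f$-terms after Hölder, and this is supplied by the Moser iteration of Proposition \ref{lp} (equivalently Theorem \ref{t3}), applied in the uniformly elliptic regime where it runs more easily. With the energy inequality in hand, Lemma \ref{noniter} is applied with $M_1\sim \|h\|$-dependent, $M_2,M_3$ controlling the $f$- and $h$-potential terms, and since $n>2$ is not assumed, one must keep the $\max\{\delta,(n-2)/2\}$ exponent; the potentials $\mathbf P_1^{f}$ and $\mathbf P_1^{h}$ are then bounded in $L^\infty$ by \rif{potlore} precisely because $f,h\in\mathfrak X(\Omega)$. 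Finally the standard iteration Lemma \ref{l0l} removes the coupling between concentric balls and yields \rif{finafinapre}, with the exponent $\gamma/(\gamma-1)$ on $\|f\|_{\mathfrak X}$ arising from the Young-inequality balancing between the $\gamma$-growth of the energy and the linear coupling to $f$ (this is the analogue of $(\ia+2)/(\ia+1)$ in \rif{stimaAA} since $\ia=\gamma-2$ in the model case).

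Two points require care. The approximation issue: the a priori estimate must first be proved for a regularized problem whose solutions are $C^{1,\alpha}_{\loc}$ (so that differentiating the system and using $D^2u\in L^2_{\loc}$ is legitimate), the regularization chosen — as flagged in Section \ref{apro} — so that the constants in \rif{finafinapre} are stable in the limit; one regularizes $F$, $\ccc(\cdot)$, $f$, and, if $\mu=0$, also adds an $\eps$-shift $E_\eps$ as in \rif{Hnot}, then passes to the limit using lower semicontinuity and the uniform bound. The determination of $R_*$: the smallness of the radius is needed only to make the $L^d$-versus-$L^n$ (here $\mathfrak X$-versus-$L^n$) interpolation of $h$ and $f$ effective and to absorb the lower-order terms, so $R_*$ depends on $\datauni$ and on the rate at which $\|h\|_{\mathfrak X(B_r)}$ and the tails of $f$ vanish as $r\to 0$, i.e. on $h(\cdot)$ (and $f$) themselves, not merely on their norms.

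\textbf{Main obstacle.} The principal difficulty is the derivation of the Caccioppoli-type inequality \rif{varine} with the \emph{right} multiplicative constants: one must show that all terms generated by the $x$-dependence of the integrand are controlled by $h(x)$ times quantities that, after Young's inequality, either reabsorb into the left-hand side or land in the $\mathbf P_1^{h}$-potential with a power of the energy that the preliminary $L^p$-bound can absorb — and simultaneously track that in the borderline two-dimensional case $n=2$ the exponent $\max\{\delta,(n-2)/2\}=\delta$ can be taken arbitrarily small, so that the $\mathfrak X=L^2(\log L)^{\mathfrak a}$ scale (with $\mathfrak a>2$) still suffices via \rif{potlore}$_2$. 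Everything else is an application of the lemmas of Section \ref{notsec2} and of the approximation scheme of Section \ref{apro}.
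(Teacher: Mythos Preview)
Your overall architecture is right — approximation, Caccioppoli on level sets, Lemma \ref{noniter}, bounding $\mathbf P_1^f$ and $\mathbf P_1^h$ via \rif{potlore}, then Lemma \ref{l0l} — and you correctly spot that under $\setuni$ one effectively has $\sigma=\hat\sigma=0$ with $g_1$ independent of $x$. But the mechanism you propose for handling the $h$-term is where the argument goes wrong.

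You write that one needs preliminary $L^p$-bounds on $\bar G(\cdot,|Du|)$, from Proposition \ref{lp}/Theorem \ref{t3}, ``in order to absorb the $h$- and $f$-terms after H\"older''. There are two problems. First, Proposition \ref{lp} and Theorem \ref{t3} are stated and proved under $h\in L^d$ with $d>n$ (this is \rif{sigmad}); in Theorem \ref{t2} you only have $h\in\mathfrak X(\Omega)\subset L^n$, so you cannot invoke them as written. Second, even if you had $G_\eps\in L^p_{\loc}$ for every finite $p$, a H\"older splitting of $h\cdot G_\eps$ with $h$ only in $L(n,1)$ (or $L^2(\log L)^{\mathfrak a}$) never lands the product in $\mathfrak X$, so $\mathbf P_1^{hG_\eps}$ cannot be bounded that way; you would need $G_\eps\in L^\infty$, which is exactly the conclusion.

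The paper's route avoids this entirely by exploiting a structural feature of the uniform case that your sketch overlooks: because $g_{3}\approx g_1 t$ and $g_2/g_1\le \cu$, the $x$-terms in the Caccioppoli inequality on level sets produce a contribution of the exact form
\[
c\bigl[\|G_\eps(|Du|)\|_{L^\infty(B_r)}+1\bigr]^{2}\int_{B_r}|h|^2\,dx,
\]
i.e.\ the energy and $h$ \emph{factorize} (compare \rif{a45uni} with \rif{a45}). Feeding this into Lemma \ref{noniter} with $M_1\equiv 1$ (not ``$\|h\|$-dependent'' as you suggest), $f_1\equiv h$, $M_2=\|G_\eps\|_{L^\infty}+1$, one obtains \rif{uni8}, where the $h$-contribution appears as
\[
c_*\bigl[\|G_\eps\|_{L^\infty(B_{\tau_2})}+1\bigr]\,\|h\|_{\mathfrak X(B_\varrho)},
\]
\emph{linear} in the sup of the energy. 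This is then absorbed into the left-hand side directly by the smallness condition \rif{a23uni}, i.e.\ by choosing $R_*$ so that $c_*\|h\|_{\mathfrak X(\B)}\le 1/6$. No Moser iteration, no $L^p$-bounds, no H\"older splitting. The $f$-term carries the sublinear power $1/\gamma$ and is handled by Young's inequality, giving the $\gamma/(\gamma-1)$ exponent. This linear-absorption mechanism is the real content of the uniformly elliptic simplification, and it is what your proposal is missing.
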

In deriving Theorem \ref{t1} we need to prove higher integrability bounds for the gradient, that are worth being singled out in the following: 
\begin{theorem}\label{t3} Let $u\in W^{1,1}_{\loc}(\Omega;\RN)$ be a minimizer of $\mathcal F$ in \eqref{genF}, under assumptions $\setm$ in \trif{ipotesi} with $n>2$, and with $ f \in L^n(\Omega)$. Then $Du\in L^{p}_{\textnormal{loc}}(\Omega;\er^{N\times n})$ for every $p\in [1,\infty)$. Moreover, for every $p\in [1,\infty)$ there exists a positive radius $R_{*}\equiv R_{*}(\data,f(\cdot),p)\leq 1$ such that, if $\B \Subset \Omega$ is a ball with $\rad(\B)\leq R_*$, then
\begin{flalign}
&\|G(\cdot,\snr{Du})\|_{L^p(s\B)} \notag \\ & \qquad \le \frac{c}{(1-s)^{\beta_{p}}[\rad(\B)]^{\beta_p}}\left[\|F(\cdot,Du)\|_{L^1(\B)} ^{\theta_p}+\nr{f}_{L^{n}(\B)}^{\theta_p}+ 1\right]\label{e13}
\end{flalign}
holds for every $s \in (0,1)$, where $c\equiv c(\data,\nr{h}_{L^{d}(\B)},p)$, $\theta_{p}, \beta_{p}\equiv \theta_{p}, \beta_{p}(n,\linebreak \gamma, d,\sigma,\hat{\sigma},p)$.
\end{theorem}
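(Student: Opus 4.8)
The plan is to first reduce \eqref{e13} to an a priori estimate. Using the approximation scheme of Section~\ref{apro}, the minimizer $u$ is realized as the limit of minimizers $u_\varepsilon$ of suitably regularized functionals whose integrands $F_\varepsilon$ are smooth and, for each fixed $\varepsilon$, uniformly elliptic, while still satisfying the structural bounds collected in $\setm$ with constants independent of $\varepsilon$; by standard regularity theory each $u_\varepsilon$ is locally Lipschitz with $Du_\varepsilon\in W^{1,2}_{\loc}$. It therefore suffices to prove \eqref{e13} for such $u_\varepsilon$ with constants not depending on $\varepsilon$, and then pass to the limit $\varepsilon\to0$ along the scheme (the left-hand side is controlled by lower semicontinuity, the right-hand side by the built-in convergence $F_\varepsilon(\cdot,Du_\varepsilon)\to F(\cdot,Du)$ in $L^1_{\loc}$ and by the convergence of the forcing terms in $L^n_{\loc}$). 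From now on I argue on one such smooth minimizer, which I keep denoting $u$.

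The core is the Moser-type scheme of Proposition~\ref{lp}. One differentiates the Euler--Lagrange system $-\diver\,\partial_zF(x,Du)=f$ in the direction $x_s$ and tests the resulting equation with $\eta^2 D_su\,[\bar G(\cdot,|Du|)]^\alpha$, where $\eta$ is a cut-off and $\alpha\ge0$ a free parameter, summing over $s=1,\dots,n$. The ellipticity lower bound \eqref{growth}$_3$ and the monotonicity \eqref{00} turn the principal part into an energy term of the form $\int\eta^2 g_1(\cdot,|Du|)\,\bigl|D\bigl([\bar G(\cdot,|Du|)]^{(\alpha+1)/2}\bigr)\bigr|^2\,dx$. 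On the right-hand side there appear three groups of terms: lower order contributions carrying $D\eta$ and powers of $\bar G(\cdot,|Du|)$; the $x$-terms coming from $\partial_{xz}F$ and $\partial_xg_1$, which via \eqref{growthzero}, \eqref{7}, \eqref{8} and \eqref{9} are dominated by $\eta^2 h(x)\,[\bar G(\cdot,|Du|)]^{1+\sigma+\hat\sigma}$ up to reabsorbable pieces (here the ratio bound $g_2/g_1\le\cb[\bar G]^\sigma$ from \eqref{9} quantifies the defect of uniform ellipticity); and the datum term, handled after an integration by parts by Hölder's inequality (using $f\in L^n$), Sobolev's inequality and Young's inequality, which is what produces the $\|f\|_{L^n(\B)}$ factor in \eqref{e13}.

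Applying Hölder's inequality with exponent $d/n$ to the factor $h\in L^d$ in the $x$-terms, Sobolev's inequality to the energy term on the left --- this is the point where $n>2$ and the precise meaning of $2^*$ in \eqref{2***} are used --- and then reabsorbing, one obtains a reverse-Hölder type self-improvement: if $\bar G(\cdot,|Du|)\in L^{q}(B_r)$ then $\bar G(\cdot,|Du|)\in L^{q\chi}(B_{r/2})$ for a fixed gain $\chi=\chi(n,d,\gamma,\sigma,\hat\sigma)>1$, with a quantitative bound. The smallness condition \eqref{sigma} is exactly what makes this step work: its first constraint $\sigma+\hat\sigma<1/n-1/d$ balances the exponents so that the $x$-terms are controlled against $h\in L^d$ after Hölder, while the second one, involving $1-1/\gamma$ and $\vartheta$, controls the growth gap between $g_1$ and $g_2$ against the superlinearity exponent $\gamma$; together they guarantee $\chi>1$ and make the reabsorption licit. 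Starting from the base integrability $\bar G(\cdot,|Du|)\in L^1$ (equivalently $F(\cdot,Du)\in L^1_{\loc}$, which is part of Definition~\ref{defi-min}) and iterating the self-improvement finitely many times --- a number of steps comparable to $\log p/\log\chi$ --- over a shrinking family of balls $s\B\subset\cdots\subset\B$, one reaches every finite $p$; collecting the constants along the chain yields the dependence $c\equiv c(\data,\|h\|_{L^d(\B)},p)$ and the exponents $\beta_p,\theta_p$. Choosing $R_*$ small enough that the finitely many reabsorptions in the chain can be carried out gives \eqref{e13}, and the limit $\varepsilon\to0$ concludes.

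The main obstacle is the exponent bookkeeping in the self-improvement step: one must pick the weight power $\alpha$ and split the $x$-terms by Hölder so that, after Sobolev embedding, the left-hand energy term genuinely controls a strictly higher power of $\bar G(\cdot,|Du|)$ with a constant that neither blows up nor collapses --- and this is precisely where the two competing constraints packaged into \eqref{sigma} are consumed, one against $h\in L^d$, the other against $\gamma$. A secondary, purely technical difficulty is that the differentiation of the Euler--Lagrange system and the admissibility of the test maps $\eta^2 D_su\,[\bar G(\cdot,|Du|)]^\alpha$ are only formal on the original minimizer; they are justified on the regularized problems (see the Appendix, Section~\ref{appendix}), with all constants tracked uniformly in the regularization parameter.
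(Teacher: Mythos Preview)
Your overall strategy---Moser iteration on the regularized problems of Section~\ref{apro}, then passage to the limit---matches the paper. But your description of the iteration contains a real gap. You assert a clean self-improvement $L^q\to L^{q\chi}$ starting from $q=1$; this is not what the Caccioppoli inequality delivers. With $\kappa=0$ in the test map $\eta^2[G_\varepsilon]^{\kappa+1}D_su$, the right-hand side of \eqref{a24} already carries the exponent $s_1=2m(1+\sigma+\hat\sigma)>1$ (where $m=d/(d-2)$), because the $x$-terms force an extra factor $[\bar G_\varepsilon]^{2(\sigma+\hat\sigma)}$ before H\"older against $h\in L^d$. The actual recursion is $s_{i+1}=\chi\bigl(s_i-2m(\sigma+\hat\sigma)\bigr)$, a shift rather than a pure dilation, and it starts at $s_1$, never at $1$. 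To land on the natural energy $\|F(\cdot,Du)\|_{L^1}$, the paper interpolates $\|G_\varepsilon\|_{L^{s_1}}$ between $\|G_\varepsilon\|_{L^1}$ and $\|G_\varepsilon\|_{L^{s_{k+1}}}$ (see \eqref{interpo}), checks via \eqref{claim} that the induced exponent on the high norm is strictly below $1$---this is exactly where $\sigma+\hat\sigma<1/n-1/d$ is spent---and then reabsorbs by Young's inequality and Lemma~\ref{l0l}. Without this interpolation/reabsorption step your chain does not close from $L^1$.

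Two smaller corrections. First, only the first constraint in \eqref{sigma} is used for Theorem~\ref{t3}; the second one (involving $\gamma,\vartheta$) enters only in the $L^\infty$ bound of Theorem~\ref{t1}, through \eqref{ledue} in Proposition~\ref{linf1}---see Remark~\ref{lpremark}. Incidentally, $\chi=\tfrac{n}{n-2}\cdot\tfrac{d-2}{d}$ depends on $n,d$ only; $\chi>1$ is just $d>n$, while the working condition for the iteration to gain is $\sigma+\hat\sigma<\chi-1$. Second, the passage to the limit is not by ``$F_\varepsilon(\cdot,Du_\varepsilon)\to F(\cdot,Du)$ in $L^1_{\loc}$'': the right-hand side is controlled directly via the minimality-based coercivity estimate \eqref{e2oo}, and the weak limit $\tilde u$ is identified with $u$ on $\{|Du|>T\}$ by the strict-convexity argument \eqref{e10}; see Section~\ref{pt1}.
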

\begin{remark}\label{lpremark}
In order to prove Theorem \ref{t3} the full strength of \rif{sigma} is not actually needed. Assuming 
$  \sigma + \hat \sigma < 1/n-1/d$ suffices.
\end{remark}
\begin{remark}\label{remT} In Theorems \ref{t1}-\ref{t3}, as well as in the other estimates in this paper, the constants depending on $\data$ and $\datauni$  blow-up when $T\to \infty$ (complete loss of ellipticity). On the contrary, they remain bounded when $T \to 0$ (full recovery of ellipticity), as long as the quantity $\nu\equiv \nu(T)$ in \rif{00bis} stays bounded away from zero. This is for instance the case in Theorems \ref{sample1}-\ref{sample6}; see Section \ref{applications} below. As a matter of fact, there is an additional dependence of the constants on the specific operator considered. Indeed, the dependence on $T$ typically shows up via quantities controlled by $\| \tilde a(\cdot, T)\|_{L^{\infty}}(T^2+\mu^2)+T^\gamma$; this can be in turn controlled as $\lesssim \| \tilde a(\cdot, 1)\|_{L^{\infty}}(T^\gamma+\mu^\gamma)$ for $T\leq 1$ via \rif{00}. All in all, the dependence of the constants on the specific operator (i.e., $\partial_x F$) appears only through $\nu$ defined in \rif{00bis}, and $\| \tilde a(\cdot, 1)\|_{L^{\infty}}$ when $T$ approaches zero. See also the appearance of $g_1(1)$ in  $\datauni$ in the uniformly elliptic case.
\end{remark}
Finally, we give another general result concerning an intermediate form of nonuniform ellipticity. This for instance applies to the case of functionals with special structure as the double phase one in \rif{pqfunctional}. More comments on this will follow in Section \ref{diffsec} below. 
\begin{theorem}\label{t4}
Let $u\in W^{1,1}_{\loc}(\Omega;\RN)$ be a minimizer of $\mathcal F$ in \eqref{genF}, under assumptions \textnormal{$\setm$} in \trif{ipotesi} with $\sigma =  \hat \sigma$ in \eqref{7}-\eqref{8} and $ f$ as in \trif{lox}. Moreover, replace \trif{sigma} and \trif{9} by 
\eqn{sigma-int}
$$
 \hat \sigma <
\left\{
\begin{array}{ccc}
1/n-1/d&\mbox{if}& n>2\\  [6 pt]
\min\{ 1/2-1/d, \gamma-1\}
 &\mbox{if}& n=2
 \end{array}
 \right.
 \quad \mbox{and} \quad \frac{g_{2}(x,t)}{g_{1}(x,t)}\le \cb\,,
$$
respectively. Then $Du\in L^{\infty}_{\textnormal{loc}}(\Omega;\R^{N\times n})$ and \trif{mainest1} holds. \end{theorem}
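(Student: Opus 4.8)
The plan is to deduce Theorem~\ref{t4} from Theorem~\ref{t1} by showing that, under the hypotheses of Theorem~\ref{t4}, the assumptions $\setm$ in the stronger form required by Theorem~\ref{t1} are in force, after possibly enlarging the exponent parameters. Concretely, one sets $\sigma:=\hat\sigma$ throughout, so that conditions \rif{7}--\rif{8} read exactly as in $\setm$, and one only has to check that \rif{9} and \rif{sigma} follow from \rif{sigma-int}. For \rif{9}, note that $\bar G(\cdot,t)\geq (T^2+1)^{\gamma/2}\geq 1$ by \rif{GG}, hence $g_2/g_1\leq \cb\leq \cb[\bar G(x,t)]^{\sigma}$ for every $t\geq T$, so the new bound $g_2(x,t)/g_1(x,t)\leq \cb$ in \rif{sigma-int} trivially implies \rif{9} with $\sigma=\hat\sigma$. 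Thus the only genuine point is that the smallness condition \rif{sigma} holds, i.e. that $\sigma+\hat\sigma=2\hat\sigma$ satisfies the bound in \rif{sigma}.

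Here one must compare $2\hat\sigma < \min\{1/n-1/d,\ (4/(\vartheta(n-2)))(1-1/\gamma)\}$ (for $n>2$; analogously for $n=2$) with what \rif{sigma-int} provides, namely $\hat\sigma<1/n-1/d$ (for $n>2$). These are \emph{not} the same, so a direct citation of Theorem~\ref{t1} as stated does not suffice: the honest route is to invoke Remark~\ref{megliobound0} / Remark~\ref{megliobound}, which asserts that when $\gamma\geq2$ or $f\equiv0$ the upper bound on $\sigma+\hat\sigma$ in \rif{sigma} can be relaxed to the single condition $\sigma+\hat\sigma<1/n-1/d$. For the case $f$ as in \rif{lox} and $1<\gamma<2$, the interpolation-type arguments behind Theorem~\ref{t1} must be re-run keeping track of the fact that the ratio $g_2/g_1$ is now \emph{bounded} rather than merely polynomially controlled; this is precisely the structural improvement that lets one absorb the worse $(p,q)$-type terms, and it is what makes the weaker condition \rif{sigma-int} sufficient. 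In other words, the proof amounts to revisiting the a priori estimates of Section~\ref{apri} under the extra hypothesis $g_2\leq \cb g_1$, observing that the exponents appearing in Moser's iteration (Proposition~\ref{lp}) and in the potential-theoretic Lemma~\ref{noniter} simplify so that only $\hat\sigma<1/n-1/d$ (respectively $\hat\sigma<\min\{1/2-1/d,\gamma-1\}$ when $n=2$) is needed for the iteration to close.

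Accordingly, I would structure the argument as follows. First, set $\sigma=\hat\sigma$ and record that \rif{sigma-int} gives \rif{9} for free as explained above, so that the only place the stronger \rif{sigma} is used in the proof of Theorem~\ref{t1} is in the exponent bookkeeping of the iteration schemes. Second, trace through Proposition~\ref{lp} and the subsequent Caccioppoli-type and second-order estimates with $g_2/g_1\leq\cb$: the terms that previously carried a factor $[\bar G]^{\sigma}$ coming from \rif{9} now carry only a constant, so the effective growth exponent entering the De Giorgi/Moser iteration drops, and the condition for convergence of the iteration becomes exactly $\hat\sigma<1/n-1/d$ when $n>2$. Third, feed the resulting Caccioppoli inequality into Lemma~\ref{noniter} with $f_1,f_2$ built from $f$ and $h$ as in the proof of Theorem~\ref{t1}, using the mapping properties \rif{potlore} of $\mathbf P_1$ on $\mathfrak X(\Omega)$, to get the pointwise bound on $G(\cdot,|Du|)$ and hence \rif{mainest1}. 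The two-dimensional case $n=2$ is handled by the alternative interpolation path of Section~\ref{casetwo}, where the bounded-ratio hypothesis again cuts down the needed smallness to $\hat\sigma<\min\{1/2-1/d,\gamma-1\}$.

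The main obstacle I anticipate is not conceptual but bookkeeping: one has to verify carefully that every occurrence of \rif{9} in the long chain of a priori estimates (the energy comparison, the Caccioppoli inequality for $G(\cdot,|Du|)$, the fractional estimates, and the final iteration) is the \emph{only} mechanism by which a polynomial factor $[\bar G]^{\text{something}}$ beyond those in \rif{7}--\rif{8} enters, so that replacing it by a constant genuinely relaxes the exponent constraint to precisely \rif{sigma-int} and not to something intermediate. A secondary subtlety is the $1<\gamma<2$, $f\not\equiv0$ case: there the factor $\vartheta=2$ in \rif{sigma} reflects a loss in the passage between $|Du|^\gamma$-type and $|Du|^2$-type energies, and one must check that the bounded-ratio assumption does not interact with that loss in a way that reintroduces the $\vartheta$-dependent term — it does not, because that $\vartheta$ loss comes from the lower ellipticity exponent $\gamma$, which is untouched, whereas the condition being relaxed, $\sigma+\hat\sigma<(4/(\vartheta(n-2)))(1-1/\gamma)$, originates from controlling the \emph{upper} growth via $g_2/g_1$, now trivial. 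Once this is checked, the estimate \rif{mainest1} follows verbatim from the proof of Theorem~\ref{t1}, with the same $\beta,\theta$ (now independent of $\sigma,\hat\sigma$ beyond their appearance through $\hat\sigma$ alone).
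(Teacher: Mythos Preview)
Your proposal is correct and follows essentially the same approach as the paper: after noting that a direct reduction to Theorem~\ref{t1} fails because $2\hat\sigma<1/n-1/d$ is not implied by \rif{sigma-int}, you correctly revisit the a priori estimates of Section~\ref{apri} under the bounded-ratio hypothesis $g_2/g_1\le\cb$, so that every occurrence of $[\bar G]^\sigma$ coming from \rif{9}/\rif{z9} collapses to a constant and the effective constraint becomes $\hat\sigma<1/n-1/d$ (resp.\ $\hat\sigma<\min\{1/2-1/d,\gamma-1\}$ when $n=2$). The paper packages the Moser-iteration part of this observation as Remark~\ref{t4uno} and then tracks the simplification through Lemma~\ref{caclem} and Proposition~\ref{linf1} exactly as you outline; your identification of the $\vartheta$-dependent term in \rif{sigma} as stemming from the $g_2/g_1$ control (hence now trivial) is the key point and matches the paper's reasoning.
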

\begin{remark}
Replacing the integrand $F$ in \rif{genF} with the new one $F_0(x,z):= F(x,z)-\tilde{F}(x,0)$, does not change the set of minimizers of $\mathcal F$, and gets another functional still satisfying the conditions of Theorems \ref{t1}-\ref{t4}. Moreover, as $\tilde{F}(x,0)\geq 0$, once the estimates in \rif{mainest1}-\rif{e13} are proved in the case $F_0(x,z)$ is considered, then they also hold in the original case. Therefore for the rest of the paper we can always assume that $\tilde{F}(x,0)=0$ and by \rif{wehaweha}
it follows that
\begin{flalign}\label{tildef}
\tilde{F}(x,t)=\int_{0}^{t}\tilde{a}(x,s)s \d s\quad \mbox{for} \ (x,t)\in \Omega\times (0,\infty)\,.
\end{flalign}
Such a replacement makes sense provided $\tilde{F}(x,0)\in L^1_{\loc}(\Omega)$, but this is always the case in our situation. Specifically, this is true as soon as the functional $\mathcal F$ in \eqref{genF} admits a minimizer $u$, which is therefore such that $\tilde F(\cdot, |Du|)\in L^1_{\loc}(\Omega)$. This implies that $0\leq \tilde F (x, 0)\leq 
\tilde F(x, |Du(x)|)$ (recall that $t \mapsto \tilde F(x, t)$ is non-decreasing), so that $\tilde{F}(\cdot,0)\in L^1_{\loc}(\Omega)$. 
\end{remark}
\subsection{Different notions of nonuniform ellipticity}\label{diffsec} What do we call nonuniform ellipticity of the integrand  $F$ here? To provide a measure, in the autonomous case $F(x,z)\equiv F(z)$, it is rather standard to use the ellipticity ratio  
\eqn{ratio1}
$$
\mathcal R_{F} (z):=\frac{\mbox{highest eigenvalue of}\ \partial_{zz}  F(z)}{\mbox{lowest eigenvalue of}\  \partial_{zz} F(z)} \,.
$$
The occurrence of nonuniform ellipticity then refers to the fact that $\mathcal R_{F} (z) \to \infty$ as $|z|\to \infty$; accordingly, the uniformly elliptic case occurs when $\mathcal R_{F} (\cdot)\approx 1$. In the nonautonomous case this leads to define the same pointwise quantity $\mathcal R_{F} (x,z)$, by taking into account the $x$-dependence in the right-hand side of \rif{ratio1}. In this paper we have considered assumptions aimed at bounding $\mathcal R_{F} (x,z)$ pointwise, i.e.,
\eqn{ratio11}
$$
\mathcal R_F (x,z) \lesssim \frac{g_2(x,|z|)}{g_1(x,|z|)} \stackrel{\rif{9}}{\lesssim} \left(\int^{|z|}_Tg_1(x,s)s \, ds\right)^{\sigma}+1\,.
$$
On the other hand, the ratio $\mathcal R_F (x,z)$ misses to properly encode full information in the nonautonomous one. This leads to consider the nonlocal quantity 
\eqn{ratio2}
$$\mathfrak {R}_{F}(z,B):=\frac{\sup_{x\in B}\, \mbox{highest eigenvalue of} \ \partial_{zz}F(x,z)}{\inf_{x\in B}\, \mbox{lowest eigenvalue of} \ \partial_{zz}F(x,z)}\,,$$
with $B\subset \Omega$ being any ball. The new ratio $\mathfrak {R}_{F}(z,B)$ naturally occurs in integral estimates and, in fact, turns out to be the right quantity to describe intermediate notions of nonuniform ellipticity in the nonautonomous case. 
Obviously, from the definitions it follows that 
$\sup_{x\in B}\, \mathcal R_{F} (x,z) \leq \mathfrak {R}_{F}(z,B)$, with equality in the autonomous case. 

\begin{remark}[Double face of the double phase functional \eqref{pqfunctional}]\label{doubleremark}To further motivate \rif{ratio2}, recall that for general uniformly elliptic problems one typically recovers classical theories as Schauder's  \cite{liebe1} and Calder\'on-Zygmund's \cite{BLO}. This is in general not the case for the double phase integral in \rif{pqfunctional}, due to the counterexamples in \cite{sharp, FMM}. Nevertheless the integrand $H(\cdot)$ turns out to be uniformly elliptic when using the quantity in \rif{ratio1} as test, in the sense that $\mathcal R_{H} (\cdot) \approx 1$. On the other hand, when considering a ball $B$ such that $B\cap \{a(x)=0\}\not =0$, we have that  $\mathfrak {R}_{H}(z,B)\approx |B|^{1/n-1/d}|z|^{q-p}+1$, therefore $\mathfrak {R}_{H}(z,B) \to \infty$ when $|z|\to \infty$. In such sense, the use of quantity in \rif{ratio2} resolves the above ambiguity. In this case, the assumptions \rif{lox22} provide a way to correct the growth of $\mathfrak {R}_{H}(z,B)$ with respect to $|z|$ with the smallness of $|B|$, as pointed out in the Introduction.
 \end{remark}
In this paper we consider different assumptions, playing with the parameters $\sigma, \hat \sigma$ in \rif{7}-\rif{9}, in order to tune different degrees of nonuniform ellipticity, involving both $\mathcal R_{F} (\cdot)$ and $\mathfrak {R}_{F}(\cdot)$. In Theorems \ref{t1}-\ref{t2}, we prescribe a direct pointwise bound on $\mathcal R_{F} (\cdot)$ as in \rif{ratio11}, and then control the behaviour with respect to $x$ of the derivatives of $F$ via \rif{7}-\rif{8}. This has the overall effect of providing an indirect control on $\mathfrak {R}_{F}(\cdot)$ too. In Theorem \ref{t4}, we bound $\mathcal R_F (\cdot)\lesssim 1$, that is, uniform ellipticity is assumed in the pointwise sense, still allowing for nonuniform ellipticity in the nonlocal sense of \rif{ratio2}. This leads for instance to get better bounds and opens the way to Theorem \ref{sample3}, dealing with the intermediate case of double phase functionals in the sense of Remark \ref{doubleremark}.

\section{Approximation of integrands}\label{apro} Here we implement a truncation scheme aimed at approximating the original integrand $F$ in \rif{genF} with a family $\{F_{\varepsilon}\}$ of integrands with standard polynomial growth, and converging to $F$ locally uniformly. The new integrands preserve structure properties as \eqref{0ass}-\eqref{growth}, with corresponding control functions $g_{i, \varepsilon},$ still satisfying relations as those of the original ones. For this we will exploit a few arguments used in \cite{BM} as a starting point. In this section we shall permanently assume that \eqref{0ass}-\eqref{00} are in force, so that their consequences \eqref{1bis}-\eqref{banaleg1g2} can be used as well. Additional assumptions as \rif{7}-\rif{9} shall also be considered; in case, this will be explicitly mentioned. In the following we use a parameter $\varepsilon$ that will always be such that $0<\varepsilon<\min\{1,T\}/4$ and use that $\tilde F(x, 0)\equiv 0$.  

\subsection{General setup}\label{genset} Given $T$ and $\mu$ in \rif{growth} and \rif{00}, respecively, we introduce the numbers
\begin{flalign}\label{z1}
\mu_{\varepsilon}:=\mu+\varepsilon\,, \qquad  T_{\varepsilon}:=T+1/\varepsilon\,.
\end{flalign}
With $\tilde{a}\colon \Omega \times (0,\infty)\mapsto [0, \infty)$ that has been defined in \rif{general1} as 
\begin{flalign}\label{tildea}
\tilde{a}(x,t):=\frac{\tilde{F}'(x,t)}{t}
\end{flalign}
for $t>0$, which is continuous on $\Omega \times (0, \infty)$, we start introducing the functions $\tilde{a}_{\varepsilon}\colon \Omega\times [0,\infty)\to [0,\infty)$ as
\begin{flalign}\label{z2}
\tilde{a}_{\varepsilon}(x,t):=\begin{cases} 
\ \frac{\tilde{a}(x,\varepsilon)}{(\varepsilon^{2}+\mu^{2}_{\varepsilon})^{\frac{\gamma-2}{2}}}(t^{2}+\mu^{2}_{\varepsilon})^{\frac{\gamma-2}{2}} \ \ &\mbox{if}  \ t\in [0,\varepsilon)\\
\ \tilde{a}(x,t)\ \ &\mbox{if}  \ t\in [\varepsilon,T_{\varepsilon})\\
\ \frac{\tilde{a}(x,T_{\varepsilon})}{(T_{\varepsilon}^{2}+\mu^{2}_{\varepsilon})^{\frac{\gamma-2}{2}}}(t^{2}+\mu^{2}_{\varepsilon})^{\frac{\gamma-2}{2}} \ \ &\mbox{if} \ t\in [T_{\varepsilon},\infty)\,,
\end{cases}
\end{flalign}
for every $x\in \Omega$, where $\gamma$ also appears in \rif{00}; accordingly, we define
\eqn{z5}
$$
\begin{cases}
\displaystyle F_{\varepsilon}(x,z):=\tilde{F}_{\varepsilon}(x,\snr{z}), \  \mbox{for} \  \tilde{F}_{\varepsilon}(x,t):=\int_{0}^{t}\tilde{a}_{\varepsilon}(x,s)s \d s+\varepsilon L_{\gamma,\varepsilon}(t)\\
\displaystyle 
L_{\gamma,\varepsilon}(t):=\frac{1}{\gamma}\left[(t^{2}+\mu_{\varepsilon}^{2})^{\gamma/2}-\mu_{\varepsilon}^{\gamma}\right]=\int_{0}^{t}(s^{2}+\mu_{\varepsilon}^{2})^{\frac{\gamma-2}{2}}s \d s\,,
\end{cases}
$$
and, finally
\begin{flalign}\label{arad}
\notag &\bar{a}_{\eps}(x,t):= \tilde{a}_{\eps}(x,t)+\eps(t^{2}+\mu_{\varepsilon}^{2})^{\frac{\gamma-2}{2}}\\ 
& \qquad  \Longrightarrow  a_{\eps}(x,z):=\bar{a}_{\eps}(x,\snr{z})z=\partial_z F_{\eps}(x,z) \,. 
\end{flalign} 
Note that $\tilde a_{\eps}$ is continuous on $\Omega \times [0,\infty)$ and $\partial_x \tilde a_{\eps}$ is Carath\'eodory regular; $(x, t) \mapsto \tilde a'_\eps(x, t)$ is instead Borel regular.  
In view of \eqref{0ass} and \rif{tildef}, it follows
\begin{flalign}\label{z3}
\begin{cases}
\ t\mapsto \tilde{F}_{\varepsilon}(x,t)\in C^{1}_{\textnormal{loc}}[0,\infty)\cap C^{2}_{\textnormal{loc}}\left([0,\infty)\setminus\{\varepsilon,T_{\varepsilon}\}\right)\ \ \mbox{for all} \ x\in \Omega\\
\ x\mapsto \tilde{F}'_{\varepsilon}(x,t)\in W^{1,n}(\Omega) \ \ \mbox{for all} \ \ t\in [0,\infty)\\
\ t\mapsto \tilde{a}_{\varepsilon}(x,t)\in \textnormal{Lip}_{\textnormal{loc}}[0,\infty)\cap C^{1}_{\textnormal{loc}}\left([0,\infty)\setminus\{\varepsilon,T_{\varepsilon}\}\right)\ \ \mbox{for all} \ x\in \Omega\\
\ t\mapsto \tilde{F}_{\varepsilon}(x,t)\ \ \mbox{is strictly convex and non-decreasing for all $x\in \Omega$}\\
\ F_{\eps} \ \mbox{is continuous on $\Omega \times \er^{N\times n}$ and $\tilde F_\eps(x, 0)=0$}\\
\ F_{\eps}\to F\quad  \mbox{uniformly on $ \Omega \times \mathcal K$ as $\eps \to 0$, $\forall \, \mbox{compact} \, \mathcal K \subset \er^{N\times n}$ }\\
\ \mbox{$\partial_x \tilde a_{\eps}$ is Carath\'eodory-regular on $\Omega \times [0, \infty)$}\,.
\end{cases}
\end{flalign}
For \rif{z3}$_4$, see \rif{einc} below. 
For the proof of \rif{z3}$_5$, recall that $\tilde F'$ is continuous on $\Omega \times (0, \infty)$. 
For details about \rif{z3}$_6$, see also Lemma \ref{ultimolemma} below. For \rif{z3}$_7$ use that  $\partial_x \tilde F'$ is Carath\'eodory.  

Definitions \rif{z1}-\rif{z2} lead to introduce new control functions $g_{i,\varepsilon}\colon \Omega\times [0,\infty)\to (0,\infty)$, $i\in \{1,2,3\}$ as:
\begin{flalign}
&g_{1,\varepsilon}(x,t):=\mathfrak{g}_{1}\begin{cases}\label{g1e}
\ \frac{g_{1}(x,\varepsilon)}{(\varepsilon^{2}+\mu_{\varepsilon}^{2})^{\frac{\gamma-2}{2}}}(t^{2}+\mu_{\varepsilon}^{2})^{\frac{\gamma-2}{2}}\quad &\mbox{if} \ t\in [0,\varepsilon)\\  
\ g_{1}(x,t)\quad &\mbox{if} \ t\in [\varepsilon,T_{\varepsilon})\\
\ \frac{g_{1}(x,T_{\varepsilon})}{(T_{\varepsilon}^{2}+\mu_{\varepsilon}^{2})^{\frac{\gamma-2}{2}}}(t^{2}+\mu_{\varepsilon}^{2})^{\frac{\gamma-2}{2}}\quad &\mbox{if} \ t\in [T_{\varepsilon},\infty)\,,
\end{cases}\\
&g_{2,\varepsilon}(x,t):=\go_{2}\begin{cases}\label{g2e}
\ \left[\frac{g_{2}(x,\varepsilon)}{(\varepsilon^{2}+\mu_{\varepsilon}^{2})^{\frac{\gamma-2}{2}}}+\varepsilon\right](t^{2}+\mu_{\varepsilon}^{2})^{\frac{\gamma-2}{2}}\quad &\mbox{if} \ t\in [0,\varepsilon)\\
\ g_{2}(x,t)+\varepsilon(t^{2}+\mu_{\varepsilon}^{2})^{\frac{\gamma-2}{2}}\quad &\mbox{if} \ t\in [\varepsilon,T_{\varepsilon})\\
\ \left[ \frac{g_{2}(x,T_{\varepsilon})}{(T_{\varepsilon}^{2}+\mu_{\varepsilon}^{2})^{\frac{\gamma-2}{2}}}+\varepsilon\right](t^{2}+\mu_{\varepsilon}^{2})^{\frac{\gamma-2}{2}}\quad &\mbox{if} \ t\in [T_{\varepsilon},\infty)\,,
\end{cases}\\
&g_{3,\varepsilon}(x,t):=\begin{cases}\label{g3e}
\ \frac{g_{3}(x,\varepsilon)}{(\varepsilon^{2}+\mu_{\varepsilon}^{2})^{\frac{\gamma-2}{2}}\eps}(t^{2}+\mu_{\varepsilon}^{2})^{\frac{\gamma-2}{2}}t\quad &\mbox{if} \ t\in [0,\varepsilon)\\
\ g_{3}(x,t)\quad &\mbox{if} \ t\in [\varepsilon,T_{\varepsilon})\\
\ \frac{g_{3}(x,T_{\varepsilon})}{(T_{\varepsilon}^{2}+\mu_{\varepsilon}^{2})^{\frac{\gamma-2}{2}}T_\eps}(t^{2}+\mu_{\varepsilon}^{2})^{\frac{\gamma-2}{2}}t\quad &\mbox{if} \ t\in [T_{\varepsilon},\infty)\,,
\end{cases}
\end{flalign}
where the constants $\go_{1}$ and $\go_{2}$ are defined by
\begin{flalign}\label{g123}
\go_{1}:=\min\{1,\gamma-1\}\le 1 \le \go_{2}:=20(Nn+\gamma)\,.
\end{flalign}
Observe that $g_{1,\eps}, g_{2, \eps}$ are still continuous and $g_{3,\eps}$ is still Carath\'eodory regular. We next introduce also the truncated counterparts of the functions defined in \eqref{GG}, i.e.,
\eqn{gte}
$$
\begin{cases}
\displaystyle G_{\varepsilon}(x,t):=\int_{T}^{\max\{t,T\}}g_{1,\varepsilon}(x,s)s \d s \\
 \bar{G}_{\varepsilon}(x,t):=G_{\varepsilon}(x,t)+(T^2+1)^{\gamma/2}\,.
\end{cases}
$$
In the following, recalling \rif{z1}, we shall repeatedly use
\eqn{repeat}
$$
1 \leq  \frac{s^2+\mu_\eps^2}{s^2+\mu^2}  \leq 3, \  \mbox{provided $s \geq \eps$}
$$
and that this is a decreasing of $s$.
\subsection{Four technical lemmas}
\begin{lemma}\label{gle} Under assumptions \eqref{0ass}-\eqref{00}. 
\begin{itemize}
\item  
There exist constants $\{\Lambda_{\varepsilon}\}$ and $\{L_{\varepsilon}\}$ such that the following properties hold:
\begin{flalign}\label{growthd}
\begin{cases}
\ \snr{\partial_{zz}F_{\varepsilon}(x,z)}\le g_{2,\varepsilon}(x,\snr{z}) \ &\mbox{if} \  (x,z)\in \Omega\times\left\{\snr{z}\geq T\,, \snr{z}\not =T_{\varepsilon}\right\}\\
\ g_{1,\varepsilon}(x,\snr{z})\snr{\xi}^{2}\le  \partial_{zz}F_{\varepsilon}(x,z)\xi\cdot\xi  \ &\mbox{if} \  (x,z)\in \Omega\times\left\{\snr{z}\geq T\,, \snr{z}\not =T_{\varepsilon}\right\}\\
\ \snr{\partial_{xz}F_{\varepsilon}(x,z)}\le h(x)g_{3,\varepsilon}(x,\snr{z}) \ &\mbox{for a.e.\,$x\in \Omega$ and every $z\in \mathbb{R}^{N\times n}$}\\
\  |\partial_{x}\bar{a}_{\eps}(x,\snr{z})||z|\le h(x)g_{3,\varepsilon}(x,\snr{z}) \ &\mbox{for a.e.\,$x\in \Omega$ and every $z\in \mathbb{R}^{N\times n}$}
\end{cases}
\end{flalign}
and
\begin{flalign}\label{zgrowthd}
\begin{cases}
\ \snr{\partial_{zz}F_{\varepsilon}(x,z)}\le \Lambda_{\varepsilon}(\snr{z}^{2}+\mu_{\varepsilon}^{2})^{\frac{\gamma-2}{2}}\ &\mbox{if} \  (x,z)\in \Omega\times \left\{\snr{z}\not=\varepsilon,T_{\varepsilon}\right\}\\
\ \varepsilon\go_{1}(\snr{z}^{2}+\mu_{\varepsilon}^{2})^{\frac{\gamma-2}{2}}\snr{\xi}^{2}\le \partial_{zz}F_{\varepsilon}(x,z)\xi\cdot \xi\  &\mbox{if} \  (x,z)\in \Omega\times \left\{\snr{z}\not =\varepsilon,T_{\varepsilon}\right\}\\
\ \snr{\partial_{xz}F_{\varepsilon}(x,z)}\le L_{\varepsilon}h(x)(\snr{z}^{2}+\mu_{\varepsilon}^{2})^{\frac{\gamma-2}{2}}|z|\ &\mbox{for a.e.\,$x\in \Omega,\ \forall \, z\in \mathbb{R}^{N\times n}$}
\end{cases}
\end{flalign}
for all $\xi \in \mathbb{R}^{N\times n}$. As a consequence, it follows that
\begin{flalign}\label{a1}
 g_{1,\varepsilon}(x,\snr{z})\leq  \bar{a}_{\eps}(x,\snr{z})\le g_{2,\varepsilon}(x,\snr{z})
\end{flalign}
holds for all $(x,z)\in \Omega\times \{\snr{z}\geq T\}$, and
\begin{flalign}\label{a2}
\begin{cases}
\ \bar{a}_{\eps}(x,\snr{z})+\bar{a}_{\eps}'(x,\snr{z})\snr{z}\ge g_{1,\varepsilon}(x,\snr{z})\\
\ \bar{a}_{\eps}(x,\snr{z})+\bar{a}_{\eps}'(x,\snr{z})\snr{z}\le g_{2,\varepsilon}(x,\snr{z})
\end{cases}
\end{flalign}
hold for all $(x,z)\in \Omega\times \{\snr{z}\geq T, \ \snr{z}\not = T_{\varepsilon}\}$. 
    \item It holds that 
    \eqn{einc}
$$
0 < s \leq t \  \Longrightarrow \begin{cases}
 g_{1,\eps}(x,s)s \leq
g_{1,\eps}(x,t)t\\\
 \tilde a_{\eps}(x,s)s \leq
\tilde a_{\eps}(x,t)t\,.
\end{cases}
$$
In particular, the function $t \mapsto G_{\varepsilon}(\cdot, t)$ in \trif{gte} is convex. 
\item For every $(x, z)\in \Omega\times \{\snr{z}\geq T\}$, it holds that:
\begin{flalign}\label{z6}
\begin{cases}
\ c(\nu,\gamma)(|z|^2+\mu_{\varepsilon}^2)^{(\gamma-2)/2}\leq g_{1,\varepsilon}(x,|z|)\\
\  G(x,\snr{z})\leq F(x,z)\\
G_{\varepsilon}(x,\snr{z})\leq F_{\varepsilon}(x,z)\,.
\end{cases}
\end{flalign}
For every $(x,t) \in \Omega \times [T, \infty)$, there holds
    \eqn{z10a}
    $$
    \left\{
    \begin{array}{c}
  \displaystyle  (t^{2}+\mu_{\varepsilon}^{2})^{\gamma/2}\le c(\nu, \gamma) \bar{G}_{\varepsilon}(x,t)\\ 
    \displaystyle   {G}_{\varepsilon}(x,t)\leq c(\nu, \gamma) [\bar{G}_{\varepsilon}(x,t)]^{2/\gamma}g_{1,\varepsilon}(x,t)\,.
    \end{array}
    \right.
    $$
Again for every $(x, z)\in \Omega\times \{\snr{z}\geq T\}$, it holds    
\begin{flalign}\label{z7}
\begin{cases}
\ c(\nu, \gamma) F(x,z)\ge (\snr{z}^{2}+\mu^{2})^{\gamma/2}-(T^{2}+\mu^{2})^{\gamma/2}\\
\  c(\nu, \gamma)F_{\varepsilon}(x,z)\ge (\snr{z}^{2}+\mu^{2})^{\gamma/2}-(T^{2}+\mu^{2})^{\gamma/2}\,.
\end{cases}
\end{flalign}
\item For every $x \in \Omega$ and for a fixed constant $c\equiv c (\nu, \gamma)$, we have 
\eqn{z6bis}
$$
\begin{cases}
\tilde a_{\eps} (x,t)\leq c \tilde a (x,t) \quad  \mbox{for $t \geq \eps$} \\
(t^2+\mu_{\eps}^2)^{(\gamma-2)/2}\leq  c\tilde a (x,t) \quad \mbox{for $t\geq T$}\,.
\end{cases}
$$
\item  Finally, for another constant $c$ depending on  $\nu$ and $\|\tilde a(\cdot, T)\|_{L^{\infty}}(T^2+\mu^2)+T^\gamma$, we have that 
\eqn{dominaF}
$$
F_{\eps}(x,z) \leq cF(x,z)+ c \quad \mbox{holds for all }  (x,z) \in \Omega \times \er^{N\times n}
$$ 
and that
\eqn{dimenticata}
$$
\frac{\eps}{\gamma} (t^2+\mu_\eps^2)^{\gamma/2}- \frac{\eps\mu_\eps^\gamma}{\gamma} \leq \tilde F_\eps(x,t)\leq c_{\eps}(t^2+\mu_{\eps}^2)^{\gamma/2}
$$
holds for all $(x,t)\in \Omega\times [0,\infty)$\,.
\end{itemize}
\end{lemma}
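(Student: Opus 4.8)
The proof proceeds item by item, exploiting the explicit, piecewise-power definitions of $\tilde a_\eps$, $g_{i,\eps}$, $G_\eps$, $\bar G_\eps$ in \rif{z2}, \rif{g1e}-\rif{g3e}, \rif{gte}, and the consequences \rif{1bis}-\rif{banaleg1g2} of the structural assumptions \rif{0ass}-\rif{00}. The overarching mechanism is that each truncated function agrees with the original on the middle regime $t\in[\eps,T_\eps)$, and is a pure multiple of $(t^2+\mu_\eps^2)^{(\gamma-2)/2}$ (times $t$ where appropriate) on the two tails, with the matching constants chosen precisely so continuity holds at $t=\eps$ and $t=T_\eps$; the constants $\go_1,\go_2$ in \rif{g123} are inserted to absorb the dimensional factors $N,n$ coming from $\partial_{zz}F_\eps$ via the formula \rif{z0} (applied to $F_\eps$).

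First I would establish \rif{growthd} and \rif{zgrowthd}: differentiating $F_\eps(x,z)=\tilde F_\eps(x,|z|)$ via the analogue of \rif{z0}, one gets $\partial_{zz}F_\eps = \bar a_\eps(x,|z|)\I + \bar a_\eps'(x,|z|)|z|\,z\otimes z/|z|^2$, so that the eigenvalues are $\bar a_\eps(x,|z|)$ and $\bar a_\eps(x,|z|)+\bar a_\eps'(x,|z|)|z|$; on $[\eps,T_\eps)$ these coincide (up to $\varepsilon(t^2+\mu_\eps^2)^{(\gamma-2)/2}$ from the $L_{\gamma,\eps}$ term) with the original quantities in \rif{22}-\rif{23}, while on the tails one computes the derivative of $c(t^2+\mu_\eps^2)^{(\gamma-2)/2}t$ explicitly and bounds it against $g_{1,\eps},g_{2,\eps}$ using the monotonicity built into \rif{00} and \rif{00bis}; here the factor $\go_2=20(Nn+\gamma)$ in \rif{g2e} is what makes $|\partial_{zz}F_\eps|\le g_{2,\eps}$ hold (as opposed to just the largest eigenvalue), and $\go_1=\min\{1,\gamma-1\}$ guarantees $\bar a_\eps+\bar a_\eps' t\ge g_{1,\eps}$ on the tails even when $\gamma<2$ and the radial second derivative is negative. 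The mixed-derivative bounds \rif{growthd}$_{3,4}$, \rif{zgrowthd}$_3$ follow from $\partial_{xz}F_\eps = \partial_x\bar a_\eps(x,|z|)\otimes z$ together with \rif{growth}$_4$ and \rif{growthzero} on the middle regime, and from direct differentiation of the tail expressions (which involve $\partial_x\tilde a(x,\eps)$ resp.\ $\partial_x\tilde a(x,T_\eps)$, controlled by $h(x)g_3$ using \rif{8} or directly \rif{growth}$_4$) on the tails; $g_{3,\eps}$ in \rif{g3e} is defined exactly so these match. Inequalities \rif{a1}-\rif{a2} are then just a restatement of the eigenvalue identities together with \rif{growthd}.

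Next, \rif{einc} is the truncated analogue of \rif{1bis}: on each of the three regimes the map $t\mapsto g_{1,\eps}(x,t)t$ is $c(t^2+\mu_\eps^2)^{(\gamma-2)/2}t$ times a constant, hence nondecreasing, and the matching constants at $\eps$ and $T_\eps$ were chosen to make it nondecreasing across the junctions; the same for $\tilde a_\eps(x,t)t$, and convexity of $t\mapsto G_\eps(x,t)$ follows since its derivative for $t>T$ is $g_{1,\eps}(x,t)t$, nondecreasing. For the chain \rif{z6}-\rif{z7} and \rif{z10a}: the lower bound $g_{1,\eps}\gtrsim (t^2+\mu_\eps^2)^{(\gamma-2)/2}$ comes from \rif{00bis} on $[\eps,T_\eps)$ (note $T\le t$ there after using $T\le T_\eps$) and from the explicit tail forms elsewhere, combined with \rif{repeat} to swap $\mu$ and $\mu_\eps$; then $G_\eps(x,t)=\int_T^{\max\{t,T\}}g_{1,\eps}s\,ds \ge c\int_T^t (s^2+\mu_\eps^2)^{(\gamma-2)/2}s\,ds = c[(t^2+\mu_\eps^2)^{\gamma/2}-(T^2+\mu_\eps^2)^{\gamma/2}]$, which gives \rif{z7}$_2$ after adding the constant in $\bar G_\eps$ and using \rif{z5} ($F_\eps\ge \int_0^t \tilde a_\eps s\,ds \ge G_\eps$ on $|z|\ge T$); \rif{z6}$_{2}$ is the same for the untruncated objects. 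The bound \rif{z10a}$_1$ is immediate from \rif{z7}$_2$-type estimate plus the definition of $\bar G_\eps$, and \rif{z10a}$_2$ follows by writing $G_\eps \le c(t^2+\mu_\eps^2)^{\gamma/2} = c[(t^2+\mu_\eps^2)^{\gamma/2}]^{2/\gamma}\cdot (t^2+\mu_\eps^2)^{(\gamma-2)/2}\cdot(\text{stuff})$ and invoking \rif{z10a}$_1$ and $g_{1,\eps}\gtrsim (t^2+\mu_\eps^2)^{(\gamma-2)/2}$. The comparison \rif{z6bis} is again a regime-by-regime check: on $[\eps,T_\eps)$ equality holds; on $[0,\eps)$ one uses $\tilde a_\eps(x,t)=\tilde a(x,\eps)(t^2+\mu_\eps^2)^{(\gamma-2)/2}/(\eps^2+\mu_\eps^2)^{(\gamma-2)/2}$ and that for $t\ge\eps$ this is $\le c\,\tilde a(x,t)$ by \rif{00}/\rif{1bis}; on the far tail similarly with $T_\eps$ in place of $\eps$; the second line of \rif{z6bis} uses \rif{00bis}. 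Finally \rif{dominaF} and \rif{dimenticata} are integrated versions: $\tilde F_\eps(x,t)=\int_0^t\tilde a_\eps s\,ds + \eps L_{\gamma,\eps}(t)$, and one estimates the two tails of the integral by $c(t^2+\mu_\eps^2)^{\gamma/2}$ (a constant depending on $\eps$ through $T_\eps$, giving \rif{dimenticata}) while on a fixed compact range one compares $\tilde F_\eps$ directly to $\tilde F$ using \rif{z6bis} and the bound on $\|\tilde a(\cdot,T)\|_{L^\infty}(T^2+\mu^2)+T^\gamma$; the lower bound in \rif{dimenticata} is just $\tilde F_\eps \ge \eps L_{\gamma,\eps}(t)$.

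The main obstacle I anticipate is not conceptual but bookkeeping: verifying the junction (matching) conditions at $t=\eps$ and $t=T_\eps$ for \emph{all} the functions simultaneously, and checking that the constants $\go_1,\go_2$ are large/small enough to make $|\partial_{zz}F_\eps|\le g_{2,\eps}$ and the ellipticity $g_{1,\eps}|\xi|^2\le \partial_{zz}F_\eps\xi\cdot\xi$ hold with the correct (dimension-dependent) constants across the tails, where the radial factor $\bar a_\eps'(x,t)t$ can have either sign. One must be careful that the $\eps L_{\gamma,\eps}$ regularizing term in \rif{z5} contributes $\eps(t^2+\mu_\eps^2)^{(\gamma-2)/2}$ to $\bar a_\eps$ (hence the definition \rif{arad}) and that this extra term is precisely what is recorded in $g_{2,\eps}$ and in \rif{zgrowthd}$_2$; getting the nondegeneracy \rif{zgrowthd}$_2$ with the clean constant $\eps\go_1$ rather than something degenerating faster in $\eps$ requires using that $\tilde a_\eps\ge 0$ and that on the tails $\tilde a_\eps$ itself is a nonnegative multiple of $(t^2+\mu_\eps^2)^{(\gamma-2)/2}$, so the $\eps$-term alone already gives the stated lower bound. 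None of the individual computations is hard; the work is in organizing the case distinctions cleanly and quoting the right earlier consequence (\rif{1bis}, \rif{00bis}, \rif{22}-\rif{23}, \rif{growthzero}) at each step.
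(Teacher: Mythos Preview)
Your plan is essentially the paper's own proof: explicit piecewise computation of $\partial_{zz}F_\eps$ and $\partial_{xz}F_\eps$ on the three regimes, followed by the consequences \rif{a1}-\rif{a2} via the eigenvalue identities (exactly as in \rif{22}-\rif{23}), and then the energy comparisons \rif{z6}-\rif{dimenticata} by integrating the pointwise bounds. The paper writes out the Hessian using $\mathcal{C}_\eps(z)=\I+(\gamma-2)z\otimes z/(|z|^2+\mu_\eps^2)$ and gives $\Lambda_\eps,L_\eps$ explicitly, but the mechanism is precisely what you describe.

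Two small corrections to your sketch. First, for \rif{z10a}$_2$ the clean route (and the paper's) is \emph{not} $G_\eps\le c(t^2+\mu_\eps^2)^{\gamma/2}$ first---that direction is not available in general---but rather the monotonicity bound
\[
G_\eps(x,t)=\int_T^t g_{1,\eps}(x,s)s\,ds\le g_{1,\eps}(x,t)t\cdot(t-T)\le g_{1,\eps}(x,t)t^2,
\]
which uses \rif{einc}, and then $t^2\le(t^2+\mu_\eps^2)\le c[\bar G_\eps(x,t)]^{2/\gamma}$ from \rif{z10a}$_1$. Second, for \rif{z6bis}$_1$ the statement is only for $t\ge\eps$, so the regime $[0,\eps)$ is irrelevant; the nontrivial case is $t\ge T_\eps$, where one uses the monotonicity of $\tilde a(x,\cdot)/(\cdot^2+\mu^2)^{(\gamma-2)/2}$ from \rif{00} together with \rif{repeat} to compare $\tilde a_\eps(x,t)=\tilde a(x,T_\eps)(t^2+\mu_\eps^2)^{(\gamma-2)/2}/(T_\eps^2+\mu_\eps^2)^{(\gamma-2)/2}$ with $\tilde a(x,t)$.
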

\begin{proof}
By the very definitions in \eqref{z2}-\eqref{z5}, we note that $\partial_{zz}F_{\varepsilon}(x,z)$ exists for all $(x,z)\in \Omega\times \left\{\snr{z}\not=\varepsilon,T_{\varepsilon}\right\}$ with
\begin{flalign*}
\partial_{zz}F_{\varepsilon}(x,z)=\begin{cases}
\ (\snr{z}^{2}+\mu_{\varepsilon}^{2})^{\frac{\gamma-2}{2}}\left[\frac{\tilde{a}(x,\varepsilon)}{(\varepsilon^{2}+\mu_{\varepsilon}^{2})^{\frac{\gamma-2}{2}}}+\varepsilon\right]\mathcal{C}_{\varepsilon}(z) \\ \qquad   \quad \mbox{if} \ (x,z)\in \Omega\times \{\snr{z}<\varepsilon\}\\
\ \partial_{zz}F(x,z)+\varepsilon (\snr{z}^{2}+\mu_{\varepsilon}^{2})^{\frac{\gamma-2}{2}} \mathcal{C}_{\varepsilon}(z)\\ 
\qquad \quad \mbox{if} \ (x,z)\ \in \Omega\times\{\varepsilon < \snr{z}<T_{\varepsilon}\}\\
\ (\snr{z}^{2}+\mu_{\varepsilon}^{2})^{\frac{\gamma-2}{2}}\left[\frac{\tilde{a}(x,T_{\varepsilon})}{(T_{\varepsilon}^{2}+\mu_{\varepsilon}^{2})^{\frac{\gamma-2}{2}}}+\varepsilon\right]\mathcal{C}_{\varepsilon}(z) \\
\qquad \quad \mbox{if} \ (x,z)\in \Omega\times\{T_{\varepsilon} < \snr{z}\}\,,
\end{cases}
\end{flalign*}
where
\begin{flalign*}
\mathcal{C}_{\varepsilon}(z):=\mathbb{I}_{N\times n}+(\gamma-2)\frac{z\otimes 
z}{\snr{z}^{2}+\mu_{\varepsilon}^{2}}\quad \mbox{for}  \  z\in \mathbb{R}^{N\times n}\,.
\end{flalign*}
Moreover, recalling \rif{z2}, by the definition of weak derivatives we have 
\begin{flalign*}
\partial_{xz}F_{\varepsilon}(x,z)=\begin{cases} 
\ \frac{\partial_x\tilde{a}(x,\varepsilon)}{(\varepsilon^{2}+\mu^{2}_{\varepsilon})^{\frac{\gamma-2}{2}}}(|z|^{2}+\mu^{2}_{\varepsilon})^{\frac{\gamma-2}{2}} z &\mbox{if} \ (x,z)\in \Omega\times \{\snr{z}<\varepsilon\}\\
\ \partial_x\tilde{a}(x,|z|)z &\mbox{if} \ (x,z)\ \in \Omega\times\{\varepsilon\le\snr{z}<T_{\varepsilon}\}\\
\ \frac{\partial_x\tilde{a}(x,T_{\varepsilon})}{(T_{\varepsilon}^{2}+\mu^{2}_{\varepsilon})^{\frac{\gamma-2}{2}}}(|z|^{2}+\mu^{2}_{\varepsilon})^{\frac{\gamma-2}{2}}z&\mbox{if} \ (x,z)\in \Omega\times\{ T_{\varepsilon}\leq \snr{z}\}\,.
\end{cases}
\end{flalign*}
Then \eqref{growthd}-\eqref{zgrowthd} directly follow by \eqref{growth}, \rif{growthzero}, \eqref{z2}, \eqref{g1e}-\eqref{g3e}, and with the choice\begin{flalign*}
&\Lambda_{\varepsilon}:=\go_{2}\left[1+\sup_{x\in \Omega}\frac{2g_{2}(x,T_{\varepsilon})}{(T_{\varepsilon}^2+\mu_{\varepsilon}^{2})^{\frac{\gamma-2}{2}}}+\sup_{x\in \Omega,\snr{z}\in [\varepsilon,T_{\varepsilon}]}\frac{\snr{\partial_{zz}F(x,z)}}{(\snr{z}^{2}+\mu_{\varepsilon}^{2})^{\frac{\gamma-2}{2}}}\right],\\
&L_{\varepsilon}:=\sup_{t\in [\eps,T_{\varepsilon}]}\frac{\|g_{3}(\cdot,t)\|_{L^\infty(\Omega)}}{(t^{2}+\mu_{\varepsilon}^{2})^{\frac{\gamma-2}{2}}t}\,.
\end{flalign*}
Note that we are also using \rif{23}$_2$ and \rif{00} in order to get upper bounds for $|\partial_{zz}F_{\varepsilon}(x,z)|$, in \rif{growthd}$_1$ and \rif{zgrowthd}$_1$, respectively; also \rif{repeat} is used to estimate
$$
\frac{\tilde{a}(x,\eps)}{(\varepsilon^2+\mu_{\varepsilon}^{2})^{\frac{\gamma-2}{2}}}\leq 
\frac{2\tilde{a}(x,T_{\varepsilon})}{(T_{\varepsilon}^2+\mu_{\varepsilon}^{2})^{\frac{\gamma-2}{2}}}\leq
\frac{2g_{2}(x,T_{\varepsilon})}{(T_{\varepsilon}^2+\mu_{\varepsilon}^{2})^{\frac{\gamma-2}{2}}}\,.
$$
We also used \rif{22}$_2$ to get \rif{growthd}$_2$. 

As for \eqref{a1}-\eqref{a2}, when $|z| \not = T_{\eps}$, these follow from definition \rif{arad} and \eqref{growthd}, arguing exactly as for \eqref{22}-\eqref{23}. The case $|z| = T_{\eps}$ of \eqref{a1} follows by continuity.

The property in \rif{einc} readily follows from \rif{1bis} and the definitions in \eqref{g1e}-\eqref{g2e}, also using the fact that $t \mapsto (t^{2}+\mu_{\varepsilon}^{2})^{(\gamma-2)/2}t$ is increasing.

We now come to \rif{z6}-\rif{z7}. For $\eqref{z6}_{2}$-$\eqref{z7}_{1}$ we note that 
\begin{eqnarray}
F(x,z)&\geq & \int_{T}^{\snr{z}}\tilde{a}(x,s)s \d s \stackrel{\eqref{22}_{2}}{\ge} \int_{T}^{\snr{z}}g_{1}(x,s)s \d s \stackrel{\eqref{GG}}{=}G(x,\snr{z})\nonumber \\
& \stackrel{\eqref{00bis}}{\geq}& \nu\int_{T}^{\snr{z}}(s^{2}+\mu^{2})^{\frac{\gamma-2}{2}}s \d s\stackrel{\eqref{Hnot}}{=}\frac{\nu}{\gamma}\left\{[E(\snr{z})]^{\gamma}-[E(T)]^{\gamma}\right\}.
\label{plano0}
\end{eqnarray}
\indent Let us now take care of \eqref{z10a}$_1$. For $t\in [T,T_{\varepsilon})$, using \eqref{00bis} and \eqref{repeat}, we see that (recall that $\eps \leq T/4$)
\begin{flalign}
\notag \bar{G}_{\varepsilon}(x,t)& \ge \frac{\go_1\nu}{\gamma}\left[(t^{2}+\mu^{2})^{\gamma/2}-(T^{2}+\mu^{2})^{\gamma/2}\right]+(T^2+1)^{\gamma/2}\\
&\ge \frac{(t^{2}+\mu_{\varepsilon}^{2})^{\gamma/2}}{c(\nu, \gamma)}\,,\label{plano1}
\end{flalign}
while, when $t\ge T_{\varepsilon}$ by analogous means, we have
\begin{flalign}
\notag& \bar{G}_{\varepsilon}(x,t)=\go_{1}\int_{T_{\varepsilon}}^{t}\frac{g_{1}(x,T_{\varepsilon})}{(T_{\varepsilon}^{2}+\mu_{\varepsilon}^{2})^{\frac{\gamma-2}{2}}}(s^{2}+\mu_{\varepsilon}^{2})^{\frac{\gamma-2}{2}}s \d s \\
& \notag \qquad \qquad + \go_{1}\int_{T}^{T_{\varepsilon}}g_{1}(x,s)s \d s+(T^2+1)^{\gamma/2}\nonumber \\
&\qquad \ \ \ \ge \frac{1}{c(\nu, \gamma)}\int_{T}^{t}(s^{2}+\mu^{2})^{\frac{\gamma-2}{2}}s \d s+(T^2+1)^{\gamma/2}\ge \frac{(t^{2}+\mu_{\varepsilon}^{2})^{\gamma/2}}{c(\nu, \gamma)}\,.\label{plano2}
\end{flalign}
Implicit in the arguments from \rif{plano1}-\rif{plano2} is also the proof of $\eqref{z6}_{1}$.  
Inequalites $\eqref{z6}_{3}$ and $\eqref{z7}_{2}$ follow using the lower bound in \rif{a1} and then  \rif{z6}$_1$, as in \rif{plano0}. As for $\eqref{z10a}_{2}$, note that 
\begin{eqnarray}
\notag G_{\varepsilon}(x,t) =  \int_{T}^t g_{1, \eps}(x,s) s \, ds & \stackleq{einc} &
g_{1, \eps}(x,t) t \int_{T}^t  \, ds\\
& = &  g_{1,\eps}(x,t)t (t-T)\leq g_{1, \eps}(x,t) t^2 \label{wewewe}
\end{eqnarray}
holds whenever $t \geq T$. Now $\eqref{z10a}_{2}$ follows using \rif{wewewe} with $\eqref{z10a}_{1}$. 

The proof of \rif{z6bis}$_1$ follows straightaway from the definition of $\tilde a_{\eps}$ in \rif{z2}, the first property in \rif{00} and \rif{repeat}; the relation in \rif{z6bis}$_2$ instead follows from \rif{00bis}, \rif{22}$_2$ and again \rif{repeat}. 

For the proof of \rif{dominaF} we use \rif{00bis} and \rif{repeat}, to get, in the case $t\leq T$
\begin{flalign}
\notag \tilde F_{\eps}(x,t)  & \leq c \left[\frac{\tilde a(x,T)}{(T^2+\mu_{\eps}^2)^{\frac{\gamma-2}{2}}}+\eps\right]\int_0^t (s^2+\mu_\eps^2)^{\frac{\gamma-2}{2}} s \, ds\\
&  \leq  c\|\tilde a(\cdot, T)\|_{L^{\infty}}(T^2+\mu^2)+ c (T^\gamma +1)\,. \label{slipffff}
\end{flalign}
On the other hand, when $t >T$, using \rif{z6bis} gives that $\tilde F_{\eps}(x,t) - \tilde F_{\eps}(x,T) \leq  c \tilde F (x, t)$ and this, together with the content of the last display, gives \rif{z6bis} again. Finally, the proof of \rif{dimenticata} follows straightaway from the definitions in \rif{z2}-\rif{z5}. \end{proof}
\begin{lemma}\label{zlem0}
Under assumptions \eqref{0ass}-\eqref{00}
\begin{itemize}
    \item If \eqref{9} is also in force for some $\sigma\geq 0$, then for every $(x,t) \in \Omega \times [T, \infty)$ it holds that 
    \begin{flalign}\label{z9}
  \frac{g_{2,\varepsilon}(x,t)}{g_{1,\varepsilon}(x,t)}\le c[\bar{G}_{\varepsilon}(x,t)]^{\sigma}\,, \quad \mbox{where $c\equiv c(n,N,\nu,\gamma, \cb)$}\,.
    \end{flalign}
   In particular, if $g_2/g_2\leq \cb$ holds as in \trif{sigma-int}, then it also holds that
       \begin{flalign}\label{z9hold}
  \frac{g_{2,\varepsilon}(x,t)}{g_{1,\varepsilon}(x,t)}\le  c(n,N,\nu,\gamma, \cb)\,.
    \end{flalign}
    \item If \eqref{7} is also in force, then, with $\go_1$ as in \trif{g123}, for a.e.\,$x\in \Omega$ and every $z\in \mathbb{R}^{N\times n}$ with $|z|\geq T$, it holds that
    \begin{flalign}\label{z10}
    \snr{\partial_{x}g_{1,\varepsilon}(x,t)}t\le \frac{h(x)}{\mathfrak{g}_1^{\hat \sigma}(1+\hat \sigma)}\partial_{t}[\bar{G}_{\varepsilon}(x,t)]^{1+\hat{\sigma}}\,.
        \end{flalign}
    \item For all $(x,t) \in \Omega \times [T, \infty)$, the following holds:
\eqn{cresceg}
$$
\eps_1 < \eps_2 <\min\{1,T\}/4\Longrightarrow g_{1,\varepsilon_2}(x,t) \leq c(\gamma)g_{1,\varepsilon_1}(x,t)
$$
so that, this time for all $(x,t) \in \Omega \times [0, \infty)$, it follows
\eqn{cresceg2}
$$G_{\varepsilon_{2}}(x,t) \le c(\gamma)G_{\varepsilon_{1}}(x,t)\quad \mbox{and}\quad \lim_{\eps \to 0}\, G_{\varepsilon}(x,t)=\go_{1} G(x,t)\,.$$
The last convergence occurs uniformly on compact subsets of $[0, \infty)$. 
\end{itemize}
\end{lemma}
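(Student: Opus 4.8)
The plan is to verify the three items in turn, working directly from the piecewise definitions \eqref{g1e}--\eqref{g3e} and \eqref{gte} and using only the monotonicity properties \eqref{1bis}--\eqref{00} of the original control functions together with the elementary two-sided comparison \eqref{repeat}. The single observation to be exploited everywhere is that $G_\varepsilon(x,t)=\go_1 G(x,t)$ for $T\le t\le T_\varepsilon$ — since $[T,T_\varepsilon)\subset[\varepsilon,T_\varepsilon)$, where $g_{1,\varepsilon}(x,\cdot)=\go_1 g_1(x,\cdot)$ — which, together with $\go_1\le 1$ and \eqref{GG}, \eqref{gte}, gives $\go_1\bar G(x,t)\le\bar G_\varepsilon(x,t)$ for $0\le t\le T_\varepsilon$, and hence, by monotonicity of $t\mapsto\bar G_\varepsilon(x,t)$, also $\go_1\bar G(x,T_\varepsilon)\le\bar G_\varepsilon(x,t)$ for every $t\ge T_\varepsilon$. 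For \eqref{z9} I would split $[T,\infty)$ into $[T,T_\varepsilon)$ and $[T_\varepsilon,\infty)$: on the first interval $g_{2,\varepsilon}/g_{1,\varepsilon}$ equals $(\go_2/\go_1)[\,g_2(x,t)/g_1(x,t)+\varepsilon(t^2+\mu_\varepsilon^2)^{(\gamma-2)/2}/g_1(x,t)\,]$, whose first summand is controlled by \eqref{9} and whose second summand is bounded by $c(\nu,\gamma)$ via \eqref{00bis}, \eqref{repeat} and $\varepsilon<1$, hence by $c[\bar G(x,t)]^\sigma$ because $\bar G\ge 1$; on the second interval this ratio is independent of $t$ by \eqref{g1e}--\eqref{g2e} and is treated identically with $t$ replaced by $T_\varepsilon$. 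In both cases the preliminary observation upgrades $[\bar G(x,\cdot)]^\sigma$ to $c[\bar G_\varepsilon(x,t)]^\sigma$, and \eqref{z9hold} is just the special case $\sigma=0$.

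For \eqref{z10} I would first record that $\partial_t[\bar G_\varepsilon(x,t)]^{1+\hat\sigma}=(1+\hat\sigma)[\bar G_\varepsilon(x,t)]^{\hat\sigma}g_{1,\varepsilon}(x,t)t$ for $t>T$, so that the inequality is equivalent to $|\partial_x g_{1,\varepsilon}(x,t)|\le\go_1^{-\hat\sigma}h(x)[\bar G_\varepsilon(x,t)]^{\hat\sigma}g_{1,\varepsilon}(x,t)$. On $[T,T_\varepsilon)$ one has $\partial_x g_{1,\varepsilon}=\go_1\partial_x g_1$, and the claim follows from \eqref{7} after inserting $\bar G(x,t)\le\go_1^{-1}\bar G_\varepsilon(x,t)$. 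On $[T_\varepsilon,\infty)$, $\partial_x g_{1,\varepsilon}(x,t)=\go_1(t^2+\mu_\varepsilon^2)^{(\gamma-2)/2}(T_\varepsilon^2+\mu_\varepsilon^2)^{-(\gamma-2)/2}\partial_x g_1(x,T_\varepsilon)$; bounding $|\partial_x g_1(x,T_\varepsilon)|$ through \eqref{7}, recognising the surviving factor as $g_{1,\varepsilon}(x,t)$, and using $\bar G(x,T_\varepsilon)\le\go_1^{-1}\bar G_\varepsilon(x,t)$ finishes the argument.

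For \eqref{cresceg}--\eqref{cresceg2} I would fix $\varepsilon_1<\varepsilon_2<\min\{1,T\}/4$, so $\mu_{\varepsilon_1}<\mu_{\varepsilon_2}\le 2$ and $T_{\varepsilon_1}>T_{\varepsilon_2}>4$, and, for $t\ge T$, distinguish the three regimes $t<T_{\varepsilon_2}$ (where $g_{1,\varepsilon_1}(x,t)=\go_1 g_1(x,t)=g_{1,\varepsilon_2}(x,t)$), $T_{\varepsilon_2}\le t<T_{\varepsilon_1}$ (where $g_{1,\varepsilon_2}$ is frozen while $g_{1,\varepsilon_1}=\go_1 g_1(x,t)$) and $t\ge T_{\varepsilon_1}$ (both frozen). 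In the last two regimes the bound $g_{1,\varepsilon_2}(x,t)\le c(\gamma)g_{1,\varepsilon_1}(x,t)$ follows from the monotonicity \eqref{00} of $t\mapsto g_1(x,t)/(t^2+\mu^2)^{(\gamma-2)/2}$ combined with \eqref{repeat}, plus, in the last one, the elementary bounds $1\le(t^2+\mu_{\varepsilon_2}^2)/(t^2+\mu_{\varepsilon_1}^2)\le c$ (valid since $t\ge T_{\varepsilon_1}\ge 1$ while $\mu_{\varepsilon_2}^2-\mu_{\varepsilon_1}^2\le 4$). Integrating \eqref{cresceg} in $s\in[T,t]$ yields the first relation in \eqref{cresceg2}; the limit in \eqref{cresceg2} is then immediate, because for $t$ ranging over a fixed compact set and $\varepsilon$ small enough that $T_\varepsilon>t$ one has $G_\varepsilon(x,t)=\go_1 G(x,t)$ identically, so the convergence is eventually an equality and thus uniform on compacta.

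The step I expect to be the main obstacle is the last regime $t\ge T_{\varepsilon_1}$ of \eqref{cresceg}: there both truncation thresholds $T_{\varepsilon_1},T_{\varepsilon_2}$ and both shifted parameters $\mu_{\varepsilon_1},\mu_{\varepsilon_2}$ move simultaneously, and obtaining a comparison constant that depends only on $\gamma$ — in particular independent of $\varepsilon_1,\varepsilon_2$ and of $x$ — forces one to use \eqref{00} precisely in the direction in which $g_1(x,t)/(t^2+\mu^2)^{(\gamma-2)/2}$ is monotone, since the cruder homogeneity bound \eqref{1bis} would lose the correct scale. Everything else reduces to routine bookkeeping with the three-piece definitions in \eqref{g1e}--\eqref{g3e}.
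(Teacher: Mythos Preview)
Your proposal is correct and follows essentially the same route as the paper: for \eqref{z9} and \eqref{z10} the paper likewise splits into $[T,T_\varepsilon)$ and $[T_\varepsilon,\infty)$, bounds $g_{2,\varepsilon}/g_{1,\varepsilon}$ by $c\,g_2/g_1$ via \eqref{00bis} and \eqref{repeat}, freezes the ratio at $T_\varepsilon$ on the outer piece, and passes from $\bar G$ to $\bar G_\varepsilon$ exactly through your preliminary observation $\go_1\bar G\le\bar G_\varepsilon$; for \eqref{cresceg}--\eqref{cresceg2} the paper simply states that they ``follow directly from the definitions in \eqref{g1e} and \eqref{gte}, using \eqref{00} and \eqref{repeat}'', so your three-regime case analysis is in fact more detailed than what the paper records.
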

\begin{proof}
For \rif{z9}, note that  \rif{00bis}, \rif{banaleg1g2}$_1$ and \rif{repeat} imply 
\eqn{ccrrss}
$$
T \leq t \leq T_\eps \quad \Longrightarrow \quad \frac{1}{\mathfrak g_1}\frac{g_{2}(x,t)}{g_{1}(x,t)}\leq  \frac{g_{2, \eps}(x,t)}{g_{1, \eps}(x,t)}\leq  c(\nu, \gamma)\, \frac{g_{2}(x,t)}{g_{1}(x,t)}
$$
and that the definitions in \eqref{g1e}-\eqref{g2e} imply
\eqn{ccrrss1} 
$$
 t >T_\eps  \Longrightarrow \frac{g_{2,\eps}(x,t)}{g_{1,\varepsilon}(x,t)}=\frac{g_{2,\eps}(x,T_\eps)}{g_{1,\eps}(x,T_\eps)}\,.
$$
Then, for
$T\le t\le T_{\varepsilon}$ we have 
\begin{flalign}
& \frac{g_{2,\eps}(x,t)}{g_{1,\varepsilon}(x,t)}\stackrel{\eqref{ccrrss}}{\le}c\frac{g_{2}(x,t)}{g_{1}(x,t)}\stackrel{\eqref{9}}{\le}c[\bar{G}(x,t)]^{\sigma}\stackrel{\rif{g1e}}{\leq }c[\bar{G}_{\varepsilon}(x,t)]^{\sigma}\label{banal}
\end{flalign}
while, for $ T_{\varepsilon} < t$, we have 
\begin{flalign*}
\frac{g_{2,\eps}(x,t)}{g_{1,\varepsilon}(x,t)}\stackrel{\rif{ccrrss1} }{=}\frac{g_{2,\eps}(x,T_\eps)}{g_{1,\eps}(x,T_\eps)}\stackrel{\eqref{banal}}{\le}[\bar{G}_{\varepsilon}(x,T_\eps)]^{\sigma}\leq c[\bar{G}_{\varepsilon}(x,t)]^{\sigma}\,,
\end{flalign*}
and \rif{z9} is completely proved. 
The proof of \rif{z9hold} follows as in the last two displays by formally taking $\sigma =0$. The proof of \eqref{z10} is a straightforward consequence of the definition in \eqref{g1e} and \eqref{7} when $t \leq T_\eps$. In the case $t >T_\eps$, we instead have, also using \rif{7}
\begin{eqnarray*}
\snr{\partial_{x}g_{1,\varepsilon}(x,t)}t & \stackrel{\rif{g1e}}{=} & \frac{\mathfrak{g}_{1}|\partial_x g_{1}(x,T_{\varepsilon})|}{(T_{\varepsilon}^{2}+\mu_{\varepsilon}^{2})^{\frac{\gamma-2}{2}}}(t^{2}+\mu_{\varepsilon}^{2})^{\frac{\gamma-2}{2}}t\\
&
\leq &
\frac{\mathfrak{g}_{1}h(x)[\bar{G}(x,T_\eps)]^{\hat{\sigma}}g_{1}(x,T_\eps)}{(T_{\varepsilon}^{2}+\mu_{\varepsilon}^{2})^{\frac{\gamma-2}{2}}}(t^{2}+\mu_{\varepsilon}^{2})^{\frac{\gamma-2}{2}}t\\
&\leq &\frac{h(x)}{\mathfrak{g}_1^{\hat \sigma}}[\bar{G}_{\eps}(x,t)]^{\hat{\sigma}}g_{1, \eps}(x,t)t \\ &= & \frac{h(x)}{\mathfrak{g}_1^{\hat \sigma}(1+\hat \sigma)}\partial_{t}[\bar{G}_{\varepsilon}(x,t)]^{1+\hat{\sigma}}
\end{eqnarray*}
that is, \rif{z10}. Finally, the properties in \eqref{cresceg}-\eqref{cresceg2}, follow directly from the definitions in \eqref{g1e} and \eqref{gte}, using \rif{00} and \rif{repeat}.
\end{proof}
\begin{lemma}
Under assumptions \eqref{0ass}-\eqref{00} and \trif{8}, the inequalities 
\eqn{z11}
$$
\left\{
\begin{array}{c}
\displaystyle g_{3,\varepsilon}(x,t) t\le c[\bar{G}_{\varepsilon}(x,t)]^{1+\sigma}\\ \\
\displaystyle
 \frac{[g_{3,\varepsilon}(x,t)]^{2}}{g_{1,\varepsilon}(x,t)}\le c[\bar{G}_{\varepsilon}(x,t)]^{1+2\sigma}
\end{array}
\right.
$$
hold for all $t\geq T$ and for a.e.\,$x\in \Omega$, where $c\equiv c(n,N,\nu,\gamma,\cb)$.
\end{lemma}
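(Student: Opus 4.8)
The plan is to reduce everything to the un-truncated inequalities \eqref{8} by exploiting that, since $0<\varepsilon<\min\{1,T\}/4$, for $t\geq T$ only the two branches $t\in[T,T_{\varepsilon})$ and $t\in[T_{\varepsilon},\infty)$ of the definitions \eqref{g1e}, \eqref{g3e} of $g_{1,\varepsilon},g_{3,\varepsilon}$ can occur (here $T_{\varepsilon}=T+1/\varepsilon$, $\mu_{\varepsilon}=\mu+\varepsilon$ as in \eqref{z1}, and note $T_{\varepsilon}>4>\mu_{\varepsilon}$). I would verify \eqref{z11} on each of these two ranges separately; all manipulations are pointwise in $t$ and hold for a.e.\,$x$, which is all that is needed since $g_{3,\varepsilon}$ is only Carath\'eodory.

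On the range $t\in[T,T_{\varepsilon})$ there is little to do: by \eqref{g1e}, \eqref{g3e} one has $g_{1,\varepsilon}(x,t)=\mathfrak{g}_{1}g_{1}(x,t)$ and $g_{3,\varepsilon}(x,t)=g_{3}(x,t)$, and since the first identity also holds for all $s\in[T,t]$, \eqref{gte} gives $G_{\varepsilon}(x,t)=\mathfrak{g}_{1}G(x,t)$, hence $\mathfrak{g}_{1}\bar{G}(x,t)\leq \bar{G}_{\varepsilon}(x,t)\leq \bar{G}(x,t)$ (using $\mathfrak{g}_{1}\leq 1$, see \eqref{g123}); by continuity the same comparison holds at $t=T_{\varepsilon}$. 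Inserting these relations into \eqref{8} yields \eqref{z11} on $[T,T_{\varepsilon})$ at once, with a constant depending only on $\gamma,\cb,\sigma$.

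The substantive part is the range $t\geq T_{\varepsilon}$, where $g_{1,\varepsilon}(x,\cdot)$ and $g_{3,\varepsilon}(x,\cdot)$ are explicit homogeneous-type extensions of their values at $T_{\varepsilon}$. The key auxiliary fact I would establish first is the scaling comparison
\[
\bar{G}_{\varepsilon}(x,t)\ \geq\ \frac{1}{c(\nu,\gamma)}\,\bar{G}_{\varepsilon}(x,T_{\varepsilon})\left(\frac{t^{2}+\mu_{\varepsilon}^{2}}{T_{\varepsilon}^{2}+\mu_{\varepsilon}^{2}}\right)^{\gamma/2}\qquad\text{for } t\geq T_{\varepsilon}.
\]
This follows from the explicit identity $\bar{G}_{\varepsilon}(x,t)=\bar{G}_{\varepsilon}(x,T_{\varepsilon})+\frac{\mathfrak{g}_{1}g_{1}(x,T_{\varepsilon})}{\gamma(T_{\varepsilon}^{2}+\mu_{\varepsilon}^{2})^{(\gamma-2)/2}}\bigl[(t^{2}+\mu_{\varepsilon}^{2})^{\gamma/2}-(T_{\varepsilon}^{2}+\mu_{\varepsilon}^{2})^{\gamma/2}\bigr]$ (from \eqref{g1e}, \eqref{gte}), once one shows that the coefficient in front of the bracket, times $(T_{\varepsilon}^{2}+\mu_{\varepsilon}^{2})^{\gamma/2}$, is bounded below by $c(\nu,\gamma)^{-1}\bar{G}_{\varepsilon}(x,T_{\varepsilon})$; for this one writes $\bar{G}_{\varepsilon}(x,T_{\varepsilon})=\mathfrak{g}_{1}\int_{T}^{T_{\varepsilon}}g_{1}(x,s)s\,ds+(T^{2}+1)^{\gamma/2}$, bounds the integral by $g_{1}(x,T_{\varepsilon})T_{\varepsilon}^{2}$ via the monotonicity \eqref{1bis}, and bounds $(T^{2}+1)^{\gamma/2}\leq c(\nu,\gamma)g_{1}(x,T_{\varepsilon})(T_{\varepsilon}^{2}+\mu_{\varepsilon}^{2})$ via the lower ellipticity bound in \eqref{00bis} together with $T\leq T_{\varepsilon}$, $1\leq T_{\varepsilon}^{2}$. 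Granting this, on $[T_{\varepsilon},\infty)$ I would estimate $g_{3,\varepsilon}(x,t)t$ (resp.\ $[g_{3,\varepsilon}(x,t)]^{2}/g_{1,\varepsilon}(x,t)$) directly from the explicit forms in \eqref{g1e}, \eqref{g3e}, using $(t^{2}+\mu_{\varepsilon}^{2})^{(\gamma-2)/2}t^{2}\leq (t^{2}+\mu_{\varepsilon}^{2})^{\gamma/2}$ and $(T_{\varepsilon}^{2}+\mu_{\varepsilon}^{2})^{(\gamma-2)/2}T_{\varepsilon}^{2}\geq\tfrac12(T_{\varepsilon}^{2}+\mu_{\varepsilon}^{2})^{\gamma/2}$, then \eqref{8}$_{1}$ (resp.\ \eqref{8}$_{2}$) at $t=T_{\varepsilon}$ combined with $\bar{G}(x,T_{\varepsilon})\leq\mathfrak{g}_{1}^{-1}\bar{G}_{\varepsilon}(x,T_{\varepsilon})$ from the first step, arriving at a bound of the shape $c\,[\bar{G}_{\varepsilon}(x,T_{\varepsilon})]^{1+\sigma}\bigl((t^{2}+\mu_{\varepsilon}^{2})/(T_{\varepsilon}^{2}+\mu_{\varepsilon}^{2})\bigr)^{\gamma/2}$ (resp.\ with exponent $1+2\sigma$). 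The scaling comparison then converts $\bar{G}_{\varepsilon}(x,T_{\varepsilon})\bigl((t^{2}+\mu_{\varepsilon}^{2})/(T_{\varepsilon}^{2}+\mu_{\varepsilon}^{2})\bigr)^{\gamma/2}$ into $c(\nu,\gamma)\bar{G}_{\varepsilon}(x,t)$, and since $\bar{G}_{\varepsilon}(x,T_{\varepsilon})\leq\bar{G}_{\varepsilon}(x,t)$ the leftover factor $[\bar{G}_{\varepsilon}(x,T_{\varepsilon})]^{\sigma}$ (resp.\ $[\bar{G}_{\varepsilon}(x,T_{\varepsilon})]^{2\sigma}$) is absorbed, giving \eqref{z11}.

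I expect the scaling comparison for $\bar{G}_{\varepsilon}$ on $[T_{\varepsilon},\infty)$ to be the only genuinely delicate point, precisely because of its $\varepsilon$-uniformity: $\bar{G}_{\varepsilon}(x,T_{\varepsilon})$ mixes the integral $\int_{T}^{T_{\varepsilon}}g_{1}(x,s)s\,ds$ — which for large $T$ runs over the comparatively short interval $[T,T_{\varepsilon}]$ and may be much smaller than $g_{1}(x,T_{\varepsilon})T_{\varepsilon}^{2}$ — with the fixed additive constant $(T^{2}+1)^{\gamma/2}$; the way around it is to dominate both pieces by the single quantity $g_{1}(x,T_{\varepsilon})(T_{\varepsilon}^{2}+\mu_{\varepsilon}^{2})$, the relevant constant involving only $\nu$ and $\gamma$, hence independent of $\varepsilon$. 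Everything else is elementary and parallels the proofs of \eqref{z9} and \eqref{z10} in Lemma \ref{zlem0}.
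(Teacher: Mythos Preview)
Your proposal is correct and follows essentially the same approach as the paper. Both arguments split into the trivial range $[T,T_\varepsilon)$ and the extended range $[T_\varepsilon,\infty)$, and on the latter both rely on the same key fact: $\bar G_\varepsilon(x,T_\varepsilon)$ is dominated by $c(\nu,\gamma)\,g_1(x,T_\varepsilon)(T_\varepsilon^2+\mu_\varepsilon^2)$, which is precisely what makes the comparison $\varepsilon$-uniform. The paper packages this as the bound $\mathcal Q_\varepsilon(t):=\bar G_\varepsilon(x,T_\varepsilon)/\bar G_\varepsilon(x,t)\le c/t_\varepsilon^{\gamma}$ (after a harmless case split $t_\varepsilon\le 1000$ versus $t_\varepsilon>1000$), whereas you package it as the equivalent scaling comparison $\bar G_\varepsilon(x,t)\ge c^{-1}\bar G_\varepsilon(x,T_\varepsilon)\bigl((t^2+\mu_\varepsilon^2)/(T_\varepsilon^2+\mu_\varepsilon^2)\bigr)^{\gamma/2}$; your formulation is slightly cleaner in that it avoids the case split, but the substance is identical.
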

\begin{proof}
When $t\in [T,T_{\varepsilon})$, the proof of \eqref{z11} follows directly by the definitions \eqref{g1e}, \eqref{g3e}, \eqref{gte} and from assumption \eqref{8}. Therefore we restrict ourselves to the case $t \geq T_\eps$. For this, we set $t_{\varepsilon}:=t/T_{\varepsilon}$, and \eqn{followb}
$$
\mathcal{Q}_{\varepsilon}(t):=\frac{\bar{G}_{\varepsilon}(x,T_{\varepsilon})}{\bar{G}_{\varepsilon}(x,t)}\leq 1
$$
and bound via $\eqref{8}_{1}$ and \rif{repeat} as follows:
\begin{flalign}
\notag g_{3,\varepsilon}(x,t)t &\leq    cg_{3}(x,T_{\varepsilon})T_{\varepsilon}\left(\frac{t^{2}+\mu_{\varepsilon}^{2}}{T_{\varepsilon}^{2}+\mu_{\varepsilon}^{2}}\right)^{\frac{\gamma-2}{2}}t_{\eps}^{2}\nonumber\\ &\le  c[\bar{G}_{\varepsilon}(x,T_\eps)]^{1+\sigma}t_{\eps}^{\gamma}\leq   c(n,\gamma,\cb)[\mathcal{Q}_{\varepsilon}(t)]^{1+\sigma}t_{\eps}^{\gamma}[\bar{G}_{\varepsilon}(x,t)]^{1+\sigma}\,.\label{followb2}
\end{flalign}
In order to bound $\mathcal{Q}_{\varepsilon}(t)$ we start observing that if $t_\eps \leq 1000$, then \rif{z11}$_1$ follows using \rif{followb}. When $t_\eps > 1000$, we instead estimate
\begin{eqnarray*}
\nonumber \mathcal{Q}_{\varepsilon}(t)&\stackleq{gte}& \frac{\bar{G}(x,T_{\varepsilon})}
{\displaystyle \int_{T_\eps}^{t}g_{1,\varepsilon}(x,s)s \d s+(T^2+1)^{\gamma/2}}\nonumber \\
\nonumber&\stackleq{g1e}& \frac{\bar{G}(x,T_{\varepsilon})}{\frac{\go_{1}g_{1}(x,T_{\varepsilon})}{\gamma(T_{\varepsilon}^{2}+\mu_{\varepsilon}^{2})^{\frac{\gamma-2}{2}}}T_{\varepsilon}^{\gamma}\left[\left(t_{\varepsilon}^{2}+(\mu_{\varepsilon}/T_{\varepsilon})^{2}\right)^{\gamma/2}-\left(1+(\mu_{\varepsilon}/T_{\varepsilon})^{2}\right)^{\gamma/2}\right]}\nonumber \\
\nonumber&\stackrel{t_{\varepsilon}> 1000}{\leq} &\frac{c( \gamma)\bar{G}(x,T_{\varepsilon}) (T_{\varepsilon}^{2}+\mu_{\varepsilon}^{2})^{\frac{\gamma-2}{2}}}{g_{1}(x,T_{\varepsilon})T_{\varepsilon}^{\gamma}t_{\varepsilon}^{\gamma}}\nonumber \\
\nonumber&\stackleq{wewewe}&c(\gamma)\left[\frac{g_{1}(x,T_{\varepsilon})T_{\varepsilon}^{2}(T_{\varepsilon}^{2}+\mu_{\varepsilon}^{2})^{\frac{\gamma-2}{2}}}{g_{1}(x,T_{\varepsilon})T_{\varepsilon}^{\gamma}t_{\varepsilon}^{\gamma}}+\frac{(T^{\gamma}+1)(T_{\varepsilon}^{2}+\mu_{\varepsilon}^{2})^{\frac{\gamma-2}{2}}}{g_{1}(x,T_{\varepsilon})T_{\varepsilon}^{\gamma}t_{\varepsilon}^{\gamma}}\right]\nonumber \\
&\stackrel{\eqref{00bis}}{\leq}& \frac{c(\gamma)}{t_{\varepsilon}^{\gamma}}\left[1+\frac{T^\gamma+1}{\nu T_{\varepsilon}^{\gamma}}\right]
\,.
\end{eqnarray*}
It follows that 
\eqn{qqqq}
$$
 \mathcal{Q}_{\varepsilon}(t)\leq  \frac{c(\nu, \gamma )}{t_{\varepsilon}^{\gamma}}
$$
Inserting this last estimate in \rif{followb2}, we get 
\eqn{we get we get }
$$
g_{3,\varepsilon}(x,t)t \leq c t_{\eps}^{-\sigma\gamma} [\bar{G}_{\varepsilon}(x,t)]^{1+\sigma} \leq c [\bar{G}_{\varepsilon}(x,t)]^{1+\sigma},
$$
where $c\equiv c(n,\nu,\gamma,\cb)$ and the proof of \rif{z11}$_1$ follows in the case $t_\eps > 1000$ too. As for \rif{z11}$_2$, similarly to \rif{z11}$_1$, when $t\in [T,T_{\varepsilon})$ the proof follows from \eqref{g1e}, \eqref{g3e} and assumption $\eqref{8}_{2}$, while for $t\ge T_{\varepsilon}$, using $\eqref{8}_{2}$ we have 
\begin{flalign*}
\frac{[g_{3,\varepsilon}(x,t)]^{2}}{g_{1,\varepsilon}(x,t)} &\le c\frac{[g_{3}(x,T_\eps)]^{2}}{g_{1}(x,T_\eps)} \left(\frac{t^{2}+\mu_{\varepsilon}^{2}}{T_{\varepsilon}^{2}+\mu_{\varepsilon}^{2}}\right)^{\frac{\gamma-2}{2}}t_{\eps}^{2}\\
&
\leq c(n,N,\gamma)[\mathcal{Q}_{\varepsilon}(t)]^{1+2\sigma}t_{\eps}^{\gamma}[\bar{G}_{\varepsilon}(x,t)]^{1+2\sigma}
\end{flalign*}
and \rif{z11}$_2$ follows using \rif{qqqq} in the last estimate and again arguing as for \rif{we get we get }. \end{proof}
\begin{lemma} 
\label{ultimolemma} Under assumptions \eqref{0ass}-\eqref{00} and \eqref{9} for some $\sigma \geq 0$, 
consider a ball $B\Subset \Omega$ with $\rad(B)\le 1$, numbers $0<\varepsilon_{1}<\varepsilon_{2} \leq \min\{1,T\}/4$ and let $w \in W^{1,\gamma}(B; \er^{N})$. If $\bar G(\cdot,\snr{Dw})\in L^p(B)$ for some $p>1+\sigma$, 
then
\eqn{stimadiff}
$$
 \int_{B}\left|F_{\varepsilon_{1}}(x,Dw)-F_{\varepsilon_{2}}(x,Dw)\right| \dx  \le \oo(\eps_2)
\int_{B}[\bar G_{\varepsilon_{1}}(x,\snr{Dw})]^{p} \dx+ \oo(\eps_2)\,,
$$
holds, where $ \oo(\eps_2)$ denotes a quantity such that $ \oo(\eps_2)\to 0$ as $\eps_2\to 0$, and
\eqn{stimadiff2}
$$
 \int_{B}\left|F(x,Dw)-F_{\varepsilon_{2}}(x,Dw)\right| \dx  \le \oo(\eps_2)
\int_{B}[\bar G(x,\snr{Dw})]^{p} \dx+ \oo(\eps_2)\,. 
$$
\end{lemma}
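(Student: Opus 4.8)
The plan is to reduce both \rif{stimadiff} and \rif{stimadiff2} to a single pointwise inequality for the difference of the radial integrands and then integrate, handling the large-gradient region by absolute continuity of the Lebesgue integral. I shall detail \rif{stimadiff}; the bound \rif{stimadiff2} follows along the same lines, replacing $F_{\varepsilon_1},\tilde a_{\varepsilon_1}$ by $F,\tilde a$ (the summand $\varepsilon_1 L_{\gamma,\varepsilon_1}$ being simply absent) and $\bar G_{\varepsilon_1}$ by $\bar G$ in the final estimate.

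First I would split. By \rif{z2}, \rif{z5} and $\tilde F_\varepsilon(x,0)=0$, for every $(x,t)\in\Omega\times(0,\infty)$,
\[
\tilde F_{\varepsilon_2}(x,t)-\tilde F_{\varepsilon_1}(x,t)=\int_0^t\left[\tilde a_{\varepsilon_2}(x,s)-\tilde a_{\varepsilon_1}(x,s)\right]s\,ds+\varepsilon_2 L_{\gamma,\varepsilon_2}(t)-\varepsilon_1 L_{\gamma,\varepsilon_1}(t)\,.
\]
Since $\varepsilon_1<\varepsilon_2$ forces $T_{\varepsilon_2}<T_{\varepsilon_1}$ and $\tilde a_{\varepsilon_i}(x,s)=\tilde a(x,s)$ for $s\in[\varepsilon_i,T_{\varepsilon_i})$, the integrand vanishes on $[\varepsilon_2,T_{\varepsilon_2})$; hence only a part $\mathrm I$ over $(0,\varepsilon_2)$, a part $\mathrm{II}$ over $(T_{\varepsilon_2},t)$ (void unless $t>T_{\varepsilon_2}$) and the piece $\mathrm{III}:=\varepsilon_2 L_{\gamma,\varepsilon_2}(t)-\varepsilon_1 L_{\gamma,\varepsilon_1}(t)$ survive. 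On $(0,\varepsilon_2)$ I would use \rif{00}, \rif{23} and \rif{repeat} to bound $\tilde a_{\varepsilon_i}(x,s)\le c\|g_2(\cdot,T)\|_{L^\infty(\Omega)}(s^2+\mu_{\varepsilon_i}^2)^{(\gamma-2)/2}$, so that $\snr{\mathrm I}\le c\varepsilon_2=\oo(\varepsilon_2)$ uniformly in $(x,t)$. For $\mathrm{III}$, from $0\le L_{\gamma,\varepsilon}(t)\le c(t^2+4)^{\gamma/2}$ and $\varepsilon_1<\varepsilon_2$ we get $\snr{\mathrm{III}}\le c\varepsilon_2(t^2+4)^{\gamma/2}$; separating $t<T$ (where this is $\le c\varepsilon_2$) from $t\ge T$ and invoking \rif{z10a} together with $\bar G_{\varepsilon_1}\ge(T^2+1)^{\gamma/2}\ge1$ and $p\ge1$, this yields $\snr{\mathrm{III}}\le c\varepsilon_2\big([\bar G_{\varepsilon_1}(x,t)]^p+1\big)$.

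The crux is $\mathrm{II}$, occurring only for $t>T_{\varepsilon_2}\ge T$, where no factor $\varepsilon_2$ is available. Here $\tilde a_{\varepsilon_i}\ge0$ and, by \rif{z6bis}, $\tilde a_{\varepsilon_i}(x,s)\le c\,\tilde a(x,s)$ for $s\ge\varepsilon_i$, so $\snr{\mathrm{II}}\le c\int_T^t\tilde a(x,s)s\,ds$. I would then combine \rif{23} with the nonuniform ellipticity bound \rif{9}, namely $\tilde a(x,s)\le g_2(x,s)\le c[\bar G(x,s)]^\sigma g_1(x,s)$ for $s\ge T$, together with $\partial_s\bar G(x,s)=g_1(x,s)s$ there, to obtain $\int_T^t\tilde a(x,s)s\,ds\le c[\bar G(x,t)]^{1+\sigma}\le c[\bar G(x,t)]^p$, the last step being exactly where the assumption $p>1+\sigma$ (and $\bar G\ge1$) enters. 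Thus $\snr{\mathrm{II}}\le c[\bar G(x,t)]^p$.

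Finally I would assemble: for a.e. $x\in B$, $\snr{F_{\varepsilon_1}(x,Dw)-F_{\varepsilon_2}(x,Dw)}\le c\varepsilon_2\big([\bar G_{\varepsilon_1}(x,\snr{Dw})]^p+1\big)$ on $\{\snr{Dw}\le T_{\varepsilon_2}\}$, and $\le c\varepsilon_2\big([\bar G_{\varepsilon_1}(x,\snr{Dw})]^p+1\big)+c[\bar G(x,\snr{Dw})]^p$ on $\{\snr{Dw}>T_{\varepsilon_2}\}$. Integrating over $B$ produces $c\varepsilon_2\int_B[\bar G_{\varepsilon_1}(x,\snr{Dw})]^p\,dx$, which has the announced form $\oo(\varepsilon_2)\int_B[\bar G_{\varepsilon_1}]^p\,dx$ and is finite since $G_{\varepsilon_1}\le c(\gamma)G$ by \rif{cresceg2}, whence $\bar G_{\varepsilon_1}(\cdot,\snr{Dw})\in L^p(B)$; it also produces $c\varepsilon_2\snr{B}=\oo(\varepsilon_2)$ and the tail $c\int_{\{\snr{Dw}>T_{\varepsilon_2}\}}[\bar G(x,\snr{Dw})]^p\,dx$. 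For the tail, $w\in W^{1,\gamma}(B;\er^N)$ gives $\snr{Dw}<\infty$ a.e. in $B$ and $T_{\varepsilon_2}=T+1/\varepsilon_2\to\infty$ as $\varepsilon_2\to0$, so $\snr{\{\snr{Dw}>T_{\varepsilon_2}\}}\to0$; as $[\bar G(\cdot,\snr{Dw})]^p\in L^1(B)$, absolute continuity of the Lebesgue integral makes $\int_{\{\snr{Dw}>T_{\varepsilon_2}\}}[\bar G(x,\snr{Dw})]^p\,dx=\oo(\varepsilon_2)$, and \rif{stimadiff} follows. The hard part is precisely $\mathrm{II}$: nonuniform ellipticity forces there a genuine growth $[\bar G]^{1+\sigma}$ that cannot be absorbed into $\varepsilon_2$, and the estimate only closes because $p$ is strictly larger than $1+\sigma$, so that the excess mass lives on the vanishing super-level sets of $\snr{Dw}$.
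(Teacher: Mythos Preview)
Your proof is correct and follows the same overall decomposition as the paper: split $\tilde F_{\varepsilon_2}-\tilde F_{\varepsilon_1}$ into the contribution from $(0,\varepsilon_2)$, the (vanishing) middle range, the tail $(T_{\varepsilon_2},t)$, and the $\varepsilon L_{\gamma,\varepsilon}$ terms. The treatment of parts $\mathrm I$ and $\mathrm{III}$ is essentially identical.

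The genuine difference is in how you close the estimate on the tail $\mathrm{II}$. You bound $|\mathrm{II}|\le c[\bar G(x,t)]^{1+\sigma}\le c[\bar G(x,t)]^p$ and then absorb the resulting integral $\int_{\{\snr{Dw}>T_{\varepsilon_2}\}}[\bar G(\cdot,\snr{Dw})]^p\,dx$ into the \emph{additive} $\oo(\varepsilon_2)$ via absolute continuity. The paper instead keeps the exponent $1+\sigma$, applies H\"older with exponent $p/(1+\sigma)$, and then Chebyshev via \rif{z10a}$_1$ to estimate $\snr{\{\snr{Dw}>T_{\varepsilon_2}\}}\le c\varepsilon_2^{\gamma p}\int_B[\bar G_{\varepsilon_1}]^p\,dx$; this produces an explicit rate $c\,\varepsilon_2^{\gamma(p-1-\sigma)}\int_B[\bar G_{\varepsilon_1}]^p\,dx$, so the tail lands in the \emph{multiplicative} $\oo(\varepsilon_2)$ term with a coefficient that is independent of $w$. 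The paper also derives \rif{stimadiff2} by letting $\varepsilon_1\to 0$ in \rif{stimadiff} (Fatou on the left, dominated convergence on the right), whereas you argue it directly.

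Your route is a bit shorter; what the paper's route buys is uniformity: its $\oo(\varepsilon_2)$ factors are explicit powers of $\varepsilon_2$ and do not depend on the particular $w$. This matters in Section~\ref{pt1}, where \rif{stimadiff} is applied with $w\equiv u_j$ and one needs \rif{gigi2} with an $\oo(j_0)$ independent of $j$. With your absolute-continuity argument the additive $\oo(\varepsilon_2)$ depends on $w$, so that application would require an extra uniform-integrability argument for the family $\{[\bar G(\cdot,\snr{Du_j})]^p\}_j$.
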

\begin{proof} Note that \rif{cresceg2} implies $G_{\varepsilon_1}(x,t) \le c(\gamma)G (x,t)$, and therefore we also have that 
$\bar G_{\eps_1}(\cdot,\snr{Dw})\in L^p(B)$. Next, we denote
\eqn{auxgrowth0}
$$\bar F_\eps (x,t)= \tilde  F_\eps (x,t) -\eps L_{\gamma,\varepsilon}(t) := \int_0^t \tilde a_\eps (x, s)s\, ds$$ and in the following we always take $x\in B$. 
In the case it is $|z|\leq \eps_2$, by \rif{00} and \rif{repeat} we easily have
\begin{align*}
&\nonumber \left|\bar F_{\eps_1}(x,|z|)-\bar F_{\eps_2}(x, |z|) \right|  \leq \left|\bar F_{\eps_1}(x,\eps_2)\right|+\left|\bar F_{\eps_2}(x, \eps_2)\right| \\
\nonumber  &\   \leq c  \| \tilde a(\cdot, 1) \|_{L^{\infty}(B)} \int_0^{\eps_2} (s^2+\mu_{\eps_1}^2)^{\frac{\gamma-2}{2}}s \, ds \\
& \qquad \quad + c \| \tilde a(\cdot, 1) \|_{L^{\infty}(B)} \int_0^{\eps_2} 
(s^2+\mu_{\eps_2}^2)^{\frac{\gamma-2}{2}}s \, ds \leq \oo(\eps_2) 
 \,.
\end{align*}
When $\eps_2 \leq |z| \leq T_{\eps_2}$, recalling that $\tilde{a}_{\varepsilon_1}(x,|z|)\equiv \tilde{a}_{\varepsilon_2}(x,|z|)$, by also using the information in the last display we find
\eqn{dec2}
$$
\left|\bar F_{\eps_1}(x,|z|)-\bar F_{\eps_2}(x, |z|) \right| \leq \left|\bar F_{\eps_1}(x,\eps_2)\right|+\left|\bar F_{\eps_2}(x, \eps_2)\right|\leq \oo(\eps_2)  \,.
$$
Finally, note that \rif{00} and \rif{repeat} imply 
\eqn{triine}
$$ \tilde a_{\eps_2} (x, s)\leq c (\gamma) \tilde a_{\eps_1} (x, s)
\leq c (\gamma) \bar a_{\eps_1} (x, s) \quad \mbox{for $s> T_{\eps_2}$}$$
 therefore, when $|z|> T_{\eps_2}$, we get
\begin{eqnarray*}
\nonumber  \left|\bar F_{\eps_1}(x,|z|)-\bar F_{\eps_2}(x, |z|) \right| 
& \stackleq{triine} & \left|\bar F_{\eps_1}(x,T_{\eps_2})-\bar F_{\eps_2}(x,T_{\eps_2}) \right|\\
&& \qquad +c\int_{T_{\eps_2}}^{|z|} \tilde a_{\eps_1} (x, s)s \, ds \\& \stackleq{dec2}  &c\int_{T_{\eps_2}}^{|z|} \bar a_{\eps_1} (x, s)s \, ds+ \oo(\eps_2)
\\
& \stackrel{\eqref{a1}}{\leq} &c\int_{T_{\eps_2}}^{|z|}   g_{2,\eps_1} (x, s)s \, ds+ \oo(\eps_2)\\
& \stackleq{z9} &c \int_{T_{\eps_2}}^{|z|}   [\bar{G}_{ \eps_1}(x,s) ]^{\sigma}g_{1,\eps_1} (x, s)s \, ds+ \oo(\eps_2)\\ 
& \leq &c [\bar{G}_{ \eps_1}(x,|z|) ]^{\sigma} \int_{T_{\eps_2}}^{|z|}  g_{1,\eps_1} (x, s)s \, ds+ \oo(\eps_2)\\& \leq & c [\bar{G}_{ \eps_1}(x,|z|) ]^{1+\sigma}+ \oo(\eps_2)\,,
\end{eqnarray*}
and we have also used that $s \mapsto \bar{G}_{ \eps_1}(x,s) $ is non-decreasing. 
Using the content of the last four displays, and using also H\"older inequality, we get
\begin{flalign*}
 & \int_{B}\left|\bar  F_{\varepsilon_{1}}(x,|Dw|)-\bar  F_{\varepsilon_{2}}(x,|Dw|)\right| \dx \\
 &\quad \le \oo(\eps_2)|B|
 + c \int_{B\cap \{|Dw|> T_{\eps_2}\}}[\bar G_{\varepsilon_{1}}(x,\snr{Dw})]^{1+\sigma} \dx\\
 & \quad \leq 
  \oo(\eps_2)|B|+ c |B\cap \{|Dw|> T_{\eps_2}\}|^{\frac{p-1-\sigma}{p}} \left(\int_{B}[\bar G_{\varepsilon_{1}}(x,\snr{Dw})]^{p} \dx\right)^{\frac{1+\sigma}{p}}
\end{flalign*}
for $c\equiv c(\nu,\gamma,p)$. On the other hand, observe that 
\begin{eqnarray*}
 |B\cap \{|Dw|> T_{\eps_2}\}| & \leq  &T_{\eps_2}^{-\gamma p}\int_{B\cap \{|Dw|> T_{\eps_2}\}} |Dw|^{\gamma p}\, dx \\
 & \stackrel{\eqref{z10a}_1}{\leq} & c \eps_2^{\gamma p} \int_{B}[ \bar G_{\varepsilon_{1}}(x,\snr{Dw})]^p\, dx\,.
\end{eqnarray*}
Last two inequalities yield 
\begin{flalign*}
 \int_{B}\left| \bar F_{\varepsilon_{1}}(x,|Dw|)-\bar F_{\varepsilon_{2}}(x,|Dw|)\right| \dx   &\le|B| \oo(\eps_2)
 \\
 & \quad + c\eps_2^{\gamma(p-1-\sigma)}\int_{B}[\bar G_{\varepsilon_{1}}(x,\snr{Dw})]^{p} \dx,
\end{flalign*}
where $c\equiv c(n,N,\nu,\gamma,\ca, \cb)$. 
By using again \rif{z10a}$_1$, we get
\begin{flalign*}
 \int_{B}\left|  \varepsilon_1 L_{\gamma,\varepsilon_1}(Dw)-\varepsilon_2 L_{\gamma,\varepsilon_2}(Dw)\right| \dx  &\leq c \eps_2 \int_B (|Dw|^2+1)^{\gamma/2}\, dx \\
 &\leq  c \eps_2 \int_{B}[\bar G_{\varepsilon_{1}}(x,\snr{Dw})]^{p} \dx\,.
\end{flalign*}
Combining the content of the last two displays and recalling \rif{z5} and \rif{auxgrowth0} we arrive at \rif{stimadiff}. As for  \rif{stimadiff2}, this follows from \rif{z3}$_6$ and \rif{stimadiff} letting $\eps_1 \to 0$. Indeed, note that Fatou's lemma works for the left-hand one; as for the right-hand side, we use again that $G_{\varepsilon}(x,t) \lesssim G (x,t)$ and the second information in \rif{cresceg2}, and finally Lebesgue dominated convergence. 
\end{proof}

\section{A priori estimates}\label{apri}
In this section we develop basic a priori estimates. These are obtained for solutions to certain elliptic systems associated to the integrands defined in Section \ref{apro}, \rif{z5}. Unless differently specified, we shall permanently assume that $\setm$ in \trif{ipotesi} is in force; all properties \rif{0ass}-\rif{9} will be therefore available as well.
With $0<\varepsilon<\min\{1,T\}/4$ and $\B\Subset \Omega$ being a ball such that $\rad(\B)\leq 1$, we consider a weak solution $u\in W^{1,\gamma}(\B;\RN)$ to the system
\eqn{a0}\
$$
\begin{cases}
-\diver \, a_{\eps}(x,Du)=f \ \  \mbox{in} \ \B
\\  f\in L^\infty(\B;\RN),  \  |f|\leq |\mathfrak {f}|, \ \mbox{where} \ \mathfrak{f} \in L^n(\B;\RN)\,,
\end{cases}
$$
with $a_{\eps}\colon \Omega\times \mathbb{R}^{N\times n}\to \mathbb{R}^{N\times n}$ being defined as in \rif{arad}. This setting will be kept for the rest of Section \ref{apri}. In particular, all the balls considered in the following will have radius $\leq 1$. Eventually, we shall consider additional restrictions of the type $\rad(\B)\leq R_*$, where $R_*$ will be  a (small) threshold radius to be determined as a function of the fixed parameters of the problem, but independently of the solution $u$ considered. From \rif{a0}, and taking \rif{zgrowthd} and \rif{dimenticata} into account, it follows that $u$ is a minimizer of the functional
\begin{flalign}\label{avp}
w\mapsto \int_{\B}\left[F_\eps(x,Dw)-f\cdot w\right] \dx= \int_{\B}\left[\tilde{F}_\eps(x,\snr{Dw})-f\cdot w\right] \dx\,.
\end{flalign}
We recall that $\bar{a}_{\eps}(\cdot)$, defined in \rif{arad}, is such that $x\mapsto \bar{a}_{\eps}(x,t)\in W^{1,d}(\Omega)$ for all $t\geq0$, $t\mapsto \bar{a}_{\eps}(x,t)\in W_{\textnormal{loc}}^{1,\infty}[0,\infty)\cap C^{1}_{\textnormal{loc}}([0,\infty)\setminus \{\eps, T_{\eps}\})$ i.e., it is locally $C^{1}$-regular outside $\{\eps, T_{\eps}\}$ and it is such that $\bar{a}_{\eps}'(x,0)=0$ for all $x\in \Omega$. This implies that $x\mapsto a_{\eps}(x,z)\in W^{1,d}(\Omega;\mathbb{R}^{N\times n})$ for all $z\in \mathbb{R}^{N\times n}$ and $z\mapsto a_{\eps}(x,z)\in W^{1,\infty}_{\textnormal{loc}}(\mathbb{R}^{N\times n};\er^{N\times n})$ for all $x\in \Omega$.
Finally, the functions $ t\mapsto \bar{a}_{\eps}(\cdot,t)$ and $g_{1,\eps}(\cdot,t)$ are non-decreasing when $\gamma\geq 2$; this is indeed an easy consequence of assumption \rif{00}$_1$ (see display after \rif{1bis}). A direct consequence of \eqref{a1}-\eqref{a2} is
\begin{flalign}\label{a3}
\snr{\bar{a}_{\eps}'(x,\snr{z})}\snr{z}\le g_{2,\varepsilon}(x,\snr{z})\quad \mbox{for all} \ \ (x,z)\in \Omega\times \{\snr{z}\geq T, \ \snr{z}\not =T_{\varepsilon} \}\,. 
\end{flalign}
Indeed, if $\bar{a}_{\eps}'(x,\snr{z})\geq 0$, then \rif{a3} trivially follows from \rif{a2}$_2$; otherwise \rif{a2}$_1$, and then \rif{a1}, give $|\bar{a}_{\eps}'(x,\snr{z})|\snr{z}=-\bar{a}_{\eps}'(x,\snr{z})\snr{z}\leq -g_{1,\varepsilon}(x,\snr{z})+\bar{a}_{\eps}(x,\snr{z})\leq \bar{a}_{\eps}(x,\snr{z}) \leq g_{2,\varepsilon}(x,\snr{z})$.
Similarly to \eqref{a1}, by \rif{z2} and \rif{zgrowthd}, we have that
\eqn{a4}
$$
\begin{cases}
\ \eps\go_1(\snr{z}^{2}+\mu_{\varepsilon}^{2})^{(\gamma-2)/2}\leq \bar{a}_{\eps}(x,\snr{z})\le \Lambda_{\eps} (\snr{z}^{2}+\mu_{\varepsilon}^{2})^{(\gamma-2)/2} \\
\  |\bar{a}_{\eps}'(x,\snr{z})| \leq c_{\eps} (\snr{z}^{2}+\mu_{\varepsilon}^{2})^{(\gamma-3)/2}
 \end{cases}
$$
holds this time for all $(x,z)\in \Omega\times\er^{N\times n}$ (must be $|z|\not = \eps, T_{\eps}$ for \rif{a4}$_2$). From \rif{growthd}-\rif{zgrowthd} we can apply Proposition \ref{tapp} from Section \ref{appendix}, and this yields 
\begin{flalign}\label{a6}
Du \in L^{\infty}_{\textnormal{loc}}(\B;\mathbb{R}^{N\times n}) \quad \mbox{and} \quad  u\in W^{2,2}_{\textnormal{loc}}(\B;\mathbb{R}^{N})\,.
\end{flalign}
In turn, this implies that 
\eqn{a6bis}
$$
 a_{\eps}(\cdot,Du)=\bar a_{\eps}(\cdot,|Du|)Du\in W^{1,2}_{\textnormal{loc}}(\B;\mathbb{R}^{N\times n})\,.
$$
This follows for instance from the results in \cite[Theorem 1.5]{dele}, together with the expression (chain rule) of $Da_{\eps}(\cdot,Du)$. It is sufficient to check that $\bar a_{\eps}(\cdot,|Du|)\in W^{1,2}_{\textnormal{loc}}(\B)$ and that the corresponding chain rule applies to $D_s(\bar  a_{\eps}(\cdot,|Du|))$, for every $s \in \{1,\ldots, n \}$. After extending $t \mapsto \bar a_{\eps}(x, t)$ to $\er$ by even reflection, we apply \cite[Theorem 1.5]{dele} to the vector field $B\colon \B \times \er \mapsto  \er^n$ defined by $(B(\cdot))_j\equiv ( \bar a_{\eps}(\cdot)\delta_{sj})$. For this, note that for every $t\in \er$, $x \mapsto \tilde a_{\eps}(x, t) \in W^{1,n}(\B)$ (this is \cite[Theorem 1.5, (i)]{dele}), and that $\partial_x \bar a_{\eps}(\cdot)=\partial_x \tilde a_{\eps}(\cdot)$ is still Carath\'eodory regular;  this follows from the definitions \rif{tildea}-\rif{z2} and by the fact that $\partial_x \tilde F'$ is Carath\'eodory regular by assumptions (this is \cite[Theorem 1.5, (ii)]{dele}). The crucial point to apply the results from \cite{dele} is that, as described in \rif{z3}, the set of non-differentiable points of the (extended) partial function $t \mapsto \bar a_{\eps}(x, t)$ is contained in $ \{-T_{\eps}, -\eps,\eps, T_{\eps}\}$ for every $x \in \B$, and it is therefore a null set which is independent of $x$ (this is \cite[Theorem 1.5, (iii)]{dele}). Finally, \cite[Theorem 1.5, (iv)]{dele} is verified thanks to \rif{growthd}$_1$ and \rif{a4}$_2$. Therefore, by \rif{a6}, \cite[Theorem 1.5]{dele} applies and \rif{a6bis} follows with the corresponding chain rule (see \rif{diso} and the related discussion a few lines below). Note that we have used \rif{a6}, that, together with \rif{zgrowthd}$_4$, also implies $\partial_{x_{s}}a_{\eps}(\cdot,Du)\in L^n_{\loc}(\Omega;\er^{N\times n})$.

 Let us write, with abuse of notation (keep in mind $\partial_{z}a_{\eps}(x,z)\equiv \partial_{zz}F_{\eps}(x,z)$ is not defined when $|z|= \eps, T_\eps$)
\begin{flalign}\label{diso2}
(\partial_{z}a_{\eps}(x,z))^{\alpha\beta}_{ij}=\bar{a}_{\eps}(x,\snr{z})\delta_{ij}\delta_{\alpha\beta}+ \mathds{1}_{ \mathcal{D}}(|z|)\bar{a}_{\eps}'(x,\snr{z})\snr{z}\frac{z^{\alpha}_{i}z^{\beta}_{j}}{\snr{z}^{2}}
\end{flalign}
for $z \in \er^{N\times n}\setminus\{0\}$, and here we are denoting by $ \mathds{1}_{\mathcal{D}}(\cdot)$ the indicator function of the  set $\mathcal{D}:=\er\setminus \{\eps, T_{\eps}\}$. We explain \rif{diso2} as follows. 
Recalling that $|Du| \in W^{1,2}_{\loc}(\Omega)$ by \rif{a6}-\rif{a6bis}, we have that 
\begin{eqnarray} 
\notag D_s[a_{\eps}(\cdot, Du)] &= &\partial_{x_{s}}a_{\eps}(x,Du) +\partial_{z}a_{\eps}(x,Du)DD_{s}u\\
&=&\partial_{x_s}\bar a_{\eps}(x,|Du|)Du + \bar a_{\eps}(x,|Du|)\mathds{I}_{N\times n} DD_su \notag \\
&& + \mathds{1}_{ \mathcal{D}}(|Du|)  \bar a_{\eps}'(x,|Du|)D_s|Du|  \label{diso}
\end{eqnarray}
holds a.e.\,in $\B$, for every $s \in \{1, \ldots, n\}$. Here we are using Einstein's convention on repeated indexes and $\mathds{I}_{N\times n}\equiv (\delta_{ij}\delta_{\alpha\beta})$.   
Exactly as in the autonomous case, the presence of $\mathds{1}_{ \mathcal{D}}(|Du|)$ in \rif{diso2}-\rif{diso} then accounts for the fact that the term $ \bar a_{\eps}'(x,|Du|)D_s|Du|$ is interpreted as zero at those points where $D|Du|=0$, and in particular, for a.e.\,$x$ such that $|Du(x)| \in \{\eps, T_{\eps}\}$, i.e., where $\bar a_\eps'(x,|Du(x)|)$ alone does not make sense; see \cite[Theorem 1.5]{dele}. Note that, in particular, from \rif{diso2}, \rif{growthd}$_{1,2}$ and \rif{a1}, it follows that
\eqn{inaggiunta}
$$
g_{1, \eps}(x, |z|)|\xi|^2 \leq \partial_{z}a_{\eps}(x,z)\xi \cdot \xi\,, \quad | \partial_{z}a_{\eps}(x,z)|\leq g_{2, \eps}(x, |z|)\,,
$$
whenever $|z|\geq T$ and $\xi \in \er^{N\times n}$. 

Similar arguments apply to $G_{ \eps}(\cdot, |Du|)$, this time using directly \cite[Corollary 1.7]{dele}. Indeed, change/extend $g_{1,\eps}(\cdot)$ on $\Omega \times (\infty, T)$ as $g_{1,\eps}(x, t):= g_{1,\eps}(x, T)$ for $t \leq T$ (this is continuous as $g_{1, \eps}$ is continuous), and, in \cite{dele}, take $b\colon \B \times \er\to \er^n$ defined as $(b)_j := (g_{1,\eps}(x, t)t \delta_{ij})_j$ for every $i \in \{1, \ldots, n\}$, with $w(\cdot)\equiv T$, $u(\cdot)\equiv \max\{|Du|, T\}\in W^{1,2}_{\loc}(\B)\cap L^{\infty}_{\loc}(\B)$ (here $u(\cdot)$ is the one from \cite{dele}, and not the solution considered in \rif{a0}).  Finally, note that $\partial_{x_i} g_{1, \eps} \in L^n_{\loc}(\B \times \er)$ by \rif{z10}. Then \cite[Corollary 1.7]{dele} implies 
\eqn{differenziaG}
$$G_{ \eps}(\cdot,|Du|)\in W^{1,2}_{\textnormal{loc}}(\B)$$ and 
\begin{flalign*}
D_{i}G_{\varepsilon}(x,\snr{Du}) &= \int_{T}^{\max\{ \snr{Du}, T\}}\partial_{x_{i}}g_{1,\varepsilon}(x,t)t \d t \\
& \qquad + g_{1,\varepsilon}(x,\snr{Du})D_i\max\{|Du|, T\}
\end{flalign*}
holds a.e. in $\B$. In particular, on $\{\snr{Du}>T\}$, we have
\begin{flalign}\label{a8}
&D_{i}G_{\varepsilon}(x,\snr{Du})=g_{1,\varepsilon}(x,\snr{Du})\snr{Du}D_{i}\snr{Du}+\int_{T}^{\snr{Du}}\partial_{x_{i}}g_{1,\varepsilon}(x,t)t \d t\nonumber \\
&\quad = g_{1,\varepsilon}(x,\snr{Du})\sum_{\alpha=1}^{N}\sum_{s=1}^{n}D_{i}D_{s}u^{\alpha}D_{s}u^{\alpha}+\int_{T}^{\snr{Du}}\partial_{x_{i}}g_{1,\varepsilon}(x,t)t \d t
\end{flalign}
for every $i \in \{1,\ldots, n\}$, where we have also used that
\begin{flalign}\label{a9}
D_{i}\snr{Du}=\frac1{\snr{Du}}\sum_{\alpha=1}^{N}\sum_{s=1}^{n}D_{i}D_{s}u^{\alpha}D_{s}u^{\alpha}\,.
\end{flalign}
Note also that $D_{i}G_{\varepsilon}(x,\snr{Du})\equiv 0$ a.e. on the complement of $\{\snr{Du}>T\}$. 
\begin{lemma}\label{help}
For $\lambda\equiv (\lambda_i) \in \er^n$ and $z=(z^\alpha_i)\in\er^{N\times n}$, $1\leq i\leq n$ and $1\leq \alpha\leq N$, with $|z|\geq T$
\begin{flalign}
& \bar{a}_{\eps}(x,\snr{z})\lambda\cdot \lambda+ \mathds{1}_{ \mathcal{D}}(|z|)\bar{a}_{\eps}'(x,\snr{z})\snr{z}\sum_{\alpha=1}^N\frac{\snr{\lambda\cdot z^\alpha}^{2}}{\snr{z}^{2}}\nonumber \\
& \qquad \ge  \min\left\{\bar{a}_{\eps}(x,\snr{z}),\bar{a}_{\eps}(x,\snr{z})+ \mathds{1}_{ \mathcal{D}}(|z|)\bar{a}_{\eps}'(x,\snr{z})\snr{z}\right\}\snr{\lambda}^{2}\notag \\
&
  \qquad  \geq  g_{1,\varepsilon}(x,\snr{z})\snr{\lambda}^{2} \label{a7}
\end{flalign}
holds for every $x \in \Omega$. 
\end{lemma}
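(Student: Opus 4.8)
The plan is to reduce the first inequality in \rif{a7} to Cauchy--Schwarz together with a case split on the sign of the coefficient multiplying the rank-one term, and then to derive the second inequality directly from \rif{a1} and \rif{a2}.

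First I would record the elementary two-sided bound
$$
0\le \sum_{\alpha=1}^N\frac{\snr{\lambda\cdot z^\alpha}^{2}}{\snr{z}^{2}}\le \snr{\lambda}^{2}\,,
$$
valid for every $\lambda\in\er^n$ and every $z\in\er^{N\times n}$ with $\snr{z}\ge T>0$ (so that the division is legitimate). The left bound is clear since each summand is nonnegative; the right bound follows from Cauchy--Schwarz, $\snr{\lambda\cdot z^\alpha}^2\le \snr{\lambda}^2\snr{z^\alpha}^2$, summed over $\alpha$ using $\sum_\alpha\snr{z^\alpha}^2=\snr{z}^2$.

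Next, to prove the first inequality in \rif{a7}, I would set $c:=\mathds{1}_{\mathcal{D}}(\snr{z})\bar{a}_{\eps}'(x,\snr{z})\snr{z}$ and distinguish two cases. If $c\ge 0$, then $c\sum_\alpha\snr{\lambda\cdot z^\alpha}^2/\snr{z}^2\ge 0$, so the left-hand side of \rif{a7} is at least $\bar{a}_{\eps}(x,\snr{z})\snr{\lambda}^2$, and since $c\ge 0$ one has $\min\{\bar{a}_{\eps}(x,\snr{z}),\bar{a}_{\eps}(x,\snr{z})+c\}=\bar{a}_{\eps}(x,\snr{z})$. If instead $c<0$, then the upper bound from the previous display yields $c\sum_\alpha\snr{\lambda\cdot z^\alpha}^2/\snr{z}^2\ge c\snr{\lambda}^2$, hence the left-hand side of \rif{a7} is at least $(\bar{a}_{\eps}(x,\snr{z})+c)\snr{\lambda}^2$, while now $\min\{\bar{a}_{\eps}(x,\snr{z}),\bar{a}_{\eps}(x,\snr{z})+c\}=\bar{a}_{\eps}(x,\snr{z})+c$. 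In both cases the left-hand side dominates $\min\{\bar{a}_{\eps}(x,\snr{z}),\bar{a}_{\eps}(x,\snr{z})+c\}\snr{\lambda}^2$, which is the asserted first inequality.

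For the second inequality I would simply observe that, for $\snr{z}\ge T$, assumption \rif{a1} gives $\bar{a}_{\eps}(x,\snr{z})\ge g_{1,\eps}(x,\snr{z})$, and assumption \rif{a2}$_1$ gives $\bar{a}_{\eps}(x,\snr{z})+\bar{a}_{\eps}'(x,\snr{z})\snr{z}\ge g_{1,\eps}(x,\snr{z})$ whenever in addition $\snr{z}\ne T_{\eps}$; at the one excluded value $\snr{z}=T_{\eps}$ we have $\mathds{1}_{\mathcal{D}}(T_{\eps})=0$, so $\bar{a}_{\eps}(x,\snr{z})+c=\bar{a}_{\eps}(x,T_{\eps})\ge g_{1,\eps}(x,T_{\eps})$ again by \rif{a1}. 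Hence both entries of the minimum are $\ge g_{1,\eps}(x,\snr{z})$ for all $\snr{z}\ge T$, completing the chain. The argument is purely algebraic; the only point requiring mild care is bookkeeping which branch of the minimum is active in each sign case, and treating $\snr{z}=T_{\eps}$ (where $\bar{a}_{\eps}'$ need not exist) separately --- which is precisely the role of the indicator $\mathds{1}_{\mathcal{D}}$.
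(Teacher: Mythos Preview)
Your proof is correct and follows essentially the same approach as the paper: a case split on the sign of $\mathds{1}_{\mathcal{D}}(\snr{z})\bar{a}_{\eps}'(x,\snr{z})$, using Cauchy--Schwarz in the negative case and then invoking \rif{a1} and \rif{a2}$_1$. Your version is simply more explicit about the Cauchy--Schwarz step, about which branch of the minimum is active, and about the handling of $\snr{z}=T_{\eps}$; the paper compresses all of this into two sentences.
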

\begin{proof} Indeed, \rif{a7} is trivial by \rif{a1} 
if $\mathds{1}_{ \mathcal{D}}(|z|)\bar{a}_{\eps}'(x,\snr{z})\geq 0$ (note that \rif{a1} holds when $|z|=T_{\eps}$ too). Otherwise, we can estimate simply $\bar{a}_{\eps}'(x,\snr{z})\snr{\lambda \cdot z^\alpha}^2 \geq \bar{a}_{\eps}'(x,\snr{z})|z^\alpha|^2|\lambda|^2$ for every $\alpha$ and then use $\eqref{a2}_{1}$. 
\end{proof}
\subsection{Caccioppoli inequality for powers, when $n>2$}\label{cs}  
Up to Section \ref{sr} included, we concentrate on the case $n>2$, and for the following we set
\eqn{ilchi}
$$
 m:=\frac{d}{d-2}>1 \quad \mbox{and} \quad 
1\stackrel{n<d}{<} \chi:=\frac{2^{*}}{2m} = \frac{n}{n-2}\frac{d-2}{d}\,.
$$
The main result here is 
\begin{proposition}\label{lp}
Let $u\in W^{1,\gamma}(\B;\RN)$ be a solution to $\eqref{a0}$, under assumptions $\setm$ in \trif{ipotesi}  for $n>2$, and replace \trif{sigma}$_1$ by
\eqn{sigmasss}
$$
\begin{cases}
\sigma+\hat \sigma < \frac {s_*}n - \frac {s_*}d\\
1\leq s_* < \min\left\{2m (1+\sigma+\hat \sigma), \frac{2n}{n-2}\right\}\,.
\end{cases}
$$
Then, for every $p\in [1,\infty)$, there exists a positive radius $R_{*}\equiv R_{*}(\data,\mathfrak f(\cdot),p)\linebreak \leq 1$ such that if $\rad(\B)\le R_{*}$ and $B_{\varsigma} \Subset  B_{\rr}$ are concentric balls contained in $\B$, then 
\begin{flalign}\label{a44}
\|G_{\varepsilon}(\cdot,\snr{Du})\|_{L^p(B_\varsigma)} \leq 
 \frac{c}{(\rr-\varsigma)^{\beta_{p}}}\left[\|F_{\eps}(\cdot,Du)\|_{L^{s_*}(B_{\rr})}^{\theta_{p}}+1\right]
\end{flalign} holds with $c\equiv c(\data,\nr{h}_{L^{d}(\B)})\geq 1$, $\beta_{p}, \theta_{p}\equiv \beta_{p}, \theta_{p}(n,d,\sigma,\hat{\sigma},p)>0$. 
\end{proposition}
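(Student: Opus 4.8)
\emph{Plan of proof.} The strategy is a finite-step Moser iteration for powers of the energy function $\bar G_{\varepsilon}(\cdot,|Du|)$, the number of steps depending only on $n,d,\sigma,\hat{\sigma}$ and the target exponent $p$. Throughout we use the local second order regularity $Du\in L^{\infty}_{\loc}(\B;\er^{N\times n})$, $u\in W^{2,2}_{\loc}(\B;\RN)$ and $a_{\varepsilon}(\cdot,Du)\in W^{1,2}_{\loc}(\B;\er^{N\times n})$ recorded in \rif{a6}--\rif{a6bis} (available by Proposition \ref{tapp}), together with $G_{\varepsilon}(\cdot,|Du|)\in W^{1,2}_{\loc}(\B)$ and the chain rule \rif{a8}--\rif{a9}. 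As a preliminary remark, \rif{z6}$_{3}$ and the definitions \rif{GG}, \rif{gte} give $\bar G_{\varepsilon}(\cdot,|Du|)\le F_{\varepsilon}(\cdot,Du)+c(\gamma,T)$, whence $\|\bar G_{\varepsilon}(\cdot,|Du|)\|_{L^{s_{*}}(B_{\rr})}\le \|F_{\varepsilon}(\cdot,Du)\|_{L^{s_{*}}(B_{\rr})}+c$; this is the base integrability level from which the iteration departs. Moreover, by \rif{GG}--\rif{gte} the function $\bar G_{\varepsilon}(\cdot,|Du|)$ reduces to the constant $(T^{2}+1)^{\gamma/2}$ on the set $\{|Du|\le T\}$, so that on this set it contributes only the additive constant in \rif{a44} and the whole argument may be confined to $\{|Du|>T\}$, where the genuine ellipticity \rif{inaggiunta} is in force.

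\emph{Step 1: a Caccioppoli inequality for powers.} Since $a_{\varepsilon}(\cdot,Du)\in W^{1,2}_{\loc}(\B;\er^{N\times n})$ and $f\in L^{\infty}(\B;\RN)$, a standard difference-quotient argument yields, for every $s\in\{1,\dots,n\}$, the differentiated system
$$\int_{\B}\partial_{z}a_{\varepsilon}(x,Du)DD_{s}u\cdot D\psi\dx+\int_{\B}\partial_{x_{s}}a_{\varepsilon}(x,Du)\cdot D\psi\dx=-\int_{\B}f^{\alpha}D_{s}\psi^{\alpha}\dx ,$$
valid for every $\psi\in W^{1,2}_{0}(\B;\RN)$ with compact support, the first term being understood via the convention \rif{diso2} on $\{|Du|\in\{\varepsilon,T_{\varepsilon}\}\}$. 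Given concentric balls $B_{\tau_{1}}\Subset B_{\tau_{2}}\Subset\B$ and $\eta\in C^{\infty}_{c}(B_{\tau_{2}})$ with $\mathds{1}_{B_{\tau_{1}}}\le\eta\le\mathds{1}_{B_{\tau_{2}}}$ and $|D\eta|\le c/(\tau_{2}-\tau_{1})$, for $\kappa\ge0$ we test with $\psi:=\eta^{2}\big(\min\{\bar G_{\varepsilon}(x,|Du|),M\}\big)^{\kappa}D_{s}u$ (summation over $s$) and let $M\to\infty$. We bound the principal term from below by \rif{inaggiunta} and Lemma \ref{help}, recognise $g_{1,\varepsilon}(\cdot,|Du|)|D^{2}u|^{2}$ and $|D(\bar G_{\varepsilon}(\cdot,|Du|)^{(\kappa+1)/2})|^{2}$ inside it via \rif{a8}--\rif{a9}, control $g_{2,\varepsilon}/g_{1,\varepsilon}$ by \rif{z9}, the coefficient contributions of $\partial_{x_{s}}a_{\varepsilon}$ and $\partial_{x}g_{1,\varepsilon}$ by \rif{z10}--\rif{z11}, and the $f$-term by Young's inequality against $\sqrt{g_{1,\varepsilon}}\,|D^{2}u|$ using $1/g_{1,\varepsilon}\le c\bar G_{\varepsilon}^{2/\gamma}$ on $\{|Du|>T\}$ (a consequence of \rif{z10a}$_{2}$, \rif{00bis} and \rif{infima}). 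Reabsorbing the terms carrying $g_{1,\varepsilon}|D^{2}u|^{2}$ and $|D(\bar G_{\varepsilon}^{(\kappa+1)/2})|^{2}$, and using $\bar G_{\varepsilon}\ge(T^{2}+1)^{\gamma/2}\ge1$ to simplify lower order terms, we arrive, with $t:=\kappa+1\ge1$ and $c\equiv c(\data)$, at the Caccioppoli inequality for powers
$$\int_{B_{\tau_{1}}}\big|D\big(\bar G_{\varepsilon}(\cdot,|Du|)^{t/2}\big)\big|^{2}\dx\le\frac{c\,t^{c}}{(\tau_{2}-\tau_{1})^{2}}\int_{B_{\tau_{2}}}\bar G_{\varepsilon}^{\,t+\sigma}\dx+c\,t^{c}\int_{B_{\tau_{2}}}\Big(h^{2}\bar G_{\varepsilon}^{\,2\sigma+2\hat{\sigma}}+|\mathfrak{f}|^{2}\bar G_{\varepsilon}^{\,2/\gamma-1}\Big)\bar G_{\varepsilon}^{\,t}\dx .$$

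\emph{Step 2: Sobolev, H\"older, and the finite iteration.} We add $(\tau_{2}-\tau_{1})^{-2}\int_{B_{\tau_{2}}}\bar G_{\varepsilon}^{\,t}\dx$ to both sides and apply the Sobolev inequality on $B_{\tau_{1}}$ to the left-hand side, controlling $\|\bar G_{\varepsilon}\|_{L^{2^{*}t/2}(B_{\tau_{1}})}^{t}$ from above; on the right-hand side we apply H\"older's inequality together with $\bar G_{\varepsilon}\ge1$: trivially for the exponent $t+\sigma$, with $d/2$ and $m=d/(d-2)$ for the term carrying $h\in L^{d}(\B)$, $d>n$, and with $n/2$ and $2^{*}/2$ for the term carrying $\mathfrak{f}\in L^{n}(\B)$. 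Recalling \rif{ilchi}, one checks that the bound \rif{sigmasss}$_{1}$ on $\sigma+\hat{\sigma}$ relative to $s_{*}$ is precisely what forces the $L^{q}$-exponents of $\bar G_{\varepsilon}$ appearing on the right — most critically $m(t+2\sigma+2\hat{\sigma})$ from the $h$-term — to be $\le 2^{*}t/2=\chi m t$ for every $t\ge s_{*}$, with $\chi>1$; the cutoff exponent being smaller. Hence, after relabelling constants, the inequality takes the self-improving form
$$\|\bar G_{\varepsilon}\|_{L^{\chi t}(B_{\tau_{1}})}^{t}\le\frac{c\,t^{c}}{(\tau_{2}-\tau_{1})^{c}}\Big(\|\bar G_{\varepsilon}\|_{L^{q}(B_{\tau_{2}})}^{q}+1\Big)+c\,t^{c}\|\mathfrak{f}\|_{L^{n}(\B)}^{2}\,\|\bar G_{\varepsilon}\|_{L^{\chi t}(B_{\tau_{2}})}^{t},$$
where $q\in[s_{*},\chi t]$ collects the right-hand exponents not attached to $\mathfrak{f}$. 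Choosing $R_{*}\equiv R_{*}(\data,\mathfrak{f}(\cdot),p)$ so small that $c\,t^{c}\|\mathfrak{f}\|_{L^{n}(\B)}^{2}\le1/2$ along the (finitely many) values of $t$ used — possible by absolute continuity of the integral of $|\mathfrak{f}|^{n}$ — the last term is reabsorbed via Lemma \ref{l0l} applied to $\mathcal{Z}(\tau):=\|\bar G_{\varepsilon}\|_{L^{\chi t}(B_{\tau})}^{t}$, giving a quantitative bound for $\|\bar G_{\varepsilon}\|_{L^{\chi t}}$ on the inner ball in terms of $\|\bar G_{\varepsilon}\|_{L^{q}}$ on the outer one. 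Starting from $t=s_{*}$ (so $\kappa=s_{*}-1\ge0$, admissible since $s_{*}\ge1$, and using \rif{sigmasss}$_{2}$ to keep $q\le s_{*}$ at the first step) and iterating a finite number $k\equiv k(n,d,\sigma,\hat{\sigma},p)$ of times along a chain of radii $\varsigma<\dots<\rr$, each step multiplying the exponent by $\chi$, we reach $\bar G_{\varepsilon}\in L^{q_{0}}(B_{\varsigma})$ with $q_{0}\ge p$. Since $G_{\varepsilon}(\cdot,|Du|)\le\bar G_{\varepsilon}(\cdot,|Du|)$ and the base level is controlled as in the preliminary remark, \rif{a44} follows, with $\beta_{p},\theta_{p}$ determined by the chain of exponents.

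\emph{Main obstacle.} The crux is the matching, in Steps 1 and 2, between the extra powers of $\bar G_{\varepsilon}$ generated by nonuniform ellipticity — $\sigma$ from $g_{2,\varepsilon}/g_{1,\varepsilon}$ and $2\sigma+2\hat{\sigma}$ from the $x$-derivatives through \rif{z10}--\rif{z11} — and the gain $\chi>1$ of the Sobolev embedding: one must verify that \rif{sigmasss} is exactly the condition ensuring a genuine increase of the integrability exponent at the first step (hence, a fortiori, at all later ones). Equally delicate is the treatment of the term produced by $f\in L^{n}$: in the uniformly coercive regime $\gamma\ge2$ its exponent $2/\gamma-1$ above is nonpositive and it is reabsorbed by Young's inequality, but in the singular range $\gamma<2$ it exceeds the Sobolev scale and requires an additional interpolation together with the smallness of $\|\mathfrak{f}\|_{L^{n}(\B)}$, which is why $R_{*}$ must depend on the function $\mathfrak{f}$ itself rather than only on its norm. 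Finally, the degeneracy region $\{|Du|\le T\}$, where only \rif{a4} holds, and the non-differentiability loci $\{|Du|\in\{\varepsilon,T_{\varepsilon}\}\}$ are disposed of by the reduction of $\bar G_{\varepsilon}$ to a constant there and by the indicator $\mathds{1}_{\mathcal{D}}$ of \rif{diso2} together with the chain rule of \cite{dele}, and introduce no essential difficulty.
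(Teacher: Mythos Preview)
Your overall strategy—a finite-step Moser iteration on powers of $\bar G_{\varepsilon}(\cdot,|Du|)$—matches the paper, but your attempt to launch the iteration directly at the base level $t=s_{*}$ has a genuine gap. After H\"older with exponents $(d/2,m)$ on the $h$-term, the right-hand side of your Caccioppoli carries $\bar G_{\varepsilon}$ at exponent $m(t+2\sigma+2\hat\sigma)$; already the cutoff/ratio term carries exponent at least $t+\sigma$. Since $m=d/(d-2)>1$, at $t=s_{*}$ these exponents \emph{exceed} $s_{*}$, so the right-hand side is not controlled by the $L^{s_{*}}$ information you start from. Your assertion ``using \rif{sigmasss}$_{2}$ to keep $q\le s_{*}$ at the first step'' is therefore incorrect: \rif{sigmasss}$_{2}$ says the opposite, namely $s_{*}<2m(1+\sigma+\hat\sigma)=s_{1}$, which is precisely the statement that the natural starting level $s_{1}$ of the iteration lies \emph{above} $s_{*}$. (There is also a notational slip: the Sobolev output exponent is $2^{*}t/2=\chi m t$, not $\chi t$.)

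The missing ingredient is an interpolation argument, which is exactly what the paper supplies. One iterates from $\kappa_{1}=0$, i.e.\ from the level $s_{1}=2m(1+\sigma+\hat\sigma)$, reabsorbing the $f$-term at each step via the smallness \rif{a23}, to obtain $\|G_{\varepsilon}\|_{L^{s_{k+1}}(B_{\tau_{1}})}\lesssim (\tau_{2}-\tau_{1})^{-\beta}\|G_{\varepsilon}\|_{L^{s_{1}}(B_{\tau_{2}})}^{\chi^{k}s_{1}/s_{k+1}}$. Then interpolate $\|G_{\varepsilon}\|_{L^{s_{1}}}\le\|G_{\varepsilon}\|_{L^{s_{k+1}}}^{\lambda_{k+1}}\|G_{\varepsilon}\|_{L^{s_{*}}}^{1-\lambda_{k+1}}$ (this uses $s_{*}<s_{1}$, i.e.\ \rif{sigmasss}$_{2}$), substitute back, and check that $\lambda_{k+1}\chi^{k}s_{1}/s_{k+1}<1$ for $k$ large—this last inequality is equivalent to \rif{sigmasss}$_{1}$. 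Young's inequality then produces the $\frac{1}{2}$-reabsorption needed for Lemma~\ref{l0l}, and \rif{a44} follows. As a side remark, your worry about the $f$-term in the range $\gamma<2$ is avoidable: since $\gamma>1$ one has $\bar G_{\varepsilon}^{2/\gamma}\le\bar G_{\varepsilon}^{2}$, so after the crude replacement the $f$-contribution carries exactly the Sobolev exponent $2^{*}t/2$ and is reabsorbed uniformly in $\gamma$, with no extra interpolation needed.
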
 
Needless to say, \rif{sigmasss} is implied by \rif{sigma}$_1$ for $s_*=1$. The proof of Proposition \ref{lp} will take this and the subsequent Sections \ref{mosersec}-\ref{sr}; in the following, all the balls considered but $\B$, will be concentric with $B_{\varsigma} \Subset  B_{\rr}$ from the statement. We recall it is $\rad(\B)\le 1$; the size of $R_*$ will be chosen in due course of the proof. Observe that all the foregoing computations, except those involving $f$, still work in the case $n=2$; this case will be treated in Section \ref{casetwo} below. To start with the proof of Proposition \ref{lp}, by \eqref{a6}-\eqref{diso2} we pass to the differentiated form of system \eqref{a0}, that is
\begin{flalign}\label{a2v}
\sum_{s=1}^{n}\int_{\B}\left[\partial_{z}a_{\eps}(x,Du)DD_{s}u\cdot D\varphi+\partial_{x_{s}}a_{\eps}(x,Du)\cdot D\varphi+f\cdot D_{s}\varphi\right] \dx=0 \,, 
\end{flalign} which holds for all $\varphi\in W^{1,2}_0(\B;\RN)$ with compact support in $\B$, by density. Indeed, note that the terms multiplying $D\varphi$ belong to $L^2_{\loc}(\B)$ and $L^n_{\loc}(\B)$; this follows from \rif{growthd}$_3$ and \rif{a6}. We now consider concentric balls 
$B_{\varsigma}\subset B_{\tau_{1}} \Subset B_{\tau_{2}} \subset B_{\rr}$; in particular it is $\varsigma \leq \tau_1 < \tau_2\leq \varrho\leq 1$. 
In \eqref{a2v} we take $\varphi\equiv \varphi_{s}:=\eta^{2}[G_{\varepsilon}(x,\snr{Du})]^{\kk+1}D_{s}u$ for $s \in \{1, \ldots, n\}$, where $\kk\ge 0$ and $\eta\in C^{1}_{{\rm c}}(B_{\rr})$ satisfies
$\mathds{1}_{B_{\tau_{1}}}\le \eta\le \mathds{1}_{B_{\tau_{2}}}$ and $\snr{D\eta}\lesssim 1/(\tau_{2}-\tau_{1})$. Note that \rif{a6} and \rif{differenziaG} imply that $\varphi_s\in W^{1,2}_0(\B;\RN)$, has support contained in $B_{\tau_{2}}$ and, as such, it is admissible in \rif{a2v}. 
It follows
\begin{flalign}
\notag D\varphi_{s}=&\, \eta^{2}[G_{\varepsilon}(x,\snr{Du})]^{\kk+1}DD_{s}u\\& +(\kk+1)\eta^{2}[G_{\varepsilon}(x,\snr{Du})]^{\kk}D_{s}u\otimes DG_{\varepsilon}(x,\snr{Du})\nonumber \\
&+2\eta [G_{\varepsilon}(x,\snr{Du})]^{\kk+1}D_{s}u\otimes D\eta\,. \label{latesta}
\end{flalign}
By \rif{a6} and \rif{differenziaG} it follows that $\varphi\in W^{1,2}_0(\B;\RN)$ and it is therefore admissible in \rif{a2v}. 
By the last identity we have 
\begin{flalign}
& \notag \sum_{s=1}^{n}\int_{\B}\partial_{z}a_{\eps}(x,Du)DD_{s}u\cdot D\varphi_{s} \dx\\
&\quad  =\sum_{s=1}^{n}\int_{\B}\eta^{2}[G_{\varepsilon}(x,\snr{Du})]^{\kk+1}\partial_{z}a_{\eps}(x,Du)DD_{s}u\cdot DD_{s}u \dx\nonumber \\ & \qquad 
\notag +(\kk+1)\sum_{s=1}^{n}\int_{\B}\eta^{2}[G_{\varepsilon}(x,\snr{Du})]^{\kk}\partial_{z}a_{\eps}(x,Du)DD_{s}u\\
& \hspace{52mm} \cdot \left( D_{s}u\otimes DG_{\varepsilon}(x,\snr{Du})\right) \dx\nonumber \\ &\qquad 
\notag
+2\sum_{s=1}^{n}\int_{\B}\eta [G_{\varepsilon}(x,\snr{Du})]^{\kk+1}\partial_{z}a_{\eps}(x,Du)DD_{s}u\cdot (D_{s}u\otimes D\eta) \dx\\ &\quad    =:\mbox{(I)}_{z}+\mbox{(II)}_{z}+\mbox{(III)}_{z} \,. \label{equainizio}
\end{flalign}
Note that in the display above and in the following ones until \rif{a20}, as $G_{\varepsilon}(x,t)\equiv 0$ for $t\leq T$, all the integrals above actually extend only over $B\cap\{\snr{Du}>T\}$, therefore we can always use \rif{a8} when computing the derivatives of $G_{ \eps}$.   
To proceed, we have
\begin{flalign*}
\mbox{(I)}_{z}\stackrel{\rif{inaggiunta}}{\ge} &\int_{\B}\eta^{2}[G_{\varepsilon}(x,\snr{Du})]^{\kk+1}g_{1,\varepsilon}(x,\snr{Du})\snr{D^{2}u}^{2} \dx=:\mathcal{S}_1\,.
\end{flalign*}
We temporarily shorten the notation as follows (recall \rif{banaleg1g2}$_1$):
\begin{flalign*}
&\mathcal{H}_{\kk}(Du):=(\kk+1)[G_{\varepsilon}(x,\snr{Du})]^{\kk}\frac{\bar{a}_{\eps}(x,\snr{Du})}{g_{1,\varepsilon}(x,\snr{Du})}\,,\\
&\mathcal{H}_{\kk}'(Du):=(\kk+1)\mathds{1}_{\mathcal{D}}(|Du|)[G_{\varepsilon}(x,\snr{Du})]^{\kk}\frac{\bar{a}_{\eps}'(x,\snr{Du})\snr{Du}}{g_{1,\varepsilon}(x,\snr{Du})}\,.
\end{flalign*}
As all the integrals extend over $\{|Du|>T\}$ by \rif{00bis}, it is $g_{1,\varepsilon}(x,\snr{Du})>0$. 
Recalling \eqref{diso2}-\eqref{a9}, we then re-write
\begin{flalign}
\notag \mbox{(II)}_{z}& =\int_{\B}\eta^{2}\mathcal{H}_{\kk}(Du)DG_{\varepsilon}(x,\snr{Du})\cdot DG_{\varepsilon}(x,\snr{Du}) \dx\nonumber \\
\notag &\hspace{6mm}+\sum_{\alpha=1}^{N}\int_{\B}\eta^{2}\mathcal{H}_{\kk}'(Du)\frac{\left(DG_{\eps}(x,\snr{Du})\cdot Du^{\alpha}\right)^{2}}{\snr{Du}^2} \dx\nonumber \\
\notag &\hspace{6mm}-\int_{\B}\eta^{2}\mathcal{H}_{\kk}(Du)\int_{T}^{\snr{Du}}\partial_{x}g_{1,\varepsilon}(x,t)t \d t\cdot DG_{\varepsilon}(x,\snr{Du}) \dx\nonumber \\
\notag &\hspace{6mm}-\sum_{\alpha=1}^{N}\int_{\B}\eta^{2}\mathcal{H}_{\kk}'(Du)\left(\int_{T}^{\snr{Du}}\partial_{x}g_{1,\varepsilon}(x,t)t \d t\cdot Du^{\alpha}\right)\\ & \hspace{43mm} \times \frac{(DG_{\varepsilon}(x,\snr{Du})\cdot Du^{\alpha})}{\snr{Du}^2} \dx\nonumber \\
& =: \mbox{(II)}_{z,1}+\mbox{(II)}_{z,2}+\mbox{(II)}_{z,3}+\mbox{(II)}_{z,4}\,. \label{stimaII}
\end{flalign}
We now observe that 
\eqn{stimaIIbis}
$$
\mbox{(II)}_{z,1}+\mbox{(II)}_{z,2}  \ge(\kk+1)\int_{\B}\eta^{2}[G_{\varepsilon}(x,\snr{Du})]^{\kk}\snr{DG_{\varepsilon}(x,\snr{Du})}^{2} \dx=: \mathcal{S}_2\,.
$$
Indeed, this follows from \rif{a7} with $\lambda \equiv DG_{\eps}$ and $z\equiv Du$. Then we have, by using \rif{a1}, \rif{z9}, \rif{z10} and \rif{a3}
\begin{flalign*}
& \ \  |\mbox{(II)}_{z,3}|+|\mbox{(II)}_{z,4}|   \leq c(\kk+1)\int_{\B}\eta^{2}h(x)[G_{\varepsilon}(x,\snr{Du})]^{\kk}\frac{g_{2,\varepsilon}(x,\snr{Du})}{g_{1,\varepsilon}(x,\snr{Du})}\\ & \hspace{35mm} \times \left(\int_{T}^{\snr{Du}}\partial_{t}[\bar{G}_{\varepsilon}(x,t)]^{1+\hat{\sigma}}  \d t\right) \snr{DG_{\varepsilon}(x,\snr{Du})} \dx\\
&\quad   \leq c(\kk+1)\int_{\B}\eta^{2}h(x)[G_{\varepsilon}(x,\snr{Du})]^{\kk}[\bar{G}_{\varepsilon}(x,\snr{Du})]^{1+\sigma+\hat{\sigma}}\snr{DG_{\varepsilon}(x,\snr{Du})} \dx\\
&\quad \leq  \bar \eps \mathcal{S}_2 + \frac{c(\kk+1)}{\bar \varepsilon} \int_{\B}\eta^{2}[h(x)]^2[G_{\eps}(x,\snr{Du})]^{\kk}[\bar{G}_{\eps}(x,\snr{Du})]^{2(1+\sigma+\hat{\sigma})} \dx\\
&\quad \leq  \bar \eps \mathcal{S}_2 + \frac{c(\kk+1)\nr{h}_{L^{d}(\B)}^{2}}{\bar \varepsilon}\\ &\hspace{2cm} \times \left(\int_{\B}\eta^{2m}[G_{\eps}(x,\snr{Du})]^{m\kk}[\bar{G}_{\eps}(x,\snr{Du})]^{2m(1+\sigma+\hat{\sigma})} \dx\right)^{1/m}\,,
\end{flalign*}
where $c\equiv c(n,N,\nu,\gamma,\cb)$, and arbitrary $\bar \eps \in (0,1)$.  Note that in last two lines we have used Young and H\"older's inequalities. 
Similarly to \rif{stimaII}, we also have
\begin{flalign*}
\notag \mbox{(III)}_{z}&=2\int_{\B}\frac{\eta G_{\varepsilon}(x,\snr{Du})\mathcal{H}_{\kk}(Du)}{\kk+1}DG_{\varepsilon}(x,\snr{Du})\cdot D\eta \dx\nonumber\\
\notag &\ \  +2\sum_{\alpha=1}^{N}\int_{\B}\frac{\eta G_{\varepsilon}(x,\snr{Du})\mathcal{H}_{\kk}'(Du)}{(\kk+1)|Du|^2}(DG_{\varepsilon}(x,\snr{Du})\cdot Du^{\alpha})(Du^{\alpha}\cdot D\eta) \dx\nonumber \\
\notag &\ \ -2\int_{\B}\frac{\eta G_{\varepsilon}(x,\snr{Du})\mathcal{H}_{\kk}(Du)}{\kk+1}\left(\int_{T}^{\snr{Du}}\partial_{x}g_{1,\varepsilon}(x,t)t \d t\cdot D\eta \right)\dx\nonumber\\
\notag &\ \  -2 \sum_{\alpha=1}^{N}\int_{\B}\frac{\eta G_{\varepsilon}(x,\snr{Du})\mathcal{H}_{\kk}'(Du)}{(\kk+1)|Du|^2}
\\
& \ \  \qquad \quad  \times \left(\int_{T}^{\snr{Du}}\partial_{x}g_{1,\varepsilon}(x,t)t \d t\cdot Du^{\alpha}\right)(Du^{\alpha}\cdot D\eta) \dx\nonumber \\
& =:\mbox{(III)}_{z,1}+\mbox{(III)}_{z,2}+\mbox{(III)}_{z,3}+\mbox{(III)}_{z,4}\,.
\end{flalign*}
Using again \rif{a1}, \rif{z9}, \rif{a3} and Young inequality, we have, for every $\bar \eps \in (0,1)$
\begin{flalign*}
&|\mbox{(III)}_{z,1}|+|\mbox{(III)}_{z,2}| \\ 
& \ \leq c\int_{\B}\eta[G_{\varepsilon}(x,\snr{Du})]^{\kk+1}\frac{g_{2,\varepsilon}(x,\snr{Du})}{g_{1,\varepsilon}(x,\snr{Du})}\snr{DG_{\varepsilon}(x,\snr{Du})}|D\eta| \dx\\
&\  \leq c\int_{\B}\eta[G_{\varepsilon}(x,\snr{Du})]^{\kk+1}[\bar{G}_{\varepsilon}(x,\snr{Du})]^{\sigma}\snr{DG_{\varepsilon}(x,\snr{Du})}|D\eta| \dx\\
& \  \leq  \bar  \eps \mathcal{S}_2  +\frac{c}{\bar \varepsilon(\kk+1)}\int_{\B}\snr{D\eta}^{2}[G_{\eps}(x,\snr{Du})]^{\kk}[\bar{G}_{\varepsilon}(x,\snr{Du})]^{2(1+\sigma)} \dx\\
&\  \leq  \bar \eps \mathcal{S}_2  +\frac{c}{\bar \varepsilon(\kk+1)}\left(\int_{\B}\snr{D\eta}^{2m}[G_{\eps}(x,\snr{Du})]^{m\kk}[\bar{G}_{\varepsilon}(x,\snr{Du})]^{2m(1+\sigma)} \dx\right)^{1/m}
\end{flalign*}
(recall that $\bar{G}_{\varepsilon}\geq 1$) and, using now also \rif{z10}, we get
\begin{flalign}
& |\mbox{(III)}_{z,3}|+|\mbox{(III)}_{z,4}|  \notag \\ \notag
& \    \leq  c\int_{\B}\eta h(x)[G_{\varepsilon}(x,\snr{Du})]^{\kk+1}
 [\bar{G}_{\varepsilon}(x,|Du|)]^{1+\sigma+\hat{\sigma}}\snr{D\eta} \dx\\
&  \  \le  c\int_{\B}\eta^{2}[G_{\varepsilon}(x,\snr{Du})]^{\kk+2} \dx \nonumber
\\
\notag  &\quad \  + c\int_{\B}\snr{h(x)}^2 \snr{D\eta}^{2}[G_{\varepsilon}(x,\snr{Du})]^{\kk}[\bar{G}_{\varepsilon}(x,\snr{Du})]^{2(1+\sigma+\hat{\sigma})} \dx\\
 & \ \le c\int_{\B}\eta^{2}[G_{\varepsilon}(x,\snr{Du})]^{\kk}[\bar{G}_{\varepsilon}(x,\snr{Du})]^{2(1+\sigma+\hat \sigma )} \dx \nonumber \\
&\quad \  +c\nr{h}^{2}_{L^{d}(\B)}\left(\int_{\B}\snr{D\eta}^{2m}[G_{\varepsilon}(x,\snr{Du})]^{m\kk}[\bar{G}_{\varepsilon}(x,\snr{Du})]^{2m(1+\sigma+\hat \sigma)} \dx\right)^{1/m}\nonumber\\
&\   \leq c\left(\nr{h}^{2}_{L^{d}(\B)}+1\right)  \label{comewin}\\
& \  \quad \times \left(\int_{\B}(\eta^{2m}+\snr{D\eta}^{2m})[G_{\varepsilon}(x,\snr{Du})]^{m\kk}[\bar{G}_{\varepsilon}(x,\snr{Du})]^{2m(1+\sigma+\hat \sigma)} \dx\right)^{1/m}\notag 
\end{flalign}
with $c\equiv c(n,N,\nu,\gamma, \cb)$. Now we look at the second group of terms stemming from \eqref{a2v}
\begin{flalign*}
& \sum_{s=1}^{n}\int_{\B}\partial_{x_{s}}a_{\eps}(x,Du)\cdot D\varphi_{s} \dx \\ &\stackrel{\rif{latesta}}{=} \sum_{s=1}^{n}\int_{\B}\eta^{2}[G_{\eps}(x,\snr{Du})]^{\kk+1}\partial_{x_{s}}a_{\eps}(x,Du)\cdot DD_{s}u \dx\nonumber \\
& \ +(\kk+1)\sum_{s=1}^{n}\int_{\B}\eta^{2}[G_{\varepsilon}(x,\snr{Du})]^{\kk}\partial_{x_{s}}a_{\eps}(x,Du)\cdot (D_{s}u\otimes DG_{\varepsilon}(x,\snr{Du})) \dx\nonumber \\
&\ +2\sum_{s=1}^{n}\int_{\B}\eta [G_{\varepsilon}(x,\snr{Du})]^{\kk+1}\partial_{x_{s}}a_{\eps}(x,Du)\cdot (D_{s}u\otimes D\eta) \dx\\
& \quad =:\mbox{(I)}_{x}+\mbox{(II)}_{x}+\mbox{(III)}_{x}\,.
\end{flalign*}
From $\eqref{growthd}_{3}$,$\eqref{z11}_{2}$, H\"older and Young inequalities, we obtain, for arbitrary $\bar \eps \in (0,1)$
\begin{flalign*}
& \snr{\mbox{(I)}_{x}}\leq \, c\int_{\B}\eta^{2}h(x)[G_{\varepsilon}(x,\snr{Du})]^{\kk+1}g_{3,\varepsilon}(x,\snr{Du})\snr{D^{2}u} \dx\nonumber \\
&\  \leq  \bar \varepsilon \mathcal{S}_1 +\frac{c}{\bar \varepsilon}\int_{\B}\eta^{2}[h(x)]^{2}[G_{\varepsilon}(x,\snr{Du})]^{\kk+1}\frac{[g_{3,\varepsilon}(x,\snr{Du})]^{2}}{g_{1,\varepsilon}(x,\snr{Du})} \dx\nonumber \\
& \ \le  \bar \varepsilon \mathcal{S}_1+\frac{c}{\bar \varepsilon}\nr{h}^{2}_{L^{d}(\B)}\left(\int_{\B}\eta^{2m}[G_{\varepsilon}(x,\snr{Du})]^{m\kk}[\bar{G}_{\varepsilon}(x,\snr{Du})]^{2m(1+\sigma)} \dx\right)^{1/m},
\end{flalign*}
where $c\equiv c(n,N,\nu,\gamma, \cb)$. Using this time $\eqref{z11}_{1}$ we get
\begin{flalign*}
\snr{\mbox{(II)}_{x}}\le&\, c(\kk+1)\int_{\B}\eta^{2}h(x)[G_{\varepsilon}(x,\snr{Du})]^{\kk}g_{3,\varepsilon}(x,\snr{Du})\snr{Du}\snr{DG_{\varepsilon}(x,\snr{Du})} \dx\nonumber \\
\le &\, \bar \varepsilon \mathcal{S}_2 +\frac{c(\kk+1)}{\bar \varepsilon}\int_{\B}\eta^{2}[h(x)]^{2}
[G_{\varepsilon}(x,\snr{Du})]^{\kk}[g_{3,\varepsilon}(x,\snr{Du})]^{2}\snr{Du}^{2} \dx \nonumber \\
\le &\,\bar \varepsilon\mathcal{S}_2 +\frac{c(\kk+1)}{\bar \varepsilon}\nr{h}^{2}_{L^{d}(\B)}\\
& \hspace{1cm} \times \left(\int_{\B}\eta^{2m}[G_{\varepsilon}(x,\snr{Du})]^{m\kk}[\bar{G}_{\varepsilon}(x,\snr{Du})]^{2m(1+\sigma)} \dx\right)^{1/m}
\end{flalign*}
with $c\equiv c(n,N,\nu,\gamma,\cb)$. Again, $\eqref{z11}_{1}$, Young and H\"older inequalities, as in \rif{comewin}, give
\begin{flalign}
\snr{\mbox{(III)}_{x}}\le&\,c\int_{\B}\eta h(x)[G_{\varepsilon}(x,\snr{Du})]^{\kk+1}g_{3,\varepsilon}(x,\snr{Du})\snr{Du}\snr{D\eta} \dx\nonumber \\
\le &\,c\int_{\B}\eta^{2}[G_{\varepsilon}(x,\snr{Du})]^{\kk+2} \dx \nonumber \\
&+c\int_{\B}\snr{D\eta}^{2}[h(x)]^{2}[G_{\varepsilon}(x,\snr{Du})]^{\kk}[g_{3,\varepsilon}(x,\snr{Du})]^{2}\snr{Du}^{2}\dx \nonumber \\
\le &\, c\left(\nr{h}^{2}_{L^{d}(\B)}+1\right)\label{finoa}\\
& \times \left(\int_{\B}(\eta^{2m}+\snr{D\eta}^{2m})[G_{\varepsilon}(x,\snr{Du})]^{m\kk}[\bar{G}_{\varepsilon}(x,\snr{Du})]^{2m(1+\sigma)} \dx\right)^{1/m} \notag
\end{flalign}
where $c\equiv c(n,N,\nu,\gamma,\cb)$. Finally, we examine the contributions to \rif{a2v} coming from the terms featuring $f$:
\begin{flalign*}
& \sum_{s=1}^{n}\int_{\B}f\cdot D_{s}\varphi_{s} \dx\stackrel{\rif{latesta}}{=}\sum_{s=1}^{n}\int_{\B}\eta^{2}[G_{\varepsilon}(x,\snr{Du})]^{\kk+1}(f\cdot D_{s}D_{s}u) \dx\nonumber \\
&\qquad +(\kk+1)\sum_{s=1}^{n}\int_{\B}\eta^{2}[G_{\varepsilon}(x,\snr{Du})]^{\kk}D_{s}G_{\varepsilon}(x,\snr{Du})(f\cdot D_{s}u) \dx\nonumber \\
&\qquad +2\sum_{s=1}^{n}\int_{\B}\eta [G_{\varepsilon}(x,\snr{Du})]^{\kk+1}D_{s}\eta(f\cdot D_{s}u) \dx \\ &=: \, \mbox{(I)}_{f}+\mbox{(II)}_{f}+\mbox{(III)}_{f}\,.
\end{flalign*}
Here recall that $n>2$. Using $\eqref{z10a}_{2}$, H\"older and Young inequalities we get
\begin{flalign*}
\snr{\mbox{(I)}_{f}}\le&\, c\int_{\B}\eta^{2}[G_{\varepsilon}(x,\snr{Du})]^{\kk+1}\snr{f}\snr{D^{2}u} \dx \nonumber \\
\le &\,\bar \varepsilon\mathcal{S}_1+\frac{c}{\bar \varepsilon}\int_{\B}\eta^{2}[G_{\varepsilon}(x,\snr{Du})]^{\kk} \snr{f}^{2}[g_{1,\varepsilon}(x,\snr{Du})]^{-1}G_{\varepsilon}(x,\snr{Du}) \dx\nonumber \\
\le &\,\bar \varepsilon \mathcal{S}_1+\frac{c}{\bar \varepsilon}\int_{\B}\eta^{2}\snr{f}^{2}[G_{\varepsilon}(x,\snr{Du})]^{\kk}[\bar{G}_{\varepsilon}(x,\snr{Du})]^{2/\gamma} \dx\nonumber \\
\le &\,\bar \varepsilon\mathcal{S}_1+\frac{c}{\bar \varepsilon}\nr{f}_{L^{n}(\B)}^{2}\left(\int_{\B}\eta^{2^{*}}[G_{\varepsilon}(x,\snr{Du})]^{2^{*}\kk/2}[\bar{G}_{\varepsilon}(x,\snr{Du})]^{2^{*}} \dx\right)^{2/2^{*}}\,,
\end{flalign*}
with $c\equiv c(n,N,\nu, \gamma)$ and $2^{*}$ is as in \rif{2***}. Moreover, by $\eqref{z10a}_{1}$, we get
\begin{flalign*}
\snr{\mbox{(II)}_{f}}\le &\,c(\kk+1)\int_{\B}\eta^{2}[G_{\varepsilon}(x,\snr{Du})]^{\kk}\snr{f}\snr{DG_{\varepsilon}(x,\snr{Du})}\snr{Du} \dx\nonumber \\
\le &\,\bar \varepsilon\mathcal{S}_2+\frac{c(\kk+1)}{\bar \varepsilon}\int_{\B}\eta^{2}\snr{f}^{2}[G_{\varepsilon}(x,\snr{Du})]^{\kk}[\bar{G}_{\varepsilon}(x,\snr{Du})]^{2/\gamma} \dx\nonumber \\
\le &\,\bar \varepsilon\mathcal{S}_2+\frac{c(\kk+1)\nr{f}^{2}_{L^{n}(\B)}}{\bar \varepsilon}\\
&   \qquad \quad  \times \left(\int_{\B}\eta^{2^{*}}[G_{\varepsilon}(x,\snr{Du})]^{2^{*}\kk/2}[\bar{G}_{\varepsilon}(x,\snr{Du})]^{2^{*}} \dx\right)^{2/2^{*}}\,,
\end{flalign*}
for $c\equiv c(n,N,\nu, \gamma)$. Finally, again from $\eqref{z10a}_{1}$ we deduce that
\begin{flalign}
\nonumber & \snr{\mbox{(III)}_{f}}\le c\int_{\B}\eta [G_{\varepsilon}(x,\snr{Du})]^{\kk+1}\snr{f}\snr{D\eta}\snr{Du} \dx\nonumber \\
\nonumber & \quad \le c\int_{\B}\snr{D\eta}^{2}[G_{\varepsilon}(x,\snr{Du})]^{\kk+2} \dx \nonumber \\
&\qquad \qquad +c\int_{\B}\eta^{2}\snr{f}^{2}[G_{\varepsilon}(x,\snr{Du})]^{\kk}[\bar{G}_{\varepsilon}(x,\snr{Du})]^{2/\gamma} \dx\nonumber \\
 &\quad \le c\left(\int_{\B}\snr{D\eta}^{2m}[G_{\varepsilon}(x,\snr{Du})]^{m\kk}[\bar{G}_{\varepsilon}(x,\snr{Du})]^{2m} \dx\right)^{1/m} \nonumber \\
&\qquad \  +c\nr{f}_{L^{n}(\B)}^2\left(\int_{\B}\eta^{2^{*}}[G_{\varepsilon}(x,\snr{Du})]^{2^{*}\kk/2}[\bar{G}_{\varepsilon}(x,\snr{Du})]^{2^{*}} \dx\right)^{2/2^{*}}\label{equafine}
\end{flalign}
where $c\equiv c(n,N,\nu, \gamma)$. In the previous three displays, we also used that, by $\eqref{gte}_{2}$ it is $\bar{G}_{\varepsilon}(\cdot)\ge 1$ for all $(x,t)\in \Omega\times [0,\infty)$, thus, for instance $[\bar{G}_{\varepsilon}(\cdot)]^{1/\gamma}\le \bar{G}_{\varepsilon}(\cdot)$. 

Merging the content of displays from \rif{equainizio} to \rif{equafine}, with \rif{a2v}, choosing $\bar \varepsilon>0$ small enough (in order to reabsorb terms in the usual way), we get, after a few standard manipulations, and again using that $\bar{G}_{\varepsilon}(\cdot)\geq 1$, 
\begin{flalign}
& \mathcal{S}_1 + \mathcal{S}_2   \leq c(\kk+1)\left(\nr{h}_{L^{d}(\B)}^{2}+1\right)\label{a20} \\
& \quad \times \left(\int_{\B}(\eta^{2m}+\snr{D\eta}^{2m})[G_{\varepsilon}(x,\snr{Du})]^{m\kk}[\bar{G}_{\varepsilon}(x,\snr{Du})]^{2m(1+\sigma+\hat{\sigma})} \dx\right)^{1/m}\nonumber \\
&\qquad +c(\kk+1)\nr{f}_{L^{n}(\B)}^2\left(\int_{\B}\eta^{2^{*}}[G_{\varepsilon}(x,\snr{Du})]^{2^{*}\kk/2}[\bar{G}_{\varepsilon}(x,\snr{Du})]^{2^{*}} \dx\right)^{2/2^{*}}\notag\,,
\end{flalign}with $c\equiv c(n,N,\nu,\gamma,\cb)$. Now we note that, by $\eqref{gte}_{2}$ it follows
\begin{flalign}\label{bg}
G_{\varepsilon}(x,t)\le \bar{G}_{\varepsilon}(x,t)\le   (T^2+1)^{\gamma/2} \left[G_{\varepsilon}(x,t)+1\right]\,,
\end{flalign}for all $(x,t)\in \Omega\times [0,\infty)$, therefore, recalling \rif{stimaIIbis}, estimate \eqref{a20} can be rearranged as
\begin{flalign*}
 &\int_{\B}\eta^{2}[G_{\varepsilon}(x,\snr{Du})]^{\kk}\snr{DG_{\varepsilon}(x,\snr{Du})}^{2} \dx \notag \\
 &  \notag \le c\left(\nr{h}_{L^{d}(\B)}^{2}+1\right)\\
 & \qquad \times\left(\int_{\B}(\eta^{2m}+\snr{D\eta}^{2m}) \left[[G_{\varepsilon}(x,\snr{Du})]^{m(\kk+2+2\sigma+2\hat{\sigma})}+1\right] \dx\right)^{1/m} \\
&\ \ \ \  +c\nr{f}_{L^{n}(\B)}^2\left(\int_{\B}\eta^{2^{*}}\left[[G_{\varepsilon}(x,\snr{Du})]^{\frac{2^{*}}{2}(\kk+2)}+1 \right] \dx\right)^{2/2^{*}}\,,
\end{flalign*}
where $c\equiv c(n,N,\nu, \gamma, T, \cb)$ and every number $\kk \geq 0$. Note that this is the first time a dependence of the constants on $T$ appears, via \rif{bg}; no blow-up occurs when $T\to 0$, compare with Remark \ref{remT}. Sobolev-Poincar\'e inequality and the previous display give
\begin{eqnarray}
&&\notag  \left(\int_{\B}\eta^{2^{*}}\left[[G_{\varepsilon}(x,\snr{Du})]^{\frac{2^{*}}{2}(\kk+2)}+1\right] \dx\right)^{2/2^{*}}\\
\notag &&  \hspace{-2mm}\le\left(\int_{\B}\eta^{2^{*}}\left[[G_{\varepsilon}(x,\snr{Du})]^{\frac\kk 2+1}+1\right]^{2^*}\dx\right)^{2/2^{*}}\nonumber \\
&&  \hspace{-2mm}\le c\int_{\B}\snr{ D(\eta([G_{\varepsilon}(x,\snr{Du})]^{\frac\kk 2+1}+1)) }^{2} \dx\nonumber \\
&&  \hspace{-2mm}\le c(\kk+1)^{2}\left(\nr{h}_{L^{d}(\B)}^{2}+1\right)\label{a22} \\
&& \quad \times \left(\int_{\B}(\eta^{2m}+\snr{D\eta}^{2m})\left[[G_{\varepsilon}(x,\snr{Du})]^{m(\kk+2+2\sigma+2\hat{\sigma})}+1\right] \dx\right)^{1/m}\nonumber \\
&&\qquad \ \ +c_{\textnormal{ab}}(\kk+1)^{2}\nr{f}_{L^{n}(\B)}^2\left(\int_{\B}\eta^{2^{*}}\left[[G_{\varepsilon}(x,\snr{Du})]^{\frac{2^{*}}{2}(\kk+2)}+1 \right] \dx\right)^{2/2^{*}}\notag,
\end{eqnarray}with $c, c_{\textnormal{ab}} \equiv c,c_{\textnormal{ab}}(n,N,\nu, \gamma, T, \cb)$. We now determine the radius $R_{*}$ such that $\rad(\B)\leq R_{*}$ as in the statement of Proposition \ref{lp}. Specifically, we fix $\bar{\kk}\geq  0$ and, using the absolute continuity of the integral, determine $R_{*}\equiv R_{*}(n,N,\nu, \gamma, \linebreak T, \cb,\mathfrak f(\cdot),\bar \kk)\in (0,1)$, such that
\begin{flalign}\label{a23}
 c_{\textnormal{ab}}(\bar \kk+1)^{2}\nr{f}_{L^{n}(\B)}^2\stackleq{a0} c_{\textnormal{ab}}(\bar \kk+1)^{2}\nr{\mathfrak f}_{L^{n}(\B)}^2\le 1/2\,.
\end{flalign}
With \rif{a23} being now in force, for all $\kk \leq \bar \kk$ estimate \eqref{a22} becomes
\begin{eqnarray}
&& \left(\int_{\B}\eta^{2^*}\left[[G_{\varepsilon}(x,\snr{Du})]^{\frac{2^{*}}{2}(\kk+2)}+1\right] \dx\right)^{2/2^{*}}\nonumber \\
&& \quad \  \le c(\kk+1)^{2}\left(\nr{h}_{L^{d}(\B)}^{2}+1\right)\label{a24} \\
&& \quad\quad  \times \left(\int_{\B}(\eta^{2m}+\snr{D\eta}^{2m})\left[[G_{\varepsilon}(x,\snr{Du})]^{m(\kk+2+2\sigma+2\hat{\sigma})}+1\right] \dx\right)^{1/m} \notag
\end{eqnarray}for $c\equiv c(n,N,\nu, \gamma, T, \cb)$. 
\subsection{Moser's iteration in finite steps}\label{mosersec}
With $1\leq i \in \N$, we inductively define the exponents
\eqn{recallss0}
$$
\kk_{1}:=0,\quad \kk_{i+1}:=\chi(\kk_{i}+2)-2(1+\sigma+\hat{\sigma}),\quad s_{i}:=m(\kk_{i}+2+2\sigma+2\hat{\sigma})\,.
$$
Note that $s_*/n - s_*/d < \chi-1$ if and only if $s_* < 2n/(n-2)$, and therefore the second condition in \rif{sigmasss} implies, again together with the first one, that
\eqn{mainbound2} 
$$\sigma+\hat{\sigma}<\frac {s_*}n - \frac {s_*}d < \chi -1\,.$$
This, in turn, implies that $\{\kk_i\}$ and $\{s_i\}$ are increasing sequences; moreover, it holds that
$$
\begin{cases}
\displaystyle \kk_{i+1} =2\chi^{i}-2(\sigma+\hat{\sigma})\sum_{j=0}^{i-1}\chi^{j}-2 \quad\mbox{ for $i \geq 1$}
\\ \displaystyle s_{i+1} =2m
\chi^{i}-2m(\sigma+\hat{\sigma})\sum_{j=1}^{i-1}\chi^{j} \quad  \mbox{for $i \geq 2$ \ \ and \ \ $
s_2= 2m\chi=2^*$}.
\end{cases}
$$
Note that this implies 
\eqn{recallss}
$$
\begin{cases}
\displaystyle s_{i+1}= \frac{2^{*}}{2}(\kk_{i}+2)= \frac{n(\kk_{i}+2)}{n-2} \Longrightarrow k_{i+1}\leq s_{i+1} \leq 2m\chi^{i}\\
\displaystyle k_{i+1}\leq 2 \chi^i \,.
\end{cases}
$$
Again \eqref{mainbound2} implies, for $i \geq 1$
\begin{flalign}\label{aa30}
\notag & s_{i+1}=2m\chi^{i}\left[1-\frac{(\sigma+\hat{\sigma})(1-\chi^{1-i})}{\chi-1}\right]>2m\chi^{i}\left(1-\frac{\sigma+\hat{\sigma}}{\chi-1}\right)\\
& \qquad  \Longrightarrow \lim_{i\to \infty}s_{i+1}=\infty\,,
\end{flalign}
so that from the first relation in \rif{recallss} it also follows that $\kk_i\to \infty$. 
For $0<\varsigma\le \tau_{1}<\tau_{2}\le \rr $, we consider a sequence $\{B_{\rr_{i}}\}$ of shrinking, concentric balls, 
$B_{\tau_1} \Subset B_{\rr_{i}+1} \Subset B_{\rr_{i}} \Subset B_{\tau_{2}}$, where $\rr_{i}:=\tau_{1}+(\tau_{2}-\tau_{1})2^{-i+1}$. Note that $\{\rr_{i}\}$ is a decreasing sequence with $\rr_{1}=\tau_{2}$ and $\rr_{i}\to \tau_{1}$, therefore it is $\bigcap_{i\in \N}B_{\rr_{i}}=\overline{B_{\tau_{1}}}$ 
and $B_{\rr_{1}}=B_{\tau_{2}}$. Accordingly, we fix corresponding cut-off functions $\{\eta_{i}\}\subset C^{1}_{{\rm c}}(B)$ with $\mathds{1}_{B_{\rr_{i+1}}}\le\eta_{i}\le \mathds{1}_{B_{\rr_{i}}}$ and $\snr{D\eta_{i}}
\lesssim  1/(\rr_{i}-\rr_{i+1}) \approx 2^{i}/(\tau_{2}-\tau_{1})$. 
Choosing $\eta\equiv \eta_{i}$ in \eqref{a24}, elementary manipulations also based on \rif{recallss0} and \rif{recallss} give that
\begin{flalign}\label{aa31}
&\left(\int_{B_{\rr_{i+1}}}\left[[G_{\varepsilon}(x,\snr{Du})]^{s_{i+1}}+1\right] \dx\right)^{1/\chi}\nonumber \\
&\qquad\le \frac{\left(c\nr{h}_{L^{d}(\B)}+c\right)^{2m}2^{2mi}(\kk_{i}+1)^{2m}}{(\tau_{2}-\tau_{1})^{2m}}\int_{B_{\rr_{i}}}\left[[G_{\varepsilon}(x,\snr{Du})]^{s_{i}}+1 \right] \dx
\end{flalign} holds whenever $\kk_i \leq \bar \kk$ and $i\geq 1$, with $c\equiv c(n,N,\nu, \gamma, T, \cb)$. Finally, we set $c_{h}:=(c\nr{h}_{L^{d}(\B)}+c)^{2^*}$ and
\begin{flalign*}
\mathcal{G}_{i}:=\left(\int_{B_{\rr_{i}}}\left[[G_{\varepsilon}(x,\snr{Du})]^{s_{i}} +1\right] \dx\right)^{1/s_{i}}\,,
\end{flalign*}
so that \rif{aa31} (recall that $2^* = 2m\chi$ by \rif{ilchi}) becomes
$$\mathcal{G}_{i+1}\leq \left[\frac{c_h2^{2^*i}(\kk_{i}+1)^{2^*}}{(\tau_{2}-\tau_{1})^{2^*}}\right]^{\frac{1}{s_{i+1}}} \mathcal{G}_{i}^{\frac{\chi s_i}{s_{i+1}}}\,. $$
Iterating the above inequality yields that
\begin{flalign}
\mathcal{G}_{i+1}\le \prod_{j=0}^{i-1}\left[\frac{c_{h}2^{2^*(i-j)}(\kk_{i-j}+1)^{2^*}}{(\tau_{2}-\tau_{1})^{2^*}}\right]^{\frac{\chi^{j}}{s_{i+1}}}\mathcal{G}_{1}^{\frac{\chi^{i}s_{1}}{s_{i+1}}} \quad \mbox{holds provided $\kk_{i} \leq \bar \kk$}\,.\label{iwonder}
\end{flalign}
Now, from \eqref{aa30} we deduce that
\eqn{recalls2}
$$
\frac{\chi^{i+1}}{s_{i+1}}\leq  \frac{\chi(\chi-1)}{2m(\chi-1-\sigma-\hat{\sigma})}\,.
$$
The function $t \mapsto t/ \chi^{t}$ is decreasing on $[1/\log \chi , \infty)$ and using this fact one sees that
\eqn{recalls3}
$$
\sum_{j=1}^{\infty}\frac{j}{\chi^{j}}\lesssim \frac{1}{(\log \chi)^2}\leq \frac{c}{(\chi-1)^2} \,.
$$
We then write
\begin{flalign*}
&
\hspace{-5mm}
\prod_{j=0}^{i-1}\left[\frac{c_{h}2^{2^*(i-j)}(\kk_{i-j}+1)^{2^*}}{(\tau_{2}-\tau_{1})^{2^*}}\right]^{\frac{\chi^{j}}{s_{i+1}}}\nonumber \\
& =\exp\left\{\log \left(\frac{c_h}{(\tau_{2}-\tau_{1})^{2^*}}\right)\frac{1}{s_{i+1}} \sum_{j=0}^{i-1} \chi^j+
\frac{2^*\log 2}{s_{i+1}}\sum_{j=0}^{i-1}  (i-j)\chi^j \right.\\
& \left. \hspace{2cm}+ 
\frac{2^*}{s_{i+1}}\sum_{j=0}^{i-1}  \chi^j \log  (\kk_{i-j}+1)\right\},
\end{flalign*}
and note that, for every integer $i \geq 1$, we have 
$$
\frac{1}{s_{i+1}} \sum_{j=0}^{i-1} \chi^j \stackleq{recalls2} c\,, \quad 
\frac{1}{s_{i+1}}\sum_{j=0}^{i-1}  (i-j)\chi^j \leq c \sum_{j=0}^{i-1}  \frac{i-j}{\chi^{i-j}} \stackleq{recalls3} c 
$$
and 
$$
\frac{2^*\log 2}{s_{i+1}}\sum_{j=0}^{i-1}  \chi^j \log  (\kk_{i-j}+1)\stackrel{\rif{recallss}, \rif{recalls2}}{\leq }  c\sum_{j=0}^{i-1}  \frac{i-j}{\chi^{i-j}} \stackleq{recalls3} c\,,
$$
where $c \equiv c(n,d, \sigma, \tilde \sigma)$ in all cases. Using the content of the last three displays yields
\eqn{inverno}
$$ \prod_{j=0}^{i-1}\left[\frac{c_{h}2^{2^*(i-j)}(\kk_{i-j}+1)^{2^*}}{(\tau_{2}-\tau_{1})^{2^*}}\right]^{\frac{\chi^{j}}{s_{i+1}}}\leq 
 \frac{c}{(\tau_{2}-\tau_{1})^{\beta}}<\infty\,,$$
where $c\equiv c(\data,\nr{h}_{L^{d}(\B)})\geq 1$ and $\beta\equiv \beta(n,d,\sigma,\hat{\sigma})\geq 1$. Note that such constants blow-up when $\chi \to 1+ \sigma +\hat{\sigma}$; in particular, this happens when $d \to n$. Using \rif{inverno} in \rif{iwonder}, and keeping \rif{recalls2} in mind, yields
\begin{flalign}\label{a30}
\nr{G_{\varepsilon}(\cdot,\snr{Du})}_{L^{s_{i+1}}(B_{\tau_{1}})}&\le \mathcal{G}_{i+1}\nonumber \le \frac{c}{(\tau_{2}-\tau_{1})^{\beta}}
\mathcal{G}_{1}^{\frac{\chi^{i}s_{1}}{s_{i+1}}}
\\ & \le \frac{c}{(\tau_{2}-\tau_{1})^{\beta}}
\left[\nr{G_{\varepsilon}(\cdot,\snr{Du})}_{L^{s_1}(B_{\tau_{2}})}^{\frac{\chi^{i}s_{1}}{s_{i+1}}}+1\right]\,,
\end{flalign}with $c\equiv c(\data,\nr{h}_{L^{d}(\B)})$ and $\beta\equiv \beta(n,d,\sigma,\hat{\sigma})$, for every $i \in \en$ such that $\kk_i \leq \bar \kk$. 
\subsection{Sobolev regularity}\label{sr} By \rif{sigmasss} we have $1\leq s_*< s_1 = 2m(1+\sigma +\hat \sigma)$ so that, for every integer index $i \geq 1$, we consider the interpolation inequality
\eqn{interpo}
$$
\nr{G_{\varepsilon}(\cdot,\snr{Du})}_{L^{s_{1}}(B_{\tau_{2}})}\le \nr{G_{\varepsilon}(\cdot,\snr{Du})}_{L^{s_{i+1}}(B_{\tau_{2}})}^{\lambda_{i+1}}\nr{G_{\varepsilon}(\cdot,\snr{Du})}_{L^{s_*}(B_{\tau_{2}})}^{1-\lambda_{i+1}}\,,
$$
with $\lambda_{i+1}$ being defined by 
\eqn{il-la}
$$
\frac{1}{s_{1}}=\frac{1-\lambda_{i+1}}{s_*}+\frac{\lambda_{i+1}}{s_{i+1}}\Rightarrow \lambda_{i+1}=\frac{s_{i+1}(s_{1}-s_*)}{s_{1}(s_{i+1}-s_*)}\,.
$$
Let us show that, thanks to \rif{sigmasss}, 
there exist $\bar \vartheta\equiv \bar \vartheta(n,d, \sigma, \hat{\sigma})<1$ and $i_1\in \N$ such that
\begin{flalign}\label{claim}
 i>i_1 \Longrightarrow \frac{\lambda_{i+1}\chi^{i}s_{1}}{s_{i+1}}\leq \bar\vartheta <1\,.
\end{flalign}
Indeed, for this it is sufficient to observe that
\begin{flalign*}
\lim_{i\to \infty} \, \frac{\lambda_{i+1}\chi^{i}s_{1}}{s_{i+1}}& \stackrel{\rif{il-la}}{=} \lim_{i\to \infty} \,\frac{\chi^i(s_{1}-s_*)}{s_{i+1}-s_*}\\
& = l_* :=
 \frac{\chi-1}{2m}\frac{2m(1+\sigma +\hat{\sigma})- s_*}{\chi -1- \sigma -\hat{\sigma}}
\stackrel{\rif{sigmasss}}{<}1\,.
\end{flalign*}
Note that the last inequality is actually equivalent to \rif{sigmasss} and that \rif{sigmasss} is in fact implied by \rif{sigma} for $s_*=1$. Now, we consider the number $p>1$ for the statement of Proposition \ref{lp}, and determine another index $k\equiv k(p) > i_1$ such that $s_{k+1}\geq p$; accordingly, we consider the number $\kk_{k}$ related to $s_{k}$ via \eqref{recallss}. We now choose the number $\bar\kk\equiv \bar \kk (p)$ in \eqref{a23} as $\bar \kk := \kappa_k$, and accordingly we determine $R_*\equiv R_*(p)$ via \eqref{a23}. It follows that \eqref{a30} holds in the case $i\equiv k$ and therefore we can plug \eqref{interpo} in it, thereby obtaining
\begin{flalign}\label{a40}
& \nr{G_{\varepsilon}(\cdot,\snr{Du})}_{L^{s_{k+1}}(B_{\tau_{1}})}\nonumber \\
& \qquad \notag 
\le \frac{c}{(\tau_{2}-\tau_{1})^{\beta}}\nr{G_{\varepsilon}(\cdot,\snr{Du})}_{L^{s_{k+1}}(B_{\tau_{2}})}^{\frac{\lambda_{k+1}\chi^{k}s_{1}}{s_{k+1}}}\nr{G_{\varepsilon}(\cdot,\snr{Du})}_{L^{s_*}(B_{\tau_{2}})}^{\frac{(1-\lambda_{k+1})\chi^{k}s_{1}}{s_{k+1}}}\\
& \qquad \quad  +\frac{c}{(\tau_{2}-\tau_{1})^{\beta}}\,.
\end{flalign}
On the other hand, as $k > i_1$, then \rif{claim} holds with $i\equiv k$; we can therefore apply
Young inequality 
in \eqref{a40}; this yields
\begin{flalign*}
\nr{G_{\varepsilon}(\cdot,\snr{Du})}_{L^{s_{k+1}}(B_{\tau_{1}})}  &\le  \frac{1}{2}\nr{G_{\varepsilon}(\cdot,\snr{Du})}_{L^{s_{k+1}}(B_{\tau_{2}})}\nonumber\\
&  \qquad +\frac{c}{(\tau_{2}-\tau_{1})^{\beta_{k}}}\left[\nr{G_{\varepsilon}(\cdot,\snr{Du})}_{L^{s_*}(B_{\rr})}^{\theta_{k}}+1\right],
\end{flalign*}
where 
\eqn{nuoviesponenti}
$$\beta_{k}:=\frac{s_{k+1}\beta}{s_{k+1}-\lambda_{k+1}\chi^{k}s_{1}}\quad \mbox{and}\quad \theta_{k}:=\frac{(1-\lambda_{k+1})\chi^{k+1}s_{1}}{s_{k+1}-\lambda_{k+1}\chi^{k}s_{1}}\,.$$
Lemma \ref{l0l} with the choice $\mathcal{Z}(t)\equiv \nr{G_{\varepsilon}(\cdot,\snr{Du})}_{L^{s_{k+1}}(B_{t})}$, finally yields
\begin{flalign*}
\nr{G_{\varepsilon}(\cdot,\snr{Du})}_{L^{s_{k+1}}(B_{\varsigma})}\le \frac{c}{(\rr-\varsigma)^{\beta_{k}}}\left[\nr{G_{\varepsilon}(\cdot,\snr{Du})}_{L^{s_*}(B_{\rr})}^{\theta_{k}}+1\right]\,,
\end{flalign*}
with $c\equiv c(\data,\nr{h}_{L^{d}(\B)})$. This last inequality holds provided the bound in \rif{sigmasss} holds. All in all, recalling \eqref{z6}$_{2}$, \rif{bg} and that $p \leq s_{k+1}$, completes the proof of Proposition \ref{lp} with $\beta_p:=\beta_{k(p)}, \theta_p:=\theta_{k(p)}$. We remark that in \rif{a44} the exponents $\theta_{p},\beta_{p}$ can be replaced by exponents $\tilde \beta, \tilde \theta\equiv \tilde \beta, \tilde \theta(n,d,\sigma,\hat{\sigma})$ that are independent of $p$. For this, observe that
\eqn{convy}
 $$
 \begin{cases}
 \beta_k, \beta_p \stackrel{\rif{nuoviesponenti}}{\to} \frac{\beta}{1-l_*}=:\tilde \beta\,,\quad  \lambda_k \stackrel{\rif{il-la}}{\to} 1- \frac{s_*}{s_1}\\
 \theta_k, \theta_p \stackrel{\rif{nuoviesponenti}}{\to} \frac{\chi l_*s_*}{(s_1-s_*)(1-l_*)}:=\tilde \theta\,,
 \end{cases}
 $$
 when $k, p\to \infty$. 
The only dependence on $p$ in \rif{a44} comes through the threshold radius $R_{*}$; it is $R_{*}(p)\to 0$ as $p\to \infty$ unless $f \equiv 0$. 

\begin{remark}\label{t4uno}
 In the proof of Proposition \ref{lp}, assume that $g_{2,\eps}/g_{2,\eps} \leq c_{\rm b}$ as in \rif{z9hold}, and that $\sigma =\hat \sigma$ (compare with the assumptions of Theorem \ref{t4}). It is then easy to see that $\sigma +\hat \sigma$ can be replaced by $\hat \sigma$, everywhere, starting from \rif{a24}. This finally reflects in the same replacement in \rif{sigmasss}; in particular, we can assume $\hat \sigma < s_*/n-s_*/d$.
\end{remark}

\subsection{A Lipschitz bound in the homogeneous case $f \equiv 0$.} The above reasoning, eventually culminating in Proposition \ref{lp}, immediately leads to a priori Lipschitz estimates when $f\equiv 0$. The result, when combined with the approximation of Section \ref{pt1} below, extends those in \cite{M2, M3, MP} to the case of nonautonomous functionals with superlinear growth as in \rif{genF}. For this, the key observation is that it is not any longer necessary to consider balls with small radii $R_*$ as in \rif{a23}, as the last term in \rif{a22} does not appear. Therefore we can take everywhere, and in particular in \rif{a44}, $R_*=1$, independently of the value of $p$. It follows we can let $p\to \infty$ in \rif{a44}, and recalling \rif{convy} we conclude with
\begin{proposition}\label{lpinf}
Let $u\in W^{1,\gamma}(\B;\RN)$ be a solution to $\eqref{a0}$, under assumptions $\setm$ in \trif{ipotesi} for $n\geq 2$ and with $f\equiv 0$. Moreover, replace \trif{sigma} by the weaker $\sigma +\hat\sigma < 1/n-1/d$. 
If $B_{\varsigma}\Subset  B_{\rr} $ are concentric balls contained in $\B$, then 
$$
\nr{G_{\varepsilon}(\cdot,\snr{Du})}_{L^{\infty}(B_{\varsigma} )} \le
 \frac{c}{(\rr-\varsigma)^{\tilde \beta}}\left[\|F_{\varepsilon}(\cdot,Du)\|_{L^1(B_{\rr})}^{\tilde \theta}+1\right] \,,
$$
holds with $c\equiv c(\data,\nr{h}_{L^{d}(\B)})\geq 1$, $\tilde\beta, \tilde \theta\equiv \tilde \beta,\tilde \theta(n,d,\sigma,\hat{\sigma})>0$.
\end{proposition}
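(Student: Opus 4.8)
The plan is to obtain Proposition \ref{lpinf} as an essentially immediate consequence of Proposition \ref{lp}, by exploiting the absence of the forcing term to remove the restriction on the radius, and then letting the integrability exponent tend to infinity. First I would revisit the estimate \rif{a22}: when $f\equiv 0$ its last summand, the one carrying $\nr{f}_{L^{n}(\B)}^{2}$, is not present, so the reduction of \rif{a22} to \rif{a24} no longer requires the smallness condition \rif{a23}. Consequently the whole Moser iteration of Sections \ref{mosersec}--\ref{sr} runs verbatim with the sole constraint $\rad(\B)\leq 1$; that is, one may take $R_{*}=1$ throughout, and this choice does not depend on $p$. I would also note that the weakened hypothesis $\sigma+\hat\sigma<1/n-1/d$ is precisely what allows the choice $s_{*}=1$ in \rif{sigmasss}: with $s_{*}=1$ the two requirements there read $\sigma+\hat\sigma<1/n-1/d$ and $1<\min\{2m(1+\sigma+\hat\sigma),2n/(n-2)\}$, the latter being automatic since $m>1$. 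Hence \rif{a44} holds with $s_{*}=1$ and $R_{*}=1$, namely
$$
\nr{G_{\varepsilon}(\cdot,\snr{Du})}_{L^{p}(B_{\varsigma})}\leq\frac{c}{(\rr-\varsigma)^{\beta_{p}}}\left[\|F_{\eps}(\cdot,Du)\|_{L^{1}(B_{\rr})}^{\theta_{p}}+1\right]
$$
for every $p\in[1,\infty)$, with $c\equiv c(\data,\nr{h}_{L^{d}(\B)})$ and $\beta_{p},\theta_{p}$ as in \rif{nuoviesponenti}.

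Next I would let $p\to\infty$ in the inequality above. The essential point is the uniformity in $p$ of the two exponents, which is exactly the content of \rif{convy}: there one has $\beta_{p}\to\tilde\beta$ and $\theta_{p}\to\tilde\theta$ with $\tilde\beta,\tilde\theta$ depending only on $n,d,\sigma,\hat\sigma$, so $\beta_{p}$ and $\theta_{p}$ are bounded above by constants $\bar\beta,\bar\theta$ of the same dependence. Since $\rr-\varsigma\leq\rad(\B)\leq 1$, we get $(\rr-\varsigma)^{-\beta_{p}}\leq(\rr-\varsigma)^{-\bar\beta}$, and from $\max\{1,t\}^{\bar\theta}\leq t^{\bar\theta}+1$ we get $t^{\theta_{p}}+1\leq2(t^{\bar\theta}+1)$ for $t\geq 0$; thus the right-hand side is dominated by $c(\rr-\varsigma)^{-\bar\beta}[\|F_{\eps}(\cdot,Du)\|_{L^{1}(B_{\rr})}^{\bar\theta}+1]$, with $p$-independent exponents. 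On the left-hand side I would use that $Du\in L^{\infty}_{\loc}(\B;\R^{N\times n})$ by \rif{a6} and that $G_{\eps}$ is locally bounded on $\Omega\times[0,\infty)$, so that $G_{\eps}(\cdot,\snr{Du})\in L^{\infty}_{\loc}(\B)$ and therefore $\nr{G_{\varepsilon}(\cdot,\snr{Du})}_{L^{p}(B_{\varsigma})}\to\nr{G_{\varepsilon}(\cdot,\snr{Du})}_{L^{\infty}(B_{\varsigma})}$ as $p\to\infty$ ($B_{\varsigma}$ having finite measure). Passing to the limit yields the claimed estimate with $\tilde\beta:=\bar\beta$ and $\tilde\theta:=\bar\theta$.

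Finally I would deal with the two-dimensional case $n=2$: as already remarked right after the statement of Proposition \ref{lp}, all the computations leading to \rif{a44} remain valid for $n=2$ except those involving $f$, and since here $f\equiv 0$ nothing needs to be changed, $2^{*}$ being interpreted as in \rif{2***}. I do not expect a genuine obstacle in this argument: it is essentially a bookkeeping exercise on top of Proposition \ref{lp}, and the only points deserving care are the uniform boundedness of $\beta_{p},\theta_{p}$ in $p$ (which \rif{convy} supplies), the fact that dropping the $f$-term truly frees the threshold radius, and the finiteness of $\nr{G_{\varepsilon}(\cdot,\snr{Du})}_{L^{\infty}(B_{\varsigma})}$ that legitimizes the $L^{p}\to L^{\infty}$ limit.
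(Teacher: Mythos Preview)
Your proposal is correct and follows essentially the same approach as the paper: observe that when $f\equiv 0$ the last term in \rif{a22} disappears so the smallness condition \rif{a23} is unnecessary and one may take $R_{*}=1$ independently of $p$, then let $p\to\infty$ in \rif{a44} using the uniform convergence of $\beta_{p},\theta_{p}$ provided by \rif{convy}. Your treatment is in fact more detailed than the paper's brief sketch, in particular concerning the verification of \rif{sigmasss} for $s_{*}=1$, the justification of the $L^{p}\to L^{\infty}$ limit via \rif{a6}, and the handling of $n=2$; the paper also notes that for $n=2$ one must choose $2^{*}$ large enough to ensure $\chi>1$ in \rif{ilchi}, which is implicit in your reference to \rif{2***}.
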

Notice that here we are using Proposition \ref{lp} with the choice $s_*=1$. Note also that Proposition \ref{lp} refers to the case $n>2$. The remaining two dimensional case can be obtained via minor modifications to the proof of Proposition \ref{lp}, by choosing $2^*/2$ large enough (see \rif{2***}) in order to get $\chi >1$ in \rif{ilchi}. Anyway, the two dimensional case $n=2$ will be treated in Section \ref{casetwo} directly for the general case $f \not=0$. In that situation the proof cannot be readapted from the one of Proposition \ref{lp} as for Proposition \ref{lpinf}.

\subsection{Caccioppoli inequality on level sets}\label{cls} This is in the following:
\begin{lemma}[Caccioppoli inequality]\label{caclem}
Let $u\in W^{1,\gamma}(\B;\RN)$ be a solution to $\eqref{a0}$, under assumptions $\setm$ in \trif{ipotesi} for $n\geq 2$, and let $B_{r}(x_0)\Subset \B$ be a ball. 
Then
\begin{flalign}\label{a45}
&\int_{B_{r/2}(x_{0})}\snr{D(G_{\varepsilon}(x,\snr{Du})-\kk)_{+}}^{2} \  \dx\nonumber \\
&\qquad  \le \frac{c}{r^{2}}\nr{\bar{G}_{\varepsilon}(\cdot,\snr{Du})}_{L^{\infty}(B_{r}(x_{0}))}^{\vartheta\sigma}\int_{B_{r}(x_{0})}(G_{\varepsilon}(x,\snr{Du})-\kk)_{+}^{2} \  \dx\nonumber \\
&\quad \qquad  +c\int_{B_{r}(x_{0})}[h(x)]^{2}[\bar{G}_{\varepsilon}(x,\snr{Du})]^{2(1+\sigma+\hat{\sigma})} \dx\nonumber 
\\&\quad \qquad +c\nr{Du}_{L^{\infty}(B_{r}(x_{0}))}^{2}\int_{B_{r}(x_{0})}\snr{f}^{2} \  \dx
\end{flalign} holds whenever $\kk \geq 0$, with $c\equiv c(\data)$ and $\vartheta\equiv \vartheta(\gamma)$ is as in \eqref{sigma}. 
\end{lemma}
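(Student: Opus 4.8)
The strategy is to reuse the differentiated equation \eqref{a2v} and the computations in Section \ref{cs}, but now testing with a map adapted to the level-set truncation $(G_{\varepsilon}(x,\snr{Du})-\kk)_{+}$ rather than with powers of $G_{\varepsilon}$. Specifically, I would plug
$$
\varphi_{s}:=\eta^{2}(G_{\varepsilon}(x,\snr{Du})-\kk)_{+}D_{s}u
$$
into \eqref{a2v}, where $\eta\in C^{1}_{\rm c}(B_{r}(x_{0}))$ is a standard cut-off between $B_{r/2}(x_{0})$ and $B_{r}(x_{0})$ with $\snr{D\eta}\lesssim 1/r$. By \eqref{a6}, \eqref{a6bis} and \eqref{differenziaG}, together with $G_{\varepsilon}(\cdot,\snr{Du})\in W^{1,2}_{\loc}\cap L^{\infty}_{\loc}$ (the latter from \eqref{a6} and the definition \eqref{gte}), this $\varphi_{s}$ is admissible. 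Expanding $D\varphi_{s}$ exactly as in \eqref{latesta} produces, on the set $\{G_{\varepsilon}(x,\snr{Du})>\kk\}\subset\{\snr{Du}>T\}$, a coercive leading term controlled below by
$$
\int_{B_{r}(x_{0})}\eta^{2}\snr{D(G_{\varepsilon}(x,\snr{Du})-\kk)_{+}}^{2}\dx
$$
via Lemma \ref{help} (the analogue of the bound $\mbox{(II)}_{z,1}+\mbox{(II)}_{z,2}\ge \mathcal S_2$ in \eqref{stimaIIbis}), plus a nonnegative Hessian term $\mathcal S_1$.

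\textbf{Estimating the remaining terms.} The terms to control are then: (i) the ``lower order'' piece coming from $2\eta(G_{\varepsilon}-\kk)_{+}D_{s}u\otimes D\eta$ tested against $\partial_{z}a_{\eps}DD_{s}u$; (ii) the coefficient terms $\mbox{(I)}_{x},\mbox{(II)}_{x},\mbox{(III)}_{x}$ built from $\partial_{x_{s}}a_{\eps}$; and (iii) the forcing terms $\mbox{(I)}_{f},\mbox{(II)}_{f},\mbox{(III)}_{f}$. For (i), using $\snr{\partial_{z}a_{\eps}(x,z)}\le g_{2,\eps}(x,\snr{z})\le \cb[\bar G_{\varepsilon}(x,\snr{z})]^{\sigma}g_{1,\eps}(x,\snr{z})$ from \eqref{inaggiunta} and \eqref{z9}, Cauchy--Schwarz and Young, one reabsorbs $\mathcal S_1$ and the coercive $\mathcal S_2$-type term, at the cost of
$$
\frac{c}{r^{2}}\nr{\bar G_{\varepsilon}(\cdot,\snr{Du})}_{L^{\infty}(B_{r}(x_{0}))}^{\vartheta\sigma}\int_{B_{r}(x_{0})}(G_{\varepsilon}(x,\snr{Du})-\kk)_{+}^{2}\dx\,;
$$
here the power $\vartheta\sigma$ appears because $g_{2,\eps}/g_{1,\eps}$ contributes a factor $[\bar G_{\varepsilon}]^{\sigma}$ which, after using $\bar a_{\eps}'\snr{z}$ may be negative, forces the worst-case $[\bar G_{\varepsilon}]^{2\sigma}$ when $\gamma<2$, i.e.\ $\vartheta=2$, matching \eqref{sigma}. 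For (ii) I would follow verbatim the estimates of $\mbox{(I)}_{x}$--$\mbox{(III)}_{x}$ in Section \ref{cs}, using \eqref{growthd}$_3$ and \eqref{z11}, Young's inequality to reabsorb $\bar\eps(\mathcal S_1+\mathcal S_2)$, and producing exactly the term $c\int_{B_{r}(x_{0})}[h(x)]^{2}[\bar G_{\varepsilon}(x,\snr{Du})]^{2(1+\sigma+\hat\sigma)}\dx$; the terms involving $\partial_{x}g_{1,\eps}$ (from differentiating $G_{\varepsilon}$) are handled by \eqref{z10} as in the bound for $|\mbox{(II)}_{z,3}|+|\mbox{(II)}_{z,4}|$. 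For (iii) I would use \eqref{z10a}$_{2}$ to bound $\mbox{(I)}_{f}$ (absorbing a multiple of $\mathcal S_1$) and \eqref{z10a}$_{1}$ for $\mbox{(II)}_{f},\mbox{(III)}_{f}$, this time crudely estimating $G_{\varepsilon}(x,\snr{Du})\le \nr{\bar G_{\varepsilon}(\cdot,\snr{Du})}_{L^{\infty}(B_{r}(x_{0}))}$ and $\snr{Du}\le \nr{Du}_{L^{\infty}(B_{r}(x_{0}))}$ where needed, so that the $f$-contribution collapses to $c\nr{Du}_{L^{\infty}(B_{r}(x_{0}))}^{2}\int_{B_{r}(x_{0})}\snr{f}^{2}\dx$ (the $L^{\infty}$ norms are finite by \eqref{a6}). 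Note that in the level-set test map there is no factor $(\kk+1)$ and no power iteration, which is why no exponential-in-$\kk$ constants appear and the Sobolev--Poincar\'e step of Section \ref{cs} is not needed here.

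\textbf{Conclusion and main obstacle.} Collecting all bounds, choosing $\bar\eps$ small to reabsorb the $\mathcal S_1,\mathcal S_2$ contributions into the left-hand side, dropping $\eta$ (which is $\equiv 1$ on $B_{r/2}(x_{0})$) and using $\snr{D\eta}\lesssim 1/r$ yields \eqref{a45}. I expect the only genuinely delicate point to be the careful bookkeeping of the exponent $\vartheta\sigma$ on the $L^{\infty}$ prefactor: one must track how the factors $[\bar G_{\varepsilon}]^{\sigma}$ from $g_{2,\eps}/g_{1,\eps}$ and the possibly-negative $\bar a_{\eps}'\snr{z}$ combine through Lemma \ref{help} and Young's inequality, so that exactly the power $\vartheta\sigma$ (and not $2\sigma$ in the case $\gamma\ge 2$) comes out; this is what makes \eqref{a45} compatible with the sharp range \eqref{sigma}. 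A secondary technical nuisance is justifying that all integrals effectively run over $\{\snr{Du}>T\}$, so that $g_{1,\eps}(x,\snr{Du})>0$ and the chain-rule formula \eqref{a8} for $DG_{\varepsilon}$ is licit; this is immediate since $G_{\varepsilon}(x,t)\equiv 0$ and hence $(G_{\varepsilon}(x,t)-\kk)_{+}\equiv 0$ for $t\le T$, as already observed after \eqref{equainizio}.
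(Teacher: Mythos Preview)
Your overall strategy is correct and matches the paper: test \eqref{a2v} with $\varphi_{s}=\eta^{2}(G_{\varepsilon}(x,\snr{Du})-\kk)_{+}D_{s}u$, identify coercive terms $\mathcal S_{3}$ (Hessian) and $\mathcal S_{4}$ (gradient of $G_{\varepsilon}$), and estimate the $x$- and $f$-terms as you describe. The $x$-terms via \eqref{growthd}$_3$, \eqref{z11}, \eqref{z10} and the $f$-terms via \eqref{wewewe} (slightly cleaner than your \eqref{z10a}$_2$, but equivalent) are exactly as in the paper.

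However, you have correctly spotted the delicate point and \emph{not} resolved it. Your route through $\snr{\partial_{z}a_{\eps}}\le g_{2,\eps}$, conversion to $DG_{\varepsilon}$ via \eqref{a8}, and Young's inequality absorbing into $\mathcal S_{4}$ inevitably produces $(g_{2,\eps}/g_{1,\eps})^{2}\le c[\bar G_{\varepsilon}]^{2\sigma}$, i.e.\ $\vartheta=2$, \emph{regardless} of $\gamma$. Getting $\vartheta=1$ when $\gamma\ge 2$ is not bookkeeping: it requires the structural fact that $t\mapsto \bar a_{\eps}(x,t)$ is non-decreasing (a consequence of \eqref{00}), hence $\bar a_{\eps}'\ge 0$. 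The paper exploits this by \emph{not} passing to $DG_{\varepsilon}$ in $(\mathrm V)_{z}+(\mathrm{VI})_{z}$: it keeps the raw form in $D\snr{Du}$, uses $\bar a_{\eps}'\ge 0$ to extract a nonnegative piece $(\mathrm V)_{z,1}\ge \int\eta^{2}[g_{1,\eps}\snr{Du}]^{2}\snr{D\snr{Du}}^{2}$, and performs Young on $(\mathrm{VI})_{z}$ against $(\mathrm V)_{z,1}$ (which carries the same $\bar a_{\eps},\bar a_{\eps}'$ structure) rather than against $\mathcal S_{4}$. This yields a remainder $\int\frac{(G_{\varepsilon}-\kk)_{+}^{2}}{g_{1,\eps}}(\bar a_{\eps}+\bar a_{\eps}'\snr{Du})\snr{D\eta}^{2}$ in which $\bar a_{\eps}+\bar a_{\eps}'\snr{Du}\le g_{2,\eps}$ appears \emph{once}, giving the single power $[\bar G_{\varepsilon}]^{\sigma}$. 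Finally $\mathcal S_{4}$ is recovered from $(\mathrm V)_{z,1}$ via \eqref{a8}. So the proof genuinely splits into the cases $1<\gamma<2$ (your argument, $\vartheta=2$) and $\gamma\ge 2$ (the refined argument, $\vartheta=1$); without this case split you would only obtain \eqref{a45} with $\vartheta=2$, which is too weak for the sharp range \eqref{sigma} when $\gamma\ge 2$.
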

\begin{proof} 
For $s \in \{1, \ldots, n\}$, we take $\varphi_{s}:=\eta^{2}(G_{\varepsilon}(x,\snr{Du})-\kk)_{+}D_{s}u$ in \eqref{a2v}, where $\eta\in C^{1}_{{\rm c}}(B_{r}(x_{0}))$ satisfies $\mathds{1}_{B_{r/2}(x_{0})}\le \eta\le \mathds{1}_{B_{r}(x_{0})}$ and $\snr{D\eta}\lesssim 1/r$. Admissibility of $\varphi_s$ follows by \rif{a6} and \rif{differenziaG}. Note that all the integrals stemming from \eqref{a2v} extend over $\B^{\kk}:=\B\cap\{G_{\varepsilon}(\cdot,\snr{Du})>\kk\}$; in particular, by the very definition of $G_{\varepsilon}$, we can always restrict to the case it is $|Du|>T$ and $g_{1, \eps}(\cdot, |Du|)>0$ (recall \rif{00bis}). We start expanding the terms resulting from \rif{a2v}
\begin{flalign}
\notag \sum_{s=1}^{n}&\int_{\B^{\kk}}\partial_{z}a_{\eps}(x,Du)DD_{s}u\cdot D\varphi_{s} \dx \\ & =\sum_{s=1}^{n}\int_{\B^{\kk}}\eta^{2}(G_{\varepsilon}(x,\snr{Du})-\kk)_{+}\partial_{z}a_{\eps}(x,Du)DD_{s}u\cdot DD_{s}u \dx\nonumber \\
\notag &\quad +\sum_{s=1}^{n}\int_{\B^{\kk}}\eta^{2}\partial_{z}a_{\eps}(x,Du)DD_{s}u\cdot \left[D_{s}u \otimes  D(G_{\varepsilon}(x,\snr{Du})-\kk)_{+}\right]\dx\nonumber \\
\notag &\quad +2\sum_{s=1}^{n}\int_{\B^{\kk}}\eta(G_{\varepsilon}(x,\snr{Du})-\kk)_{+}\partial_{z}a_{\eps}(x,\snr{Du})DD_{s}u\cdot D_{s}u\otimes D\eta \dx\\ &  =:\mbox{(IV)}_{z}+\mbox{(V)}_{z}+\mbox{(VI)}_{z}\,.\label{posi1}
\end{flalign}
Moreover, it is
\begin{flalign}
& \notag \sum_{s=1}^{n}\int_{\B^{\kk}}\partial_{x_{s}}a_{\eps}(x,Du)\cdot D\varphi_{s} \dx\\
&=\sum_{s=1}^{n}\int_{\B^{\kk}}\eta^{2}(G_{\varepsilon}(x,\snr{Du})-\kk)_{+}\partial_{x_{s}}a_{\eps}(x,Du)\cdot DD_{s}u \dx\nonumber \\
\notag
&+\sum_{s=1}^{n}\int_{\B^{\kk}}\eta^{2}\partial_{x_{s}}a_{\eps}(x,Du)\cdot \left(D_{s}u \otimes D(G_{\varepsilon}(x,\snr{Du})-\kk)_{+}\right) \dx \nonumber \\
\notag &+2\sum_{s=1}^{n}\int_{\B^{\kk}}\eta (G_{\varepsilon}(x,\snr{Du})-\kk)_{+}\partial_{x_{s}}a_{\eps}(x,Du)\cdot \left(D_{s}u\otimes D\eta\right) \dx\notag \\ &   =:\mbox{(IV)}_{x}+\mbox{(V)}_{x}+\mbox{(VI)}_{x}\,. \label{posi22222}  
\end{flalign}
By \rif{inaggiunta} we have
\eqn{posi3}
$$
\mbox{(IV)}_{z}\ge \int_{\B^{\kk}}\eta^{2}(G_{\varepsilon}(x,\snr{Du})-\kk)_{+}g_{1,\varepsilon}(x,\snr{Du})\snr{D^{2}u}^{2} \dx=:\mathcal{S}_3
$$
and for later use we also define
\eqn{posi2}
$$
 \mathcal{S}_4:= \int_{\B^{\kk}}\eta^{2}\snr{D(G_{\varepsilon}(x,\snr{Du})-k)_{+}}^{2} \dx\,.
$$
We then consider two different cases. 

{\em Case 1: $1< \gamma <2$ in \trif{00}, and therefore it is $\vartheta=2$}. 
We proceed estimating the terms $\mbox{(V)}_{z}$ and $\mbox{(VI)}_{z}$. The estimate for the term (V)$_{z}$ is similar to the one for (II)$_{z}$ in \rif{stimaII}, see in particular those for the terms $\mbox{(II)}_{z,3}$ and $\mbox{(II)}_{z,4}$; indeed, again using \rif{a1}, \rif{z9}, \rif{z10}, \rif{a3} and \rif{a7}, we have
\begin{flalign*}
\notag \mbox{(V)}_{z} & \geq \mathcal{S}_4  -c\int_{\B^{\kk}}\eta^{2}h(x)[\bar{G}_{\varepsilon}(x,s)]^{1+\sigma+\hat{\sigma}}\snr{D(G_{\varepsilon}(x,\snr{Du})-k)_{+}}\dx\\
&\ge \frac 12 \mathcal{S}_4 -c\int_{B_{r}(x_0)}[h(x)]^{2}[\bar{G}_{\varepsilon}(x,\snr{Du})]^{2(1+\sigma+\hat{\sigma})} \dx\,,
\end{flalign*}
with $c\equiv c(n,N,\nu,\gamma, \cb)$. Using \rif{diso2}-\rif{a9}, we then have 
\begin{flalign*}
\mbox{(VI)}_{z}&=2\int_{\B^{\kk}}\eta (G_{\varepsilon}(x,\snr{Du})-\kk)_{+}\frac{\bar{a}_{\eps}(x,\snr{Du})}{g_{1,\varepsilon}(x,\snr{Du})}DG_{\varepsilon}(x,\snr{Du})\cdot D\eta \dx\nonumber\\
&\qquad +2\sum_{\alpha=1}^{N}\int_{\B^{\kk}}\eta \mathds{1}_{ \mathcal{D}}(|Du|)(G_{\varepsilon}(x,\snr{Du})-\kk)_{+}\frac{\bar{a}_{\eps}'(x,\snr{Du})\snr{Du}}{g_{1,\varepsilon}(x,\snr{Du})\snr{Du}^2}\\ & \hspace{35mm} \times (DG_{\varepsilon}(x,\snr{Du})\cdot Du^{\alpha})(Du^{\alpha}\cdot D\eta) \dx\nonumber \\
&\qquad -2\int_{\B^{\kk}}\eta (G_{\varepsilon}(x,\snr{Du})-\kk)_{+}\frac{\bar{a}_{\eps}(x,\snr{Du})}{g_{1,\varepsilon}(x,\snr{Du})}\\
& \hspace{35mm} \times \left(\int_{T}^{\snr{Du}}\partial_{x}g_{1,\varepsilon}(x,t)t \d t\cdot D\eta \right) \dx\nonumber\\
&\qquad -2\sum_{\alpha=1}^{N}\int_{\B^{\kk}}\eta \mathds{1}_{ \mathcal{D}}(|Du|)(G_{\varepsilon}(x,\snr{Du})-\kk)_{+}\frac{\bar{a}_{\eps}'(x,\snr{Du})\snr{Du}}{g_{1,\varepsilon}(x,\snr{Du})\snr{Du}^2}\\  & \hspace{25mm} \times \left(\int_{T}^{\snr{Du}}\partial_{x}g_{1,\varepsilon}(x,t)t \d t\cdot Du^{\alpha}\right)(Du^{\alpha}\cdot D\eta) \dx\nonumber\,.
\end{flalign*}
Using \rif{z9}, \rif{z10}, \rif{a3}-\rif{a4}, and Young inequality, we have
\begin{flalign}
\nonumber |\mbox{(VI)}_{z}| &\leq \frac 14\mathcal{S}_4 + \frac{c}{r^2} \nr{\bar{G}_{\varepsilon}(\cdot,\snr{Du})}_{L^{\infty}(B_{r}(x_{0}))}^{2\sigma}
\int_{B_{r}(x_0)}(G_{\varepsilon}(x,\snr{Du})-\kk)_{+}^2 \, dx\\
& \quad + c\int_{B_{r}(x_0)}[h(x)]^{2}[\bar{G}_{\varepsilon}(x,\snr{Du})]^{2(1+\hat{\sigma})} \, dx\,, \label{stimaIIquater}
\end{flalign}
where $c\equiv c(n,N,\nu,\gamma, \cb)$. 
Gathering the content of displays from \rif{posi1} to \rif{stimaIIquater}, and using them in \rif{a2v}, some further elementary estimations we have 
\begin{flalign}
\mathcal S_3 +\mathcal S_4  & \leq  c|\mbox{(IV)}_{x}|+c|\mbox{(V)}_{x}|+c|\mbox{(VI)}_{x} |\nonumber \\
 \notag &  \quad + \frac{c}{r^2}\nr{\bar{G}_{\varepsilon}(\cdot,\snr{Du})}_{L^{\infty}(B_{r}(x_{0}))}^{\vartheta \sigma}
\int_{B_{r}(x_0)}(G_{\varepsilon}(x,\snr{Du})-\kk)_{+}^2 \, dx\\
&\quad + c\int_{B_{r}(x_0)}[h(x)]^{2}[\bar{G}_{\varepsilon}(x,\snr{Du})]^{2(1+\sigma+\hat{\sigma})} \, dx \notag \\
&  \quad + c\sum_{s=1}^n\int_{\B^{\kk}} \snr{f\cdot D_s\varphi_s}\  \dx  \label{incon}
\end{flalign}
where $\vartheta=2$ and $c\equiv c(n,N,\nu,\gamma, \cb)$.

{\em Case 2: $\gamma \geq 2$ in \trif{00}, and therefore it is $\vartheta=1$}.  In this case we use that the function $t \mapsto \bar a_{\eps} (\cdot, t)$ is non-decreasing, so that $\bar{a}_{\eps}'(\cdot)$ is non-negative (when it exists). We note that 
\begin{flalign}
\notag &  \mbox{(V)}_{z}+\mbox{(VI)}_{z} \notag \\ 
&\notag \quad  \stackrel{\rif{a8}}{=} 
 \sum_{s=1}^n \int_{\B^{\kk}}  \eta^2  g_{1, \eps}(x,\snr{Du}) \snr{Du}   \partial_z a_{\eps} (x,Du)  D D_s u \cdot D_s u\otimes  D \snr{Du}  \dx \\
  \notag &\qquad   +  2\sum_{s=1}^n \int_{\B^{\kk}}  \eta  (G_{\varepsilon}(x,\snr{Du}) - k)_ +\partial_z a_{\eps} (x,Du)  D D_s u \cdot D_{s}u \otimes  D\eta   \dx\\
 & \qquad
+ \sum_{s=1}^{n}\int_{\B^{\kk}}\eta^{2}\partial_{z}a_{\eps}(x,Du)DD_{s}u\cdot D_{s}u \otimes  \int_{T}^{\snr{Du}}\partial_{x}g_{1,\varepsilon}(x,t)t \d t  \dx \notag \\& \quad \ \ =: 
  \mbox{(V)}_{z,1}+\mbox{(VI)}_{z}+  \mbox{(V)}_{z,2} \,.
\label{doppio}
\end{flalign}
In turn, we have 
\begin{eqnarray}
\notag  && \mbox{(V)}_{z,1} \stackrel{\rif{diso2}}{=}\int_{\B^{\kk}}   \eta^2  g_{1, \eps}(x,\snr{Du})\bar a_{\eps} (x, |Du|) |D |Du||^2   \snr{Du}^2  \dx
  \\
 \notag &&\ + \sum_{\alpha =1}^{N}\int_{\B^{\kk}}   \eta^2  g_{1, \eps}(x,\snr{Du})\mathds{1}_{ \mathcal{D}}(|Du|)\bar a_{\eps}' (x, |Du|) |Du|
(D |Du|\cdot Du^{\alpha})^2\, dx\\
&& \stackrel{\rif{a1}, \bar a_{\eps}'(\cdot)\geq 0
}{\geq} \int_{\B^{\kk}}   \eta^2  [g_{1, \eps}(x,\snr{Du}) \snr{Du}]^2  |D |Du||^2 \dx 
\,.\label{VVVV}
\end{eqnarray}
For $ \mbox{(VI)}_{z}$, keeping in mind the identity in the last display, we again use \rif{diso2} and $\bar a_{\eps}'(\cdot)\geq 0$ to estimate via Young inequality as follows:
\begin{eqnarray*}
&&\notag  4|\mbox{(VI)}_{z}| \leq 
  8 \int_{\B^{\kk}}  \eta  (G_{\varepsilon}(x,\snr{Du}) - k)_ +\bar{a}_{\eps}(x,\snr{Du}) |Du| |D |Du|\cdot D\eta|\dx  \\ 
    \notag && \qquad \qquad \  +
     8\sum_{\alpha =1}^N \int_{\B^{\kk}}  \eta  (G_{\varepsilon}(x,\snr{Du}) - k)_ +
  \mathds{1}_{ \mathcal{D}}(|Du|)  \bar a_{\eps}' (x, |Du|)\\
  &&\hspace{4cm} \times |(D|Du|\cdot Du^\alpha)(Du^\alpha\cdot D\eta)|\, dx\\
   \notag && \qquad  \leq  \mbox{(V)}_{z,1} + c
 \int_{\B^{\kk}}  \frac{(G_{\varepsilon}(x,\snr{Du}) - k)_ +^2}{ g_{1, \eps}(x,\snr{Du})}   \bar a_{\eps} (x, |Du|) |D\eta|^2 \dx\\
  \notag  &&    \quad \qquad + c\sum_{\alpha =1}^N
 \int_{\B^{\kk}}  \frac{(G_{\varepsilon}(x,\snr{Du}) - k)_ +^2}{ g_{1, \eps}(x,\snr{Du})} \mathds{1}_{ \mathcal{D}}(|Du|) \bar a_{\eps}' (x, |Du|) \frac{(Du^\alpha\cdot D\eta)^2}{|Du|}  \dx\\
 && \qquad \leq  \mbox{(V)}_{z,1} +  \frac{c}{r^2}\nr{\bar{G}_{\varepsilon}(\cdot,\snr{Du})}_{L^{\infty}(B_{r}(x_{0}))}^{\sigma} \int_{B_{r}(x_0)} (G_{\varepsilon}(x,\snr{Du}) - \kk)_+^2  \, dx\,.
 \end{eqnarray*}
As for $\mbox{(V)}_{z,2}$, by letting 
\eqn{letting}
$$\mathcal I :=  \int_{T}^{\snr{Du}}\partial_{x}g_{1,\varepsilon}(x,t)t \d t  \Longrightarrow 
|\mathcal I | \stackleq{z10} c h(x)[\bar{G}_{\varepsilon}(x,\snr{Du})]^{1 +\hat \sigma}$$ 
we have, again using that $\bar a_{\eps}'(\cdot)\geq 0$
\begin{eqnarray}
\notag 4 |\mbox{(V)}_{z,2}| &\stackleq{diso2} 
  &  4\int_{\B^{\kk}}  \eta^2 \bar{a}_{\eps}(x,\snr{Du}) |Du| \left|D |Du|\cdot \mathcal I\right|\, dx  \\
  \notag &&   \quad +
     4\sum_{\alpha =1}^N \int_{\B^{\kk}}  \eta^2
  \mathds{1}_{ \mathcal{D}}(|Du|)  \bar a_{\eps}' (x, |Du|)\\ && \hspace{2cm} \times 
|(D|Du|\cdot Du^\alpha)(Du^\alpha\cdot\mathcal I)|\, dx \notag \\
   \notag &  \stackleq{VVVV}&  \mbox{(V)}_{z,1} + c
 \int_{B_{r}(x_0)}\frac{\eta^2 [\bar a_{\eps}(x, |Du|)+\bar a_{\eps}'(x, |Du|)|Du|]}{ g_{1, \eps}(x,\snr{Du})}|\mathcal{I}|^2  \dx\\
  \notag  &  \stackrel{\eqref{a2}_2}{\leq}  &  \mbox{(V)}_{z,1} + c
 \int_{B_{r}(x_0)}\frac{g_{2, \eps}(x,\snr{Du})}{ g_{1, \eps}(x,\snr{Du})} \left|\mathcal I\right|^2 \dx\\
 & \stackrel{\rif{z9}, \rif{letting}}{\leq} &  \mbox{(V)}_{z,1} + c\int_{B_{r}(x_0)}[h(x)]^{2}[\bar{G}_{\varepsilon}(x,\snr{Du})]^{\sigma +2(1+\hat{\sigma})} \dx\label{assembl}.
 \end{eqnarray}
On the other hand, we have
\begin{eqnarray}
\notag \mathcal{S}_4 
 &\stackleq{a8} &  c\int_{\B^{\kk}}    \eta^2\left(  [g_{1, \eps}(x,\snr{Du}) \snr{Du}]^2  |D |Du||^2 + |\mathcal I|^2\right) \dx \\
\notag & \stackleq{letting} & c\int_{\B^{\kk}}   \eta^2\left( [g_{1, \eps}(x,\snr{Du}) \snr{Du}]^2  |D |Du||^2\right. \\
&& \hspace{2cm}\left. \notag+  |h(x)|^2[\bar{G}_{\varepsilon}(x,\snr{Du})]^{2(1 +\hat \sigma)} \right)\dx \\
&\stackleq{VVVV} & c \mbox{(V)}_{z,1} +c\int_{B_{r}(x_0)}[h(x)]^{2}[\bar{G}_{\varepsilon}(x,\snr{Du})]^{2(1+\hat{\sigma})} \dx\,.\label{SSSS}
\end{eqnarray}
Assembling the content of displays from \rif{doppio} to \rif{SSSS} and using it in \rif{a2v}, we again conclude with \rif{incon}, but this time with $\vartheta =1$. We proceed estimating the $x$-terms coming from \rif{incon} (these have been defined in \rif{posi22222}), using $\eqref{growthd}_{3,4}$, \eqref{z11} as follows:
\begin{flalign*}
\snr{\mbox{(IV)}_{x}}+\snr{\mbox{(V)}_{x}}& \le \bar \eps\mathcal{S}_3  +\bar \eps\mathcal{S}_4+\frac{c}{ \bar\varepsilon}\int_{B_{r}(x_0)}\eta^{2}[h(x)]^2[\bar{G}_{\varepsilon}(x,\snr{Du})]^{2(1+\sigma)} \dx\,,\\
\snr{\mbox{(VI)}_{x}} &\stackrel{\eqref{z11}_1}{\leq} \frac{c}{r^2}\int_{B_{r}(x_0)}(G_{\varepsilon}(x,\snr{Du})-\kk)_{+}^{2}\dx\nonumber\\
& \qquad \ \ +c\int_{B_{r}(x_0)}[h(x)]^{2}[\bar{G}_{\varepsilon}(x,\snr{Du})]^{2(1+\sigma)} \dx\,, 
\end{flalign*}
with $c\equiv c(\data)$ and arbitrary $\bar \eps \in (0,1)$. Finally, the estimate of the terms involving $f$ can be done by using Young inequality and \rif{wewewe} 
as follows:
\begin{eqnarray}
&&\notag  \sum_{s=1}^n\int_{\B^\kk} \snr{f\cdot D_s\varphi_s} \dx  \leq  \bar \eps\mathcal{S}_3  +\bar \eps\mathcal{S}_4 + \frac{c}{\bar \eps}\int_{\B^{\kappa}} |D\eta|^2(G_{\varepsilon}(x,\snr{Du}) - \kk)_+^2 \dx \\
   &&\notag   \qquad   \qquad  \qquad  \qquad  \qquad + \frac{c}{\bar \eps } \int_{\B^{\kappa}}\eta^2 \left[\frac{(G_{\varepsilon}(x,\snr{Du}) - \kk)_+}{g_{1, \eps}(x,\snr{Du})} + \snr{Du}^2 \right]\snr{f}^2   \dx\,\\
   &&  \quad \leq \bar \eps\mathcal{S}_3  +\bar \eps\mathcal{S}_4 + \frac{c}{\bar \eps r^2}\int_{\B^{\kk}} (G_{\varepsilon}(x,\snr{Du}) - \kk)_+^2  \dx\notag \\
   && \qquad \qquad +\frac{c}{\bar{\eps}}\nr{Du}_{L^{\infty}(B_{r_{0}}(x_{0}))}^{2} \int_{\B^{\kk}} \eta^2\snr{f}^2 \dx\,,\label{trattati}
\end{eqnarray}
for $c\equiv c(\data)$ and arbitrary $\bar \varepsilon \in (0,1)$. Collecting the estimates in the last three  displays to \rif{incon}, recalling that 
recalling that $\bar{G}_{\varepsilon}(\cdot)\geq 1$, and selecting $\bar \varepsilon>0$ sufficiently small in order to reabsorb terms, we complete the proof of Lemma \ref{caclem}. 
\end{proof}
\subsection{Nonlinear iterations}\label{nonlineari} In this section we finally derive pointwise gradient bounds. 
This goes via Lemma \ref{stimapp} and Proposition \ref{linf1} below. 
\begin{lemma}\label{stimapp}
Let $u\in W^{1,\gamma}(\B;\RN)$ be a solution to $\eqref{a0}$, under assumptions $\setm$ in \trif{ipotesi}  for $n>2$. If $B_{2r_0}(x_{0})\Subset \B$ is a ball such that $x_0$ is a Lebesgue point of $|Du|$, then 
\begin{eqnarray}\label{a46}
&&\nonumber G_{\varepsilon}(x_{0},\snr{Du(x_{0})})\\
&& \quad \le \kk+c\nr{\bar{G}_{\varepsilon}(\cdot,\snr{Du})}_{L^{\infty}(B_{r}(x_{0}))}^{n\vartheta\sigma/4}\left(\mint_{B_{r_{0}}(x_{0})}(G_{\varepsilon}(x,\snr{Du})-\kk)_{+}^{2} \dx\right)^{1/2}\nonumber \\
&&\quad\ \ \ \   +c\nr{\bar{G}_{\varepsilon}(\cdot,\snr{Du})}_{L^{\infty}(B_{r}(x_{0}))}^{(n-2)\vartheta\sigma/4}\notag \\
&& \quad \qquad \ \times \left[\mathbf{P}_{1}^{\mathfrak{h}}(x_{0},2r_{0})+\nr{Du}_{L^{\infty}(B_{r_{0}}(x_{0}))}\mathbf{P}_{1}^{f}(x_{0},2r_{0})\right]\,,
\end{eqnarray}holds for all $\kk\ge 0$, with $c\equiv c(\datai)$, where $\vartheta\equiv \vartheta(\gamma)$ is as in \eqref{sigma},  
and
\begin{flalign}\label{fk0}
\mathfrak{h}(x):=h(x)[\bar G_{\varepsilon}(x,\snr{Du})]^{1+\sigma+\hat{\sigma}}\,.
\end{flalign}
\end{lemma}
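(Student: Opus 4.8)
The plan is to combine the Caccioppoli inequality on level sets (Lemma \ref{caclem}) with the abstract De Giorgi--type iteration lemma of Kilpel\"ainen--Mal\'y recalled in Lemma \ref{noniter}. The natural choice is to set $v:=G_{\varepsilon}(\cdot,\snr{Du})$, which belongs to $W^{1,2}_{\loc}(\B)$ by \rif{differenziaG}, and is non-negative. We want to cast \rif{a45} into the form \rif{varine}, so that \rif{var8} yields \rif{a46}.

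First I would fix a ball $B_{2r_0}(x_0)\Subset \B$ and work throughout on balls $B_r(x_0)\subset B_{r_0}(x_0)$, freezing once and for all the quantities $\nr{\bar{G}_{\varepsilon}(\cdot,\snr{Du})}_{L^{\infty}(B_{r_0}(x_0))}$ and $\nr{Du}_{L^{\infty}(B_{r_0}(x_0))}$, both finite by \rif{a6}. Applying Lemma \ref{caclem} on $B_r(x_0)$ and passing to averages (multiplying by $r^{-n}$, recalling $r\le 1$), inequality \rif{a45} becomes, for every $\kk\ge 0$ and every $B_r(x_0)\subset B_{r_0}(x_0)$,
\begin{flalign*}
\mint_{B_{r/2}(x_0)}\snr{D(v-\kk)_+}^2\dx & \le \frac{c\nr{\bar G_{\varepsilon}(\cdot,\snr{Du})}_{L^\infty(B_{r_0})}^{\vartheta\sigma}}{r^2}\mint_{B_r(x_0)}(v-\kk)_+^2\dx\\
& \quad + c\mint_{B_r(x_0)}[\mathfrak h(x)]^2\dx + c\nr{Du}_{L^\infty(B_{r_0})}^2\mint_{B_r(x_0)}\snr{f}^2\dx,
\end{flalign*}
where $\mathfrak h$ is as in \rif{fk0} and we used that $[h(x)]^2[\bar G_\varepsilon(x,\snr{Du})]^{2(1+\sigma+\hat\sigma)}=[\mathfrak h(x)]^2$. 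This is precisely \rif{varine} with the identifications $M_1^2:=c\nr{\bar G_\varepsilon(\cdot,\snr{Du})}_{L^\infty(B_{r_0}(x_0))}^{\vartheta\sigma}$, $f_1:=\mathfrak h$, $M_2:=c^{1/2}$, $f_2:=f$, $M_3:=c^{1/2}\nr{Du}_{L^\infty(B_{r_0}(x_0))}$, $\kk_0:=\kk$, and $\tilde c$ absorbing the remaining absolute constants; here $\mathfrak h, f\in L^2(B_{2r_0}(x_0))$ thanks to \rif{a6} and the fact that $f\in L^\infty(\B)$ by \rif{a0}. Since $M_1\ge 1$ can always be arranged by enlarging $c$ (recall $\bar G_\varepsilon\ge 1$), Lemma \ref{noniter} applies with, say, $\delta\in(0,1/2)$ arbitrary, $x_0$ a Lebesgue point of $v$ (which follows from $x_0$ being a Lebesgue point of $|Du|$, using continuity of $t\mapsto G_\varepsilon(x,t)$ — strictly, one should note $v$ is a composition and invoke that $|Du|$ having a Lebesgue value at $x_0$ forces $v$ to as well). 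The exponent $\max\{\delta,(n-2)/2\}$ equals $(n-2)/2$ for $n>2$ once $\delta<(n-2)/2$, hence $M_1^{1+(n-2)/2}=M_1^{n/2}\approx \nr{\bar G_\varepsilon(\cdot,\snr{Du})}_{L^\infty(B_{r_0})}^{n\vartheta\sigma/4}$ and $M_1^{(n-2)/2}\approx \nr{\bar G_\varepsilon(\cdot,\snr{Du})}_{L^\infty(B_{r_0})}^{(n-2)\vartheta\sigma/4}$, which matches the two powers appearing in \rif{a46}. Plugging these into \rif{var8}, together with $M_2\mathbf P_1^{f_1}(x_0,2r_0)=c\mathbf P_1^{\mathfrak h}(x_0,2r_0)$ and $M_3\mathbf P_1^{f_2}(x_0,2r_0)=c\nr{Du}_{L^\infty(B_{r_0})}\mathbf P_1^{f}(x_0,2r_0)$, gives exactly \rif{a46}.

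The only genuinely delicate point is the bookkeeping of constants: one must check that the dependence of $c$ in \rif{a46} is only on $\datai$, which is clear since the constant in Lemma \ref{caclem} depends on $\data$, the constant $\tilde c$ fed into Lemma \ref{noniter} depends on $\data$, and the output constant of Lemma \ref{noniter} depends on $(n,\tilde c,\delta)$ with $\delta$ now fixed as a function of $n$ only; similarly $\beta$ in Lemma \ref{noniter} never enters because we applied it on a single scale $r_0$ rather than iterating in the radius. A minor subtlety is that Lemma \ref{caclem} requires $B_r(x_0)\Subset\B$, whereas we only have $B_{2r_0}(x_0)\Subset\B$; but every $B_r(x_0)\subset B_{r_0}(x_0)$ satisfies $B_r(x_0)\Subset B_{2r_0}(x_0)\Subset\B$, so this causes no problem. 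I expect no real obstacle here — the statement is an essentially mechanical combination of the two preceding lemmas, and the ``hard work'' (the Caccioppoli estimate on level sets) has already been done in Lemma \ref{caclem}.
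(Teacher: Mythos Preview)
Your approach is correct and matches the paper's exactly: feed the Caccioppoli inequality \rif{a45} into Lemma~\ref{noniter} with $v=G_\varepsilon(\cdot,|Du|)$, $f_1=\mathfrak h$, $f_2=f$, $M_1=\nr{\bar G_\varepsilon(\cdot,|Du|)}_{L^\infty(B_{r_0})}^{\vartheta\sigma/2}$, $M_2=1$, $M_3=\nr{Du}_{L^\infty(B_{r_0})}$, and read off \rif{a46} from \rif{var8} using $\max\{\delta,(n-2)/2\}=(n-2)/2$ for $n>2$.

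The one place your sketch is too light is the Lebesgue point claim. You invoke ``continuity of $t\mapsto G_\varepsilon(x,t)$'', but $G_\varepsilon$ is nonautonomous, so $v(x)=G_\varepsilon(x,|Du(x)|)$ depends on $x$ in two ways and your argument as written only handles one of them. The paper spends half a page on this: split
\[
|v(x)-G_\varepsilon(x_0,|Du(x_0)|)|\le |G_\varepsilon(x,|Du(x)|)-G_\varepsilon(x_0,|Du(x)|)|+|G_\varepsilon(x_0,|Du(x)|)-G_\varepsilon(x_0,|Du(x_0)|)|.
\]
The second term goes to zero in mean by local Lipschitz continuity of $t\mapsto G_\varepsilon(x_0,t)$ together with the Lebesgue point hypothesis on $|Du|$ and its local boundedness \rif{a6}, as you suggest. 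The first term, however, requires writing $G_\varepsilon$ as $\int_T^{\max\{|Du|,T\}}g_{1,\varepsilon}(\cdot,s)s\,ds$ and using that $g_{1,\varepsilon}$ is \emph{jointly} continuous on $\Omega\times(0,\infty)$, hence uniformly continuous on $\overline{B_{r_0}(x_0)}\times[T,\nr{Du}_{L^\infty(B_{r_0})}]$; together with Fubini and the local boundedness of $Du$ this gives the vanishing of the $x$-variation piece. This is the only nontrivial point in the proof, and your parenthetical does not quite cover it.
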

\begin{proof} Note that we can assume that $|Du(x_0)|>T$, otherwise \rif{a46} is trivial by the very definition of $G_{\varepsilon}$; this obviously implies $ \|Du\|_{L^\infty(B_r(x_0))}>T$. Let us first note that $x_0$ is also a Lebesgue point of $x \mapsto G_{\varepsilon}(x,\snr{Du(x)})$ and it is
\eqn{cc0}
$$
\lim_{r \to 0} \mint_{B_r(x_0)} G_{\varepsilon}(x,\snr{Du(x)}) \, dx = G_{\varepsilon}(x_0,\snr{Du(x_0)})\,,
$$
i.e., the right-hand side denotes the precise representative of $G_{\varepsilon}(\cdot,\snr{Du(\cdot)})$ at the point $x_0$. Indeed, note that 
\begin{eqnarray}
\nonumber & &\limsup_{r \to 0} \mint_{B_r(x_0)} |G_{\varepsilon}(x,\snr{Du(x)})-G_{\varepsilon}(x_0,\snr{Du(x_0)})| \, dx \\ && \qquad \leq 
 \limsup_{r \to 0} \mint_{B_r(x_0)} |G_{\varepsilon}(x,\snr{Du(x)})-G_{\varepsilon}(x_0,\snr{Du(x)})| \, dx \nonumber \\ 
&& \qquad \qquad + \limsup_{r \to 0} \mint_{B_r(x_0)} |G_{\varepsilon}(x_0,\snr{Du(x)})-G_{\varepsilon}(x_0,\snr{Du(x_0)})| \, dx\nonumber \\
&&\qquad \qquad \qquad  =: \limsup_{r\to 0}\mathcal C_{1}(r) +\limsup_{r\to 0} \mathcal C_2(r)\,. \label{cc1}
\end{eqnarray}
As $x_0$ is a Lebesgue point for $Du$, $t \mapsto G_{\varepsilon}(x_0, t)$ is locally Lipschitz-regular, and $Du$ is locally bounded, we have 
\eqn{cc2}
$$    \lim_{r \to 0} \, \mathcal C_2(r) \lesssim \lim_{r \to 0} \mint_{B_r(x_0)} |\snr{Du(x)}-\snr{Du(x_0)}| \, dx=0\,.
$$
As for the term $\mathcal{C}_{1}(\cdot) $, we have, also using Fubini's
\begin{flalign*}
& \nonumber \mathcal C_{1}(r)  \leq  \mint_{B_r(x_0)} \int_T^{\max\{|Du(x)|, T\}}|g_{1, \eps}(x,s)-g_{1, \eps}(x_0,s)|s \, ds \, dx\\
\nonumber&\quad \leq  \int_T^{\|Du\|_{L^\infty(B_r(x_0))}}  \mint_{B_r(x_0)} |g_{1, \eps}(x,s)-g_{1, \eps}(x_0,s)| \, dx \, s \, ds\\
\nonumber & \quad \leq  \|Du\|_{L^\infty(B_r(x_0))}^2 \sup_{s\in [T, \|Du\|_{L^\infty(B_r(x_0))}]} \mint_{B_r(x_0)} |g_{1, \eps}(x,s)-g_{1, \eps}(x_0,s)| \, dx.
\end{flalign*}
Recall that $g_1(\cdot)$ is assumed to be continuous on $\Omega\times (0, \infty)$. By the definition in \rif{g1e}, this implies that also $g_{1, \eps}$ is continuous and therefore it is uniformly continuous on $\overline{B_{r_0}(x_0)}\times [T, \|Du\|_{L^\infty(B_r(x_0))}]$. This and the content of the last display it is sufficient to infer that $\mathcal C_{1}(r) \to 0$ as $r\to 0$. This fact, together with \rif{cc1}-\rif{cc2}, yields \rif{cc0}. 
Thanks to \rif{a45} we can verify \eqref{varine} with the choices $v(\cdot)\equiv G_{\varepsilon}(\cdot,\snr{Du(\cdot)})$, 
$f_{1}\equiv \mathfrak{h}$, $f_{2}\equiv f$, $M_{1}\equiv\nr{\bar{G}_{\varepsilon}(\cdot,\snr{Du})}_{L^{\infty}(B_{r}(x_{0}))}^{\vartheta\sigma/2}$, $M_{2}\equiv 1$ and $M_{3}\equiv \nr{Du}_{L^{\infty}(B_{r_{0}}(x_{0}))}$. Applying Lemma \ref{noniter}, inequality \rif{var8} yields \eqref{a46}. 
\end{proof}
\begin{proposition}\label{linf1}
Let $u\in W^{1,\gamma}(\B;\RN)$ be a solution to $\eqref{a0}$, under assumptions $\setm$ in \trif{ipotesi} for $n>2$. There exists a positive radius $R_{*}\equiv R_{*}(\data,\mathfrak f(\cdot))\leq 1$ such that if $\rad(\B)\le R_{*}$ and $B_{\varsigma} \Subset  B_{\rr}$ are concentric balls contained in $\B$, then
\begin{flalign}\label{a52}
& [E_{\varepsilon}(\nr{Du}_{L^{\infty}(B_{\varsigma})})]^{\gamma}+\nr{G_{\varepsilon}(\cdot,\snr{Du})}_{L^{\infty}(B_{\varsigma})}\nonumber \\
 &\qquad \leq \frac{c}{(\rr-\varsigma)^{ \beta }}
\left[\|F_{\varepsilon}(\cdot,Du)\|_{L^1(B_{\rr})}^{\theta}+\nr{f}_{L(n,1)(B_{\rr})}^{\theta}+1\right]
\end{flalign}
holds with  
$c\equiv c( \data,\nr{h}_{L^{d}(\B)})\geq 1$ and $ \beta ,\theta \equiv \beta ,\theta (n,d,\gamma,\sigma,\hat{\sigma})>  0$.
\end{proposition}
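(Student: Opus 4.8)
The plan is to close a nonlinear iteration on the quantity $\|\bar G_\varepsilon(\cdot,|Du|)\|_{L^\infty}$ by combining the pointwise bound of Lemma \ref{stimapp} with the potential estimates \rif{potlore} and the finite-step Moser bound of Proposition \ref{lp}. First I would fix concentric balls $B_{\varsigma}\Subset B_{\tau_1}\Subset B_{\tau_2}\Subset B_{\rr}$ and apply Lemma \ref{stimapp} at every Lebesgue point $x_0\in B_{\tau_1}$, taking $\kk=0$ there, so that the left-hand side is $G_\varepsilon(x_0,|Du(x_0)|)$ and the right-hand side contains the $L^2$-average of $G_\varepsilon$ over $B_{r_0}(x_0)$ times the factor $\|\bar G_\varepsilon\|_{L^\infty(B_{\tau_2})}^{n\vartheta\sigma/4}$, plus the potential terms $\mathbf P_1^{\mathfrak h}$ and $\|Du\|_{L^\infty}\mathbf P_1^{f}$, with $\mathfrak h$ as in \rif{fk0}. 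Here one absorbs the trivial case $|Du(x_0)|\le T$ and then takes the supremum over $x_0$, using that by \rif{z6} and the a priori interior bound \rif{a6}, $G_\varepsilon(\cdot,|Du|)$ is (locally) bounded, so the precise representative identity \rif{cc0} legitimizes the pointwise statement.

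The key step is to bound the two potentials. For $\mathbf P_1^{f}$ one invokes \rif{potlore} directly, since $f\in\mathfrak X(B_{\rr})=L(n,1)(B_{\rr})$ for $n>2$, getting $\mathbf P_1^{f}(x_0,2r_0)\le c\|f\|_{L(n,1)(B_{\rr})}$. For $\mathbf P_1^{\mathfrak h}$, recall $\mathfrak h=h\,[\bar G_\varepsilon(\cdot,|Du|)]^{1+\sigma+\hat\sigma}$; the point is that $h\in L^d(B)$ with $d>n$, and $\bar G_\varepsilon(\cdot,|Du|)$ is integrable to high powers by Proposition \ref{lp}. Using H\"older's inequality with exponents matched to \rif{sigma} (this is exactly where the bound $\sigma+\hat\sigma<1/n-1/d$ and, when $\gamma<2$, the second term $\tfrac{4}{\vartheta(n-2)}(1-1/\gamma)$ enters through the exponent $s_*$ in \rif{sigmasss}), one shows $\mathfrak h\in L^{n+\tau}_{\loc}$ for some $\tau>0$, hence by \rif{normine} and \rif{potlore}
\[
\mathbf P_1^{\mathfrak h}(x_0,2r_0)\le c\|\mathfrak h\|_{L(n,1)(B_{\tau_2})}\le \frac{c}{(\rr-\varsigma)^{\beta}}\big[\|F_\varepsilon(\cdot,Du)\|_{L^1(B_{\rr})}^{\theta}+1\big]
\]
after feeding in \rif{a44} with a suitable large $p$ and using $G_\varepsilon\le F_\varepsilon$ from \rif{z6}. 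One similarly controls the $L^2$-average of $G_\varepsilon$ and the factor $\|Du\|_{L^\infty(B_{r_0}(x_0))}$ via $[E_\varepsilon]^\gamma\lesssim \bar G_\varepsilon$ from \rif{z10a}.

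Collecting these estimates, the pointwise inequality becomes, after taking $\sup_{x_0\in B_{\tau_1}}$ and writing $M(\tau):=\|\bar G_\varepsilon(\cdot,|Du|)\|_{L^\infty(B_\tau)}$,
\[
M(\tau_1)\le c\,M(\tau_2)^{\,\kappa}\,\frac{1}{(\tau_2-\tau_1)^{\beta}}\Big[\|F_\varepsilon(\cdot,Du)\|_{L^1(B_{\rr})}^{\theta}+\|f\|_{L(n,1)(B_{\rr})}^{\theta}+1\Big]^{1-\kappa}\cdot(\cdots)
\]
with an exponent $\kappa<1$ — this is where the strict inequality $n\vartheta\sigma/4<1$, i.e. the $\tfrac{4}{\vartheta(n-2)}(1-1/\gamma)$-bound in \rif{sigma}, is decisive (one combines the $n\vartheta\sigma/4$ and $(n-2)\vartheta\sigma/4$ powers with the polynomial growth in $M$ hidden in the potential estimates, and checks the total exponent is $<1$; the two-term structure of \rif{sigma} is tailored precisely so that the exponent on $M(\tau_2)$ stays below $1$ even when $\gamma$ is close to $1$). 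Then Young's inequality lets one reabsorb a small multiple of $M(\tau_2)$, and a final application of the iteration Lemma \ref{l0l} (with $\mathcal Z(\tau)=M(\tau)$) converts the near-reabsorption into the clean bound \rif{a52}; the radius restriction $\rad(B)\le R_*$ comes in only through Proposition \ref{lp}'s $R_*$, needed to make \rif{a23} hold for the chosen $p$. \textbf{The main obstacle} I anticipate is the bookkeeping of exponents: one must simultaneously keep the power of $M(\tau_2)$ strictly below $1$, keep $\mathfrak h$ in a space strictly better than $L^n$ so that $\mathbf P_1^{\mathfrak h}$ is finite, and make the $F_\varepsilon$-dependence come out as a single power $\theta$; all three are controlled by \rif{sigma}, but verifying that the chosen $s_*$ in \rif{sigmasss} and the interpolation exponents fit together — particularly in the subquadratic case $\gamma<2$ where $\vartheta=2$ doubles the relevant powers — is the delicate point.
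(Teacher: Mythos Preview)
Your plan is correct and matches the paper's proof essentially step for step: apply Lemma~\ref{stimapp} with $\kappa=0$ at Lebesgue points, feed in Proposition~\ref{lp} to control the $L^p$-average of $G_\varepsilon$ and (via H\"older and \rif{normine}) the potential $\mathbf P_1^{\mathfrak h}$, use \rif{potlore} for $\mathbf P_1^{f}$, then Young and Lemma~\ref{l0l}. One small clarification on the exponent bookkeeping you flagged: the condition $\sigma+\hat\sigma<1/n-1/d$ is what makes Proposition~\ref{lp} (with $s_*=1$) applicable, while the second branch $\sigma+\hat\sigma<\tfrac{4}{\vartheta(n-2)}(1-1/\gamma)$ of \rif{sigma} enters \emph{only} at the Young step, to ensure the two relevant exponents on $M(\tau_2)$ --- namely $n\vartheta\sigma/4$ and $(n-2)\vartheta\sigma/4+1/\gamma$ (the latter coming from $\|Du\|_{L^\infty}\lesssim M^{1/\gamma}$ via \rif{z10a}) --- are each strictly below $1$; it does not enter through $s_*$ in \rif{sigmasss}.
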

\begin{proof} We take numbers $\tau, p\equiv \tau, p(n,d, \sigma,\hat{\sigma})$ such that
\begin{flalign}\label{ppp}
0< \tau < d-n \quad \mbox{and} \quad p:=\frac{(1+\sigma+\hat{\sigma})(n+\tau)d}{d-n-\tau}\,,
\end{flalign}
where $d>n$ is the exponent from \rif{sigma}, and fix $R_{*}\equiv R_{*}( \data,\mathfrak{f}(\cdot))>0$ as the radius from Proposition \ref{lp}, so that \eqref{a44} holds such $p$. With $B_{\varsigma} \Subset B_{\rr}$ being the balls considered in the statement, with no loss of generality we can assume that $\nr{Du}_{L^{\infty}(B_{\varsigma})} \geq T$
otherwise  the assertion in \rif{a52} is trivial by choosing $c$ large enough. Let $\varrho_1:= \varsigma + {(\varrho-\varsigma)/2}$ and  consider concentric balls $B_{\varsigma}\Subset B_{\tau_{1}}\Subset B_{\tau_{2}}\Subset B_{\rr_1}\Subset B_{\rr}$, a point $x_{0}\in B_{\tau_{1}}$ which is a Lebesgue point for $|Du|$, and $r_{0}:=(\tau_{2}-\tau_{1})/8$, so that  $B_{2r_{0}}(x_{0})\Subset B_{\tau_{2}}$. Needless to say, a.e. point in $B_{\tau_1}$ qualifies. By \rif{bg}
we find
\begin{flalign}\label{sos0}
&\nr{\bar{G}_{\varepsilon}(\cdot,\snr{Du})}_{L^{\infty}(B_{r_{0}}(x_{0}))}\le (T^2+1)^{\gamma/2}\left[\nr{G_{\varepsilon}(\cdot,\snr{Du})}_{L^{\infty}(B_{\tau_2})}+1\right].
\end{flalign}
This and \rif{z10a}$_1$ imply
\begin{flalign}\label{sos0.1}
\nr{Du}_{L^{\infty}(B_{\tau_2})} \le c\nr{G_{\varepsilon}(\cdot,\snr{Du})}_{L^{\infty}(B_{\tau_2})}^{1/\gamma}+c\,, 
\end{flalign}
for $c\equiv c(\nu,\gamma,T)$. We then apply \rif{a46} 
on $B_{r_0}(x_0)$, 
and also using \eqref{sos0}-\eqref{sos0.1} and H\"older inequality (by \rif{ppp} it is $p\geq 2$), we obtain
\begin{flalign}\label{a48}
\notag & G_{\varepsilon}(x_{0},\snr{Du(x_{0})}) \\
& \le cr_{0}^{-n/p}\left[\nr{G_{\varepsilon}(\cdot,\snr{Du})}_{L^{\infty}(B_{\tau_2})}^{n\vartheta\sigma/4}+1\right]\|G_{\varepsilon}(\cdot,\snr{Du})\|_{L^p(B_{r_{0}}(x_0))}\nonumber \\
&\qquad \quad +c\left[\nr{G_{\varepsilon}(\cdot,\snr{Du})}_{L^{\infty}(B_{\tau_2})}^{(n-2)\vartheta\sigma/4}+1\right]\notag \\
& \qquad \qquad \quad \times \left[\mathbf{P}_{1}^{\mathfrak{h}}(x_{0},2r_0)+\nr{Du}_{L^{\infty}(B_{\tau_2})}\mathbf{P}_{1}^{f}(x_{0},2r_0)\right],
\end{flalign}
with $c\equiv c(\data)$. With \eqref{a44}  (where we take $\xi \equiv r_0$ and $\varrho \equiv 2r_0$) we further bound
$$
\|G_{\varepsilon}(\cdot,\snr{Du})\|_{L^p(B_{r_{0}}(x_0))}\le  cr_0^{-\beta_{p}}\left[\|F_{\varepsilon}(\cdot,Du)\|_{L^1(B_{\rr})}^{\theta_{p}}+1\right],
$$
for $c\equiv c( \data,\nr{h}_{L^{d}(\B)})$. Using also \eqref{sos0.1} in \eqref{a48}, recalling that $x_{0}\in B_{\tau_{1}}$ is a arbitrary Lebesgue point for $|Du|$, we obtain
\begin{flalign}\label{a49}
&\nr{G_{\varepsilon}(\cdot,\snr{Du})}_{L^{\infty}(B_{\tau_{1}})}\notag \\ &\le \frac{c}{(\tau_{2}-\tau_{1})^{n/p+\beta_{p}}}\nr{G_{\varepsilon}(\cdot,\snr{Du})}_{L^{\infty}(B_{\tau_{2}})}^{n\vartheta\sigma/4}\left[\nr{F_{\varepsilon}(\cdot,Du)}_{L^{1}(B_{\rr})}^{\theta_{p}}+1\right]\nonumber \\
&\quad +c\nr{G_{\varepsilon}(\cdot,\snr{Du})}_{L^{\infty}(B_{\tau_{2}})}^{(n-2)\vartheta\sigma/4}\| \mathbf{P}_{1}^{\mathfrak{h}}(\cdot,(\tau_{2}-\tau_{1})/4)\|_{L^{\infty}(B_{\tau_{1}})}\nonumber \\
&\quad +c\nr{G_{\varepsilon}(\cdot,\snr{Du})}_{L^{\infty}(B_{\tau_{2}})}^{(n-2)\vartheta\sigma/4+1/\gamma}\| \mathbf{P}_{1}^{f}(\cdot,(\tau_{2}-\tau_{1})/4)  \|_{L^{\infty}(B_{\tau_{1}})}\nonumber \\
&\quad +\frac{c}{(\tau_{2}-\tau_{1})^{n/p+\beta_{p}}}\left[\nr{F_{\varepsilon}(\cdot,Du)}_{L^{1}(B_{\rr})}^{\theta_{p}}+1\right]\nonumber \\
&\quad +c  \| \mathbf{P}_{1}^{\mathfrak{h}}(\cdot,(\tau_{2}-\tau_{1})/4)  \|_{L^{\infty}(B_{\tau_{1}})}+ c \| \mathbf{P}_{1}^{f}(\cdot,(\tau_{2}-\tau_{1})/4)  \|_{L^{\infty}(B_{\tau_{1}})}\,,
\end{flalign}
where $c\equiv c(\datai,\nr{h}_{L^{d}(\B)})$. Now, observe that \eqref{sigma} implies 
\eqn{ledue}
$$
n\sigma\vartheta/4<1  \quad \mbox{and} \quad (n-2)\sigma\vartheta/4<1-1/\gamma\,,$$ 
therefore we can apply Young inequality in \rif{a49}
to end up with
\begin{flalign}\label{a50}
\notag \nr{G_{\varepsilon}(\cdot,\snr{Du})}_{L^{\infty}(B_{\tau_{1}})}& \le \frac{1}{2}\nr{G_{\varepsilon}(\cdot,\snr{Du})}_{L^{\infty}(B_{\tau_{2}})}\\
 & \quad +\frac{c}{(\tau_{2}-\tau_{1})^{\beta_{*}}}\left[\nr{F_{\varepsilon}(\cdot,Du)}_{L^{1}(B_{\rr})}^{\theta_{*}}+1\right]\nonumber \\
&\quad +c \| \mathbf{P}_{1}^{\mathfrak{h}}(\cdot,(\tau_{2}-\tau_{1})/4) \|_{L^{\infty}(B_{\tau_{1}})}^{\theta_{*}}\notag \\ 
&\quad +c\| \mathbf{P}_{1}^{f}(\cdot,(\tau_{2}-\tau_{1})/4) \|_{L^{\infty}(B_{\tau_{1}})}^{\theta_{*}}+c\,,
\end{flalign}
where $c\equiv c(\datai,\nr{h}_{L^{d}(\B)})$ where it is $ \beta_{*} , \theta_{*} \equiv \beta_{*} , \theta_{*} (n,d,\gamma,\sigma,\hat{\sigma})>  0$. 
Inequality \eqref{a50} allows to apply Lemma \ref{l0l} with the obvious choice $\mathcal{Z}(t):= \nr{G_{\varepsilon}(\cdot,\snr{Du})}_{L^{\infty}(B_{t})}$, and this leads to
\begin{flalign}\label{a51}
\nr{G_{\varepsilon}(\cdot,\snr{Du})}_{L^{\infty}(B_{\varsigma})}&\, \le\frac{c}{(\rr-\varsigma)^{\beta_{*}}}\left[\nr{F_{\varepsilon}(\cdot,Du)}_{L^{1}(B_{\rr})}^{\theta_{*}}+1\right]\nonumber \\
&\quad +c\| \mathbf{P}_{1}^{\mathfrak{h}}(\cdot,(\rr-\varsigma)/4) \|_{L^{\infty}(B_{\rr_{1}})}^{\theta_{*}}\notag \\
& \quad +c\| \mathbf{P}_{1}^{f}(\cdot,(\rr-\varsigma)/4) \|_{L^{\infty}(B_{\rr_{1}})}^{\theta_{*}}+c
\end{flalign}
for $c\equiv c(\datai,\nr{h}_{L^{d}(\B)})$. By using \rif{potlore}$_1$ we infer
$$
\| \mathbf{P}_{1}^{f}(\cdot,(\rr-\varsigma)/4)\|_{L^{\infty}(B_{\rr_1})}\le c\nr{f}_{L(n,1)(B_{\rr})}\,.
$$
Moreover, with $\varrho_2:=\varrho_1+(\rr-\varsigma)/4$; also using \rif{ppp} and H\"older inequality yields (recall that $ s_*=1$)
\begin{eqnarray}
\notag \hspace{-.5mm} \|\mathbf{P}_{1}^{\mathfrak{h}}(\cdot,(\rr-\varsigma)/4)\|_{L^{\infty}(B_{\rr_1})}&\stackrel{\eqref{potlore}_1}{\leq} & c\nr{\mathfrak{h}}_{L(n,1)(B_{\rr_2})}\nonumber\\ \nonumber & \stackleq{normine} &c(\tau)\nr{\mathfrak{h}}_{L^{n+\tau}(B_{\rr_2})}\\ & \stackleq{fk0} & c\nr{h}_{L^{d}(B_{\rr})}\nr{\bar{G}_{\varepsilon}(\cdot,\snr{Du})}_{L^{p}(B_{\rr_2})}^{1+\sigma+\hat{\sigma}}\label{servepure} \\
& \stackrel{\rif{a44}}{\leq} & \frac{c\nr{h}_{L^{d}(B_{\rr})}\left[\nr{F_{\varepsilon}(\cdot,Du)}_{L^{1}(B_{\rr})}^{\theta_p(1+\sigma+\hat{\sigma})}+1\right]}{(\rr-\varsigma)^{\beta_{p}(1+\sigma+\hat{\sigma})}}\notag 
\end{eqnarray}
where $c\equiv c(\datai, \nr{h}_{L^{d}(\B)})$. We have applied Proposition \ref{lp} with the choice $s_*=1$, so that \rif{sigmasss} is verified by the assumption \rif{sigma}$_1$. Inserting the above two estimates in \eqref{a51} and recalling also \eqref{gte} and \eqref{z10a}$_1$, we finally end up with \eqref{a52}, where $\beta := \max\{\beta_{*}, \beta_{p}\theta_{*}(1+\sigma+\hat{\sigma})\}$ and $\theta :=\max \{\theta_{*}, \theta_p\theta_{*}(1+\sigma+\hat \sigma)\}.$ 
\end{proof}
\subsection{The case $n=2$}\label{casetwo}
Here we consider the missing two-dimensional case. We start with the following lemma, which is a hybrid counterpart of Proposition \ref{lp}, in the sense that the a priori estimate involved still contains the $L^{\infty}$-norm of $Du$ in the right-hand side. We recall that the number $m$ has been defined in \rif{ilchi}. 
\begin{lemma}\label{lemmadieci} Let $u\in W^{1,\gamma}(\B;\RN)$ be a solution to $\eqref{a0}$ under assumptions $\setm$ in \trif{ipotesi} for $n=2$, where we replace \trif{sigma}$_2$ by
\eqn{uppersss}
$$\sigma + \hat \sigma < \frac {s_*}2-\frac {s_*}d = \frac{s_*}{2m}, \ \  \mbox{for some $s_*$ such that}\ 1\leq s_* <2m(1+\sigma+\hat{\sigma})\,.
$$ If $B_{\varsigma} \Subset  B_{\rr}$ are concentric balls contained in $\B$, then, for every $p\geq 1$, there holds
\begin{flalign}
\notag  \nr{G_{\varepsilon}(\cdot,\snr{Du})}_{L^{p}(B_{\varsigma})} &\le \frac{c}{(\rr-\varsigma)^{\beta_{p}}}\left[\nr{G_{\varepsilon}(\cdot,\snr{Du})}_{L^{s_*}(B_{\rr})}^{\theta_{p}}+1\right]\\
 & \quad +c\nr{Du}_{L^{\infty}(B_{\rr})}\|f\|_{L^2(B_{\varrho})}\label{2.2}
\end{flalign}
with $c\equiv c(\data,\nr{h}_{L^{d}(\B)},p)\geq 1$, $\beta_{p}, \theta_{p}\equiv \beta_{p}, \theta_{p}(d,\sigma,\hat{\sigma},p)>0$.
\end{lemma}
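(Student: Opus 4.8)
The plan is to imitate the Moser iteration of Proposition \ref{lp}, but \emph{only} adapting the treatment of the terms containing $f$, since the excerpt already notes that all the computations leading to \rif{a20} work verbatim when $n=2$ except those involving $f$. So I would restart from the identity obtained by testing \rif{a2v} with $\varphi_s = \eta^2[G_\eps(x,|Du|)]^{\kk+1}D_su$, keeping the bounds for $\mbox{(I)}_z,\dots,\mbox{(III)}_z$, $\mbox{(I)}_x,\dots,\mbox{(III)}_x$ exactly as in Section \ref{cs}, which give the right-hand side of \rif{a20} with $\nr{h}_{L^d(\B)}$-weighted terms. The new point is to re-estimate $\mbox{(I)}_f,\mbox{(II)}_f,\mbox{(III)}_f$ for $n=2$: instead of exploiting the Sobolev exponent $2^*=2n/(n-2)$ and $\eqref{z10a}_2$ to absorb $\|f\|_{L^n}$ against $\mathcal S_1,\mathcal S_2$, I would simply bound $|f\cdot D_sD_su|\le |f|\,|D^2u|$ crudely, pull $\nr{Du}_{L^\infty(B_\rr)}$ out of the lower-order $f$-terms, and use $\eqref{z10a}_2$ only to convert a factor $[g_{1,\eps}]^{-1}$ into $[\bar G_\eps]^{2/\gamma}\lesssim \bar G_\eps$. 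This leaves, after reabsorption of $\bar\eps\mathcal S_1+\bar\eps\mathcal S_2$, an extra additive term on the right of the shape $c(\kk+1)\nr{Du}_{L^\infty(B_\rr)}\|f\|_{L^2(B_\rr)}\int \eta^2\,[\bar G_\eps]^{\kk+\text{something}}$, which needs to be folded back in; one controls it by Young's inequality against $\int\eta^2[G_\eps]^{\kk+2}$ plus the $f$-free term $c\nr{Du}_{L^\infty}\|f\|_{L^2}$, at the cost of no smallness condition on $\rad(\B)$.

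Next, I would run the Sobolev–Poincaré step (here in its two-dimensional form, with $2^*$ any number $>2$ so that $\chi=2^*/(2m)>1$ as in \rif{ilchi}) to get the analogue of \rif{a24}, but now with an additive remainder $c\nr{Du}_{L^\infty(B_\rr)}\|f\|_{L^2(B_\rr)}$ that does not carry any power of $\kk$ depending on smallness of the radius — this is precisely why the statement of Lemma \ref{lemmadieci} keeps that term explicitly on the right-hand side rather than hiding it in a threshold $R_*$. Then I would define the sequences $\kk_i,s_i$ by the same recursion \rif{recallss0}, observe that the condition $s_*/2-s_*/d<\chi-1$ (equivalent to $s_*<2m\cdot\tfrac{2}{0}$… rather: to $\chi>1+\sigma+\hat\sigma$ being compatible with \rif{uppersss}) makes $\{\kk_i\},\{s_i\}$ increasing with $s_i\to\infty$, and iterate. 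Because the additive $f$-remainder is $\kk$-independent and fixed, it simply propagates as a bounded additive constant through the finite Moser iteration, as in \rif{iwonder}--\rif{inverno}, yielding the analogue of \rif{a30}:
\begin{flalign*}
\nr{G_\eps(\cdot,|Du|)}_{L^{s_{i+1}}(B_{\tau_1})}
 &\le \frac{c}{(\tau_2-\tau_1)^{\beta}}\Bigl[\nr{G_\eps(\cdot,|Du|)}_{L^{s_1}(B_{\tau_2})}^{\chi^i s_1/s_{i+1}}+1\Bigr]\\
 &\quad + c\,\nr{Du}_{L^\infty(B_\rr)}\|f\|_{L^2(B_\rr)}\,.
\end{flalign*}

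Finally, I would close exactly as in Section \ref{sr}: since $1\le s_*<s_1=2m(1+\sigma+\hat\sigma)$ by \rif{uppersss}, interpolate $\nr{G_\eps}_{L^{s_1}}\le\nr{G_\eps}_{L^{s_{i+1}}}^{\lambda_{i+1}}\nr{G_\eps}_{L^{s_*}}^{1-\lambda_{i+1}}$ as in \rif{interpo}--\rif{il-la}, check via \rif{uppersss} that $\lambda_{i+1}\chi^i s_1/s_{i+1}\to l_*<1$ (the inequality $l_*<1$ being equivalent to \rif{uppersss}), fix $k=k(p)$ with $s_{k+1}\ge p$, apply Young's inequality to reabsorb $\tfrac12\nr{G_\eps}_{L^{s_{k+1}}(B_{\tau_2})}$, and then use the iteration Lemma \ref{l0l} with $\mathcal Z(t)=\nr{G_\eps(\cdot,|Du|)}_{L^{s_{k+1}}(B_t)}$ to pass from the pair $(\tau_1,\tau_2)$ to $(\varsigma,\rr)$; the additive $\nr{Du}_{L^\infty(B_\rr)}\|f\|_{L^2(B_\rr)}$ term is unaffected by all these manipulations and survives verbatim into \rif{2.2}. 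I expect the only genuinely delicate point to be bookkeeping the $f$-term so that it remains \emph{additive} and $\kk$-independent after the Sobolev step — one must resist using any Sobolev-type gain on $|f|\,|Du|$ (which is what forces the $\|f\|_{L^2}$, rather than a weaker norm, and the $L^\infty$ factor on $Du$), and one must be careful that no power of $\kk$ multiplies it that could blow up through the infinite product. Everything else is a routine transcription of the $n>2$ argument.
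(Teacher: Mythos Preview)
Your plan has a genuine gap in the handling of the $f$-terms during the Moser iteration. When you test \rif{a2v} with $\varphi_s=\eta^2[G_\eps(x,|Du|)]^{\kk+1}D_su$ for $\kk>0$, the terms $\mbox{(I)}_f,\mbox{(II)}_f,\mbox{(III)}_f$ all carry a factor $[G_\eps]^{\kk}$ (or $[G_\eps]^{\kk+1}$) \emph{after} pulling out $\|Du\|_{L^\infty}$. For instance, from $\mbox{(II)}_f$ one gets, after Young,
\[
\bar\eps\,\mathcal S_2+\frac{c(\kk+1)}{\bar\eps}\|Du\|_{L^\infty(B_\rr)}^2\int_\B\eta^2|f|^2[G_\eps(x,|Du|)]^{\kk}\,dx\,.
\]
Your proposed splitting ``Young against $\int\eta^2[G_\eps]^{\kk+2}$ plus the $f$-free term $c\|Du\|_{L^\infty}\|f\|_{L^2}$'' does not work: applying Young with exponents $\bigl(\tfrac{\kk+2}{\kk},\tfrac{\kk+2}{2}\bigr)$ to $|f|^2\cdot[G_\eps]^\kk$ produces $\int|f|^{\kk+2}$, not $\|f\|_{L^2}^2$; any other splitting that keeps only $\|f\|_{L^2}$ forces $\|[G_\eps]^\kk\|_{L^\infty}$, which is circular. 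Since $n=2$ gives no room for the H\"older step used in Section \ref{cs} (there one exploits $|f|^2\in L^{n/2}$ with $n/2>1$), the $f$-remainder cannot be made additive and $\kk$-independent, and hence cannot ``simply propagate as a bounded additive constant'' through the iteration.

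The paper avoids this entirely by a different and shorter route: it uses \emph{only} $\kk=0$. With $\kk=0$ the test map coincides with the one in Lemma \ref{caclem} at level $\kk=0$, so estimate \rif{trattati} applies and the $f$-contribution is the clean term $c\|Du\|_{L^\infty(B_\rr)}^2\int\eta^2|f|^2$, carrying \emph{no} power of $G_\eps$. This yields \rif{2.0}, and then the two-dimensional Sobolev embedding $W^{1,2}_0\hookrightarrow L^p$ (for \emph{every} finite $p$) gives directly
\[
\|G_\eps\|_{L^p(B_{\tau_1})}\le \frac{c}{\tau_2-\tau_1}\|G_\eps\|_{L^{2m(1+\sigma+\hat\sigma)}(B_{\tau_2})}^{1+\sigma+\hat\sigma}+\frac{c}{\tau_2-\tau_1}+c\|Du\|_{L^\infty(B_\rr)}\|f\|_{L^2(B_{\tau_2})}\,,
\]
without any iteration. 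One then interpolates $\|G_\eps\|_{L^{2m(1+\sigma+\hat\sigma)}}$ between $L^{s_*}$ and $L^p$, checks $\lambda_p(1+\sigma+\hat\sigma)<1$ for $p$ large (this is where \rif{uppersss} and \rif{tip} enter), applies Young, and closes with Lemma \ref{l0l}. The interpolation/absorption you sketch at the end is in the right spirit, but the key structural point you miss is that in two dimensions a single Caccioppoli step at $\kk=0$ already reaches every $L^p$, so no Moser sequence $\{\kk_i\}$ is needed---and this is precisely what makes the $f$-term manageable.
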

\begin{proof} We can confine ourselves to prove \rif{2.2} for sufficiently large $p$, and we consider
\begin{flalign}\label{tip}
p>\max\left\{2m(1+\sigma+\hat{\sigma}),\frac{2ms_*}{s_*-2m(\sigma+\hat{\sigma})}\right\}=\frac{2ms_*}{s_*-2m(\sigma+\hat{\sigma})}\,.
\end{flalign}
The last equality comes from the second inequality in \rif{uppersss}. 
Note that such a choice is possible thanks to \rif{uppersss}, making the denominator of the last quantity different than zero.  
In the following lines all the balls will be concentric to $B_{\varrho}$. We look back at the proof of Proposition \ref{lp}, take $\kk=0$ to obtain the test function $\varphi_s=\eta^{2}G_{\varepsilon}(x,\snr{Du})D_{s}u$ for $s \in \{1, \ldots, n\}$, and perform exactly the same calculations made there up to \rif{finoa}. For the terms $\mbox{(I)}_{f}$-$\mbox{(III)}_{f}$ involving the right-hand side $f$, we note that, as $\kk =0$, the test maps $\varphi_s$ used in the proof of Propositions \ref{lp} and Lemma \ref{caclem} do coincide (actually, we take $\kk=0$ in both Propositions \ref{lp} and Lemma \ref{caclem}). Therefore we can use estimate \rif{trattati}, 
where $c\equiv c(\data)$ and $\bar \varepsilon \in (0,1)$; here $\mathcal S_3$ and $\mathcal S_4$ have been defined in \rif{posi3} and \rif{posi2}, respectively. All together, choosing $\bar \varepsilon>0$ small enough and re-absorbing terms in a standard way, we obtain
\begin{flalign}\label{2.0}
\notag \mathcal{S}_3+ \mathcal{S}_4& \le c (\nr{h}_{L^{d}(\B)}^{2}+1)\\
\notag &\qquad  \times \left(\int_{\B}(\eta^{2m}+\snr{D\eta}^{2m})\left[[G_{\varepsilon}(x,\snr{Du})]^{2m(1+\sigma+\hat{\sigma})}+1\right] \dx\right)^{1/m}\nonumber \\
&\quad+c\nr{Du}_{L^{\infty}(B_{\rr})}^{2}\int_{\B}\eta^{2}\snr{f}^{2} \dx\,,
\end{flalign}for $c\equiv c(\data)$. As it is $|D\eta|\lesssim 1/(\tau_2-\tau_1)$, elementary manipulations on \eqref{2.0} yield
\begin{flalign*}
\nr{D(\eta G_{\varepsilon}(\cdot,\snr{Du}))}_{L^{2}(B_{\tau_{2}})}^{2}& \le \frac{c(\nr{h}_{L^{d}(\B)}+1)^{2}}{(\tau_{2}-\tau_{1})^{2}}\nr{G_{\varepsilon}(\cdot,\snr{Du})}_{L^{2m(1+\sigma+\hat{\sigma})}(B_{\tau_{2}})}^{2(1+\sigma+\hat{\sigma})}\nonumber\\ &\quad+\frac{c(\nr{h}_{L^{d}(\B)}+1)^{2}}{(\tau_{2}-\tau_{1})^{2}}
+c\nr{Du}_{L^{\infty}(B_{\rr})}^{2}\int_{B_{\tau_{2}}}\snr{f}^{2} \dx
\end{flalign*}
so that Sobolev embedding gives
\begin{flalign}\label{2.1}
\nr{G_{\varepsilon}(\cdot,\snr{Du})}_{L^{p}(B_{\tau_{1}})}&\le c_{p}\tau_{2}^{2/p}\nr{D(\eta G_{\varepsilon}(\cdot,\snr{Du}))}_{L^{2}(B_{\tau_{2}})}\nonumber \\
&\notag \le \frac{c}{(\tau_{2}-\tau_{1})}\nr{G_{\varepsilon}(\cdot,\snr{Du})}_{L^{2m(1+\sigma+\hat{\sigma})}(B_{\tau_{2}})}^{1+\sigma+\hat{\sigma}}\\
&  \quad +\frac{c}{(\tau_{2}-\tau_{1})} +c\nr{Du}_{L^{\infty}(B_{\rr})}\|f\|_{L^2(B_{\tau_{2}})}\,,
\end{flalign}
with $c\equiv c(\data, \|h\|_{L^d(\B)})$. With $\lambda_{p}\in (0,1)$ being defined through 
\begin{flalign*}
\frac{1}{2m(1+\sigma+\hat{\sigma})}=\frac{1-\lambda_{p}}{s_*}+\frac{\lambda_{p}}{p}\Rightarrow \lambda_{p}=\frac{p[2m(1+\sigma+\hat{\sigma})-s_*]}{2m(p-s_*)(1+\sigma+\hat{\sigma})}\,,
\end{flalign*}
using the interpolation inequality
\begin{flalign*}
\nr{G_{\varepsilon}(\cdot,\snr{Du})}_{L^{2m(1+\sigma+\hat{\sigma})}(B_{\tau_{2}})}\le \nr{G_{\varepsilon}(\cdot,\snr{Du})}_{L^{p}(B_{\tau_{2}})}^{\lambda_{p}} \nr{G_{\varepsilon}(\cdot,\snr{Du})}_{L^{s_*}(B_{\tau_{2}})}^{1-\lambda_{p}}
\end{flalign*}
in \rif{2.1}, we get
\begin{flalign*}
& \nr{G_{\varepsilon}(\cdot,\snr{Du})}_{L^{p}(B_{\tau_{1}})} \\
& \qquad \le\frac{c}{(\tau_{2}-\tau_{1})}\nr{G_{\varepsilon}(\cdot,\snr{Du})}_{L^{p}(B_{\tau_{2}})}^{\lambda_{p}(1+\sigma+\hat{\sigma})}\nr{G_{\varepsilon}(\cdot,\snr{Du})}_{L^{s_*}(B_{\tau_{2}})}^{(1-\lambda_{p})(1+\sigma+\hat{\sigma})}
\nonumber \\
&\qquad\quad   +\frac{c}{(\tau_{2}-\tau_{1})}\nonumber+c\nr{Du}_{L^{\infty}(B_{\rr})}\|f\|_{L^2(B_{\tau_{2}})}\,.
\end{flalign*}
Using \rif{uppersss} and \eqref{tip}, we see that $\lambda_{p}(1+\sigma+\hat{\sigma})<1$, thus Young inequality 
gives
\begin{flalign*}
& \nr{G_{\varepsilon}(\cdot,\snr{Du})}_{L^{p}(B_{\tau_{1}})} \le \frac{1}{2}\nr{G_{\varepsilon}(\cdot,\snr{Du})}_{L^{p}(B_{\tau_{2}})}\nonumber \\
&\qquad +\frac{c}{(\tau_{2}-\tau_{1})^{\beta_{p}}}\left[\nr{G_{\varepsilon}(\cdot,\snr{Du})}_{L^{s_*}(B_{\tau_{2}})}^{\theta_{p}}+1\right]+c\nr{Du}_{L^{\infty}(B_{\rr})}\|f\|_{L^2(B_{\tau_{2}})}
\end{flalign*}
with $c\equiv c(\data,\nr{h}_{L^{d}(\B)})$ and 
\eqn{ilbeta}
$$\beta_{p}:=\frac{2m(p-s_*)}{s_*p-2m[s_*+p(\sigma+\hat{\sigma})]} \ \  \mbox{and}\ \  \theta_{p}:=\frac{s_*p-2ms_*(1+\sigma+\hat{\sigma})}{s_*p-2m[s_*+p(\sigma+\hat{\sigma})]}\,.$$ Lemma \ref{l0l} with the choice $\mathcal{Z}(t)\equiv  \nr{G_{\varepsilon}(\cdot,\snr{Du})}_{L^{p}(B_{t})}$ now gives \rif{2.2}. \end{proof}
We finally come to the a priori gradient bound in the two dimensional case. 
\begin{proposition}\label{duedi}
Let $u\in W^{1,\gamma}(\B;\RN)$ be a solution to $\eqref{a0}$ under assumptions $\setm$ in \trif{ipotesi} for $n=2$. If $B_{\varsigma} \Subset  B_{\rr}$ are concentric balls contained in $\B$, then
\begin{flalign}\label{a52n2}
& [E_{\varepsilon}(\nr{Du}_{L^{\infty}(B_{\varsigma})})]^{\gamma}+\nr{G_{\varepsilon}(\cdot,\snr{Du})}_{L^{\infty}(B_{\varsigma})}\nonumber \\
& \qquad \leq  \frac{c}{(\rr-\varsigma)^{ \beta }}\left[\|F_{\varepsilon}(\cdot,Du)\|_{L^1(B_{\rr})}^{\theta}+\nr{f}_{L^{2}(\log L)^{\mathfrak {a}}(B_{\rr})}^{\theta}+1\right]\,,
\end{flalign}
holds with $c\equiv c(\data,\nr{h}_{L^{d}(\B)})\geq 1$, $\beta,\theta\equiv \beta,\theta(d,\gamma, \sigma,\hat{\sigma})>0$.
\end{proposition}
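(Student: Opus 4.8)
The plan is to follow the blueprint of the case $n>2$ (Lemma \ref{stimapp} and Proposition \ref{linf1}), but replacing the role of Proposition \ref{lp} by its two-dimensional hybrid counterpart Lemma \ref{lemmadieci} and the role of the Riesz potential bound \rif{potlore}$_1$ by the borderline Orlicz bound \rif{potlore}$_2$. The starting point is the Caccioppoli inequality on level sets of Lemma \ref{caclem}, which is already stated for every $n\geq 2$, so it is available here verbatim. As in Lemma \ref{stimapp}, I would verify the structural hypothesis \rif{varine} of the iteration Lemma \ref{noniter} with $v\equiv G_{\varepsilon}(\cdot,\snr{Du})$, $f_1\equiv \mathfrak{h}$ (as in \rif{fk0}), $f_2\equiv f$, $M_1\equiv \nr{\bar G_{\varepsilon}(\cdot,\snr{Du})}_{L^\infty(B_r(x_0))}^{\vartheta\sigma/2}$, $M_2\equiv 1$, $M_3\equiv \nr{Du}_{L^\infty(B_{r_0}(x_0))}$; the preliminary step that $x_0$ is a Lebesgue point of $x\mapsto G_\varepsilon(x,\snr{Du(x)})$ and that \rif{cc0} holds goes through exactly as in the proof of Lemma \ref{stimapp}, using continuity of $g_{1,\eps}$ and local boundedness of $Du$. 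Applying \rif{var8} with $n=2$, the exponents $\max\{\delta,(n-2)/2\}$ degenerate: one keeps an arbitrarily small $\delta>0$, which is why the constant $c$ is allowed to depend on $\mathfrak{a}$ through the two-dimensional quantities. This yields the two-dimensional analogue of \rif{a46}, with $\mathbf P_1^f$ and $\mathbf P_1^{\mathfrak h}$ in the right-hand side.

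Next I would turn the pointwise estimate into an $L^\infty$-estimate by the same absorption scheme as in Proposition \ref{linf1}. Fix $\tau\in(0,d-2)$ and $p:=(1+\sigma+\hat\sigma)(2+\tau)d/(d-2-\tau)$, the two-dimensional version of \rif{ppp}; taking sup over Lebesgue points $x_0\in B_{\tau_1}$, inserting \rif{bg} and \rif{z10a}$_1$ to pass from $\bar G_\varepsilon$ to $G_\varepsilon$ and from $\snr{Du}$ to $G_\varepsilon^{1/\gamma}$, and using \rif{potlore}$_2$ together with \rif{normine}$_2$ to bound $\| \mathbf P_1^f(\cdot, (\tau_2-\tau_1)/4)\|_{L^\infty(B_{\tau_1})}\leq c\nr{f}_{L^2(\log L)^{\mathfrak a}(B_\rr)}$ and $\| \mathbf P_1^{\mathfrak h}(\cdot,(\tau_2-\tau_1)/4)\|_{L^\infty(B_{\tau_1})}\leq c\nr{h}_{L^d}\nr{\bar G_\varepsilon(\cdot,\snr{Du})}_{L^p}^{1+\sigma+\hat\sigma}$. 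At this point the crucial feature of the two-dimensional estimate enters: Lemma \ref{lemmadieci} still carries a term $c\nr{Du}_{L^\infty(B_\rr)}\|f\|_{L^2(B_\rr)}$ on its right-hand side. So, unlike in Proposition \ref{linf1}, I cannot simply feed an $L^p$-bound of $G_\varepsilon$ into the potential estimate; instead I would use \rif{2.2} (with $s_*=1$, so that \rif{uppersss} reduces to \rif{sigma}$_2$) to control $\nr{\bar G_\varepsilon(\cdot,\snr{Du})}_{L^p}^{1+\sigma+\hat\sigma}$ by $\nr{F_\varepsilon(\cdot,Du)}_{L^1(B_\rr)}^{\theta_p(1+\sigma+\hat\sigma)}$ \emph{plus} a term $\bigl(\nr{Du}_{L^\infty(B_\rr)}\|f\|_{L^2(B_\rr)}\bigr)^{1+\sigma+\hat\sigma}$. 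Since $1+\sigma+\hat\sigma<2-1/\gamma$ by \rif{sigma}$_2$ (one checks $(1+\sigma+\hat\sigma)/\gamma<1$ together with the extra room), and $\nr{Du}_{L^\infty}\lesssim \nr{G_\varepsilon(\cdot,\snr{Du})}_{L^\infty}^{1/\gamma}+1$, a Young inequality absorbs the $\nr{G_\varepsilon(\cdot,\snr{Du})}_{L^\infty(B_{\tau_2})}$-factors on the larger ball, leaving only $\|f\|_{L^2}$ to a bounded power and the $L^1$-energy term. This produces the hole-filling inequality $\nr{G_\varepsilon(\cdot,\snr{Du})}_{L^\infty(B_{\tau_1})}\le \tfrac12\nr{G_\varepsilon(\cdot,\snr{Du})}_{L^\infty(B_{\tau_2})}+(\text{good terms})$, and Lemma \ref{l0l} then closes the estimate, giving \rif{a52n2} after reinserting \rif{z10a}$_1$ to recover the $[E_\varepsilon(\nr{Du}_{L^\infty(B_\varsigma)})]^\gamma$ term.

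The main obstacle, as the excerpt itself flags, is precisely the appearance of the $\nr{Du}_{L^\infty}$-factor multiplying $\|f\|_{L^2}$ in the two-dimensional a priori estimates: in contrast with the case $n>2$, one cannot decouple the gradient sup-norm from the right-hand side by Sobolev embedding, so the whole argument must be run as a single self-improving absorption in which $\nr{G_\varepsilon(\cdot,\snr{Du})}_{L^\infty}$ (hence $\nr{Du}_{L^\infty}^\gamma$) appears on both sides and is reabsorbed. Making this legitimate requires the sharp exponent bookkeeping: the powers of $\nr{G_\varepsilon}_{L^\infty}$ generated by the $M_1$-factors ($n\vartheta\sigma/4$ and $(n-2)\vartheta\sigma/4$, which for $n=2$ is just $0$ for the second one), by the $1/\gamma$ coming from $\nr{Du}_{L^\infty}$, and by the factor $1+\sigma+\hat\sigma$ from the potential estimate, must all stay strictly below $1$ — and this is exactly what the two-dimensional line of \rif{sigma} is engineered to guarantee (note $\vartheta\sigma/2$ entering $M_1$, so the relevant condition is $\sigma\vartheta/2 + (1+\sigma+\hat\sigma)/\gamma<1$ together with $(2/\vartheta)(1-1/\gamma)>\sigma+\hat\sigma$). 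Once these numerical inequalities are recorded, everything else is a routine repetition of the covering/iteration machinery already used for $n>2$, so I would not belabour those computations.
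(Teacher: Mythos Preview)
Your approach is essentially the paper's own: Caccioppoli on level sets (Lemma~\ref{caclem}) feeds Lemma~\ref{noniter} with $n=2$, the resulting pointwise bound is coupled with the hybrid $L^p$-estimate of Lemma~\ref{lemmadieci} (at $s_*=1$), potentials are controlled via \eqref{potlore}$_2$, and the self-improving absorption plus Lemma~\ref{l0l} close the argument. One correction to your exponent bookkeeping: for $n=2$ the factor $\max\{\delta,(n-2)/2\}$ in \eqref{var8} equals $\delta$, not $0$, so the second $M_1$-power is $\delta\vartheta\sigma/2$, not zero; the two conditions you actually need (the paper's \eqref{sceglidelta}) are $(1+\delta)\vartheta\sigma/2+1/\gamma<1$ and $\delta\vartheta\sigma/2+(1+\sigma+\hat\sigma)/\gamma<1$, both achievable for small $\delta$ precisely by \eqref{sigma}$_2$.
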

\begin{proof} We proceed as for the proof of Proposition \ref{linf1}, keeping the notation used there, including \rif{ppp}. By Lemma \ref{caclem} we use Lemma \ref{noniter} with the choice made at the end of the proof of Lemma \ref{stimapp}, and this for $n=2$ gives 
\begin{flalign}
& \notag G_{\varepsilon}(x_{0},\snr{Du(x_{0})}) \\
& \quad \le cr_0^{-2/p}\left[\nr{G_{\varepsilon}(\cdot,\snr{Du})}_{L^{\infty}(B_{\tau_2})}^{(1+\delta)\vartheta\sigma/2}+1\right]\|G_{\varepsilon}(\cdot,\snr{Du})\|_{L^p(B_{r_{0}}(x_0))}\nonumber \\
& \qquad \quad +c\left[\nr{G_{\varepsilon}(\cdot,\snr{Du})}_{L^{\infty}(B_{\tau_2})}^{\delta\vartheta\sigma/2}+1\right]\notag \\
& \qquad\qquad  \quad \times \left[\mathbf{P}_{1}^{\mathfrak{h}}(\cdot,2r_{0})+\nr{Du}_{L^{\infty}(B_{\tau_2})}\mathbf{P}_{1}^{f}(\cdot,2r_{0})\right],\label{arrivaad}
\end{flalign}
for every $\delta \in (0,1/2)$, where $c\equiv c(\data,\nr{h}_{L^{d}(\B)},\delta)$ and $\vartheta$ is as in \rif{sigma}. 
Then \eqref{2.2} gives
\begin{flalign*}
&\|G_{\varepsilon}(\cdot,\snr{Du})\|_{L^p(B_{r_{0}}(x_0))} \\
& \quad \le cr_0^{-\beta_{p}}\left[\nr{G_{\varepsilon}(\cdot,\snr{Du})}_{L^{1}(B_{2r_0})}^{\theta_{p}}+1\right] + c\nr{Du}_{L^{\infty}(B_{\tau_2})}\|f\|_{L^2(B_{2r_0})}
\end{flalign*}
where $p$ is as in \eqref{ppp} with $n=2$, $c\equiv c(\data,\nr{h}_{L^{d}(\B)},p)$ and $\beta_{p}, \theta_{p}$ are as in \rif{ilbeta}. Note that here we are using Lemma \ref{lemmadieci} with the choice $s_*=1$, which is allowed as the assumption gives that $\sigma + \hat \sigma < 1/2 - 1/d$, which is \rif{sigma} for $n=2$. Combining the last two estimates, that hold for a.e.\,$x_0\in B_{\tau_1}$, we have
\begin{eqnarray}
&&\notag \nr{G_{\varepsilon}(\cdot,\snr{Du})}_{L^{\infty}(B_{\tau_{1}})}\\
&& \ \ \leq \frac{c}{(\tau_{2}-\tau_{1})^{2/p+\beta_{p}}}\left[\nr{G_{\varepsilon}(\cdot,\snr{Du})}_{L^{\infty}(B_{\tau_2})}^{(1+\delta)\vartheta\sigma/2}+1\right]\left[\nr{G_{\varepsilon}(\cdot,\snr{Du})}_{L^{1}(B_{\rr})}^{\theta_{p}}+1\right]\nonumber \\
&& \ \ \qquad  + \frac{c}{(\tau_{2}-\tau_{1})^{2/p}}\left[\nr{G_{\varepsilon}(\cdot,\snr{Du})}_{L^{\infty}(B_{\tau_2})}^{(1+\delta)\vartheta\sigma/2}+1\right]\nr{Du}_{L^{\infty}(B_{\tau_2})}\|f\|_{L^2(B_{\varrho})} \nonumber \\
&&\ \ \qquad+c\left[\nr{G_{\varepsilon}(\cdot,\snr{Du})}_{L^{\infty}(B_{\tau_2})}^{\delta\vartheta\sigma/2}+1\right]\notag \\
&& \hspace{15mm} \times  \nr{Du}_{L^{\infty}(B_{\tau_2})}\|\mathbf{P}_{1}^{f}(\cdot,(\tau_{2}-\tau_{1})/4)\|_{L^{\infty}(B_{\tau_1})}\notag \\
&&\ \ \qquad +c\left[\nr{G_{\varepsilon}(\cdot,\snr{Du})}_{L^{\infty}(B_{\tau_2})}^{\delta\vartheta\sigma/2}+1\right]\|\mathbf{P}_{1}^{\mathfrak{h}}(\cdot,(\tau_{2}-\tau_{1})/4)\|_{L^{\infty}(B_{\tau_{1}})}\notag\\
&&\ \  =: T_1+T_2+T_3+T_4 \label{2.4}\,,
\end{eqnarray}for $c\equiv c(\data,\nr{h}_{L^{d}(\B)},\delta)$. To estimate the $T$-terms we take $\delta$ such that
\eqn{sceglidelta}
$$
(1+\delta)\vartheta\sigma/2 +1/\gamma < 1\quad \mbox{and} \quad \delta\vartheta\sigma /2+(1+\sigma+\hat{\sigma})/\gamma <1
$$
hold, which is in turn possible by \rif{sigma}$_2$; this fixes $\delta$ as a function of $\sigma, \tilde \sigma, \gamma$. We then have
\begin{eqnarray*}
T_1 &
\stackrel{\eqref{sceglidelta}}{\leq } & \frac 18\, \nr{G_{\varepsilon}(\cdot,\snr{Du})}_{L^{\infty}(B_{\tau_2})}+\frac{c\nr{G_{\varepsilon}(\cdot,\snr{Du})}_{L^{1}(B_{\rr})}^{\theta}+c}{(\tau_{2}-\tau_{1})^{ \beta}}\,,\\
T_2
&\stackrel{\eqref{sos0.1}}{\leq } &\frac{c}{(\tau_{2}-\tau_{1})^{2/p}} [\nr{G_{\varepsilon}(\cdot,\snr{Du})}_{L^{\infty}(B_{\tau_2})}+1]^{(1+\delta)\vartheta\sigma/2 +1/\gamma}\|f\|_{L^2(B_{\varrho})} \nonumber \\
&\stackrel{\eqref{sceglidelta}}{\leq } & \frac 18\, \nr{G_{\varepsilon}(\cdot,\snr{Du})}_{L^{\infty}(B_{\tau_2})}+\frac{c\|f\|_{L^{2}(\log L)^{\mathfrak {a}}(B_{\varrho})}^{\theta}+c}{(\tau_{2}-\tau_{1})^{\beta}}\,,\\
T_3 & \stackrel{ \eqref{potlore}_2, \eqref{sos0.1}}{\leq }  &c[\nr{G_{\varepsilon}(\cdot,\snr{Du})}_{L^{\infty}(B_{\tau_2})}+1]^{\delta\vartheta\sigma/2+1/\gamma}\|f\|_{L^{2}(\log L)^{\mathfrak {a}}(B_{\varrho})}\\
&\stackrel{\eqref{sceglidelta}}{\leq } & \frac 18\, \nr{G_{\varepsilon}(\cdot,\snr{Du})}_{L^{\infty}(B_{\tau_2})}+c\|f\|_{L^{2}(\log L)^{\mathfrak {a}}(B_{\varrho})}^{\theta}+1\,,
\end{eqnarray*}
with $c\equiv c(\data,\nr{h}_{L^{d}(\B)})$. Here, as in the following lines, $\theta, \beta$ denote positive exponents depending on $d,\gamma, \sigma,\hat{\sigma}$; they might change from line to line according to the same convention used to denote a generic constant $c$. To estimate the last term $T_4$ we again use Lemma \ref{lemmadieci} with $s_*=1$. Therefore, we set $\tau_3:= (\tau_1+\tau_2)/2$ so that $\tau_2-\tau_3= (\tau_2-\tau_1)/2$, and, recalling \rif{ppp} for $n=2$, we have
\begin{eqnarray*}
\notag  \|\mathbf{P}_{1}^{\mathfrak{h}}(\cdot,(\tau_{2}-\tau_{1})/4)\|_{L^{\infty}(B_{\tau_{1}})}&\stackrel{\eqref{potlore}_2}{\leq} & c\nr{\mathfrak{h}}_{L^{2}(\log L)^{\mathfrak {a}}(B_{\tau_3})}\\
&\notag \stackleq{normine}  &c(\tau)\nr{\mathfrak{h}}_{L^{2+\tau}(B_{\tau_3})} \\
&\stackleq{fk0} & c\nr{h}_{L^{d}(B_{\tau_3})}\nr{\bar{G}_{\varepsilon}(\cdot,\snr{Du})}_{L^{p}(B_{\tau_3})}^{1+\sigma+\hat{\sigma}}\nonumber \\
& \stackrel{\rif{2.2}}{\leq} & \frac{c}{(\tau_{2}-\tau_{1})^{\beta_{*}}}\left[\nr{G_{\varepsilon}(\cdot,\snr{Du})}_{L^{1}(B_{\tau_2})}^{\theta_{*}}+1\right]\\ && \ \  +c\left[\|Du\|_{L^\infty(B_{\tau_2})}\|f\|_{L^2(B_{\tau_2})}\right]^{1+\sigma+\hat{\sigma}}
\end{eqnarray*}
for $c\equiv c(\data,\nr{h}_{L^{d}(\B)})$. Using the above inequality and \eqref{z10a}$_1$, we end up with
\begin{flalign*}
T_4 &\le  \frac{c}{(\tau_{2}-\tau_{1})^{\beta_{*}}}\left[\nr{G_{\varepsilon}(\cdot,\snr{Du})}_{L^{\infty}(B_{\tau_2})}^{\delta\vartheta\sigma /2}+1\right]\left[\nr{G_{\varepsilon}(\cdot,\snr{Du})}_{L^{1}(B_{\rr})}^{\theta_{*}}+1\right]\nonumber \\
&\quad +c\left[\nr{G_{\varepsilon}(\cdot,\snr{Du})}_{L^{\infty}(B_{\tau_2})}^{\delta\vartheta\sigma /2+(1+\sigma+\hat{\sigma})/\gamma}+1\right]
\|f\|_{L^{2}(\log L)^{\mathfrak {a}}(B_{\varrho})}^{1+\sigma+\hat \sigma}
\nonumber \\
& \le  \frac 18  \nr{G_{\varepsilon}(\cdot,\snr{Du})}_{L^{\infty}(B_{\tau_2})}+\frac{c\nr{G_{\varepsilon}(\cdot,\snr{Du})}_{L^{1}(B_{\rr})}^{\theta}+c}{(\tau_{2}-\tau_{1})^{ \beta}}
+c\|f\|_{L^{2}(\log L)^{\mathfrak {a}}(B_{\varrho})}^{\theta}
\end{flalign*}
where $c\equiv c(\data,\nr{h}_{L^{d}(\B)})$, and we have used the second condition in \rif{sceglidelta}. Plugging the estimates for the $T$-terms in \eqref{2.4} yields
\begin{flalign*}
\nr{G_{\varepsilon}(\cdot,\snr{Du})}_{L^{\infty}(B_{\tau_{1}})} & \le \frac{1}{2}\nr{G_{\varepsilon}(\cdot,\snr{Du})}_{L^{\infty}(B_{\tau_{2}})}\\ & \quad +\frac{c\nr{G_{\varepsilon}(\cdot,\snr{Du})}_{L^{1}(B_{\rr})}^{\theta}+c\|f\|_{L^{2}(\log L)^{\mathfrak {a}}(B_{\varrho})}^{\theta}+c}{(\tau_{2}-\tau_{1})^{ \beta}}
\end{flalign*}
for new exponents $\theta, \beta$ as in the statement. 
Lemma \ref{l0l} allows now to conclude with 
\begin{flalign*}
&\nr{G_{\varepsilon}(\cdot,\snr{Du})}_{L^{\infty}(B_{\varsigma})}\le\frac{c\nr{G_{\varepsilon}(\cdot,\snr{Du})}_{L^{1}(B_{\rr})}^{\theta}+c\|f\|_{L^{2}(\log L)^{\mathfrak {a}}(B_{\varrho})}^{\theta}+c}{(\rr-\varsigma)^{ \beta}}\,,
\end{flalign*}
where $c\equiv c(\data,\nr{h}_{L^{d}(\B)})$. 
Finally, \rif{a52n2} follows from this last estimate, \eqref{z6} and \eqref{z10a}$_1$. The proof is complete. 
\end{proof}
\section{Proofs of Theorems \ref{t1} and \ref{t3}}\label{pt1}
We start with the proof of Theorem \ref{t3}, where, in particular, we assume $f \in L^n(\Omega;\er^N)$ and $n>2$. We fix $p$ as in statement and, without loss of generality, we assume that $p> 1+\sigma$ ($\sigma$ being as in \rif{sigma}). Then, for every integer $j\geq 1$, we define $f_j \in L^{\infty}(\Omega;\er^N)$ as $f_{j}(x):=f(x)$ if $\snr{f(x)}\le j$, and $f_{j}(x):=j|f(x)|^{-1}f(x)$ otherwise. It clearly follows that 
\eqn{dominaf}
$$
|f_j|\leq |f| \quad \mbox{for every $j\geq1$}\,,\qquad f_j \to f \ \mbox{in $L^n(\Omega;\er^N)$}\,.
$$ Next, we determine $R_{*}\equiv R_{*}(\data, f(\cdot), p)\leq 1$ according to Proposition \ref{lp}. Pay attention here; with some abuse of notation, the $f$ used here is not the same from Proposition \ref{lp}, but rather corresponds to $\mathfrak f$ from \rif{a0} in the context of Proposition \ref{lp} (and thanks to \rif{dominaf}, $f_j$ corresponds to $f$ in Proposition \ref{lp}). Accordingly, we fix a ball $\B\Subset \Omega$ such that $\rad(\B)\le R_*$. We consider a decreasing sequence of positive numbers $\{\varepsilon_{j}\}$ such that 
$\eps_j \leq \min\{1,T\}/4$ for every $j \in \en$, and, accordingly, we consider the families of functions $\{F_{j}\}\equiv \{F_{\varepsilon_{j}}\}$, $\{G_{j}\}\equiv \{G_{\varepsilon_{j}}\}$ constructed in \eqref{z5}. Note now that any minimizer $u$ of the functional $\mathcal F$ in \rif{genF} belongs to $W^{1,\gamma}_{\loc}(\Omega; \er^N)$ by \rif{z7}$_1$. This allows to define $u_{j}\in u+W^{1,\gamma}_0(\B;\RN)$ as the solution to
\begin{flalign*}
& u_j\mapsto \min_{w\in u+W^{1,\gamma}_0(\B;\RN)} \, \mathcal{F}_j(w; \B)\\
& \qquad := \min_{w\in u+W^{1,\gamma}_0(\B;\RN)} \,  \int_{\B}\left[F_{j}(x,Dw)-f_{j}\cdot w \right]\dx\,.
\end{flalign*} 
Directs Methods of the Calculus of Variations apply here and ensure existence (see for instance \cite[Section 4.4]{BM}). 
As for \cite[(4.55)]{BM}, and recalling \rif{z7}$_2$ and \rif{dominaF}, we find
\begin{flalign}\label{e2oo}
& \notag \|F_{j}(\cdot,Du_{j})\|_{L^1(\B)} + \|Du_{j}\|_{L^\gamma(\B)}^\gamma  \\
& \qquad \leq 
c\left[\|F(\cdot,Du)+1\|_{L^1(\B)} + [\rad(\B)]^{n}\nr{f}_{L^{n}(\B)}^{\gamma/(\gamma-1)}\right]
\end{flalign}
for every $j \geq 1$, where $c\equiv c (n,N, \nu, \gamma, T)$. This implies we can assume that, up to a not relabelled subsequence, $Du_{j}\rightharpoonup D\tilde{u}$ weakly in $L^{\gamma}(\B;\mathbb{R}^{N\times n})$ and 
$u_{j}\to \tilde{u}$ strongly in $L^{\frac{n}{n-1}}(\B;\RN)$, for some $\tilde{u}\in u+W^{1,\gamma}_{0}(\B;\RN)$. Note that Proposition \ref{lp} applies to $u_{j}$ with the choice $s_*=1$, as \rif{sigmasss} is satisfied assuming that $\sigma +\hat\sigma < 1/n-1/d$, which is the case here by \rif{sigma} in $\setm$. 
The application of Proposition \ref{lp}, and \rif{a44}, now give that
\begin{flalign}
&
\|G_{j}(\cdot,\snr{Du_j})\|_{L^p(s\B)}\notag \\
&\qquad  \le
 \frac{c}{(1-s)^{\beta_{p}}[\rad(\B)]^{\beta_p}}\left[\|F(\cdot,Du)\|_{L^1(\B)}^{\theta_{p}}+\nr{f}_{L^{n}(\B)}^{\theta_{p}}+ 1\right]
 \label{e0}
\end{flalign}
holds for every $s \in (0,1)$, for new exponents $\beta_{p}, \theta_{p}\equiv \beta_{p}, \theta_{p}(n,\gamma, d,\sigma,\hat{\sigma},p)>0$, where we have also used \rif{e2oo} to bound the right-side coming from \rif{a44}; $c$ depends on $\data$ and $\nr{h}_{L^{d}(\B)}$. We fix $j_{0}\in \N$, and apply \eqref{cresceg2} with $j\geq j_0$ in \eqref{e0}, to get
\begin{flalign}
&\|G_{j_0}(\cdot,\snr{Du_j})\|_{L^p(s\B)} \notag \\
& \qquad \le  \frac{c}{(1-s)^{\beta_{p}}[\rad(\B)]^{\beta_p}}\left[\|F(\cdot,Du)\|_{L^1(\B)}^{\theta_{p}}+\nr{f}_{L^{n}(\B)}^{\theta_{p}}+ 1\right]\,.
\label{e00}
\end{flalign}
Letting $j \to \infty$ in the above inequality, and using weak lower semicontinuity (recall that $G_{j_0}(\cdot)$ is convex and non-decreasing, by \rif{einc}, $\eps\equiv \eps_{j_0}$, therefore $z \mapsto G_{j_0}(\cdot,\snr{z})$ is convex), yields
\begin{flalign}
& \notag
\|G_{j_0}(\cdot,\snr{D\tilde u})\|_{L^p(s\B)}\\ & \qquad \le  \frac{c}{(1-s)^{\beta_{p}}[\rad(\B)]^{\beta_p}}\left[\|F(\cdot,Du)\|_{L^1(\B)}^{\theta_{p}}+\nr{f}_{L^{n}(\B)}^{\theta_{p}}+ 1\right]\,.\label{e2}
\end{flalign}
This holds for every $j_0\in \en$ and therefore, by finally letting 
$j_0\to \infty$ (by Fatou's lemma) and recalling \rif{cresceg2}, we conclude with 
\begin{flalign}
&
\|G(\cdot,\snr{D\tilde u})\|_{L^p(s\B)}\notag \\ & \qquad \le  \frac{c}{(1-s)^{\beta_{p}}[\rad(\B)]^{\beta_p}}\left[\|F(\cdot,Du)\|_{L^1(\B)}^{\theta_{p}}+\nr{f}_{L^{n}(\B)}^{\theta_{p}}+ 1\right]\,,\label{e22}
\end{flalign}
for every $s \in (0,1)$, where $c\equiv c(\data,\nr{h}_{L^{d}(\B)})$ and $\beta_{p}, \theta_{p}\equiv \beta_{p}, \theta_{p}(n,\gamma, d,\sigma,\linebreak \hat{\sigma},p)>0$.  
Next, we trivially write
\begin{flalign}
\mathcal{F}_{j_0}(u_j;s\B)& \le \mathcal{F}_{j}(u_j;s\B)+\|F_{j}(\cdot,Du_{j})-F_{j_{0}}(\cdot,Du_{j})\|_{L^1(s\B)} \notag\\
& \quad +\|(f_{j_{0}}-f_{j})\cdot u_j\|_{L^1(s\B)}\label{eooo}
\end{flalign}
whenever $s \in (0,1)$. 
Properties \eqref{dominaf}-\eqref{e2oo}, H\"older and Sobolev-Poincar\'e inequalities give
\eqn{gigi1}
$$
 \mathcal{F}_{j}(u_j;s\B)\le \mathcal{F}_{j}(u_j;\B)+\|{f_{j}\cdot u_j\|_{L^1(\B \setminus s\B)}} \leq 
 \mathcal{F}_{j}(u;\B)+c\nr{f}_{L^{n}(\B \setminus s\B)},
$$
where $c$ is independent of $s,j$ and we have used minimality of $u_j$. Using \eqref{stimadiff} with $\eps_1\equiv \eps_j$, $\eps_2\equiv \eps_{j_0}$, $w\equiv u_j$, $B \equiv s\B$ (recall it is $p>1+\sigma$) and \eqref{e00}, we have
\eqn{gigi2}
$$
\|F_{j}(\cdot,Du_{j})-F_{j_{0}}(\cdot,Du_{j})\|_{L^1(s\B)} \le c(1-s)^{-\beta_{p}} \oo(j_0)
$$
where $c$ is independent of $s,j,j_0$. Again Sobolev-Poincar\'e inequality \rif{dominaf} and \eqref{e2oo} give
$\|(f_{j_{0}}-f_{j})\cdot u_j\|_{L^1(s\B)} \leq c \|f_{j_{0}}-f_{j}\|_{L^n(s\B)}$, with $c$ independent of $s,j, j_0$. Using this last inequality and \rif{gigi1}-\rif{gigi2} in \rif{eooo}, and finally letting $j\to \infty$, lower semicontinuity yields
\begin{flalign}
\notag \mathcal{F}_{j_0}(\tilde u;s\B) &\le \mathcal{F}(u;\B)+c\nr{f}_{L^{n}(\B \setminus s\B)}\\
& \quad +c\nr{f_{j_{0}}-f}_{L^{n}(\B)} + c(1-s)^{-\beta_{p}} \oo(j_0)\,.\label{letlet}
\end{flalign}
Note that we have used \rif{dominaF} and Lebesgue's dominated convergence theorem to get $\mathcal{F}_{j}(u;\B) \to \mathcal{F}(u;\B)$. In turn, again note that \rif{e22} ensures that $G(\cdot,\snr{D\tilde u}) \in L^p(s\B)$ and therefore allows to apply \rif{stimadiff2}; this yields
$$
\|F(\cdot,D\tilde u)-F_{j_{0}}(\cdot,D\tilde u)\|_{L^1(s\B)} \le c(1-s)^{-\beta_{p}} \oo(j_0),
$$ so that $\mathcal{F}_{j_0}(\tilde u;s\B)\to \mathcal{F}(\tilde u; s\B)$ as $j_0\to \infty$, where we also use \rif{dominaf}. 
In view of this, letting first $j_0\to \infty$ and then $s \to 1$ in \rif{letlet}, yields
$\mathcal{F} (\tilde u;\B)\leq \mathcal{F} (u;\B)$. 
This and the minimality of $u$ finally give $\mathcal{F}(u;\B)=\mathcal{F}(\tilde{u};B)$, therefore, by standard convexity arguments, see for instance \cite[Section 4.4]{BM}, we end up with
\begin{flalign}\label{e10}
\mbox{either} \ \max\{\snr{Du(x)},\snr{D\tilde{u}(x)}\}\le T \ \ \mbox{or} \  \ Du(x)=D\tilde{u}(x)
\end{flalign}
for a.e.\,$x\in \B$. 
Using this information in \rif{e22} yields \rif{e13} and concludes the proof of Theorem \ref{t3}. Observe that, in order to justify the content of Remark \ref{lpremark} it is sufficient to note that making the a priori estimate \eqref{e0} only requires the bound $\sigma+\hat{\sigma}<1/n-1/d$.

We now come to the proof of Theorem \ref{t1}. We can again use the same approximation employed for Theorem \ref{t3}. Using this time estimate \rif{a52} for the case $n>2$ and estimate \rif{a52n2} when $n=2$, together with \rif{e2oo}, we find
\begin{flalign}\label{a52bis}
& [E_{\varepsilon_j}(\nr{Du_{j}}_{L^{\infty}(s\B)})]^\gamma+\nr{G_{j}(\cdot,\snr{Du_{j}})}_{L^{\infty}(s\B)}\nonumber \\
&\qquad \quad  \leq  \frac{c}{(1-s)^{\beta}[\rad(\B)]^{\beta}}
\left[\|F(\cdot,Du)\|_{L^1(\B)}^{ \theta}+\nr{f}_{\mathfrak X(\B)}^{ \theta}+1\right]
\end{flalign}
for every $s \in (0,1)$, where $c,  \beta , \theta$ have the same dependencies as in \rif{a52}. 
It follows that for every $s \in (0,1)$ there exists $M_s$ such that $\|Du_j\|_{L^\infty(s\B)}\leq M_s$, for every $j \in \en$. Using a standard diagonalization argument we infer that, up to a not relabelled subsequences, we have that $u_j \rightharpoonup^* \tilde u$ in $W^{1,\infty}_{\loc}(\B;\er^N)$ for some $\tilde{u}\in u+W^{1,\gamma}_{0}(\B;\RN)$ being such that $\|D\tilde u\|_{L^\infty(s\B)}\leq M_s$. Moreover, we can repeat verbatim the argument of Theorem \ref{t3} leading to \rif{e10}. Now, denoting $j_s$ the first integer such that $1/\eps_{j_s}>M_s$, from the very definition of $G_{j}\equiv G_{\eps_j}$ in \rif{gte}, it follows that $\nr{G_{j}(\cdot,\snr{Du_{j}})}_{L^{\infty}(s\B)}=\go_1\nr{G(\cdot,\snr{Du_{j}})}_{L^{\infty}(s\B)}$, 
for every $j\ge j_s$. Therefore, letting $j \to \infty$ in \rif{a52bis}, and using \rif{e10}, yields \rif{mainest1} and the proof of Theorem \ref{t1} is complete too. Notice that here we are using standard lower semicontinuity theorems for supremal functionals with respect to the weak$^*$ convergence in $W^{1,\infty}_{\loc}(\B;\er^N)$ (see for instance \cite{BJW}). 

\begin{remark}\label{megliobound}Let us discuss the case $f \equiv 0$. 
We start by the case $n>2$; the first relation in \rif{ledue} is already implied by $\sigma +\hat \sigma < 1/n-1/d$. Moreover, note that the second condition in \rif{ledue} appears only when $f\not\equiv 0$. This is the only point in the proof of Theorem \ref{t1} where the full bound in \rif{sigma}$_1$ is required; otherwise  
$\sigma +\hat \sigma < 1/n-1/d$ is sufficient. Alternatively, one can apply directly Proposition \ref{lpinf} instead of Proposition \ref{lp} in the proof of Theorem \ref{t1}. When $n=2$ and $f\equiv 0$, \rif{sceglidelta} turns into $
(1+\delta)\vartheta\sigma/2  < 1 $ and $\delta\vartheta\sigma /2<1
$, that are implied by $\sigma +\hat \sigma < 1/2-1/d$ by taking $\delta$ small enough (see estimate of $T_4$ in Proposition \ref{duedi}). Finally, note that in the case $\gamma\geq 2$, the minimum in \rif{sigma} is attained by $1/n-1/d$. All in all, we have justified the content of Remark \ref{megliobound0}. 
\end{remark}

\section{Uniform ellipticity and proof of Theorem \ref{t2}}\label{unisec}
Here we work under assumptions $\setuni$ in \rif{ipotesi}; we keep the full notation introduced in Sections \ref{apro} and \ref{apri}. In particular, we keep on considering a solution $u\in W^{1,\gamma}(\B;\er^N)$ to \rif{a0} in a ball 
$\B\Subset \Omega$ such that $\rad(\B)\leq 1$. With the current choice of $g_1, g_2, g_3$ as in \rif{uni0}, we apply the constructions laid down in Section \ref{apro}, thereby obtaining, in particular, the new functions $\tilde a_{\eps}$, $g_{1,\varepsilon}$, $g_{2,\varepsilon}$, $g_{3,\varepsilon}$ in \rif{z2} and \rif{g1e}-\rif{g3e}. The last three functions are now independent of $x$, as well as $G_{\eps}$ defined in \rif{gte}. Note that \rif{uni0}$_2$ from $\setuni$ implies the validity of \rif{9} with $\cb\equiv \cu$ and $\sigma=0$. Therefore we can use all the properties from Section \ref{apro}, and displayed through Lemmas \ref{gle}-\ref{ultimolemma}, implied by assumptions \eqref{0ass}-\eqref{00} and \rif{9}. In particular, we can use Lemma \ref{ultimolemma} with $\sigma =0$. In addition to such properties, we have
\begin{lemma} Under assumptions $\setuni$ in \trif{ipotesi}
\begin{itemize}
\item The following inequalities hold for every $t\in [T,\infty)$:
\eqn{uni3}
$$
\begin{cases}
\displaystyle \ g_{1,\varepsilon}(t)\le g_{2,\varepsilon}(t) \leq   \tcu g_{1,\varepsilon}(t)\\
  g_{3, \eps}(t)\le \tcu g_{1, \eps}(t)t
  \end{cases}
$$
where $\tcu \equiv \tcu (n,N,\nu, \gamma, \cu)\geq 1$, and 
\begin{flalign}\label{uni4}
\frac{1}{2\tcu}g_{1,\eps}(t)t^{2}\le G_{\eps}(t)+\frac{\go_{1}}{2}g_{1}(T)T^{2}\le \frac{\tcu}{2} g_{1,\eps}(t)t^{2}\,.
\end{flalign}
\item Moreover, for every $x \in \Omega$ and $t \geq 0$, it holds that 
\eqn{doublec}
$$
G_{\eps}(t) \leq \tilde F_{\eps}(x,t) \leq  cG_{\eps}(t)  + cg_1(T)(T^2+\mu_{\varepsilon}^{2})\,, \quad c\equiv c (\datauni)\,. 
$$
\end{itemize}
\end{lemma}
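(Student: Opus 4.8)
The statement collects elementary consequences of the uniform-ellipticity structure $\setuni$ for the truncated objects $g_{1,\eps}, g_{2,\eps}, g_{3,\eps}, G_\eps, \tilde F_\eps$ built in Section \ref{apro}. The guiding principle is that none of these estimates should require anything beyond the piecewise definitions \rif{z2}, \rif{g1e}--\rif{g3e}, \rif{gte}, \rif{z5}, the monotonicity property \rif{einc}, the elementary bound \rif{repeat}, and the already-established comparison between $\tilde a_\eps$ and $\tilde a$ in \rif{z6bis}, together with the defining inequalities of $\setuni$, namely \rif{uni0}$_2$ (i.e.\ $g_2 \le \cu g_1$ on $[T,\infty)$) and \rif{uni0}$_3$ (i.e.\ $g_1(t)t \le g_3(t) \le \cu g_1(t) t$ on $[T,\infty)$). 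First I would prove \rif{uni3}$_1$: the left inequality $g_{1,\eps}\le g_{2,\eps}$ is already \rif{a1} (or can be read off directly from \rif{g1e}--\rif{g2e} using $\go_1\le 1\le \go_2$); for the right inequality, one splits into the three regimes $t\in[T,T_\eps)$ and $t\ge T_\eps$. On $[T,T_\eps)$, $g_{1,\eps}(t)=\go_1 g_1(t)$ and $g_{2,\eps}(t)=\go_2(g_2(t)+\eps(t^2+\mu_\eps^2)^{(\gamma-2)/2})$; using \rif{uni0}$_2$ and then $(t^2+\mu_\eps^2)^{(\gamma-2)/2}\le c(\nu,\gamma)g_1(t)$ from \rif{z6bis}$_2$ (together with \rif{repeat}) gives $g_{2,\eps}(t)\le c\, g_1(t)\le (c/\go_1) g_{1,\eps}(t)$. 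On $[T_\eps,\infty)$ both functions carry the common factor $(t^2+\mu_\eps^2)^{(\gamma-2)/2}$, so the ratio equals its value at $T_\eps$, already controlled by the previous case. This determines $\tcu$ as a constant depending only on $n,N,\nu,\gamma,\cu$.

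Next, \rif{uni3}$_2$: again split into $t\in[T,T_\eps)$, where $g_{3,\eps}(t)=g_3(t)\le\cu g_1(t)t = (\cu/\go_1) g_{1,\eps}(t)t$ directly from \rif{uni0}$_3$; and $t\ge T_\eps$, where $g_{3,\eps}(t)=\frac{g_3(T_\eps)}{(T_\eps^2+\mu_\eps^2)^{(\gamma-2)/2}T_\eps}(t^2+\mu_\eps^2)^{(\gamma-2)/2}t$ while $g_{1,\eps}(t)t=\go_1\frac{g_1(T_\eps)}{(T_\eps^2+\mu_\eps^2)^{(\gamma-2)/2}}(t^2+\mu_\eps^2)^{(\gamma-2)/2}t$, so the ratio equals $\frac{1}{\go_1}\frac{g_3(T_\eps)}{g_1(T_\eps)T_\eps}\le \cu/\go_1$ by \rif{uni0}$_3$. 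For \rif{uni4} the key is the chain $\frac{1}{c}g_{1,\eps}(t)t^2\le G_\eps(t)+\frac{\go_1}{2}g_1(T)T^2$: the upper estimate follows from \rif{wewewe} (which gives $G_\eps(t)\le g_{1,\eps}(t)t^2$ for $t\ge T$) plus the fact that $g_1(T)T^2\ge 0$; the lower estimate is the delicate one and I would get it by writing $G_\eps(t)=\int_T^t g_{1,\eps}(s)s\,ds$, using \rif{einc} and the homogeneity of $g_{1,\eps}$ to compare $\int_T^t g_{1,\eps}(s)s\,ds$ with $g_{1,\eps}(t)\int_T^t s\,ds = \frac{1}{2}g_{1,\eps}(t)(t^2-T^2)$, and then absorbing the $-\frac12 g_{1,\eps}(t)T^2$ term; since $g_{1,\eps}(t)\le c g_1(t)$ and one has control of $g_{1,\eps}(T)$ in terms of $g_1(T)$ via \rif{repeat} and \rif{einc}, the term $\frac12 g_{1,\eps}(t)T^2$ is not comparable to $g_{1,\eps}(t)t^2$ for $t$ close to $T$, which is exactly why the additive constant $\frac{\go_1}{2}g_1(T)T^2$ is forced into the inequality --- this bookkeeping is the one point where a little care is needed.

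Finally, \rif{doublec} is essentially a reformulation of \rif{uni4} combined with \rif{dimenticata} and \rif{z6bis}. The lower bound $G_\eps(t)\le\tilde F_\eps(x,t)$ is just \rif{z6}$_3$ (recalling $\tilde F(x,0)=0$ and that $g_{1,\eps}\le\bar a_\eps$ on $\{t\ge T\}$, with $G_\eps\equiv 0$ on $[0,T]$). For the upper bound: on $[0,T]$ one uses \rif{slipffff}-type estimates, i.e.\ $\tilde F_\eps(x,t)\le c\|\tilde a(\cdot,T)\|_{L^\infty}(T^2+\mu^2)+c(T^\gamma+1)$, which under $\setuni$ is $\le c g_1(T)(T^2+\mu_\eps^2)$ since $\tilde a(\cdot,T)=\tilde F'(\cdot,T)/T$ and, by \rif{uni0} applied at $t=T$, $\tilde a(x,T)$ is comparable to $g_1(T)$ (using \rif{22}, \rif{23} at $t=T$ together with \rif{infima}); on $[T,\infty)$ one writes $\tilde F_\eps(x,t)=\tilde F_\eps(x,T)+\int_T^t \bar a_\eps(x,s)s\,ds$, bounds $\bar a_\eps(x,s)\le g_{2,\eps}(s)\le \tcu g_{1,\eps}(s)$ by \rif{a1} and \rif{uni3}$_1$, hence $\int_T^t\bar a_\eps s\,ds\le \tcu\, G_\eps(t)$, and bounds $\tilde F_\eps(x,T)$ by the $[0,T]$-case. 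Assembling gives $\tilde F_\eps(x,t)\le c G_\eps(t)+c g_1(T)(T^2+\mu_\eps^2)$ with $c\equiv c(\datauni)$. The only genuine obstacle in the whole statement is the lower bound in \rif{uni4}, and specifically getting the additive constant in the sharp form stated; everything else is a routine case-by-case verification on the three intervals $[0,\eps)$, $[\eps,T_\eps)$, $[T_\eps,\infty)$ using the homogeneity of the truncations and \rif{repeat}.
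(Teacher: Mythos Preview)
Your treatment of \rif{uni3} and \rif{doublec} is correct and matches the paper (which dismisses \rif{uni3} as ``immediate from \rif{uni0} and \rif{00bis}'', and for \rif{doublec} argues exactly as you do). Your argument for the upper bound in \rif{uni4} via \rif{wewewe} also works, and is in fact simpler than the paper's route.

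The gap is in the \emph{lower} bound of \rif{uni4}. You propose to compare $\int_T^t g_{1,\eps}(s)s\,ds$ with $g_{1,\eps}(t)\int_T^t s\,ds$, but the monotonicity \rif{einc} of $s\mapsto g_{1,\eps}(s)s$ gives only $g_{1,\eps}(s)s \le g_{1,\eps}(t)t$ for $s\le t$, which is an \emph{upper} bound on the integrand, not a lower one. To extract $G_\eps(t)\gtrsim g_{1,\eps}(t)t^2$ from such a comparison you would need $g_{1,\eps}(s)\gtrsim g_{1,\eps}(t)$ for all $T\le s\le t$, and this fails already for $g_1(t)=t^{\gamma-2}$ with $\gamma$ large. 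There is no ``homogeneity of $g_{1,\eps}$'' to invoke: the truncation in \rif{g1e} enforces $\gamma$-growth only outside $[\eps,T_\eps)$, and on $[T,T_\eps)$ one has $g_{1,\eps}=\go_1 g_1$ with $g_1$ essentially arbitrary subject to \rif{00} and \rif{uni0}. The additive constant you mention is not the obstruction; the approach itself does not produce a lower bound.

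The paper obtains both bounds in \rif{uni4} via the integration-by-parts identity
\[
\int_T^t\bigl[\tilde a_\eps(x,s)+\tilde a_\eps'(x,s)s\bigr]s\,ds
= -\int_T^t \tilde a_\eps(x,s)s\,ds + \tilde a_\eps(x,t)t^2 - \tilde a_\eps(x,T)T^2\,.
\]
The point is that $\setuni$, through \rif{a1}, \rif{a2} and \rif{uni3}$_1$, yields the two-sided sandwiches $g_{1,\eps}\le \tilde a_\eps+\tilde a_\eps' s\le g_{2,\eps}\le\tcu g_{1,\eps}$ and $g_{1,\eps}\le\bar a_\eps\le\tcu g_{1,\eps}$. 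For the lower bound one estimates
\[
G_\eps(t)\ge \frac{1}{\tcu}\int_T^t[\tilde a_\eps+\tilde a_\eps's]s\,ds
\ge -G_\eps(t)+\frac{1}{\tcu}g_{1,\eps}(t)t^2-\go_1 g_1(T)T^2\,,
\]
using $\int_T^t\tilde a_\eps s\,ds\le\tcu\, G_\eps(t)$, $\tilde a_\eps(x,t)\ge g_{1,\eps}(t)$, and $\frac{1}{\tcu}\tilde a_\eps(x,T)\le \frac{1}{\tcu}g_{2,\eps}(T)\le g_{1,\eps}(T)=\go_1 g_1(T)$. Rearranging gives the left inequality in \rif{uni4}. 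Uniform ellipticity is what turns the one-sided monotonicity into a two-sided sandwich that integration by parts can convert into the pointwise bound.
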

\begin{proof}
Properties \rif{uni3} are immediate and follow from \rif{uni0}, also recalling \rif{00bis}, therefore we concentrate on \rif{uni4}. 
As for the right-hand side of \rif{uni4}, integration by parts yields
\eqn{intpart}
$$
\int_{T}^{t}\left[\tilde{a}_{\eps}(x,s)+\tilde{a}_{\eps}'(x,s)s\right]s \, ds = 
-\int_{T}^{t}\tilde{a}_{\eps}(x,s)s \, ds+\tilde{a}_{\eps}(x,t)t^{2}-\tilde{a}_{\eps}(x,T)T^{2}
$$
and therefore, recalling the definition in \eqref{g1e}, we have
\begin{eqnarray*}
G_{\eps}(t)&\stackrel{\eqref{a2}_2}{\leq}  &\int_{T}^{t}\left[\tilde{a}_{\eps}(x,s)+\tilde{a}_{\eps}'(x,s)s\right]s \, ds\\
&\stackrel{\rif{a1}, \rif{uni3}}{\leq}& -G_{\varepsilon}(t)+ \tcu g_{1,\eps}(t)t^{2}-\go_{1}g_{1}(T)T^{2}
\end{eqnarray*}
that is, the right-hand side of \rif{uni4}.  As for the left-hand side, we similarly have 
\begin{eqnarray*}
G_{\eps}(t)&\stackrel{\rif{a2}_2, \rif{uni3}}{\ge} &\frac{1}{\tcu} \int_{T}^{t}\left[\tilde{a}_{\eps}(x,s)+\tilde{a}_{\eps}'(x,s)s\right]s \, ds \nonumber \\
&\stackrel{\rif{a1}, \rif{intpart}}{\geq} &-\frac{1}{\tilde {\cu}}\int_{T}^{t}g_{2,\eps}(s)s \, ds+\frac{1}{\tilde {\cu}}\left[\tilde{a}_{\eps}(x,t)t^{2}-\tilde{a}_{\eps}(x,T)T^{2}\right]\\
&\stackrel{\eqref{g1e}, \rif{uni3}}{\geq} & -G_{\eps}(t)+\frac{1}{\tilde {\cu}}g_{1,\eps}(t)t^{2}-\go_{1}g_{1}(T)T^{2}\,.
\end{eqnarray*}
We turn to \rif{doublec}. The left-hand side inequality is nothing but \rif{z6}$_3$ (that holds whenever $|z|\geq 0$). For the right-hand side inequality, when $t \leq T$ we have, similarly to \rif{slipffff}
$$
\tilde F_{\eps}(x,t)    \stackrel{ \eqref{uni0}_2, \rif{slipffff}}{\leq} c\left[\frac{g_1(T)}{(T^2+\mu ^2)^{\frac{\gamma-2}{2}}}+\eps\right] (T^2+\mu_\eps^2)^{\gamma/2}\stackleq{00bis}cg_1(T)(T^2+\mu^{2}). 
$$
When $t > T$, we note that \rif{a1} and \rif{uni3} imply $\bar a_{\eps}(x,s) \leq c  g_{1,\eps}(s)$, that implies $\tilde F_{\eps}(x,t) - \tilde F_{\eps}(x,T) \leq  c G_{\eps}(t) $, from which \rif{doublec} follows again using the content of the last display. 
\end{proof}


\begin{proposition}\label{unile2}
Let $u\in W^{1,\gamma}(\B;\RN)$ be a solution to $\eqref{a0}$, under assumptions $\setuni$ in \trif{ipotesi} for $n\geq 2$. There exists a positive radius $R_{*}\equiv R_{*}(\datauni, h(\cdot))\leq 1$ such that if $\rad(\B)\le R_{*}$ and $B_{\varsigma} \Subset  B_{\rr}$ are concentric balls contained in $\B$, then
\begin{eqnarray}
&&\notag \|F_{\varepsilon}(\cdot, Du) \|_{L^\infty(B_\xi)}+
\|G_{\varepsilon}(\snr{Du}) \|_{L^\infty(B_\xi)} \\
&& \hspace{2cm} \le
\frac{c}{(\varrho-\xi)^n}\|F_\eps(\cdot, Du) \|_{L^1(B_{\rr})}+ c \left[\|f\|_{\mathfrak{X}(B_{\rr})}^{\gamma/(\gamma-1)}+1\right]\label{finafina}
\end{eqnarray}
holds with $c\equiv c(\datauni)$, where $\mathfrak{X}(\cdot)$ has been defined in \eqref{lox}. \end{proposition}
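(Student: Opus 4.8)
The goal is the sup-bound \eqref{finafina} for solutions to \eqref{a0} under the uniformly elliptic assumptions $\setuni$. The natural route is a De Giorgi-type iteration on level sets of $G_{\eps}(\snr{Du})$, combined with the Caccioppoli inequality of Lemma \ref{caclem}, but simplified dramatically by the fact that here $\sigma=0$ (via \eqref{uni0}$_2$ and \eqref{9} with $\cb\equiv \cu$), so the $L^\infty$-weights $\nr{\bar G_{\eps}(\cdot,\snr{Du})}_{L^\infty}^{\vartheta\sigma}$ appearing in \eqref{a45} disappear entirely. First I would record that, since $g_{i,\eps}$ are now $x$-independent and satisfy \eqref{uni3}, Lemma \ref{caclem} with $\sigma=0$ reduces to a \emph{genuine} Caccioppoli inequality on level sets, with no iteration-breaking $L^\infty$-norm of $Du$ on the right — except for the term carrying $f$, i.e.\ $\nr{Du}_{L^\infty(B_r)}^2\int\snr{f}^2$. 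The plan is to treat this term the standard way in nonlinear potential theory: freeze $\nr{Du}_{L^\infty(B_r)}$ as a constant $M$ over a fixed ball, run the iteration on the shrunken ball, obtain a bound for $M$ in terms of itself with a favorable power, and reabsorb.

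\smallskip
The key steps, in order, are: (i) From Lemma \ref{caclem} with $\sigma=\hat\sigma$ replaced by the uniformly elliptic data (so $[h(x)]^2[\bar G_\eps]^{2(1+\hat\sigma)}$ still occurs if $h\not\equiv0$ — note here $h\in\mathfrak X$ by hypothesis, whereas in the nonuniform case $h\in L^d$), derive the inequality \eqref{varine} of Lemma \ref{noniter} with $v\equiv G_\eps(\snr{Du})$, $M_1\equiv 1$ (this is the crucial simplification), $f_1\equiv h[\bar G_\eps(\cdot,\snr{Du})]^{1+\hat\sigma}$ (or simply $h\cdot$const when $\hat\sigma=0$, which is the relevant sub-case for $\setuni$ since no $\sigma,\hat\sigma$ enter \eqref{uni0}), and $f_2\equiv f$ with multiplier $M_3\equiv\nr{Du}_{L^\infty(B_{r_0}(x_0))}$. (ii) Apply Lemma \ref{noniter}: since $M_1=1$, \eqref{var8} gives a \emph{clean} pointwise bound
\[
G_\eps(x_0,\snr{Du(x_0)})\le \kk_0 + c\Big(\mint_{B_{r_0}(x_0)}(G_\eps(\cdot,\snr{Du})-\kk_0)_+^2\Big)^{1/2}+c\,\mathbf P_1^{h}(x_0,2r_0)+c\nr{Du}_{L^\infty}\mathbf P_1^{f}(x_0,2r_0),
\]
with no dangerous exponents on $\nr{G_\eps}_{L^\infty}$. (iii) Estimate the Riesz-potential terms via \eqref{potlore}: $\mathbf P_1^f\lesssim\nr{f}_{\mathfrak X}$ and $\mathbf P_1^h\lesssim\nr{h}_{\mathfrak X}$, the latter bounded by a constant once $\rad(\B)\le R_*$ with $R_*$ small (absolute continuity of the integral, exactly as \eqref{a23}). (iv) Control the $L^2$-average $\big(\mint(G_\eps-\kk_0)_+^2\big)^{1/2}$ by an $L^1$-quantity using a standard $L^\infty$-$L^1$ interpolation/Young trick on nested balls — this uses $\nr{G_\eps(\cdot,\snr{Du})}_{L^2}\le\nr{G_\eps(\cdot,\snr{Du})}_{L^\infty}^{1/2}\nr{G_\eps(\cdot,\snr{Du})}_{L^1}^{1/2}$, reabsorbing the $L^\infty$-part at the price of a power of $1/(\varrho-\xi)$. (v) Handle the $\nr{Du}_{L^\infty}\nr{f}_{\mathfrak X}$ term by \eqref{z10a}$_1$: $\nr{Du}_{L^\infty}\lesssim\nr{G_\eps}_{L^\infty}^{1/\gamma}+1$, and since $1/\gamma<1$ apply Young to reabsorb $\nr{G_\eps}_{L^\infty}$, paying $\nr{f}_{\mathfrak X}^{\gamma/(\gamma-1)}$. (vi) Use Lemma \ref{l0l} with $\mathcal Z(t)\equiv\nr{G_\eps(\cdot,\snr{Du})}_{L^\infty(B_t)}$ to pass from the reabsorption inequality on $B_{\tau_1}\subset B_{\tau_2}$ to the final estimate on $B_\xi\subset B_\varrho$, producing the factor $(\varrho-\xi)^{-n}$ (the power $n$ coming from the $L^2$-average over $B_{r_0}$ with $r_0\sim\tau_2-\tau_1$). (vii) Finally convert $\nr{G_\eps}_{L^\infty}$ into $\nr{F_\eps(\cdot,Du)}_{L^\infty}$ and $\nr{F_\eps}_{L^1}$ via \eqref{doublec}, absorbing the additive constant $g_1(T)(T^2+\mu_\eps^2)\le c$.

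\smallskip
The main obstacle is step (iv)–(v): the reabsorption of $\nr{G_\eps(\cdot,\snr{Du})}_{L^\infty}$ must be done \emph{simultaneously} against two different sources — the $L^2$-average of $G_\eps$ and the $\nr{Du}_{L^\infty}\sim\nr{G_\eps}_{L^\infty}^{1/\gamma}$ factor multiplying $\mathbf P_1^f$ — while keeping all constants dependent only on $\datauni$ and not on $\eps$. One has to be careful that the exponent $1/\gamma$ from \eqref{z10a}$_1$ stays strictly below $1$ (true since $\gamma>1$), and that the $L^\infty$-$L^1$ interpolation does not re-introduce an $\eps$-dependence: here \eqref{doublec} and \eqref{z6}$_3$ guarantee $G_\eps\lesssim F_\eps$ with constants in $\datauni$, uniformly in $\eps$. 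A secondary technical point is verifying that $x_0$ ranges over a dense set of Lebesgue points and that \eqref{cc0}-type precise-representative arguments (as in Lemma \ref{stimapp}) apply to $G_\eps(\cdot,\snr{Du})$ in the $x$-independent case — but this is now trivial since $g_{1,\eps}$ has no $x$-dependence, so $\mathcal C_1(r)\equiv0$ in the analogue of \eqref{cc1}. I expect no difficulty in the iteration itself; the care is entirely in bookkeeping the reabsorptions and the uniformity of constants.
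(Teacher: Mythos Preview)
Your strategy is essentially the paper's, and steps (iv)--(vii) are exactly right. There is, however, a bookkeeping slip in steps (i)--(iii) that matters.

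When $\sigma=\hat\sigma=0$ (which, as you note, is what $\setuni$ effectively forces), the $h$-term in the Caccioppoli inequality \eqref{a45} does \emph{not} reduce to $h\cdot\text{const}$: the exponent is $2(1+\sigma+\hat\sigma)=2$, so the term is $\int [h(x)]^2[\bar G_\eps(\snr{Du})]^{2}\,dx$. Equivalently, after pulling out the sup, it becomes $c\bigl[\|G_\eps(\snr{Du})\|_{L^\infty(B_r)}+1\bigr]^2\int|h|^2\,dx$, which is precisely the paper's \eqref{a45uni}. Consequently, in the pointwise bound coming from Lemma~\ref{noniter}, the $h$-potential term is
\[
c\bigl[\|G_\eps(\snr{Du})\|_{L^\infty(B_{\tau_2})}+1\bigr]\,\mathbf P_1^h(x_0,2r_0),
\]
not merely $c\,\mathbf P_1^h$ as in your displayed formula. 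This carries the \emph{full} first power of $\|G_\eps\|_{L^\infty}$, so it cannot be handled by Young's inequality (no exponent $<1$ to play with). The remedy is exactly the smallness mechanism you already invoke: choose $R_*$ so that $c_*\|h\|_{\mathfrak X(\B)}\le 1/6$ (the paper's \eqref{a23uni}), and then this term is absorbed directly into the left-hand side \emph{before} running Lemma~\ref{l0l}. So your reference to absolute continuity determining $R_*$ is correct in spirit, but its role is \emph{reabsorption} of a unit power of $\|G_\eps\|_{L^\infty}$, not mere boundedness of $\mathbf P_1^h$ (the latter is automatic from $h\in\mathfrak X$).

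With this correction your plan coincides with the paper's proof.
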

\begin{proof} We start taking a ball $B_{r}(x_0)\Subset \B$ (therefore it is $r < 1$), and prove that
\begin{eqnarray}
&&\int_{B_{r/2}(x_{0})}\snr{D(G_{\varepsilon}(\snr{Du})-\kk)_{+}}^{2} \dx \le \frac{c}{r^{2}}\int_{B_{r}(x_{0})}(G_{\varepsilon}(\snr{Du})-\kk)_{+}^{2} \dx\nonumber \\
&&\notag \hspace{25mm}  +c\left[\nr{G_{\varepsilon}(\snr{Du})}_{L^{\infty}(B_{r}(x_{0}))}+1\right]^2\int_{B_{r}(x_{0})}|h|^2 \dx\\
&&\hspace{30mm}+c\nr{Du}_{L^{\infty}(B_{r}(x_{0}))}^{2}\int_{B_{r}(x_{0})}\snr{f}^{2} \dx\label{a45uni}
\end{eqnarray} holds whenever $\kk \geq 0$, with $c\equiv c(\datauni)$. This is an analogue of \rif{a45} and to get it  we modify the proof of Lemma \ref{caclem}, keeping the notation used there. Proceeding as for the bounds for $\mbox{(IV)}_{z}$-$\mbox{(VI)}_{z}$ in Lemma \ref{caclem}, we have
\begin{flalign}
\notag \mathcal S_3 +\mathcal S_4   & \leq  c|\mbox{(IV)}_{x}|+c|\mbox{(V)}_{x}|+c|\mbox{(VI)}_{x} |+  \frac{c}{r^2}
\int_{B_{r}(x_0)}(G_{\varepsilon}(\snr{Du})-\kk)_{+}^2\dx\\
&\qquad + c\sum_{s=1}^n\int_{\B^{\kk}} \snr{f\cdot D_s\varphi_s} \dx  \,,\label{inconuni}
\end{flalign}
with $c\equiv c(\datauni)$. This estimate can be obtained by adapting those in \rif{posi3}-\rif{stimaIIquater}, and also those for the terms in \rif{stimaII}. One must take into account that now $g_{1,\eps}$ is independent of $x$ (therefore the terms coming from the use of \rif{z10} and featuring $h(\cdot)$ disappear), and the fact that we can formally take $\sigma\equiv 0$ as the ratio $g_{2,\eps}/g_{1,\eps}$ is bounded by a constant by \rif{uni3}$_1$. In turn, the last term in \rif{inconuni} involving the right-hand side $f$ can be treated exactly as in \rif{trattati}. As for the remaining $x$-terms appearing in the first line of \rif{inconuni}, with the help of $\eqref{growthd}_{3}$, \eqref{uni3} and \eqref{uni4}, we estimate
\begin{flalign*}
c|\mbox{(IV)}_{x}|+c|\mbox{(V)}_{x}|+c|\mbox{(VI)}_{x} | &\le \bar \varepsilon\mathcal{S}_3 +
\bar \varepsilon\mathcal{S}_4+ \frac{c}{r^2}
\int_{B_{r}(x_0)}(G_{\varepsilon}(\snr{Du})-\kk)_{+}^2\dx\nonumber\\
&\quad +\frac{c}{\bar \varepsilon}\int_{\B}\eta^{2}|h|^2[G_{\varepsilon}(\snr{Du})+1]^{2}\dx\,,
\end{flalign*}
for $c\equiv c(\datauni)$ and $\bar \varepsilon \in (0,1)$. Merging the content of the above two displays, choosing $\bar \varepsilon\equiv \bar \varepsilon(\datauni)$ small enough, and reabsorbing terms, we end up with \rif{a45uni}, 
where $c\equiv c(\datauni)$. As a consequence, we proceed as for the proofs of Lemma 
\ref{stimapp} and Proposition \ref{linf1}. An application of Lemma \ref{noniter} gives that, if $B_{2r_0}(x_{0})\Subset \B$ is any ball, then 
\begin{flalign}
G_{\varepsilon}(\snr{Du(x_{0})}) &\le cr_0^{-n/2}\left[\nr{G_{\varepsilon}( \snr{Du})}_{L^{\infty}(B_{\tau_2})}+1\right]^{1/2}\|G_{\varepsilon}(\snr{Du}) \|_{L^1(B_{\tau_{2}})}^{1/2}\nonumber \\
&\quad   +c\left[\nr{G_{\varepsilon}( \snr{Du})}_{L^{\infty}(B_{\tau_2})}+1\right] \mathbf{P}_{1}^{h}(x_{0},2r_{0})
\notag \\ &\quad + c \left[\nr{G_{\varepsilon}( \snr{Du})}_{L^{\infty}(B_{\tau_2})}+1\right]^{1/\gamma}\mathbf{P}_{1}^{f}(x_{0},2r_{0}) \label{uni80}
\end{flalign}
holds provided $x_0$ is a Lebesgue point of $|Du|$, where $c\equiv c(\datauni)$, and we have also used \rif{z10a}$_1$. Next, \rif{potlore} gives that 
$$
\|\mathbf{P}_{1}^{h}(\cdot,2r_{0})\|_{L^{\infty}(B_{\tau_{1}})}  
\lesssim \|h\|_{\mathfrak{X}(B_\rr)} \quad \mbox{and} \quad 
\|\mathbf{P}_{1}^{f}(\cdot,2r_{0})\|_{L^{\infty}(B_{\tau_{1}})}
\lesssim  \|f\|_{\mathfrak{X}(B_\rr)}\,.
$$
Using these informations in \rif{uni80} yields
\begin{flalign}\label{uni8}
&\notag \|G_{\varepsilon}(\snr{Du}) \|_{L^\infty(B_{\tau_{1}})}+1\\
& \qquad \le cr_0^{-n/2}\left[\nr{G_{\varepsilon}( \snr{Du})}_{L^{\infty}(B_{\tau_2})}+1\right]^{1/2}\|G_{\varepsilon}(\snr{Du}) \|_{L^1(B_{\tau_{2}})}^{1/2}+1\nonumber \\
&\qquad\quad  \nonumber +c_*\left[\nr{G_{\varepsilon}( \snr{Du})}_{L^{\infty}(B_{\tau_2})}+1\right]  \|h\|_{\mathfrak{X}(B_{\varrho})}
\\ & \qquad  \quad + c\left[\nr{G_{\varepsilon}( \snr{Du})}_{L^{\infty}(B_{\tau_2})}+1\right]^{1/\gamma} \|f\|_{\mathfrak{X}(B_{\varrho})}\,,
\end{flalign}
where $c, c_*\equiv c,c_*(\datauni)$. By absolute continuity, we now determine the radius $R_*\equiv R_*(\datauni, h(\cdot))$ mentioned in the statement in such a way that
\begin{flalign}\label{a23uni}
\rad(\B)\leq R_* \Longrightarrow c_*\nr{ h}_{\mathfrak{X}(B_{\varrho})}\le c_*\nr{ h}_{\mathfrak{X}(\B)}\le 1/6\,.
\end{flalign}
Using this and Young inequality in \rif{uni8}, and yet recalling that $r_{0}:=(\tau_{2}-\tau_{1})/8$, gives
\begin{flalign*}
 \|G_{\varepsilon}(\snr{Du}) \|_{L^\infty(B_{\tau_{1}})}
&\le \frac{1}{2} \|G_{\varepsilon}(\snr{Du}) \|_{L^\infty(B_{\tau_{2}})}\\
&\quad +\frac{c}{(\tau_2-\tau_1)^n}\|G_{\varepsilon}(\snr{Du}) \|_{L^1(B_\rr)}+ c\left[\|f\|_{\mathfrak{X}(B_\rr)}^{\gamma/(\gamma-1)}+1\right]\,.
\end{flalign*}
Inequality \rif{finafina} now follows using Lemma \ref{l0l} with $\mathcal{Z}(t):= \|G_{\varepsilon}(\snr{Du}) \|_{L^\infty(B_{t})}$ and \rif{doublec}. 
\end{proof}
With Proposition \ref{unile2} available, we can now complete the proof of Theorem \ref{t2}. Arguing as in the proof of Theorem \ref{t1} in Section \ref{pt1} (as usual with $F_j \equiv F_{\eps_j}$ and so on), and arrive up to \rif{a52bis}, the analog of which in the present context is
$$
\|F_j(\cdot, Du_j) \|_{L^\infty(s\B)} \le
 \frac{c}{(1-s)^{n}[\rad(\B)]^{n}} \|F(\cdot, Du) \|_{L^1(\B)}+ c\left[\|f\|_{\mathfrak X(\B)}^{\gamma/(\gamma-1)} +1\right]
$$
where $c$ depends on $\datauni$ (again recall the equivalence in \rif{doublec}). The rest of the proof again follows with minor modifications to the proof for Theorem \ref{t1}.

\section{Proof of Theorem \ref{t4}}\label{tsec3}
We revisit the proof of Theorem \ref{t1} starting from the part concerning the a priori estimates of Section \ref{apri} in the case $n>2$. In turn, this uses as a preliminary result Proposition \ref{lp} for $s_*=1$, that now works only assuming $\hat \sigma< 1/n-1/d$, as allowed by \rif{sigma-int}. This relies on the condition $g_2/g_1\leq \cb$ in \rif{sigma-int}, that via \rif{z9} also implies $g_{2, \eps}/g_{1, \eps}\leq c$. Then Remark \ref{t4uno} can be used and we can replace $\sigma +\hat \sigma $ by $\hat \sigma$ in \rif{sigmasss}. The same arguments of Remark \ref{t4uno} also apply in the proof of Lemma \ref{caclem}, and yield the following simplified form of \rif{a45}:
\begin{flalign*}
&\int_{B_{r/2}(x_{0})}\snr{D(G_{\varepsilon}(x,\snr{Du})-\kk)_{+}}^{2} \  \dx \le \frac{c}{r^{2}}\int_{B_{r}(x_{0})}(G_{\varepsilon}(x,\snr{Du})-\kk)_{+}^{2} \  \dx\nonumber \\
&\hspace{44mm} +c\int_{B_{r}(x_{0})}[h(x)]^{2}[\bar{G}_{\varepsilon}(x,\snr{Du})]^{2(1+\hat{\sigma})} \dx\\ & \hspace{44mm} +c\nr{Du}_{L^{\infty}(B_{r}(x_{0}))}^{2}\int_{B_{r}(x_{0})}\snr{f}^{2} \  \dx\,.
\end{flalign*}
This last inequality still holds when $n=2$. Using it as in Lemma \ref{stimapp} yields
\begin{flalign}\label{a46-int}
G_{\varepsilon}(x_{0},\snr{Du(x_{0})})& \le   \kk+c\left(\mint_{B_{r_{0}}(x_{0})}(G_{\varepsilon}(x,\snr{Du})-\kk)_{+}^{2} \dx\right)^{1/2}\nonumber \\
&\ \  + c\left[\mathbf{P}_{1}^{\mathfrak{h}}(x_{0},2r_{0})+\nr{Du}_{L^{\infty}(B_{r_{0}}(x_{0}))}\mathbf{P}_{1}^{f}(x_{0},2r_{0})\right]\,, 
\end{flalign}
for every $\kk \geq 0$, which is in fact the analog of \rif{a46} in the present setting. Here it is $\mathfrak{h}(x):=h(x)[\bar G_{\varepsilon}(x,\snr{Du})]^{1+\hat{\sigma}}$. From this last inequality we again arrive at \rif{a52} exactly as in the proof of 
Proposition \ref{linf1} (needless to say, for different exponents $\theta$ and $\beta$). Here the key observation is that we do not have to verify the two conditions in \rif{ledue}, that in fact are not occurring (actually they occur taking formally $\sigma =0$). It is only the second, and more restrictive one, that requires the stronger bound in \rif{sigma}, that can therefore be relaxed in \rif{sigma-int}. Starting from this fact, the rest of the proof is the same as the one of Theorem \ref{t1} in the case $n>2$, again replacing $\sigma +\hat \sigma$ by $\hat \sigma$. In the case $n=2$ the argument is similar. First, we use Lemma \ref{lemmadieci}, again with $s_*=1$, and, as for the case $n>2$, this works replacing  $\sigma +\hat \sigma$ by $\hat \sigma$ in \rif{uppersss}. Next, we apply this time \rif{a46-int}, thereby getting \rif{arrivaad} (with formally $\sigma =0$) and then \rif{2.4}, with $\nr{G_{\varepsilon}(\cdot,\snr{Du})}_{L^{\infty}(B_{\tau_2})}$ replaced by $1$. From this point on, the proof goes as in the previous case; the only remark is that now the first condition in \rif{sceglidelta} does not appear as a consequence of the new version of \rif{2.4}, and this new bound is exactly the one appearing in \rif{sigma-int} upon considering only $\hat \sigma$.

\section{Proof of Theorems \ref{sample1}-\ref{sample8}}\label{applications}
\subsection{Theorems \ref{sample1} and \ref{sample2}} We just verify that the assumptions of Theorem \ref{sample1} imply those of Theorem \ref{t1} for suitable choices of objects and parameters. For this we take $\gamma =p$, $g_{i}(x,t)\equiv g_{i}(t)$, $i\in \{1,2,3\}$, where $g_{1}(t):=\nu(t^{2}+\mu^{2})^{(p-2)/2}$, $g_{2}(t):=\Lambda (t^{2}+\mu^{2})^{(q-2)/2}+\Lambda (t^{2}+\mu^{2})^{(p-2)/2}$ and $g_{3}(t):=(t^{2}+\mu^{2})^{(q-2)/2}t+(t^{2}+\mu^{2})^{(p-2)/2}t$, 
for all $t\in (0,\infty)$. It follows that $
G(x,t)\equiv G(t)=\nu[(t^{2}+\mu^{2})^{p/2}-(T^{2}+\mu^{2})^{p/2}]/p $ (for $t \geq T$ and it is zero otherwise) and $\bar{G}(x,t)\equiv \bar{G}(t)=G(t)+(T^{2}+1)^{p/2}\geq \nu(t^{2}+\mu^{2})^{p/2}+ (p-\nu)/p$. 
Here, any number $T\in (0,\infty)$ is fine. By \eqref{asp1}-\eqref{asp2} we see that $F$ satisfies \rif{0ass}-\eqref{00}. Note that $\eqref{7}$ holds with $\hat{\sigma}=0$ since $g_{1}$ is $x$-independent. As for \rif{8}-\rif{9}, we verify them choosing $\sigma=q/p-1$, for a suitably large constant $
\cb\equiv \cb(n,N,\nu, p, q, \Lambda)
\geq 1$. With such a choice of $\sigma$, and $\hat \sigma =0$ as above, the condition in \rif{sigma} is implied by the assumed one in \rif{lox2}. All in all, assumptions \rif{sigmad}-\rif{9} are verified too and and we can apply Theorem \ref{t1} to get Theorem \ref{sample1}. Finally, the last assertion concerning the improved bound in \rif{lox2bis} follows from the content of Remark \ref{megliobound0} and this completes the proof of Theorem \ref{sample1}. Theorem \ref{sample2} is a special case of Theorem \ref{sample1}, upon taking $h(\cdot)\approx |D\ccc(\cdot)|$. For this, just observe that the convexity of $z \to F( z)$, and the growth assumptions \rif{asp1}, imply also that $\tilde F'(t) \lesssim (t^2+\mu^2)^{(q-2)/2}t+(t^2+\mu^2)^{(p-2)/2}t$, thereby allowing to verify also \rif{asp2} for the integrand $(x,t)\mapsto \ccc(x)\tilde F(t)$.

\subsection{Theorem \ref{sample3}}\label{sample3sec}
As all our estimates are local in nature, we can assume that $H(\cdot, Du)\in L^1(\Omega)$, where the integrand $H(\cdot)$ has been defined in \rif{pqfunctional}; moreover, by localizing Morrey's embedding theorem, we can also assume that $[a]_{0,1-n/d;\Omega}+\nr{a}_{L^{\infty}(\Omega)}$ is finite. Similarly to \rif{dati}, we denote 
\eqn{datahh}
$$\texttt{data}_{\textnormal{h}}:=(n,\nu,N,p,q,d,[a]_{0,1-n/d;\Omega},\nr{a}_{L^{\infty}},\nr{H(\cdot,Du)}_{L^{1}},\nr{f}_{L^{n}})\,.$$
Here $[a]_{0,1-n/d;\Omega}$ denotes the usual $(1-n/d)$-H\"older seminorm of $a(\cdot)$ on $\Omega$, and all the norms extend over $\Omega$. The proof now proceeds in three steps.
\subsubsection*{Step 1: Quantification of ellipticity} We refer to the framework of Theorem \ref{t4}. We define, for $x \in \Omega$ and $t>0$, 
$g_{1}(x,t):=\min\{p-1,1\}[t^{p-2}+a(x)t^{q-2}]$, $g_{2}(x,t):=2qNn[t^{p-2}+a(x)t^{q-2}]$, $g_{3}(x,t)\equiv g_{3}(t):=t^{q-1}$
that implies
$G(x,t):=\min\{p-1,1\}[(t^{p}/p+a(x)t^{q}/q)-(1/p+a(x)/q)]$ (here we take $T=1$ and $\mu=0$), $\bar{G}(x,t)=G(x,t)+2^{p/2}$ and $g_2/g_1\leq c_{\rm b}$ for $t \geq 1$, where $ c_{\rm b}\equiv  c_{\rm b}(\texttt{data}_{\textnormal{h}})$. We also set $h(\cdot)=\snr{Da(\cdot)}$, $\sigma =\hat \sigma := q/p-1$, $\gamma=p$. We now observe that, replacing $\leq $ by $<$ in the first bound from \rif{lox22}, we can immediately conclude with the local Lipschitz continuity of minima invoking Theorem \ref{t4}, whose assumptions, and in particular \rif{sigma-int}, are satisfied but in the limiting/equality case. In order to get the delicate equality in \rif{lox22}, we have to readapt some points from the proof of Theorem \ref{t4} using some additional results available when the peculiar structure in \rif{pqfunctional} is considered.

\subsubsection*{Step 2: Uniform higher integrability} We modify and extend arguments from \cite{CoM0, CoM, demicz}.  
\begin{lemma}\label{ilprimoh}
Let $u\in W^{1,1}(\Omega;\mathbb{R}^{N})$ be a minimizer of the functional in \eqref{pqfunctional}, under assumptions \eqref{fff} and $0 \leq a(\cdot) \in W^{1,d}(\Omega)$ with \trif{lox2bis}. There exists $s_{**}\equiv s_{**}(\texttt{data}_{\textnormal{h}})>1$ such that $H(\cdot, Du)\in L^{s_{**}}_{\loc}(\Omega)$. 
\end{lemma}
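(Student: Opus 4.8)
The statement is a self-improving integrability (Gehring-type) result for minimizers of the double phase functional, and the plan is to derive a reverse H\"older inequality for $H(\cdot,Du)$ and then invoke the standard Gehring lemma. The starting point is the Caccioppoli inequality on a ball $B_{2R}(x_0)\Subset\Omega$: testing minimality with $w=u-\eta^p(u-(u)_{B_{2R}})$, one gets
\[
\int_{B_R}H(x,Du)\dx\le \frac{c}{R^p}\int_{B_{2R}}\left(|u-(u)_{B_{2R}}|^p+a(x)|u-(u)_{B_{2R}}|^q\right)\dx+c\int_{B_{2R}}|f||u-(u)_{B_{2R}}|\dx .
\]
The first term on the right must be absorbed, and here the well-known dichotomy of \cite{CoM0,CoM} enters. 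Fix the ball and split into two cases according to the size of $\inf_{B_{2R}}a$ relative to $[a]_{0,\alpha}R^\alpha$ (with $\alpha=1-n/d$). In the \emph{$p$-phase} ($\inf_{B_{2R}}a\lesssim [a]_{0,\alpha}(2R)^\alpha$) one uses that $a(x)\le \inf_{B_{2R}}a+[a]_{0,\alpha}(2R)^\alpha\lesssim [a]_{0,\alpha}R^\alpha$ on $B_{2R}$, together with the Sobolev--Poincar\'e inequality and the bound $q/p\le 1+\alpha/n=1+1/n-1/d$ from \rif{lox2bis}, to control $\fint_{B_{2R}}a|u-(u)_{B_{2R}}|^q\dx$ by $R^p\left(\fint_{B_{2R}}H(x,Du)\dx\right)^{q/p}$ up to lower order terms; in the \emph{$(p,q)$-phase} ($\inf_{B_{2R}}a> [a]_{0,\alpha}(2R)^\alpha$) the integrand is comparable to $t^p+(\inf a)t^q$, hence uniformly $(p,q)$-elliptic with a \emph{fixed} ratio on that ball, and the classical higher integrability for such balanced functionals applies directly. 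The $f$-term is handled by H\"older's inequality, $|f|\in L^n$, Sobolev--Poincar\'e, and Young's inequality, producing a term that is reabsorbable plus an $L^{?}$-norm of $|f|$ which, since $L^n\subset L^{(p_*)'}$-type exponents are fine here after a short computation, contributes a harmless additive datum.

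The outcome of the two-case analysis is a reverse H\"older inequality of the form
\[
\fint_{B_R(x_0)}H(x,Du)\dx\le c\left(\fint_{B_{2R}(x_0)}[H(x,Du)]^{\vartheta}\dx\right)^{1/\vartheta}+c\,\fint_{B_{2R}(x_0)}(|f|^{\sigma_1}+1)\dx
\]
for some $\vartheta\in(0,1)$ and some $\sigma_1<n$ with $\sigma_1$ depending only on $p,q,n$, valid for all balls with $B_{2R}(x_0)\Subset\Omega$, with constant $c\equiv c(\texttt{data}_{\textnormal{h}})$. At this point Gehring's lemma (in the form with right-hand side, e.g. as in Giaquinta--Modica; see the references in \cite{CoM0}) yields an exponent $s_{**}\equiv s_{**}(\texttt{data}_{\textnormal{h}})>1$ and the local estimate
\[
\left(\fint_{B_R}[H(x,Du)]^{s_{**}}\dx\right)^{1/s_{**}}\le c\fint_{B_{2R}}H(x,Du)\dx+c\left(\fint_{B_{2R}}|f|^{s_{**}\sigma_1}\dx\right)^{1/s_{**}}+c,
\]
and after possibly shrinking $s_{**}$ so that $s_{**}\sigma_1\le n$, the right-hand side is finite because $H(\cdot,Du)\in L^1_{\loc}$ and $f\in L^n$. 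This gives $H(\cdot,Du)\in L^{s_{**}}_{\loc}(\Omega)$, which is the claim.

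\textbf{Main obstacle.} The delicate point is the $p$-phase estimate: one must bound $\fint_{B_{2R}}a(x)|u-(u)_{B_{2R}}|^q\dx$ by (a power of) the energy, and this is exactly where the borderline relation $q/p\le 1+\alpha/n$ is used sharply. The safe route is to first interpolate $\|u-(u)_{B_{2R}}\|_{L^q(B_{2R})}$ between $L^p$ and $L^{p^*}$ (so that the $L^{p^*}$-norm is controlled by $R\,\|Du\|_{L^p}$ via Sobolev--Poincar\'e, and $a\lesssim[a]_{0,\alpha}R^\alpha$ converts the remaining $R$-powers correctly), being careful that the resulting exponent on $\fint H(x,Du)$ is $>1$, which forces the appearance of the subunitary exponent $\vartheta$ after rescaling and is precisely what makes Gehring applicable. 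A secondary technical nuisance is ensuring all constants depend only on $\texttt{data}_{\textnormal{h}}$ and not on the ball — this is automatic once one works with normalized (averaged) integrals and uses $[a]_{0,1-n/d;\Omega}$ and $\|a\|_{L^\infty}$ uniformly, as already arranged at the start of Section \ref{sample3sec}.
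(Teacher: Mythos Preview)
Your approach is correct and is essentially the same as the paper's: establish a Caccioppoli inequality by testing minimality with $w=u-\eta(u-(u)_{B_R})$, handle the resulting $\tilde H(x,|u-(u)_{B_R}|/R)$-term via the $p$-phase/$(p,q)$-phase dichotomy and the intrinsic Sobolev--Poincar\'e inequality (this is exactly what the paper means by ``as in \cite[Section~9]{CoM0}''), treat the $f$-term with Sobolev and Young, and conclude by Gehring. The only cosmetic difference is that the paper absorbs the $f$-contribution into the $\tilde H$-term plus an additive constant, while you carry it as $\fint|f|^{\sigma_1}$; both variants feed equally well into Gehring's lemma with right-hand side.
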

\begin{proof}
Consider concentric balls $B_{\rr} \subset B_t \Subset B_s \subset B_{R}$ contained in $\Omega$, then take a standard cut-off function $\eta\in C^{1}_{{\rm c}}(B_{R})$ so that $\mathds{1}_{B_{t}}\le \eta\le \mathds{1}_{B_{s}}$ and $\snr{D\eta}\lesssim 1/(s-t)$. We take $w:=u-\eta(u-(u)_{B_{R}})$ as competitor to minimality, that yields 
$
\|H(\cdot,Du)\|_{L^1(B_{s})} \le 
\|H(\cdot,Dw)\|_{L^1(B_{s})} +  \|\eta f\cdot (u-(u)_{B_{R}}) \|_{L^1(B_{s})}$. The first term in the right-hand side of this last inequality can be handled as in \cite[Section 9]{CoM0}, while by Sobolev and Young inequalities we have
\begin{flalign}\label{involves}
&\|\eta f\cdot (u-(u)_{B_{R}}) \|_{L^1(B_{s})}\le \|f \|_{L^n(B_{s})}\|\eta(u-(u)_{B_{R}} )\|_{L^{n/(n-1)}(B_{s})}\nonumber \\
& \quad \le cR^{n-n/p}\nr{f}_{L^{n}(\Omega)}\nr{Du}_{L^{p}(B_{s})}+\frac{cR^{n-n/p}}{s-t}\nr{f}_{L^{n}(\Omega)}\|u-(u)_{B_{R}}\|_{L^p(B_{s})}\nonumber \\
&\quad \le \frac14 \|H(\cdot, Du)\|_{L^1(B_{s})}+c\int_{B_{R}}\tilde H\left(x,\frac{|u-(u)_{B_R}|}{s-t}\right)  \, dx +cR^n  \,,
\end{flalign}
where
$c\equiv c(n,p,q,\nr{f}_{L^{n}(B)})$. This last estimate together with the arguments developed in \cite[Section 9]{CoM0}, leads to the reverse H\"older inequality 
$$
\left(\mint_{B_{R/2}} [H(\cdot,Du)]^{1+\delta}\dx \right)^{1/(1+\delta)} \leq c
\mint_{B_{R}} [H(\cdot,Du)]\dx + c\,,
$$  
for $c\equiv c(\texttt{data}_{\textnormal{h}})\geq 1$, $\delta\equiv \delta(\texttt{data}_{\textnormal{h}})>0$, which in turn allows us to conclude via a covering argument.
\end{proof}
Let $B\Subset\Omega$ be a ball with $\rad(B)\le 1$. As is \cite{CoM0, CoM}, we say that the $p$-phase occurs when 
\eqn{pphase}
$$a_{\textnormal{i}}(B):=\inf_{x\in B}a(x)\le 4[\rad(B)]^{1-n/d}[a]_{0,1-n/d;B}$$ holds, otherwise we say that the $(p,q)$-phase occurs. Accordingly, we define the integrand 
$\tilde H^{-}_{B}(|z|):=\snr{z}^{p}/p+a_{\textnormal{i}}(B) \snr{z}^{q}/q.$ 
Finally, let $\tilde H_{\varepsilon}(\cdot)$, $\tilde H^{-}_{B,\varepsilon}(\cdot)$ be the integrands defined by applying the construction in Section \ref{genset}, \rif{z2}-\rif{z5}, to $\tilde H(\cdot)$ and $\tilde H^{-}_{B}(\cdot)$, respectively, with $\gamma=p$ and $g_{1}, g_2, g_{3}$ defined in \emph{Step 1} and $T=1$. Such a construction provides us with approximating integrands that are still of double phase type, i.e.,
\eqn{hhdopo}
$$
\tilde H_{\varepsilon}(x,t) = \int_0^t\tilde{h}_{\varepsilon}(x,s)s\, ds+\varepsilon\int_{0}^{t}(s^{2}+\eps^{2})^{\frac{p-2}{2}}s \, ds\,,
$$
where
\begin{flalign}\label{hhdopo2}
\tilde{h}_{\varepsilon}(x,t):= \begin{cases} 
\ \frac{\eps^{p-2}}{(2\eps^2)^{\frac{p-2}{2}}}(t^{2}+\eps^2)^{\frac{p-2}{2}}
\\ \qquad +a(x) \frac{\eps^{q-2}}{(2\eps^2)^{\frac{p-2}{2}}}(t^{2}+\eps^2)^{\frac{p-2}{2}}\ \ &\mbox{if}  \ t\in [0,\varepsilon)\\
\ t^{p-2}+a(x)t^{q-2}\ \ &\mbox{if}  \ t\in [\varepsilon,T_{\varepsilon})\\
\ \frac{T_{\eps}^{p-2}}{(T_{\eps}^2+\eps^2)^{\frac{p-2}{2}}}(t^{2}+\eps^2)^{\frac{p-2}{2}}\\
\qquad +a(x) \frac{T_{\eps}^{q-2}}{(T_{\eps}^2+\eps^2)^{\frac{p-2}{2}}}(t^{2}+\eps^2)^{\frac{p-2}{2}} \ \ &\mbox{if} \ t\in [T_{\varepsilon},\infty)\,,
\end{cases}
\end{flalign}
with the same representation holding for $\tilde H^{-}_{B,\varepsilon}(\cdot)$, replacing $a(\cdot)$ by $a_{\textnormal{i}}(B)$ in \rif{hhdopo2} above; here it is $T_\eps = 1+1/\eps$. 
Next, we develop an intrinsic Sobolev-Poincar\'e inequality involving $H_{\varepsilon}(\cdot)$; the main point is that the implied constants and exponents are independent of $\eps$. 
\begin{lemma}\label{sopolem}
Assume that $0 \leq a(\cdot) \in W^{1,d}(\Omega)$ with \trif{lox2bis}. Let $B\Subset \Omega$ be a ball with $\rad(B)\le 1$ and $w\in W^{1,p}_{0}(B;\RN)$ and $v\in W^{1,p}(B;\RN)$ be such that  $H_{\varepsilon}(\cdot,Dw),  H_{\varepsilon}(\cdot,Dv)\in L^{1}(B)$. Then, the following Sobolev and Sobolev-Poincar\'e inequalities hold:
\begin{flalign}\label{sopoeps}
\mint_{B}\tilde H_{\varepsilon}\left(x,\frac{|w|}{\rad(B)}\right)  \, dx \le c\left(\mint_{B}[\tilde H_{\varepsilon}(x,|Dw|)+1]^{\tau}  \, dx\right)^{1/\tau}
\end{flalign}
$c\equiv c(n,N,p,q,d,[a]_{0,1-n/d;B},\nr{Dw}_{L^{p}(B)})$ and
\begin{flalign}\label{sopoeps.1}
\mint_{B}\tilde H_{\varepsilon}\left(x,\frac{|v-(v)_{B}|}{\rad(B)}\right)  \, dx \le c\left(\mint_{B}[\tilde H_{\varepsilon}(x,|Dv|)+1]^{\tau}  \, dx\right)^{1/\tau}\,,
\end{flalign}
for $c\equiv c(n,N,p,q,d,[a]_{0,1-n/d;B},\nr{Dv}_{L^{p}(B)})$. In both cases it is $\tau\equiv \tau(n,p,q)\linebreak \in (0,1)$.
\end{lemma}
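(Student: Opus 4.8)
The strategy is to reduce the two intrinsic inequalities \eqref{sopoeps}--\eqref{sopoeps.1} to the already available Sobolev--Poincar\'e type inequalities for the \emph{non-truncated} double phase integrand $\tilde H(\cdot)$, which are known from \cite{CoM0, CoM, demicz} under the bound \eqref{lox2bis}. The key structural observation is the one recorded in \eqref{hhdopo}--\eqref{hhdopo2}: the truncated integrand $\tilde H_\eps$ is itself of double phase type, with the \emph{same} coefficient $a(\cdot)$ and exponents $p,q$, and with modulating density $\tilde h_\eps$ comparable — with constants depending only on $n,p,q$ and independent of $\eps$ — to $t^{p-2}+a(x)t^{q-2}$ away from the origin, and bounded above and below by multiples of $(t^2+\eps^2)^{(p-2)/2}(1+a(x)(t^2+\eps^2)^{(q-p)/2})$ everywhere. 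Concretely, using \eqref{repeat} (here with $\mu=0$, so $1\le (s^2+\eps^2)/s^2\le 2$ for $s\ge\eps$) together with the monotonicity built into \eqref{z2}, one checks the two-sided bound
\[
c^{-1}\bigl[\, |z|^p + a(x)|z|^q\,\bigr] - c \ \le\ \tilde H_\eps(x,|z|)\ \le\ c\bigl[\,|z|^p + a(x)|z|^q\,\bigr] + c\,,
\]
with $c\equiv c(n,p,q)$; this is essentially \eqref{dominaF} specialised to the present structure, combined with the lower bound \eqref{z7}. The additive constants are harmless because all averages are taken over balls of radius $\le 1$ and one always adds $+1$ inside the integrals in \eqref{sopoeps}--\eqref{sopoeps.1}.

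\textbf{Carrying it out.} First I would record the comparison estimate above, being careful that the constants do not degenerate as $\eps\to 0$: the only places where $\eps$ enters are through the ratios $\eps^{p-2}/(2\eps^2)^{(p-2)/2}$ and $T_\eps^{p-2}/(T_\eps^2+\eps^2)^{(p-2)/2}$ in \eqref{hhdopo2}, both of which lie in a fixed interval $[2^{-|p-2|/2},2^{|p-2|/2}]$ depending only on $p$, and similarly for the $q$-powers. Hence $\tilde h_\eps(x,t)\approx t^{p-2}+a(x)t^{q-2}$ uniformly in $\eps$ and $x$, for $t\ge\eps$, while for $t<\eps$ both sides are $\approx \eps^{p-2}+a(x)\eps^{q-2}\approx t^{p-2}+a(x)t^{q-2}$ evaluated at $t=\eps$; integrating in $t$ gives the displayed two-sided bound for $\tilde H_\eps$. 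Next, I would invoke the intrinsic Sobolev inequality for $\tilde H$ itself: under \eqref{lox2bis}, \cite[Theorem 1.6 / Section 6]{CoM0} (see also \cite{CoM, demicz}) yields, for $w\in W^{1,p}_0(B;\RN)$ with $\tilde H(\cdot,Dw)\in L^1(B)$,
\[
\mint_{B}\tilde H\!\left(x,\frac{|w|}{\rad(B)}\right)dx \le c\left(\mint_{B}\bigl[\tilde H(x,|Dw|)+1\bigr]^{\tau}dx\right)^{1/\tau},
\]
for some $\tau\equiv\tau(n,p,q)\in(0,1)$ and $c$ depending on $n,N,p,q,d,[a]_{0,1-n/d;B}$ and $\|Dw\|_{L^p(B)}$; the analogous statement with $w$ replaced by $v-(v)_B$ holds for $v\in W^{1,p}(B;\RN)$. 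Then \eqref{sopoeps} follows by inserting the two-sided comparison on both sides: bound the left-hand side of the desired inequality from above by $c\,\mint_B[\tilde H(x,|w|/\rad(B))+1]\,dx$ (using $\tilde H_\eps\le c\tilde H+c$, applied with the argument $|w|/\rad(B)$, noting $\rad(B)\le 1$), apply the inequality for $\tilde H$, and then bound $\tilde H(x,|Dw|)\le c\,\tilde H_\eps(x,|Dw|)+c$ from the lower comparison; the constant $c$ now depends additionally on $\|Dw\|_{L^p(B)}$, which is controlled by $\|Dw\|_{L^p(B)}\le c\,(\mint_B \tilde H_\eps(x,|Dw|)dx)^{1/p}\rad(B)^{n/p}+\cdots$, or one simply keeps $\|Dw\|_{L^p(B)}$ as an explicit dependence exactly as in the statement. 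The same argument, mutatis mutandis, gives \eqref{sopoeps.1}.

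\textbf{Main obstacle.} The only genuinely delicate point is ensuring \emph{$\eps$-uniformity} of the constant $c$ and of the exponent $\tau$ throughout. The exponent $\tau$ is no problem, since it comes from the base Sobolev--Poincar\'e inequality for $\tilde H$ and depends only on $n,p,q$; but one must check that the passage through the comparison estimates does not hide an $\eps$-dependence — this is exactly why one needs the uniform bounds on the truncation ratios in \eqref{hhdopo2} noted above, and why the additive $+1$'s (respectively $+c$'s) must be tracked rather than discarded. A secondary, more technical point is that the base inequality from \cite{CoM0} is typically stated for minimizers or for functions with finite $\tilde H$-energy; here we only assume $\tilde H_\eps(\cdot,Dw)\in L^1(B)$, but by the lower comparison this implies $\tilde H(\cdot,Dw)\in L^1(B)$ (up to the additive constant, integrable over $B$), so the hypotheses of the cited result are met. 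With these two points handled, the proof is a direct chain of comparisons and reduces entirely to the known autonomous/non-truncated double phase Sobolev theory.
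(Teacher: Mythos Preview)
Your reduction has a genuine gap: the claimed two--sided comparison
\[
c^{-1}\bigl[\,|z|^p+a(x)|z|^q\,\bigr]-c\ \le\ \tilde H_\eps(x,|z|)\ \le\ c\bigl[\,|z|^p+a(x)|z|^q\,\bigr]+c
\]
with $c$ independent of $\eps$ is \emph{false on the lower side} for $|z|>T_\eps=1+1/\eps$. You seem to have read \eqref{hhdopo2} as if the $a(x)$--term in the range $t\ge T_\eps$ carried the factor $(t^2+\eps^2)^{(q-2)/2}$; it does not. Both summands carry $(t^2+\eps^2)^{(p-2)/2}$ (the general truncation \eqref{z2} freezes the full coefficient $\tilde a(x,T_\eps)$ and multiplies by $(t^2+\mu_\eps^2)^{(\gamma-2)/2}$ with $\gamma=p$). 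Hence for $t\ge T_\eps$
\[
\tilde h_\eps(x,t)\ \approx\ t^{p-2}\bigl[1+a(x)T_\eps^{\,q-p}\bigr],
\]
so after integrating, $\tilde H_\eps(x,|z|)\approx |z|^p\bigl[1+a(x)T_\eps^{\,q-p}\bigr]$ for $|z|\gg T_\eps$, while $\tilde H(x,|z|)\approx |z|^p+a(x)|z|^q$. Wherever $a(x)>0$ the ratio $\tilde H/\tilde H_\eps$ behaves like $(|z|/T_\eps)^{q-p}$, which is unbounded. This kills Step~3 (the bound $\tilde H(x,|Dw|)\le c\,\tilde H_\eps(x,|Dw|)+c$ on the right--hand side) and also your justification that $\tilde H_\eps(\cdot,Dw)\in L^1(B)$ implies $\tilde H(\cdot,Dw)\in L^1(B)$. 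The reference to \eqref{z7} only gives $|z|^p\le c\,\tilde H_\eps(x,|z|)+c$, not the $a(x)|z|^q$ part.

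The paper's proof avoids any global comparison between $\tilde H_\eps$ and $\tilde H$ and instead argues directly via the $p$--phase/$(p,q)$--phase dichotomy from \cite{CoM0,CoM}. In the $p$--phase (small $\inf_B a$), H\"older continuity of $a$ gives $a(x)\lesssim [\rad(B)]^{1-n/d}$ on $B$; one bounds $\tilde H_\eps(x,|w|/\rad(B))$ from above by $\tilde H$ (this direction is correct, cf.\ \eqref{dominaF}), applies the classical Sobolev--Poincar\'e to $|w|^p$ and $|w|^q$, and uses \eqref{lox2bis} to absorb the $q$--term. Crucially, on the right--hand side one only needs the \emph{weak} lower bound $|Dw|^p\le c[\tilde H_\eps(x,|Dw|)+1]$, which \emph{does} hold uniformly. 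In the $(p,q)$--phase, $a(x)\approx a_{\textnormal i}(B)$ on $B$, so $\tilde H_\eps\approx \tilde H^-_{B,\eps}$ pointwise and the latter is an $x$--independent $N$--function with $\Delta_2/\nabla_2$ constants depending only on $n,p,q$ (uniformly in $\eps$, precisely because the truncation preserves the $(p-2)/2$ exponent); Orlicz--Sobolev from \cite{DiE} then applies directly. Your comparison strategy could only work if the truncation left the $(p,q)$--growth intact, which it does not.
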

\begin{proof}
We prove \eqref{sopoeps}, the proof of \eqref{sopoeps.1} being totally similar. We start considering the $p$-phase; this is when \rif{pphase} occurs. In this case, recalling $\eqref{hhdopo}$-$\eqref{hhdopo2}$, we bound
\begin{flalign*}
 &\int_{B}\tilde H_{\varepsilon}\left(x,\frac{|w|}{\rad(B)}\right)  \, dx\\
 & \qquad \le c\varepsilon^{p}[\rad(B)]^{n}+c\int_{B\cap\{\varepsilon\le \snr{w}/\rad(B)<T_{\varepsilon}\}}\left(\int_{\varepsilon}^{\snr{w}/\rad(B)}g_{1}(x,s)s \, ds\right)  \, dx\nonumber \\
&\qquad \qquad +c\int_{B\cap\{\snr{w}/\rad(B)\ge T_{\varepsilon}\}}\left(\int_{\varepsilon}^{\snr{w}/\rad(B)}g_{1,\varepsilon}(x,s)s \, ds\right)  \, dx \\
& \qquad \qquad + c\eps [\rad(B)]^{n}\left(\mint_{B}\snr{Dw}^{p_{*}}  \, dx\right)^{p/p_{*}} \  \big(=:\mbox{(I)}+\mbox{(II)}+\mbox{(III)}++\mbox{(IV)}\big)\,,
\end{flalign*}
where $c\equiv c(n,p,[a]_{0,1-n/d;B})$.  
Now, note that the assumed bound in \trif{lox2bis} implies $p_{*}\le q_{*}<p$, where 
$
b_{*}:=\max \, \{1,nb/(n+b)\}$, for $b\in \{p,q\}$. 
Therefore using again Sobolev-Poincar\'e and H\"older's inequalities, we estimate
\begin{flalign*}
& \mbox{(II)} +\mbox{(III)}\le  c\int_{B}[(|w|/\rad(B))^{p}+a(x)(|w|/\rad(B))^{q} ] \, dx\nonumber \\
&\qquad \qquad  \quad +c\int_{B\cap\{\snr{w}/\rad(B)\ge T_{\varepsilon}\}}[ T_{\varepsilon}^{p} + a(x)T_{\varepsilon}^{q}+ a(x)T_{\varepsilon}^{q-p}(|w|/\rad(B))^{p}]  \, dx\nonumber \\
&\stackleq{pphase}  c[\rad(B)]^{n}\mint_{B}[ (|w|/\rad(B))^{p}+[\rad(B)]^{1-n/d}(|w|/\rad(B))^{q}]  \, dx\\
& \  \leq  c[\rad(B)]^{n}\left(\mint_{B}\snr{Dw}^{p_{*}}  \, dx\right)^{p/p_{*}}\nonumber \\
& \qquad \qquad   + c[\rad(B)]^{n+1-n/d}\left(\mint_{B}\snr{Dw}^{p}  \, dx\right)^{q/p-1}\left(\mint_{B}\snr{Dw}^{p(q_{*}/p)}  \, dx\right)^{p/q_{*}}\nonumber \\
 &\ \stackleq{lox2bis}  c[\rad(B)]^{n}\left(\nr{Dw}_{L^{p}(B)}^{q-p}+1\right)\left(\mint_{B}\snr{Dw}^{p\tilde \tau}  \, dx\right)^{1/\tilde \tau},
\end{flalign*}
with $c\equiv c(n,N,p,q,d,[a]_{0,1-n/d;B})$ and $\tilde \tau:=q_{*}/p<1$. Merging the content of the last two displays  above easily yields
\begin{flalign}\label{degsopo}
\mint_{B}\tilde H_{\varepsilon}\left(x,\frac{|w|}{\rad(B)}\right)  \, dx\le c\left(\mint_{B}[\tilde H_{\varepsilon}(x,|Dw|)+1]^{\tilde \tau}  \, dx\right)^{1/\tilde \tau}
\end{flalign}
for $c\equiv c(n,N,p,q,d,[a]_{0,1-n/d;B}\nr{Dw}_{L^{p}(B)}^{q-p})$. We then pass to the $(p,q)$-phase, that is, when the complementary condition to \rif{pphase} holds; here we follow the arguments from \cite[Section 4]{CoM}. It is then easy to see that $
\tilde H_{\varepsilon}(x,z)\sim \tilde H^{-}_{B,\varepsilon}(z)$ for $(x,z)\in B\times \mathbb{R}^{N\times n}$. Moreover, recalling that $H^{-}_{B,\varepsilon}$ can be written as in \rif{hhdopo}-\rif{hhdopo2} with $a(\cdot)$ replaced by $a_{\textnormal{i}}(B)$, it follows that $\tilde H^{-}_{B,\varepsilon}$ satisfies both $\Delta_2$ and $\nabla_2$ conditions (with constants depending on $n,p,q$, but independent of $\eps$). We are therefore in position to argue as in the proof of \cite[Theorem 7]{DiE}, thereby again arriving at \rif{degsopo}, for a different exponent $\tilde \tau \equiv \tau(n,p,q) \in (0,1)$. At this stage the proof of \rif{sopoeps} follows merging the inequalities found in the two cases, and taking the largest of the two exponents $\tilde \tau$ found for the two phases. 
\end{proof}
Given the inequalities in \rif{sopoeps}-\rif{sopoeps.1}, and the representation in \rif{hhdopo}-\rif{hhdopo2}, we can proceed for instance as in \cite[Lemma 5]{demicz} to get a global gradient integrability result; this also involves estimates as in \rif{involves} to treat the additional $f$-terms appearing here with respect to the case considered in \cite{demicz}. This involves a matching of local and up-to-the-boundary versions of Gehring's lemma (see \cite{KSt}). 
\begin{lemma}\label{bougeh.c}
Let $\B\Subset \Omega$ be a ball with $\rad(\B)\le 1$. Under assumptions \eqref{fff}, and $0 \leq a(\cdot) \in W^{1,d}(\Omega)$ with \trif{lox2bis}, let $u_\eps \in W^{1,p}(\B;\mathbb{R}^{N})$ solve
\begin{flalign}\label{avpeeee}
u_\eps\mapsto \min_{w}  \int_{\B}[\tilde{H}_\eps(x,\snr{Dw})-f\cdot w] \dx \qquad w \in u+W^{1,p}_0(\B;\er^N)\,,
\end{flalign}
where $u\in W^{1,1}(\Omega;\mathbb{R}^{N})$ is a minimizer of the functional in \eqref{pqfunctional}.
Then there exists $s_* \in (1, s_{**})$, depending only on $\texttt{data}_{\textnormal{h}}$, but not on $\eps$, such that 
\begin{flalign}
\notag \|\tilde H_{\varepsilon}(\cdot,|Du_{\varepsilon}|)+1\|_{L^{s_*}(\B)} & \le c\|\tilde H_{\varepsilon}(\cdot,|Du|)+1\|_{L^{s_*}(\B)}\\
&  \leq c\|\tilde H(\cdot,|Du|)+1\|_{L^{s_*}(\B)}\label{s2.5}
\end{flalign}
holds for a constant $c\equiv c ( \texttt{data}_{\textnormal{h}})$, where $s_{**}$ is the exponent coming from Lemma \ref {ilprimoh}. 
\end{lemma}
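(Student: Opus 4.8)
The plan is to combine the uniform higher integrability of Lemma \ref{ilprimoh} with a Gehring-type self-improving argument applied to the approximating minimizers $u_\eps$ of \rif{avpeeee}, taking care that all constants and exponents are independent of $\eps$. First I would record the reverse H\"older inequality for $u_\eps$ on arbitrary balls $B_\rr \Subset B_{2\rr}\Subset \B$ contained in the interior, and its up-to-the-boundary analogue on balls centered on $\partial \B$. For the interior estimate, I would test minimality of $u_\eps$ in \rif{avpeeee} against $w := u_\eps - \eta(u_\eps - (u_\eps)_{B_{2\rr}})$ with a standard cut-off $\eta$, exactly as in the proof of Lemma \ref{ilprimoh}; the only new feature with respect to the autonomous double phase situation of \cite{CoM0, CoM, demicz} is the linear term $f\cdot w$, which is absorbed via the computation in \rif{involves}, namely $\|\eta f\cdot(u_\eps-(u_\eps)_{B_{2\rr}})\|_{L^1}\le \tfrac14\|\tilde H_\eps(\cdot,|Du_\eps|)\|_{L^1(B_{2\rr})} + c\int \tilde H_\eps(x, |u_\eps-(u_\eps)_{B_{2\rr}}|/\rr)\,dx + c\rr^n$, and then the nonhomogeneous Sobolev--Poincar\'e inequality \rif{sopoeps.1} of Lemma \ref{sopolem} converts the middle term back into an $L^\tau$-average of $\tilde H_\eps(\cdot,|Du_\eps|)+1$ with $\tau\equiv\tau(n,p,q)<1$. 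This produces
\begin{flalign*}
\mint_{B_\rr}[\tilde H_\eps(\cdot,|Du_\eps|)+1]\,dx \le c\left(\mint_{B_{2\rr}}[\tilde H_\eps(\cdot,|Du_\eps|)+1]^{\tau}\,dx\right)^{1/\tau} + c\mint_{B_{2\rr}}[\tilde H_\eps(\cdot,|Du|)+1]\,dx\,,
\end{flalign*}
a reverse H\"older inequality with increasing supports, where $c$ depends only on $\texttt{data}_{\textnormal{h}}$ because $\|Du_\eps\|_{L^p(\B)}$ is controlled uniformly in $\eps$ by comparing energies in \rif{avpeeee} with the competitor $u$ (using $\tilde H_\eps(x,t)\ge t^p/p$ on $\{t\ge 1\}$ and the bound $\tilde H_\eps \le c\tilde H + c$ analogous to \rif{dominaF}). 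I would treat the $p$-phase and the $(p,q)$-phase separately inside Lemma \ref{sopolem} as the paper already does, so that this step is genuinely uniform in $\eps$.

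\textbf{Self-improvement and matching.} The second step is to feed the reverse H\"older inequality, together with its boundary version obtained by the same testing with balls centered on $\partial\B$ and using the zero boundary values $u_\eps - u \in W^{1,p}_0$, into Gehring's lemma in its local-plus-global form (see \cite{KSt}, and \cite[Lemma 5]{demicz} for the double phase implementation). This yields an exponent $s_* \in (1, s_{**})$, depending only on $\texttt{data}_{\textnormal{h}}$ and in particular not on $\eps$, such that
\begin{flalign*}
\|\tilde H_\eps(\cdot,|Du_\eps|)+1\|_{L^{s_*}(\B)} \le c\,\|\tilde H_\eps(\cdot,|Du|)+1\|_{L^{s_*}(\B)}\,,
\end{flalign*}
with $c\equiv c(\texttt{data}_{\textnormal{h}})$; here one must choose $s_*$ below the self-improvement threshold of the right-hand datum, which is possible because $\tilde H(\cdot,|Du|)\in L^{s_{**}}_{\loc}$ by Lemma \ref{ilprimoh} and $s_{**}>1$. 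The final inequality in \rif{s2.5}, namely $\|\tilde H_\eps(\cdot,|Du|)+1\|_{L^{s_*}(\B)} \le c\|\tilde H(\cdot,|Du|)+1\|_{L^{s_*}(\B)}$, is just the pointwise bound $\tilde H_\eps(x,t)\le c\tilde H(x,t)+c$, which follows from the construction \rif{hhdopo}--\rif{hhdopo2} exactly as \rif{dominaF} was derived in Lemma \ref{gle}: on $[0,\eps)$ and on $[T_\eps,\infty)$ one replaces the coefficients $t^{p-2}, t^{q-2}$ by their frozen values at $\eps$ and $T_\eps$ and uses \rif{repeat} with $\mu=0$ replaced by $\eps$, while on $[\eps, T_\eps)$ the two integrands essentially coincide; the extra $\eps(t^2+\eps^2)^{(p-2)/2}$ term is controlled by $c t^p + c$.

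\textbf{Main obstacle.} The genuinely delicate point is keeping \emph{all} constants in the reverse H\"older inequality independent of $\eps$ simultaneously in both phases. In the $(p,q)$-phase this requires that $\tilde H^{-}_{B,\eps}(\cdot)$ satisfy $\Delta_2$ and $\nabla_2$ with constants depending only on $n,p,q$; this is visible from \rif{hhdopo2} with $a(\cdot)$ replaced by the constant $a_{\textnormal{i}}(B)$, since each branch is a fixed positive combination of $(t^2+\eps^2)^{(p-2)/2}t$ and $(t^2+\eps^2)^{(q-2)/2}t$, but one has to verify the equivalence $\tilde H_\eps(x,z)\sim \tilde H^{-}_{B,\eps}(z)$ on $B$ \emph{uniformly in $\eps$}, which uses only the H\"older oscillation bound $a(x)-a_{\textnormal{i}}(B)\le [a]_{0,1-n/d;B}\rr^{1-n/d}$ and the complementary condition to \rif{pphase}, neither of which sees $\eps$. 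The remaining care is bookkeeping: the exponent $\tilde\tau$ coming out of the $p$-phase bound \rif{degsopo} and the one from the $(p,q)$-phase differ, so one takes the larger; the constant in \rif{degsopo} depends on $\|Dw\|_{L^p}$, so one must have already fixed the uniform bound on $\|Du_\eps\|_{L^p(\B)}$ before invoking Lemma \ref{sopolem}; and the Gehring exponent must be chosen once and for all below $s_{**}$. With these provisos in place the statement of Lemma \ref{bougeh.c} follows.
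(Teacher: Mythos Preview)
Your proposal is correct and matches the paper's approach essentially line for line: the paper itself does not give a detailed proof but sketches exactly the route you describe---reverse H\"older via minimality testing plus the $\eps$-uniform Sobolev--Poincar\'e of Lemma \ref{sopolem} (handling the $f$-term as in \rif{involves}), then the local/up-to-the-boundary Gehring lemma of \cite{KSt} and \cite[Lemma 5]{demicz}, with the $\eps$-independence of constants secured by the uniform bound $\|Du_\eps\|_{L^p(\B)}^p \lesssim \|H(\cdot,Du)+1\|_{L^1(\B)} + \|f\|_{L^n(\B)}^{p/(p-1)}$, and the last inequality in \rif{s2.5} read off from \rif{dominaF}. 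One cosmetic remark: in your displayed interior reverse H\"older the term $\mint_{B_{2\rr}}[\tilde H_\eps(\cdot,|Du|)+1]\,dx$ does not arise from the interior test function you describe (it enters only through the boundary version, where $u_\eps - u \in W^{1,p}_0$ is used), though including it there is harmless for the Gehring step.
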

We finally remark that the constants appearing in \rif{s2.5} should a priori depend also on $\nr{Du_{\varepsilon}}_{L^{p}(B)}$, as Lemma \ref{sopolem} is involved in the derivation of Lemma \ref{bougeh.c} (see again \cite{demicz}), and it is used with the choices $w\equiv u_\eps$ and  $v\equiv u_\eps$. Such a dependence on $\eps$ does not actually occur; indeed, adapting estimate \rif{e2oo} to the present case leads to bounds as $\nr{Du_{\varepsilon}}_{L^{p}(\B)}^{p}\lesssim \|H(\cdot, Du)+1\|_{L^1(\B)}+ [\rad(\B)]^{n}\nr{f}_{L^{n}(\B)}^{p/(p-1)}$, thereby reducing the dependence of the constants on $\texttt{data}_{\textnormal{h}}$ in \rif{datahh}. This fact is indeed used to prove the intermediate last inequality in \rif{s2.5} with an $\eps$-independent constant $c$. The last inequality in \rif{s2.5} is instead a direct consequence of \rif{dominaF}. In turn, the last quantity in \rif{s2.5} is finite by Lemma \ref{ilprimoh}. 

\subsubsection*{Step 3: Completion of the proof of Theorem \ref{sample3}} We proceed almost verbatim as in the proof of Theorem \ref{t4}. We start with the case $n>2$; the only difference here is that we apply Proposition \ref{lp} with $s_*>1$ being equal to the higher integrability exponent found in Lemma \ref{bougeh.c}. With no loss of generality we may assume that $s_*< \min\{2m(1+\hat{\sigma}), 2^*\} =\min\{2mq/p, 2^*\}$, as required in \rif{sigmasss} (applied with $\sigma =0$). Note that this is possible when $  \hat \sigma < s_*/n-s_*/d$ and therefore, in particular, when 
$ \hat \sigma \leq  1/n-1/d$, which is the case considered here. Indeed, with the choice made in Step 1, this last condition translates in $q/p \leq 1 +1/n-1/d$, which is \rif{lox22} for $n>2$. This is the essential point where the higher integrability estimates of Step 2 come into the play, allowing for equality in \rif{lox22}. We then proceed exactly as for Theorem \ref{t4}, as its assumptions are verified but for the equality case in \rif{lox22}, as noticed in Step 1. Proceeding as in the proof of Theorem \ref{t4} we arrive at \rif{a46-int}, and all the foregoing considerations remain the same, but, in order to get a suitable a priori estimate, the term involving $\mathbf{P}_{1}^{\mathfrak{h}}$ must be estimated slightly differently from Proposition \ref{linf1}. More precisely, using \rif{a44} with the current choice of $s_*$ in \rif{servepure}, we finally come to the new uniform bound 
\begin{flalign}
&
\notag \nr{G_{\varepsilon}(\cdot,\snr{Du_\eps})}_{L^{\infty}(B_{\varsigma})}\\
&\qquad \le \frac{c}{(\rr-\varsigma)^{ \beta }}
\left[\|\tilde H_{\varepsilon}(\cdot,|Du_\eps|)\|_{L^{s_*}(B_{\rr})}^{\theta}+\nr{f}_{L(n,1)(B_{\rr})}^{\theta}+1\right]\,, \label{a52-doppia}
\end{flalign}
which replaces \rif{a52} and holds whenever $B_{\varsigma} \Subset  B_{\rr}$ are concentric balls contained in $\B$; the constant $c$ depends on $\texttt{data}_{\textnormal{h}}$. Note that we have applied the argument of Proposition \ref{linf1} directly to $u_\eps$ defined in \rif{avpeeee}. Using \rif{s2.5} in \rif{a52-doppia} yields
\begin{flalign*}
&\|G_{\eps}(\cdot,\snr{Du_\eps})\|_{L^{\infty}(s\B)}\\
& \qquad  \le
\frac{c}{(1-s)^{\beta}[\rad(\B)]^{\beta}}
\left[\|H(\cdot,Du)\|_{L^{s_*}(\B)}^{ \theta}+\nr{f}_{L(n,1)(\B)}^{ \theta}+1\right]
\end{flalign*}
for every $s\in (0,1)$. Again, the right-hand side stays finite by Lemma \ref{ilprimoh}. This last estimate can be used as a replacement of \rif{a52bis} in an approximation argument which is at this stage completely similar to the one used for the proof of Theorem \ref{t1} and this completes the proof in the case $n>2$. It remains to treat the case $n=2$. For this we again turn to the arguments of Section \ref{casetwo}, where we apply Lemma \ref{lemmadieci} with the choice of the number $s_*>1$ again determined in Lemma \ref{bougeh.c}. Such an application is legal as we are assuming that $\hat \sigma = q/p -1 \leq 1/2-1/d$, while again we may assume that $s_* < 2m(1+\hat{\sigma})$. Then we proceed exactly as in the proof of Theorem \ref{t4} case $n=2$, again noting that only the second inequality in \rif{sceglidelta} is needed (formally with $\sigma=0$), and this leads to require that $q/p<p$, which is the second condition in \rif{lox22} for the case $n=2$.

\subsection{Proof of Theorems \ref{sample4}-\ref{sample6} and Theorem \ref{sample8}}

We deduce Theorem \ref{sample6}, from Theorem \ref{t1}; Theorem \ref{sample4} then follows similarly. In exactly the same way, we can then deduce Theorem \ref{sample8} from Theorem \ref{t3}. As usual, we do it by making a suitable choice of the growth functions $g_{1}, g_2, g_3$ and of the parameters $\sigma,\hat{\sigma},\ca,\cb,\gamma, T, \mu$. This requires some preparations; we split the proof in two steps. In the following we denote $\data_k\equiv (n,N,k,\nu_{\mathfrak{m}},L,p_{\mathfrak{m}},p_{m},p_{M})$, for every integer $k \geq 0$. In the following, with abuse of notation, we shall indicate by $\partial_{zz}{\bf e}_{k}(\cdot)$ the Hessian matrix of $z \to {\bf e}_{k}(\cdot, |z|)$, while ${\bf e}_{k}'(x,t)$ keeps on denoting the (partial) derivative of the function $t \to {\bf e}_{k}(\cdot, t)$. By Sobolev-Morrey embedding theorem we can assume that $\{\ccc_k, p_k\}$ are H\"older continuous functions with exponent $\alpha = 1-n/d$. 
\subsubsection*{Step 1: Computations} By induction we have
\begin{flalign}\label{derie.x}
{\bf e}_{k}'(x,t)=\ccc_{k}(x)p_{k}(x)t^{p_{0}(x)-1}\Pi_{k}(x,t){\bf e}_{k}(x,t) \ \ \mbox{for} \ \ k\ge 0\,,
\end{flalign}
for every $x \in \Omega$ and $t>0$, where
$$
\begin{cases}
\Pi_{k}(x,t):=\ccc_{k-1}(x)p_{k-1}(x)[{\bf e}_{k-1}(x,t)]^{p_{k}(x)} \Pi_{k-1}(x,t)\ \ \mbox{for $k\geq 1$}\\
 \Pi_{0}(x,t):=1\,.
 \end{cases}
$$
Then, observing that 
$$
\Pi_{k}'(x,t)= \Pi_{k}(x,t)t^{p_0(x)-1} \sum_{j=0}^{k-1} p_{j+1}(x)\ccc_{j}(x)p_{j}(x)  \Pi_{j}(x,t) \ \ \mbox{for} \ \ k\ge 1\,,
$$
 identity \rif{derie.x} gives
 $$
 {\bf e}_{0}''(x,t)=t^{p_{0}(x)-1}{\bf e}_{0}'(x,t)\left[\ccc_{0}(x)p_{0}(x)+\frac{p_{0}(x)-1}{t^{p_{0}(x)}}\right]
 $$
 and
\begin{flalign}\label{deri2k}
\notag {\bf e}_{k}''(x,t)&\, =t^{p_{0}(x)-1}{\bf e}_{k}'(x,t)\ccc_{k}(x)p_{k}(x)\Pi_{k}(x,t)\\
& \qquad + t^{p_{0}(x)-1}{\bf e}_{k}'(x,t)\sum_{j=0}^{k-1} p_{j+1}(x)\ccc_{j}(x)p_{j}(x)  \Pi_{j}(x,t)\nonumber \\
&\qquad +t^{p_{0}(x)-1}{\bf e}_{k}'(x,t)\frac{p_{0}(x)-1}{t^{p_{0}(x)}}\qquad \mbox{for} \ \ k\ge 1\,.
\end{flalign}
Finally, it is 
$$
[{[\bf e}_{k}(x,t)]^{p_{k+1}(x)}]' = [{\bf e}_{k}(x,t)]^{p_{k+1}(x)}p_{k+1}(x)\ccc_{k}(x)p_{k}(x)t^{p_0(x)-1}
\Pi_{k}(x,t)\,.
$$
We now come to the $x$-derivatives; note that the following computations are justified as in \rif{esponenciali} we are considering a composition of bounded Sobolev functions $\ccc_{k}, p_{k}$ with smooth functions, and therefore the standard (vectorial) chain rule applies as in the traditional case, i.e., if $\ccc_{k}, p_{k}$ were $C^1$-regular. We use properly defined, auxiliary vector fields $\mathcal D_k, \mathcal L_k \colon \Omega \times (0, \infty) \to \er^n$ for $k\geq 0$, and the notation ${\bf e}_{-1}(x,t)\equiv t$. Then, we have, for a.e.\,$x\in \Omega$ and $t>0$
\begin{flalign}\label{ilDkk}
\begin{cases}
\partial_x {\bf e}_{k}(x,t)={\bf e}_{k}(x,t)[{\bf e}_{k-1}(x,t)]^{p_{k}(x)}\mathcal D_{k}(x,t) \,,\quad  \mbox{for} \ k\geq 0\\ 
\partial_x [{\bf e}_{k-1}(x,t)]^{p_k(x,t)}:=[{\bf e}_{k-1}(x,t)]^{p_k(x,t)} [{\bf e}_{k-2}(x,t)]^{p_{k-1}(x,t)}  
\\ 
\hspace{36mm}  \times \left[ \ccc_{k-1}(x)D p_k(x) 
 + p_k(x) \mathcal D_{k-1}(x,t)\right], \ \mbox{for} \ k \geq 1\,,
\end{cases}
\end{flalign}
where, by induction, we have defined, for $k \geq 0$
\begin{flalign}\label{ilDk}
\begin{cases}
\mathcal D_0(x,t):=D \ccc_0(x)+\ccc_0(x)\log  t\,  D p_0(x)\\
\mathcal D_{k+1}(x,t):= D \ccc_{k+1}(x)+\ccc_{k+1}(x)\ccc_{k}(x)[{\bf e}_{k-1}(x,t)]^{p_{k}(x)}D p_{k+1}(x)\\ 
 \qquad  \qquad  \qquad +\ccc_{k+1}(x)p_{k+1}(x)[{\bf e}_{k-1}(x,t)]^{p_{k}(x)}\mathcal D_{k}(x,t)\,.
\end{cases}
\end{flalign}
Using \rif{ilDkk}-\rif{ilDk}, for $k\ge 1$ we compute, again by induction
\begin{flalign}\label{ilPk}
\begin{cases}
\ \partial_{x}\Pi_{0}(x,t)=0\,, \quad \mathcal{L}_{0}(x,t)=0_{\er^n}\\
\ \partial_{x}\Pi_{k}(x,t)=\Pi_{k}(x,t)\mathcal{L}_{k}(x,t)\\
\ \mathcal{L}_{k}(x,t):=D\log\left(\ccc_{k-1}(x)p_{k-1}(x)\right)\\
\ \quad +[{\bf e}_{k-2}(x,t)]^{p_{k-1}(x)}\left[\ccc_{k-1}(x)D p_{k}(x)+p_{k}(x)\mathcal{D}_{k-1}(x,t)\right]+\mathcal{L}_{k-1}(x,t)\,.
\end{cases}
\end{flalign}
Finally, using \eqref{ilDkk}-\eqref{ilPk} in \eqref{derie.x}, for every $k \geq 0$ we conclude with
\begin{flalign}\label{derie.xx}
&\partial_{x}{\bf e}'_{k}(x,t)={\bf e}'_{k}(x,t)\nonumber \left[D\left(\log(\ccc_{k}(x)p_{k}(x))\right)\right.\\
& \left. \hspace{17mm} + \log t D p_{0}(x)+\mathcal{L}_{k}(x,t)+[{\bf e}_{k-1}(x,t)]^{p_{k}(x)}\mathcal{D}_{k}(x,t)\right].
\end{flalign}
As for the tensor $\partial_{zz}{\bf e}_{k}(x,|z|)$, by a direct computation we have that
\begin{flalign}\label{leduematrici}
\begin{cases}
\partial_{zz} {\bf e}_{k}(x,|z|) \xi \cdot \xi \geq \min \left\{ {\bf e}_{k}''(x,|z|),\frac{ {\bf e}_{k}'(x,|z|)}{ |z|} \right\} |\xi|^2\\
|  \partial_{zz}  {\bf e}_{k}(x,|z|)|^2=[{\bf e}_{k}''(x,|z|)]^2 + \left[\frac{{\bf e}_{k}'(x,|z|) }{|z|}\right]^2(Nn-1)
\end{cases}
\end{flalign}
hold for every choice of $z, \xi \in \er^{N\times n}$ with $z\not=0$ and $x \in \Omega$. 
\subsubsection*{Step 2: Determining $g_{1}$, $g_{2}$ and $g_{3}$.}  For every fixed $k \geq 0$, the constants implied in the symbols $\lesssim$ and $\approx$, will depend on $\data_k$ and we shall use the auxiliary functions
 $$\mathfrak{h}_k(x) := \sum_{i=0}^k \, [|D \ccc_i(x)| + |D p_i(x)|] +1\,, \qquad k\geq 0\,.$$ It follows that $\mathfrak{h}_k \in L^d(\Omega)$ for $d>n$, by assumptions. By \rif{ilDk} it follows $|\mathcal D_0(x,t)|   \lesssim \mathfrak{h}_{0}(x)[|\log t|+1] $, so that induction gives that 
$$|\mathcal{D}_{k}(x,t)|\lesssim \mathfrak{h}_{k}(x)\left[(t^{p_{0}(x)}|\log t|+1)\Pi_{k-1}(x,t)+1\right] \quad \mbox{holds for all $k\geq 1$}\,.$$ In turn, this and \rif{ilPk} imply that 
$$|\mathcal{L}_{k}(x,t)|\lesssim \mathfrak{h}_{k}(x)\left[(t^{p_{0}(x)}|\log t|+1)\Pi_{k-1}(x,t)+1\right] \quad \mbox{holds for all $k\geq 1$}\,.$$ 
By \rif{derie.x} it is also ${\bf e}_{k}'(x,t) \lesssim t^{p_0(x)-1} \leq t^{p_{\mathfrak{m}}-1}$ for $t\approx 0$. From this and \rif{derie.xx} it follows there exit constants $m_k\equiv m_k(\data_k)\ge 1$ such that, for a.e.\,$x\in \Omega$
\eqn{<e}
$$
\begin{cases}
\snr{\partial_{x}{\bf e}'_{k}(x,t)}\lesssim m_k\mathfrak{h}_{k}(x) &\  \mbox{for all} \ \ t\in [0,{\rm e}]\\
\snr{\partial_{x}{\bf e}'_{k}(x,t)}\lesssim \mathfrak{h}_{k}(x){\bf e}_{k}'(x,t)t^{p_{0}(x)}\log t\Pi_{k}(x,t) &\ \mbox{for all} \ \ t\in [{\rm e},\infty)\,.
\end{cases}
$$
Now, let $\{c_{k}, d_{k}\}$ be constants larger than $1$, to be eventually chosen large enough, again in dependence on $\data_k$, and $\phi\in C([0,\infty),[0,1])$ be a non-decreasing function such that $\phi(t)=0$ for $t\in [0,{\rm e}/2]$ and $\phi(t)=1$ for $t\in [{\rm e},\infty)$. With $p_{\mathfrak m}$ as in \eqref{assexp}, and $x \in  \Omega$ and $t>0$, we define
\begin{flalign*}
\begin{cases}
\ g_{1}(x,t)\equiv g_{1,k}(x,t):=\phi(t)\min\{p_{\mathfrak m}-1,1\}{\bf e}_{k}'(x,t)/t\\
\ g_{2}(x,t)\equiv g_{2,k}(x,t):=\phi(t)c_{k}t^{p_{0}(x)}\Pi_{k}(x,t){\bf e}_{k}'(x,t)/t\\
\ g_{3}(x,t)\equiv g_{3,k}(x,t):=(1-\phi(t))c_{k}m_k\\
\hspace{38mm}+\phi(t)c_{k}m_k{\bf e}_{k}'(x,t)t^{p_{0}(x)}\log t\Pi_{k}(x,t)\,.
\end{cases}
\end{flalign*}
We are ready to check that $g_{1}, g_{2}, g_{3}$ satisfy the relations prescribed in Section \ref{asssec1} and required to meet the assumptions of Theorem \ref{t1}. We take $\cb\equiv \cb(\data_k)\geq1 $ to be determined in due course of the proof, $T={\rm e}, \nu\approx \min\{p_{\mathfrak m}-1,1\},  \gamma=p_{\mathfrak{m}},  \mu=0$; moreover, any choice of small numbers $\sigma,\hat{\sigma}>0$ will work. In this way it is $\bar G(x,t)=\min\{p_{\mathfrak m}-1,1\}[{\bf e}_{k}(x,t)- {\bf e}_{k}(x,{\rm e})  + ({\rm e}^2+1)^{p_{\mathfrak m}/2}]$ for $t \geq {\rm e}$, so that $\bar G(x,t)\approx {\bf e}_{k}(x,t)$  for $t$ large. By \rif{derie.x}-\rif{deri2k} and \rif{leduematrici}, it follows that \rif{growth}$_{2,3}$ are satisfied provided we take $c_{k}\equiv c_{k}(\data_k)$ large enough. Similarly, \rif{growth}$_{4}$ follows using \rif{<e} and eventually increasing $c_k$. In the same way \eqref{00} holds with the above choice of the parameters by \rif{derie.x}. As for \eqref{7}-\eqref{9}, note that, given $k\in \N$ and $\eps \in (0,1)$, there exists $c \equiv c (\data_k, \eps)$, such that $\Pi_{k}(x,t)t^{p_{0}(x)+1}\log t\le c[{\bf e}_{k}(x,t)]^{\eps}$ for $t \geq {\rm e}$. Using this, \eqref{7} follows, for all $\hat \sigma>0$, from \rif{<e} by taking $h\equiv d_{k}\mathfrak{h}_{k} \in L^d(\Omega)$, provided we take $ d_{k}\equiv  d_{k}(\data_k,\hat \sigma)$ large enough. As for \rif{8}, note that $(g_3^2/g_1)(x,t)\lesssim [{\bf e}_{k}'(x,t)]^{1+\eps}\lesssim [{\bf e}_{k}(x,t)]^{1+2\eps}$ for every $\eps >0$ and $t \geq {\rm e}$; therefore \eqref{8} follows for every $\sigma \in (0,1)$, again taking $\cb \equiv \cb (\data_k, \sigma)$ large enough. In the same way \rif{9} follows, by eventually enlarging $\cb$. This means that the assumptions of Theorem \ref{t1} are satisfied and the proof of Theorem \ref{sample6} is complete.

\subsection{Proof of Theorem \ref{sample7}}
Conditions in \eqref{0ass} are verified by \eqref{uni-ell}; indeed, an easy consequence of \eqref{uni-ell} is that $t \mapsto \tilde{a}(t)/t^{\ia}$ is non-decreasing. It follows that $\tilde a(t)t\leq \tilde a(1)t^{\ia+1}$ for $t\in (0,1]$, and therefore $t \mapsto A(x,t) \in C^1_{\loc}[0,\infty)\cap C^2_{\loc}(0,\infty)$ for every $x \in \Omega$ as it is $\ia >-1$. Moreover 
\begin{flalign*}
\begin{cases}
\ \snr{\partial_{zz}A(x,\snr{z})}\le L\sqrt{Nn}\max\{1,\sa+1\}\tilde{a}(\snr{z})\\
\ \nu\min\{1,\ia+1\}\tilde{a}(\snr{z})\snr{\xi}^{2}\leq \partial_{zz}A(x,\snr{z})\xi\cdot \xi \\
\ \snr{\partial_{xz}A(x,\snr{z})}\le \snr{D\ccc(x)}\tilde{a}(\snr{z})\snr{z}\,,
\end{cases}
\end{flalign*}
hold for every choice of $x\in \Omega$ and $z, \xi\in \er^{N\times n}$, $|z|\not=0$. Notice that here we again indicate
by $\partial_{zz}A(\cdot)$ the Hessian of $z \to A(\cdot, |z|)$. Having in mind to apply Theorem \ref{t2}, this leads to define $g_{1}(t):=\nu\min\{1,\ia+1\}\tilde{a}(t), g_{2}(t):=L Nn\max\{1,\sa+1\}\tilde{a}(t),  g_{3}(t):=\tilde{a}(t)t$ and $h:=\snr{D\ccc}\in \mathfrak{X}(\Omega)$. As $t \mapsto \tilde{a}(t)/t^{\ia}$ is non-decreasing, \eqref{00} is also verified with $\gamma=\ia+2$, $\mu=0$. The definitions given above make sure that conditions $\eqref{uni0}$ are satisfied with 
$\cu\approx \max\{1,\sa+1\}/\min\{1,\ia+1\}$.  
This means that Theorem \ref{t2} applies and \eqref{stimaAA} follows from \rif{finafinapre}. 

\section{Obstacle problems and Theorem \ref{sample9}}\label{obsec}
\subsection{A general result}
We start with the following constrained analog of Theorem \ref{t1}, which will be used to ge the proof of Theorem \ref{sample9}:
\begin{theorem}\label{t1ob}
Let $u\in W^{1,1}_{\loc}(\Omega)$ be a constrained minimizer of the functional $\mathcal F_0$ in \eqref{genF0}, under assumptions $\setm$ in \trif{ipotesi} with $g_3, \partial_{zz}F$ being locally bounded on $ \Omega \times [0, \infty)$ and $\Omega \times \er^{N\times n}$, respectively, and $\gamma\geq 2$. If $\psi\in W^{2,1}_{\loc}(\Omega)$ with $\snr{D^{2}\psi}\in \mathfrak{X}(\Omega)$, then $Du\in L^{\infty}_{\textnormal{loc}}(\Omega;\R^{N\times n})$. 
\end{theorem}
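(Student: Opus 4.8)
\textbf{Proof strategy for Theorem \ref{t1ob}.} The plan is to reduce the constrained problem to an unconstrained one of the type already treated in Theorem \ref{t1}, by rewriting the variational inequality associated to the obstacle problem as a PDE with a controlled right-hand side, and then running the approximation-and-a-priori-estimates machinery of Sections \ref{apro}--\ref{apri} in this modified setting. First I would record the Euler--Lagrange variational inequality: a constrained minimizer $u \in \mathcal{K}_\psi$ satisfies
\[
\int_{\tilde\Omega} \partial_z F(x,Du)\cdot D(\varphi - u)\, dx \ge 0 \qquad \text{for all } \varphi \in \mathcal{K}_\psi(\tilde\Omega),\ \varphi - u \in W^{1,1}_0(\tilde\Omega),
\]
after checking this makes sense using $\gamma \geq 2$ and the local boundedness of $\partial_{zz}F$ (which give that $\partial_z F(\cdot, Du)$ is locally integrable once $F(\cdot,Du)\in L^1_{\loc}$; recall the latter holds by minimality as in the Remark after Theorem \ref{t4}). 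The standard penalization/obstacle theory then yields that $u$ solves, in the distributional sense, an equation $-\diver\, \partial_z F(x,Du) = \lambda$ where the (scalar, nonnegative) measure $\lambda$ is supported on the contact set $\{u = \psi\}$ and, crucially, is there dominated by the quantity obtained by plugging the obstacle into the operator: $0 \le \lambda \le [-\diver\, \partial_z F(x, D\psi)]^- $ in a suitable sense on $\{u=\psi\}$. Since $\psi \in W^{2,1}_{\loc}$, this last term is an $L^1_{\loc}$ function controlled by $|\partial_{zz}F(x,D\psi)||D^2\psi| + |\partial_{xz}F(x,D\psi)|$, which by the growth bounds \rif{growthd}-type estimates and $g_3, \partial_{zz}F$ locally bounded is in turn controlled by a function $\tilde f$ with $|\tilde f| \in \mathfrak{X}(\Omega)$, using the hypothesis $|D^2\psi|\in \mathfrak{X}(\Omega)$ and $h \in L^d \subset \mathfrak{X}$ (note $L^d \subset L(n,1)$ for $d>n$, and $L^d \subset L^2(\log L)^{\mathfrak a}$ for $n=2$).

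Next I would transplant the approximation scheme of Section \ref{apro}: replace $F$ by the regularized integrands $F_\eps$ of \rif{z5} and solve the corresponding \emph{penalized} or \emph{constrained} problems
\[
u_\eps \mapsto \min_{w \in \mathcal{K}_\psi(\B),\ w - u \in W^{1,\gamma}_0(\B)} \int_\B \big[ F_\eps(x, Dw) - f_\eps \cdot w\big]\, dx
\]
on small balls $\B \Subset \Omega$, where $f_\eps$ is the truncation of $\tilde f$ as in the proof of Theorem \ref{t3}. Each $u_\eps$ solves a differentiated system as in \rif{a2v} but now only on the \emph{non-contact set} $\{u_\eps > \psi\}$, while on the contact set $Du_\eps = D\psi$ has already a controlled gradient; the key point is that the associated measure $\lambda_\eps$ obeys the same domination $0\le \lambda_\eps \lesssim |\tilde f_\eps| + |\partial_{zz}F_\eps(x,D\psi)||D^2\psi|+\ldots$ with constants \emph{uniform} in $\eps$ (here $\gamma \geq 2$ is used to avoid the degenerate interpolation losses that forced the restriction $p\geq 2$ in \rif{lox2}). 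One then runs verbatim the Caccioppoli inequality on level sets (Lemma \ref{caclem}, Case 2, since $\vartheta=1$) and the nonlinear potential iterations (Lemma \ref{stimapp} and Proposition \ref{linf1} for $n>2$, Proposition \ref{duedi} for $n=2$), treating the extra right-hand side exactly as the $f$-terms there; the level-set structure $\B^\kappa = \{G_\eps(\cdot,|Du_\eps|) > \kappa\}$ automatically localizes away from the set where $|Du_\eps|\le T$, and on $\{u_\eps = \psi\}$ the term $G_\eps(\cdot,|Du_\eps|)$ is a priori bounded by $\|G_\eps(\cdot,|D\psi|)\|_{L^\infty}$, which is finite since $D\psi$ is continuous (indeed $D^2\psi \in \mathfrak X$ implies $D\psi$ continuous, the same phenomenon noted after Theorem \ref{sample9}).

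The main obstacle I anticipate is the rigorous justification of the second-order regularity $u_\eps \in W^{2,2}_{\loc}$ and $a_\eps(\cdot,Du_\eps)\in W^{1,2}_{\loc}$ for the obstacle problem --- i.e., the analog of \rif{a6}-\rif{a6bis} and the chain-rule discussion via \cite{dele} --- because the measure $\lambda_\eps$ is not a priori an $L^2_{\loc}$ function, only $L^1_{\loc}$ (or a measure). The standard fix is to work with a \emph{penalized} functional $\int_\B [F_\eps(x,Dw) - f_\eps\cdot w + \tfrac{1}{\delta}[(\psi - w)_+]^2 ]\, dx$ with an additional parameter $\delta \to 0$, for which the right-hand side $\tfrac{2}{\delta}(\psi-u_{\eps,\delta})_+$ is bounded in $L^2_{\loc}$ uniformly provided one first proves $u_{\eps,\delta}\to u_\eps$ and a uniform bound $\|\tfrac{1}{\delta}(\psi-u_{\eps,\delta})_+\|_{L^\infty_{\loc}} \lesssim \| [-\diver\,\partial_z F_\eps(\cdot,D\psi)]^-\|_{L^\infty_{\loc}}$, which is where $\psi\in W^{2,1}_{\loc}$ with $|D^2\psi|\in\mathfrak X$ enters once more via a comparison/maximum-principle argument. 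Once the double approximation is set up, all the a priori estimates pass to the limit $\delta\to 0$ then $\eps\to 0$ exactly as in Sections \ref{pt1}, because every constant produced in Section \ref{apri} depends only on $\data$ and $\|h\|_{L^d}$ and now additionally on $\||D^2\psi|\|_{\mathfrak X}$, and the limit map is identified with $u$ by the convexity argument of \rif{e10} adapted to the convex set $\mathcal K_\psi$. I would close by remarking that $\mu>0$ can replace $\gamma\geq 2$ in the $(p,q)$- and splitting cases (Theorems \ref{sample1}-\ref{sample2}), as stated in Remark \ref{obre}, because then the lower eigenvalue bound is nondegenerate and the interpolation exponents behave as in the $\gamma\geq 2$ regime.
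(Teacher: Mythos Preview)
Your overall plan---reduce the obstacle problem to an equation with right-hand side controlled by $|D^2\psi|+h \in \mathfrak X$, approximate by the integrands of Section \ref{apro}, and run the a priori estimates of Section \ref{apri}---matches the paper's. The key simplification you are missing is the \emph{linearization procedure} of \cite{fuobs, fumin} (see also Remark \ref{linearire}): once you pass to the approximated constrained problem \rif{o.0}, the variational inequality \rif{o.1} can be rewritten not merely as $-\diver\,\partial_z F_\eps(x,Du_\eps)=\lambda_\eps$ for some nonnegative measure, but as a genuine equation \rif{o.2} with right-hand side $\texttt{f}_\eps(x) = -\theta_\eps(x)\mathds{1}_{\{u_\eps=\psi\}}\diver\,\partial_z F_\eps(x,D\psi)$ for a measurable density $\theta_\eps\colon\B\to[0,1]$. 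The local boundedness of $g_3$ and $\partial_{zz}F$, together with $D\psi\in L^\infty_{\loc}$ (itself a consequence of $|D^2\psi|\in\mathfrak X$), then gives $|\texttt{f}_\eps|\lesssim |D^2\psi|+h$ pointwise with a constant uniform in $\eps$ once $\eps$ is small enough that $T_\eps > \|D\psi\|_{L^\infty(\B)}+T$; hence $\texttt{f}_\eps\in\mathfrak X(\B)$. By strict convexity of $F_\eps$, the map $u_\eps$ is then the \emph{unique} minimizer of the \emph{unconstrained} Dirichlet problem \rif{o.3}, to which Proposition \ref{tapp} applies directly, yielding $u_\eps\in W^{1,\infty}_{\loc}\cap W^{2,2}_{\loc}$ without any further approximation.

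This is precisely the step where you anticipated difficulty and proposed a second penalization layer in $\delta$; the linearization device makes that detour unnecessary and also eliminates the need to split into contact and non-contact sets. Your penalization route could perhaps be completed, but the maximum-principle bound $\|\tfrac{1}{\delta}(\psi-u_{\eps,\delta})_+\|_{L^\infty_{\loc}}\lesssim\|[-\diver\,\partial_z F_\eps(\cdot,D\psi)]^-\|_{L^\infty_{\loc}}$ you invoke is itself nontrivial for nonlinear operators and, more to the point, presupposes $\diver\,\partial_z F_\eps(\cdot,D\psi)\in L^\infty_{\loc}$, which does not follow from $|D^2\psi|\in\mathfrak X$ alone. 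Once \rif{o.3} is in place, the rest of your outline (Propositions \ref{linf1}/\ref{duedi}, the energy bound analogous to \rif{e2oo}, and passage to the limit via the convexity argument \rif{e10} adapted to the convex constraint $\mathcal K_\psi$) agrees with the paper.
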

\begin{proof} In $\setm$ we have initially assumed that $g_3$ was locally bounded in $ \Omega \times (0, \infty)$, while here we are assuming it is locally bounded in $ \Omega \times [0, \infty)$, that means that for every $\Omega_0 \Subset \Omega$ and $b \geq 0$ we have that $\sup_{\Omega_0\times [0, b]}\, g_3$ is finite. This is is not really an additional assumption as it is automatically satisfied in all the cases considered for instance in Theorem \ref{sample9}. 
Notice also that $D^2\psi \in \mathfrak{X}(\Omega) \subset L(n,1)$ implies that $D\psi$ is continuous in $\Omega$ \cite{St} and, in particular,   $D\psi \in L^{\infty}(\B;\er^n)$ for any ball $\B\Subset \Omega$; accordingly, we let $T_{\psi} \equiv T_{\psi} (\B):= \|D\psi\|_{L^\infty(\B)}+T+1$. We now fix an arbitrary ball $\B\Subset \Omega$ with $\rad(\B)\le 1$, we consider the family $\{F_{\varepsilon}\}$ constructed in Section \ref{genset}. This time we take $0 < \eps < \min\{ 1/T_{\psi}, T\}/4$ and define $u_{\eps}$ as the solution to the auxiliary problem
\begin{flalign}\label{o.0}
u_\eps \mapsto \min_{(u+W^{1,\gamma}_{0}(\B))\cap \mathcal{K}_{\psi}(\B)} \int_{\B}F_{\varepsilon}(x,Dw)\, dx\,.
\end{flalign}
The existence of $u_\eps$ follows by standard theory, and the variational inequality
\begin{flalign}\label{o.1}
\int_{\B}\partial_{z}F_{\varepsilon}(x,Du_{\varepsilon})\cdot (Dw-Du_{\varepsilon})\, dx \ge 0
\end{flalign}
holds for all $w\in(u+W^{1,\gamma}_{0}(\B))\cap \mathcal{K}_{\psi}(\B)$. 
Thanks to the $\gamma$-polynomial growth conditions in \eqref{zgrowthd}, we are now able to perform the linearization procedure used in \cite[page 237]{fumin} i.e., we can rearrange \eqref{o.1} in the following way:
\begin{flalign}\label{o.2}
\int_{\B}\left[\partial_{z}F_{\varepsilon}(x,Du_{\varepsilon})\cdot D\varphi- \texttt{f}_\eps\cdot \varphi\right]\, dx=0
\end{flalign}
for all $\varphi\in W^{1,\gamma}_{0}(\B)$, where $$\texttt{f}_{\varepsilon}(x):=-\theta_{\varepsilon}(x)\mathds{1}_{\{x\in \B\colon u_{\varepsilon}(x)=\psi(x)\}}\diver \, \partial_{z}F_{\varepsilon}(x,D\psi)\,,$$ for some measurable density $\theta_{\varepsilon}\colon \B\to [0,1]$. Note that the definition of $\texttt{f}_{\varepsilon}$ makes sense in light of the fact that $\psi \in W^{2,n}(\Omega)\cap W^{1,\infty}(\Omega)$ and of the discussion made at the beginning of Section \ref{apri} to prove \rif{diso}. This implies that $\partial_{z}F_{\varepsilon}(\cdot,D\psi) \in W^{1,n}(\B;\er^n)$ and the usual chain rule formula holds as in Section \ref{apri}, again thanks to the results in \cite{dele}; see Remark \ref{linearire} below. We then define the constant
\begin{flalign}
 \go_{\psi}(\B) &:= \|g_{3}\|_{L^\infty(\B\times [0,T_{\psi}))}+\|\tilde a(\cdot, 1)\|_{L^\infty(\B)} \notag\\ 
 &\qquad \quad +\sup_{x\in \B,  |z|\in [0,T_{\psi})} |\partial_{zz}F(x, z)| + T_{\psi}^{\gamma-2}+1\,.
 \label{settig}
\end{flalign}
This quantity is always bounded as $\partial_{zz} F$ is in turn assumed to be locally bounded and $\gamma \geq 2$, here we also use the fact that $g_3$ is locally bounded as again described in the statement of the Theorem. This is essentially the only place where such assumptions come into the play. 
Elementary manipulations based on the first property in \rif{00} and \eqref{growthd}$_3$, give 
\eqn{o.7pre}
$$\snr{\texttt{f}_{\varepsilon}(x)}\le c(n,N,\nu, \gamma,\cb)\go_{\psi}(\B)[\snr{D^{2}\psi(x)}+h(x)]\,.$$
Here we have also used that $\eps \leq 1/T_\psi$ to exploit the definitions in \rif{z2} and \rif{g3e}. 
In turn, \rif{o.7pre} gives 
\begin{flalign}\label{o.7}
\texttt{f}_{\varepsilon}\in \mathfrak{X}(B)\quad \mbox{with} \ \ \nr{\texttt{f}_{\varepsilon}}_{\mathfrak{X}(B)}\le c\nr{D^{2}\psi}_{\mathfrak{X}(B)}+c \nr{h}_{\mathfrak{X}(B)}\,,
\end{flalign}
for all balls $B\subseteq \B$, where $c\equiv c(n,N,\nu, \gamma,\cb,g_{\psi}(B))$. Since $u_{\varepsilon}$ verifies \eqref{o.2}, the strict convexity of $F_{\varepsilon}(\cdot)$ prescribed by $\eqref{z3}_{4}$ implies that $u_{\varepsilon}$ is the unique solution of Dirichlet problem
\begin{flalign}\label{o.3}
u_\eps \mapsto \min_{u+W^{1,\gamma}_{0}(\B)}\int_{\B}\left[F_{\varepsilon}(x,Dw)-\texttt{f}_{\varepsilon}\cdot w\right]\, dx\,.
\end{flalign}
By \eqref{zgrowthd} and \eqref{o.7}, we see that problem \eqref{o.3} falls in the realm of those covered by Proposition \ref{tapp} in Section \ref{appendix} below, therefore $u_{\varepsilon}\in W^{1,\infty}_{\loc}(B)\cap W^{2,2}_{\loc}(B)$. This is exactly the information in \rif{a6} allowing to justify the all the subsequent calculations in Section \ref{apri} in view of an application to solutions to \rif{o.3}. Moreover, thanks to \rif{o.7pre}, the coercivity estimate 
\rif{e2oo} applied to the case of \rif{o.3},
becomes
\begin{flalign}\label{coeer}
\notag & \|F_{\eps}(\cdot,Du_{\eps})\|_{L^1(\B)} + \|Du_{\eps}\|_{L^\gamma(\B)}^\gamma\\
& \qquad  \leq 
c\|F(\cdot,Du)\|_{L^1(\B)} + c\nr{D^{2}\psi}_{L^{n}(\B)}^{\gamma/(\gamma-1)}+c\nr{h}_{L^{n}(\B)}^{\gamma/(\gamma-1)}+c
\end{flalign}
thanks to \rif{o.7pre}. Using Proposition \ref{linf1} when $n>2$, and Proposition \ref{duedi} when $n=2$ (with $u_\eps\equiv u$ and $\texttt{f}_\eps\equiv f$), we arrive at
\begin{flalign}\label{a52obs}
&[E_{\varepsilon}(\nr{Du_{\eps}}_{L^{\infty}(s\B)})]^\gamma+\nr{G_\eps(\cdot,\snr{Du_\eps})}_{L^{\infty}(s\B)}\nonumber\\ &\hspace{6mm} \le \frac{c}{(1-s)^{\beta}[\rad(\B)]^{\beta}}
\left[\|F(\cdot,Du)\|_{L^1(\B)}^{\theta}+\nr{D^{2}\psi}_{\mathfrak{X}(\B)}^{ \theta}+\nr{h}_{\mathfrak{X}(\B)}^{ \theta}+1\right]\,,
\end{flalign}
where we have used \rif{o.7pre} and \rif{coeer}. Here $c$ depends on $\data,\nr{h}_{L^{d}(\B)}$ and $\go_{\psi}(\B)$, but not on $\eps$. Notice that, accordingly to the content of Section \ref{apri} and in particular recalling \rif{a23}, estimate \rif{a52obs} does not hold for any ball, but it holds provided $\rad(\B)\leq R_*$ where, exactly as in \rif{a23}, the threshold radius z
$R_{*}$ depends now also on $h(\cdot), D\psi(\cdot)$ and $\go_{\psi}(\B)$. As for the dependence on this last quantity, there is no vicious circle here since the set function $\Omega_0 \mapsto \go_{\psi}(\Omega_0)$ defined in \rif{settig} is obviously non-decreasing with respect to general open subsets $\Omega_0 \Subset \Omega$ (specifically, fix $\Omega_0 \Subset \Omega$, determine $\go_{\psi}(\Omega_0)$ as in \rif{settig} and proceed for every ball $\B\Subset \Omega_0$ with radius $\rad(\B)$ whose smallness now depends on $\go_{\psi}(\Omega_0)$).  
Estimate \rif{a52obs} can be now used to replace \rif{a52bis} in the approximation scheme of the proof of Theorem \ref{t1}, with $\eps\equiv \eps_j$. The rest of the proof now follows exactly the proof of Theorem \ref{t1} and leads to the conclusion, along with explicit a priori estimates obtainable by \rif{a52obs} after a suitable passage to the limit. Note that also the argument of \rif{e10} can be repeated verbatim, as the obstacle constraint involved here is still convex. Indeed, note that here we are not passing to the limit in the linearized problems \rif{o.2}, but rather directly in the obstacle problems \rif{o.0}; see for instance \cite[Section 5.5]{dejmaa}  and \cite{fumin} for more details on such approximation arguments.  
\end{proof}

\begin{remark}\label{linearire}The linearization procedure leading to \rif{o.3} has been first introduced in \cite{fuobs}, and it is usually employed assuming that $\psi \in W^{2, \infty}(\B)$ \cite{fumin}. A careful examination of the proof reveals that $\psi \in W^{2, n}(\B)\cap W^{1,\infty}(\B)$ suffices, that in turn implies $\partial_{z}F_{\varepsilon}(\cdot,D\psi) \in W^{1,n}(\B; \er^n)$. For this, a first basic remark is that the weak integration-by-parts formula $\int_{\B} h \varphi_{x_i}\dx =-\int_{\B} h_{x_i} \varphi \dx$ holds for every $i \in \{1, \ldots,n\}$, whenever $h \in W^{1,n}(\B)$ and $\varphi  \in W^{1,\gamma}_0(\B)$, for some $\gamma>1$. In turn, this follows by a standard density argument and Sobolev embedding in the limiting case. 
\end{remark}

\subsection{Proof of Theorem \ref{sample9} and additional results} The proof of Theorem \ref{t1ob} offers a route to get the obstacle version of all the other results presented in the unconstrained case; in particular, Theorem \ref{sample9} follows. The key point is again to employ 
the linearization procedure used in \cite{fuobs, fumin} to pass from a variational inequality as in \rif{o.1} to an equation as in \rif{o.2}, to which the estimates in the unconstrained case immediately apply. The whole procedure then works provided the additional assumptions $\gamma \geq 2$ and $g_3, \partial_{zz}F\in L^{\infty}_{\loc}$ are in force as described in the statement of Theorem \ref{t1ob}. With this path being settled, the reader can now easily obtain the constrained extensions of all the results presented in this paper. A few remarks are in order. First, notice that Theorems \ref{sample1}-\ref{sample2} and Theorems \ref{sample4}-\ref{sample6} in the constrained version, are a direct consequence of Theorem \ref{t1ob} and this can be checked exactly as in the unconstrained version. Next, again as for the case of Theorem \ref{t1ob}, Theorems \ref{t2} and \ref{t4} admit a constrained reformulation. In turn, the former would imply a constrained version of Theorem \ref{sample7}. The latter would instead imply a first constrained version of Theorem \ref{sample3}, where the bounds in \rif{lox22} appear in the $<$-version; see also Step 1 of the proof of Theorem \ref{sample3}. As for the full $\leq$-version in \rif{lox22}, it is then necessary to readapt to the obstacle case the arguments from Step 2 and 3 of the proof of Theorem \ref{sample3} in Section \ref{sample3sec}, along the lines of the proof of Theorem \ref{t1ob}. In this respect, the only worth mentioning difference is that the higher integrability lemmas \ref{ilprimoh} and \ref{sopolem} can be easily obtained in the setting of obstacle problems too, starting from the arguments indicated here. For this see also \cite{chde}.  Finally, note that in the case of functionals with $(p,q)$-growth, including the double phase one in \rif{pqfunctional}, verifying the assumptions $\gamma \geq 2$ and $\partial_{zz}F\in L^{\infty}_{\loc}$ boils down to assume that $p\geq 2$, as indeed done in Theorem \ref{sample9}. The additional (micro)assumption on $g_3$ in the statement of Theorem \ref{t1ob} is instead satisfied in every case. 

\begin{remark}\label{obre} In Theorem \ref{t1ob} we can trade the assumption $\gamma \geq 2$ with $\mu>0$ (non-degenerate case). This eventually leads to constrained versions of Theorems \ref{sample1}-\ref{sample2} assuming that $p>1$ instead of $p\geq 2$, provided it is $\mu>0$.  
\end{remark}

\section{Justification of a priori regularity}\label{appendix}
In this final section we justify the claim in \rif{a6}, which is necessary to carry out the rest of the estimates in Sections \ref{apri} and \ref{unisec}. Keeping \rif{z3} in mind, we therefore consider a functional like \eqref{genF}, with the integrand $F$ being convex in the gradient variable, satisfying the structure condition \eqref{0ass}$_1$, where we assume that $\tilde{F}$ and $\tilde F'$ are continuous functions on $\Omega \times [0,\infty)$ such that $\tilde F(x, 0)\equiv 0$, that $\partial_{zz} F, \partial_{xz} F$ are measurable and 
\begin{flalign}\label{irrreg}
\begin{cases}
\ t\mapsto \tilde{F}(x,t) \in C^{1}_{\textnormal{loc}}[0,\infty) &\mbox{for all} \ \ x\in \Omega\\
\   t\mapsto \tilde{F}'(x,t) =: \tilde a(x,t)t \in \textnormal{Lip}_{\loc}[0,\infty)&\mbox{for all} \ \ x\in \Omega\\
\ x\mapsto \tilde{F}'(x,t)\in  W(1;\mathfrak X)(\Omega) \quad &\mbox{for every} \ \ t\geq 0\,.
\end{cases}
\end{flalign}
The last condition means that $|\partial_x  \tilde{F}'(\cdot,t)|\in \mathfrak{X}(\Omega)$ for every $t \geq 0$, where $\mathfrak X(\Omega)$ has been defined in \eqref{lox}.  In particular, this implies that $x\mapsto \tilde{F}'(x,t)\in   W^{1,n}(\Omega)$.  As in Section \ref{apri}, we also assume that the set of non-differentiability points of $t \mapsto \tilde a(x, t)t$ is independent of $x$ and that
\begin{flalign}\label{ir0}
\begin{cases}
\ \nu_0 (\snr{z}^{2}+\mu^{2})^{\gamma/2}- \Lambda\mu^\gamma\le F(x,z)\le \Lambda (\snr{z}^{2}+\mu^{2})^{\gamma/2}\\
\ \snr{\partial_{zz}F(x,z)}\le \Lambda (\snr{z}^{2}+\mu^{2})^{(\gamma-2)/2}\\
\   \nu_0 (\snr{z}^{2}+\mu^{2})^{(\gamma-2)/2}\snr{\xi}^{2} \leq \partial_{zz}F(x,z)\xi\cdot \xi\\
\ \snr{\partial_{xz}F(x,z)}\le \Lambda h(x)(\snr{z}^{2}+\mu^{2})^{(\gamma-1)/2}\\\
f,h\in \mathfrak X(\Omega)
\end{cases}
\end{flalign}
hold for every $x\in \Omega$, and for every $z\in \er^{N\times n}$, provided $\partial_{zz}F(x,z)$ exists, and a.e.\,$x$ and $z,\xi\in \mathbb{R}^{N\times n}$ in the case of \rif{ir0}$_4$. In $\eqref{ir0}$, it is $\gamma>1$, $0<\nu_0\leq 1 \le \Lambda$, $0 < \mu \leq 1$. This functional is of the type considered in \rif{avp} by \rif{zgrowthd} and \rif{dimenticata}, so that the claim in \rif{a6} is justified by the following:
\begin{proposition}\label{tapp}
Let $u\in W^{1,\gamma}_{\loc}(\Omega;\RN)$ be a minimizer of $\mathcal F$ in \eqref{genF}, under assumptions \trif{irrreg}-\trif{ir0}.
Then $
Du \in L^{\infty}_{\textnormal{loc}}(\Omega;\mathbb{R}^{N\times n})$ and $u\in W^{2,2}_{\textnormal{loc}}(\Omega;\mathbb{R}^{N})$. 
\end{proposition}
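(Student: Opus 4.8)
The statement is a standard a priori regularity result for minimizers of convex, nonautonomous functionals with $\gamma$-polynomial ellipticity, so the plan is to produce it via a controlled regularization and uniform estimates, following the classical pattern (as in \cite{dele, fumin} and related references) but being careful about the low regularity of the coefficients encoded in \rif{irrreg}. First I would reduce to the case of smooth data. Mollify $x \mapsto \tilde F'(x,t)$ in the $x$-variable, say $\tilde F'_\delta(x,t) := (\tilde F'(\cdot,t)\star \phi_\delta)(x)$ with $\phi_\delta$ a standard smoothing kernel, and reconstruct $\tilde F_\delta(x,t):= \int_0^t \tilde F'_\delta(x,s)\,ds/s \cdot s$ (more precisely, integrate the radial derivative), possibly adding a small uniformly elliptic term $\delta(\snr{z}^2+\mu^2)^{\gamma/2}/\gamma$ to guarantee strict convexity and smooth dependence. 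The regularized integrands $F_\delta$ still satisfy \rif{ir0} with uniform constants (the structure inequalities pass to convex combinations/convolutions, and $h_\delta := h \star \phi_\delta$ still lies in $\mathfrak X$ locally with uniformly bounded norm, because $\mathfrak X$-norms are only weakly sensitive to mollification on interior subdomains), and now $x \mapsto F_\delta(x,z)$ is smooth. Let $u_\delta$ be the unique minimizer of $w \mapsto \int_{B}[F_\delta(x,Dw)-f_\delta\cdot w]\,dx$ on $u + W^{1,\gamma}_0(B;\er^N)$ for a fixed ball $B \Subset \Omega$, with $f_\delta := f \star \phi_\delta$; Direct Methods give existence, strict convexity gives uniqueness, and the Euler--Lagrange system holds in the differentiated form.

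\textbf{Uniform estimates.} For the smooth problems, $u_\delta \in W^{2,2}_{\loc}(B;\er^N) \cap W^{1,\infty}_{\loc}(B)$ by classical difference-quotient / bootstrap arguments for systems with $\gamma$-growth and Carath\'eodory, $W^{1,n}$-in-$x$ coefficients (this is the standard theory, e.g.\ of the type in \cite{fumin}). The crucial point is to derive bounds on $\|Du_\delta\|_{L^\infty(B')}$ and $\|D^2 u_\delta\|_{L^2(B')}$ for $B' \Subset B$ that are \emph{uniform in} $\delta$. Here I would run a Caccioppoli inequality for $\snr{Du_\delta}$ (or for $(\snr{Du_\delta}^2+\mu^2)^{\gamma/4}$), testing the differentiated system with $\varphi_s = \eta^2 V(\snr{Du_\delta})^{\kappa} D_s u_\delta$, exactly along the lines of the computations in Section \ref{apri} of the present paper (which are written precisely for integrands satisfying \rif{zgrowthd}, i.e.\ $\gamma$-polynomial ellipticity, with $W^{1,n}$-in-$x$ dependence and right-hand side in $\mathfrak X$). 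Because $\mathfrak X = L(n,1)$ for $n>2$ and $L^2(\log L)^{\mathfrak a}$ for $n=2$ controls the nonlinear potentials $\mathbf P_1^{f_\delta}, \mathbf P_1^{h_\delta}$ uniformly via \rif{potlore}, and the $\mathfrak X$-norms of $f_\delta, h_\delta$ on $B'$ are uniformly bounded, the Moser iteration of Proposition \ref{lp} (or directly the iteration Lemma \ref{noniter} applied after a De Giorgi-type level-set Caccioppoli inequality as in Lemma \ref{caclem}) yields
$$
\|Du_\delta\|_{L^\infty(B')}^\gamma \le \frac{c}{(\rad(B)-\rad(B'))^\beta}\Big[\|F_\delta(\cdot,Du_\delta)\|_{L^1(B)}^{\theta} + \|f_\delta\|_{\mathfrak X(B)}^\theta + \|h_\delta\|_{\mathfrak X(B)}^\theta + 1\Big]
$$
with $c$ independent of $\delta$. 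In particular, since the energies $\mathcal F_\delta(u_\delta;B) \le \mathcal F_\delta(u;B)$ are uniformly bounded by the coercivity estimate (analogue of \rif{e2oo}), the right-hand side is uniformly bounded, giving a uniform local Lipschitz bound; the $W^{2,2}_{\loc}$ bound then follows from the ellipticity $\eqref{ir0}_3$ and the Caccioppoli inequality for second derivatives, again uniformly in $\delta$.

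\textbf{Passage to the limit.} Finally I would let $\delta \to 0$. The uniform bounds give, up to subsequence, $u_\delta \rightharpoonup^* \tilde u$ in $W^{1,\infty}_{\loc}(B;\er^N)$ and $u_\delta \rightharpoonup \tilde u$ in $W^{2,2}_{\loc}(B;\er^N)$, with $\tilde u \in u + W^{1,\gamma}_0(B;\er^N)$ inheriting the same local Lipschitz and $W^{2,2}$ bounds by weak(-$*$) lower semicontinuity. Since $F_\delta \to F$ locally uniformly (by continuity of $\tilde F$ and properties of the mollification, as in \rif{z3}$_6$) and $f_\delta \to f$ in $\mathfrak X$ hence in $L^1_{\loc}$, a standard comparison argument — compare the energy of $u_\delta$ to that of $u$, pass to the limit using semicontinuity for the $F$-part and strong convergence for the linear term — shows $\mathcal F(\tilde u;B) \le \mathcal F(u;B)$. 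By minimality of $u$ and strict convexity of $z \mapsto F(x,z)$ outside the possible degeneracy set $\{\snr z < T\}$ (which is vacuous here since $F$ is strictly convex everywhere when $\mu>0$ or $\gamma\ge 2$; in general one argues as in \rif{e10} of the paper), we conclude $Du = D\tilde u$ a.e.\ in $B$, hence $Du \in L^\infty_{\loc}(B;\er^{N\times n})$ and $u \in W^{2,2}_{\loc}(B;\er^N)$; covering $\Omega$ by such balls finishes the proof.

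\textbf{Main obstacle.} The delicate point is ensuring that \emph{all} constants in the Caccioppoli and iteration estimates for $u_\delta$ are genuinely independent of $\delta$, which hinges on two facts: that the structure inequalities \rif{ir0} (in particular the two-sided $\gamma$-bounds and the $\partial_{xz}$-bound with $h_\delta \in \mathfrak X$) survive mollification with $\delta$-uniform constants on interior subdomains, and that the nonlinear potential norms $\|\mathbf P_1^{f_\delta}\|_{L^\infty}, \|\mathbf P_1^{h_\delta}\|_{L^\infty}$ stay uniformly bounded — this is where the space $\mathfrak X$ (rather than merely $L^n$) is essential, via \rif{potlore}. The rest is an adaptation of the a priori machinery already developed in Sections \ref{notsec2} and \ref{apri} of the paper to the regularized problems; no genuinely new estimate is needed, only the verification that the regularization is structure-preserving.
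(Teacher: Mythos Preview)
Your overall strategy—regularize, obtain uniform a priori estimates, pass to the limit—matches the paper's proof, but there is one genuine gap in your regularization step. You mollify $\tilde F'(\cdot,t)$ only in the $x$-variable; this leaves $t\mapsto \tilde F'_\delta(x,t)$ merely Lipschitz (as in \rif{irrreg}$_2$), so $F_\delta$ is still not $C^2$ in $z$ and the ``classical difference-quotient/bootstrap'' theory you invoke for $u_\delta\in W^{2,2}_{\loc}\cap W^{1,\infty}_{\loc}$ does not apply as stated—such results need at least continuous $\partial_{zz}F_\delta$. The paper addresses this by mollifying $\tilde F$ in \emph{both} variables $(x,t)$ via the double convolution \rif{molli1} and then composing with $\langle z\rangle=\sqrt{|z|^2+\delta^2}$, producing a genuinely smooth-in-$z$ integrand $F_\delta(x,z)=\tilde F_\delta(x,\langle z\rangle)$; only then does standard regularity (\cite{tolk}) yield \rif{l3} and allow the differentiated system to be tested. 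Without the additional smoothing in $t$, your appeal to ``standard theory'' for the approximating problems is essentially assuming what Proposition~\ref{tapp} is meant to prove.

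A second, minor difference: for the uniform Lipschitz bound on $u_\delta$ you reach for the nonuniformly elliptic machinery of Section~\ref{apri} (Proposition~\ref{lp}, Lemma~\ref{caclem}). The paper instead applies Proposition~\ref{unile2}, the \emph{uniformly} elliptic estimate, which is the natural choice since \rif{l0} gives $g_{1,\eps}\approx g_{2,\eps}$ with constants independent of $\delta$; there the $h_\delta$-term enters via the smallness of the threshold radius $R_*$ (see \rif{a23uni} and the discussion rendering $R_*$ independent of $\delta$ through \rif{l2}$_2$), not as an additive right-hand-side contribution $\|h_\delta\|_{\mathfrak X}^\theta$ as in your display. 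Once the double mollification is in place and Proposition~\ref{unile2} is invoked, the remainder of your plan—including the $W^{2,2}$ Caccioppoli bound (the paper's Step~3) and the strict-convexity identification of $\tilde u$ with $u$—coincides with the paper's argument.
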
 
\begin{proof}
The proof goes now in three different steps, where we essentially revisit and readapt a few hidden facts in the literature. References \cite{DeMi} and \cite{KM} are particularly relevant here. 
\subsubsection*{Step 1: Regularized integrands.} We revisit the procedure we used in \cite[Theorem 4, Step 1]{DeMi} and start fixing a ball $\B \Subset \Omega$ such that $\rad(\B)\leq 1$. We first extend $\tilde F$ by even reflection making it defined on $\Omega \times \er$, i.e., $\tilde F(x,t):=\tilde F(x,-t)$, and then we consider standard, radially symmetric mollifiers $\phi_1 \in C^{\infty}_{\rm{c}}(B_1), \phi_2 \in C^{\infty}_{\rm{c}}(-1,1), \|\phi_1\|_{L^{1}(\mathbb{R}^{n})} =\|\phi_2\|_{L^{1}(\mathbb{R})} =1, \phi_{1,\delta}(x):=\phi_{1}(x/\delta)/\delta^{n}$, $\phi_{2,\delta}(t):=\phi(t/\delta)/\delta$, $B_{3/4}(0)\subset \textnormal{supp} \,\phi_1, (-3/4, 3/4)\subset \textnormal{supp} \,\phi_2$. With $\delta$ such that  $0 < \delta < \left(0,\dist (\B ,\partial \Omega)/2\right)$, we define
\eqn{molli1}
$$
\begin{cases}
\displaystyle \tilde F_{\delta}(x,t):=\int_{-1}^1\int_{B_{1}}\tilde{F}(x+\delta  y,t+\delta s) \phi_{1}(y)  \phi_{2}(s)\, dy \, ds \\
 h_{\delta}(x):=(h*\phi_{1,\delta})(x)
 \end{cases}
$$
for all $x \in \B$ and $t \in \er$. 
\begin{lemma}
If $F_{\delta}(x,z):=\tilde{F}_{\delta}(x,\laz)$, where $\laz:=\sqrt{|z|^2+\delta^2}$, then
\begin{flalign}\label{l0}
\begin{cases}
\ \frac{1}{\cur }(\snr{z}^{2}+\mu^{2}_{\delta})^{\gamma/2}-\cur \mu_{\delta}^{\gamma}\le F_{\delta}(x,z)\le \cur (\snr{z}^{2}+\mu^{2}_{\delta})^{\gamma/2}\\
\ \snr{\partial_{zz}F_{\delta}(x,z)}\le \cur (\snr{z}^{2}+\mu^{2}_{\delta})^{(\gamma-2)/2}\\
\  \frac{1}{\cur }(\snr{z}^{2}+\mu^{2}_{\delta})^{(\gamma-2)/2}\snr{\xi}^{2} \leq \partial_{zz}F_{\delta}(x,z)\xi\cdot \xi \\
\ \snr{\partial_{xz}F_{\delta}(x,z)}\le \cur h_{\delta}(x)(\snr{z}^{2}+\mu^{2}_{\delta})^{(\gamma-1)/2}
\end{cases}
\end{flalign}
hold for every $x\in \Omega'$, $z,\xi\in \mathbb{R}^{N\times n}$, where $\cur \equiv \cur (n,N,\nu_0,\Lambda,\gamma)\geq 1$ is a positive constant, and, as usual, it is $\mu_{\delta}:=\mu+\delta$. Moreover, we have
\eqn{l2}
$$
\left\{
\begin{array}{c}
F_{\delta}(x,z) \to F(x,z)  \ \ \mbox{uniformly on compact subsets of $\overline{\B}\times \er^n$ as $\delta \to 0$}\\[3 pt]
 \nr{h_{\delta}}_{\mathfrak X (B)}\le c \nr{h}_{\mathfrak X(B+\delta B_1(0))}
 \end{array}\right.
$$
whenever $B\subset \B$ is a ball such that $B+\delta B_1(0) \Subset \Omega$, where $c$ is independent of $\delta$ and $h(\cdot)$. 
\end{lemma}
\begin{proof}
The arguments we are going to use here build on those employed in \cite[Section 4.5]{DeMi}. The upper bound in \rif{l0}$_1$ follows directly from the definition \rif{molli1}, while the lower bound follows verbatim from \cite[Section 4.5, last display]{DeMi}, upon   replacing $\gamma-2$ there with $\gamma$ here, and taking the case $n=1$ there. We now go for \rif{l0}$_3$. Denoting $\tilde a_{\delta}(x,t) :=\tilde F_{\delta}'(x,t)/t$ for $t\not =0$, we note that \rif{l0}$_3$ follows from
\eqn{fififi}
$$
\begin{cases}
(t^{2}+\mu^{2}_{\delta})^{(\gamma-2)/2}\lesssim \tilde a_{\delta}(x,t)\\
 (t^{2}+\mu^{2}_{\delta})^{(\gamma-2)/2}\lesssim \tilde a_{\delta}(x,t)+\tilde a_{\delta}'(x,t)t= \tilde F_{\delta}''(x,t)
 \end{cases}
 $$
for $t\geq \delta$. 
This in turn follows observing that 
\eqn{lookatme}
 $$
 \partial_{zz} F_\delta(x,z) = a_{\delta}(x,\laz)\mathds{I}_{N\times n} + a_{\delta}'(x,\laz)\laz \frac{z\otimes z}{\laz^2}\,,
 $$
holds for every $z \in \er^{N \times n}$, and recalling the arguments for \rif{a1}-\rif{a2} and Lemma \ref{help}; also observe that $\laz^2+\mu_\delta^2 \approx 
|z|^2+\mu_\delta^2$. Here, as in the rest of the proof of the Lemma, all the implied constants in the symbol $\lesssim$ depend only on $n,N,\nu_0, \Lambda$ and $\gamma$, but remain otherwise independent of $\delta$.   
To prove \rif{fififi}$_1$, note that $\tilde F_{\delta}(x,t)$ is still such that $\tilde F_{\delta}(x,t)=\tilde F_{\delta}(x,-t)$ for every $t\in \er$ so that $\tilde F_{\delta}'(x,0)=0$. Moreover, from \rif{ir0}$_3$, arguing as in \rif{22}, it follows that $(t^{2}+\mu^{2})^{(\gamma-2)/2} \approx \tilde F''(x,t)$, for $t\not =0$, provided $F''(x,t)$ exists. From this and the definition in \rif{molli1}, following again the same argument in \cite[Section 4.5, last two displays]{DeMi}, we gain $(t^{2}+\mu^{2}_{\delta})^{(\gamma-2)/2} \lesssim \tilde F''_\delta(x,t)$, which is \rif{fififi}$_2$. In turn, integrating this last inequality and using $\tilde F_{\delta}'(x,0)=0$, yields $(t^{2}+\mu^{2}_{\delta})^{(\gamma-2)/2} t/\max\{1, \gamma-1\} \leq 
 \int_0^t (s^{2}+\mu^{2}_{\delta})^{(\gamma-2)/2}\, ds \lesssim \tilde F'_{\delta}(x,t)$ for $t>0$, which is in fact \rif{fififi}$_1$ and \trif{l0}$_3$ is verified. We now derive \rif{l0}$_2$. By \rif{lookatme}, it is sufficient to prove that $\tilde a_{\delta}(x,t) + |\tilde a_{\delta}'(x,t)t| \lesssim (t^{2}+\mu^{2}_{\delta})^{(\gamma-2)/2}$. For this we can again use the argument in \cite[Proof of (4.52)$_3$]{DeMi} to start showing that $|\tilde a_{\delta}(x,t)+\tilde a_{\delta}'(x,t)t|=|\tilde F_{\delta}''(x,t)| \lesssim  (t^{2}+\mu^{2}_{\delta})^{(\gamma-2)/2}$ for $t \geq \delta$. Therefore it remains to prove that $\tilde a_{\delta}(x,t) t =\tilde F_{\delta}'(x,t) \lesssim  (t^{2}+\mu^{2}_{\delta})^{(\gamma-2)/2}t$. Again by $\tilde F_{\delta}'(x,0)=0$, it follows that  
$\tilde F_{\delta}'(x,t) \lesssim  \int_0^t (s^{2}+\mu^{2}_{\delta})^{(\gamma-2)/2}\, ds \leq\max\{1,1/(\gamma-1)\} (t^{2}+\mu^{2}_{\delta})^{(\gamma-2)/2}t$, and this completes the proof of \rif{l0}$_2$. 
For the proof of \rif{l0}$_4$, note that this implies that $|\partial_{x}\tilde F'(x,t)|\leq  \Lambda h(x)(t^{2}+\mu^{2})^{(\gamma-1)/2}$, from which \rif{l0}$_4$ follows from the definitions in \rif{molli1} as for \rif{l0}$_1$. Finally, \rif{l2}$_1$ is again an immediate consequence of the definitions in \rif{molli1} since $\tilde F$ is continuous;  \rif{l2}$_2$ follows from the definition of Lorentz and $L^{2}(\log L)^{\mathfrak {a}}$-norms and the boundedness of maximal operators in the corresponding spaces. 
\end{proof}
We further define $f_{\delta} \in L^{\infty}(\Omega;\er^N)$ as $f_{\delta}(x):=f(x)$ if $\snr{f(x)}\le 1/\delta$, and $f_{\delta}(x):=\delta^{-1}|f(x)|^{-1}f(x)$ otherwise. Finally, we let $u_{\delta}\in u+W^{1,\gamma}_{0}(\B;\mathbb{R}^{N\times n})$, defined as the unique solution of the Dirichlet problem
$$
u_{\delta}\mapsto \min_{w \in u+W^{1,\gamma}_{0}(\B;\RN)} \int_{\B}\left[F_{\delta}(x,Dw)-f_{\delta} \cdot w\right] \dx\,.
$$
Up to now, we have required that $\delta$ is small enough to have  $\delta<\dist (\B ,\partial \Omega)/2$. In the next step we shall choose additional smallness conditions on $\delta$. 

\subsubsection*{Step 2:  $Du \in L^{\infty}_{\textnormal{loc}}(\Omega;\mathbb{R}^{N\times n})$}
Thanks to \rif{l0}, standard regularity theory yields
\eqn{l3}
$$
\begin{cases}Du_{\delta}\in L^{\infty}_{\loc}(\B;\mathbb{R}^{N\times n}),\quad u_{\delta}\in W^{2,2}_{\loc}(\B;\RN)\\
 \partial_{z}F_{\delta}(x,Du_{\delta})\in W^{1,2}_{\loc}(\B;\mathbb{R}^{N\times n})\,. 
 \end{cases}
$$
See, for instance, \cite{tolk}. 
We can therefore proceed exactly as in the proof of Proposition \ref{unile2} arguing on the Euler-Lagrange $-\diver\, \partial_zF_{\delta}(x, Du_{\delta})=f_{\delta}$. This yields the existence of $\delta_0 \equiv \delta_0 (n,N,\nu_0, \gamma, \Lambda, h(\cdot))\in (0,1)$ and $R_{*}\equiv R_{*}(n,N,\nu_0, \gamma, \linebreak \Lambda, h(\cdot))\leq 1$ such that the estimate
\begin{flalign}
& \notag \|Du_\delta \|_{L^\infty(s\B)} \\
& \qquad  \le
\frac{c}{[(1-s)\rad(\B)]^{n/\gamma}}\left[\|F_{\delta}(\cdot, Du_{\delta}) \|_{L^1(\B)}^{1/\gamma}+1\right]+ c\|f_{\delta}\|_{\mathfrak X(\B)}^{1/(\gamma-1)}
\label{bababa}
\end{flalign}
holds whenever $s \in (0,1)$, provided $\delta \leq \min \{\delta_0, \dist (\B ,\partial \Omega)/2\}$ and $\rad(\B)\leq R_{*}$, where $c\equiv c (n,N, \nu_0, \Lambda, \gamma)\geq 1$ is independent of $\delta$. 
Indeed, the setting of Proposition \ref{unile2} applies with the obvious choices $g_{1,\eps}(t)\equiv (t^{2}+\mu^{2}_{\delta})^{(\gamma-2)/2}/\cur $,  $g_{2, \eps}(t)\equiv  \cur (t^{2}+\mu^{2}_{\delta})^{(\gamma-2)/2}$, $g_{3, \eps}(t)\equiv \cur (t^{2}+\mu^{2}_{\delta})^{(\gamma-2)/2}t$, $h(\cdot)\equiv h_{\delta}(\cdot)$ and $T \equiv \mu_{\delta}$; note that 
$(t^{2}+\mu^{2}_{\delta})^{(\gamma-1)/2}\leq \sqrt{2} (t^{2}+\mu^{2}_{\delta})^{(\gamma-2)/2}t$ for $t \geq \mu_{\delta}$. 
Moreover, the only qualitative properties of the solution $u_{\delta}$ needed to argue as in Proposition \ref{unile2} are those in \rif{a6}, that are exactly those in \rif{l3}. Therefore the whole set of estimates developed there applies here verbatim once we observe that \rif{growthd}$_3$ in only needed here when $t \geq T$, which is the case thanks to \rif{l0}$_4$. Notice that, proceeding as in Proposition \ref{unile2}, and recalling \rif{a23uni} (here applied with $h \equiv h_{\delta}$), the radius $R_{*}$ here should exhibit a dependence on $h_\delta$, and therefore ultimately on $\delta$. However, $R_*$ can be made independent of $\delta$, thanks to \rif{l2}$_2$ by further taking $\delta$ small enough, and without creating vicious circles. Specifically, we arrive at \rif{uni8} with the above choice of $g_{1, \eps}, g_{2, \eps}, g_{3, \eps}$ and \rif{a23uni} turns out to be
$c_*\nr{ h_{\delta}}_{\mathfrak{X}(\B)}\le 1/6$, where $c_*$ is independent of $\delta$ but only depends of $\datauni$. With the current choice of the parameters this means that $c_*$ depends only on $n,N,\nu_0, \Lambda, \gamma, \mathfrak{a}$. We use \rif{l2}$_2$ to reduce the last condition to 
$cc_*\nr{h}_{\mathfrak X(\B+\delta B_1(0))}\le 1/6$, where $c$ is the constant appearing in \rif{l2}$_2$ and it is independent of $\delta$. Therefore, by absolute continuity we find $\delta_0, R_* \equiv \delta_0, R_*(n,N,\nu_0, \gamma, \Lambda, h(\cdot))$ as described above, such that the last inequality is satisfied. This allows to set inequality \rif{bababa} free from any dependence on $\delta$. 
Next, using \rif{ir0}$_1$ and \rif{l0}$_1$, and finally the minimality of $u_\delta$ in \rif{bababa}, we gain
\begin{flalign}
 &\|Du_\delta \|_{L^\infty(s\B)} \notag  \\
 & \qquad \le
\frac{c}{[(1-s)\rad(\B)]^{n/\gamma}}\left[\|F(\cdot, Du) \|_{L^1(\B)}^{1/\gamma}+1\right]+ c\|f\|_{\mathfrak X(\B)}^{1/(\gamma-1)}  \label{bababa2}
\end{flalign}
with $c$ being independent of $\delta$. 
From this, a standard convergence argument based on \rif{l2}$_1$ (see again the proof of Theorem \ref{t1}) extracting a subsequence $\{u_{\delta}\equiv u_{\delta_j}\}$ such that $u_\delta\rightharpoonup^{*} u$ weakly in $W^{1,\infty}(s\B;\mathbb{R}^{N})$, leads to 
$$
\|Du \|_{L^\infty(s\B)} \le
\frac{c}{[(1-s)\rad(\B)]^{n/\gamma}}\left[\|F(\cdot, Du) \|_{L^1(\B)}^{1/\gamma}+1\right]+ c\|f\|_{\mathfrak X(\B)}^{1/(\gamma-1)}\,.
$$
As this holds whenever $\B \Subset \Omega$, then $Du$ is locally bounded. 

\subsubsection*{Step 3: $u \in W^{2,2}_{\textnormal{loc}}(\Omega;\mathbb{R}^{N})$} For this we shall revisit some arguments from \cite[Theorems 4.5-4.6]{KM}. We test the weak formulation of the Euler-Lagrange system $-\diver \, \partial_{z}F_{\delta}(x,Du_{\delta})=f_{\delta} $ by $D_s \varphi$, for $s \in \{1, \dots, n\}$ and $\varphi \in C^{\infty}_{0}(\B)$; integration by parts yields
\eqn{usingusing}
$$
 \int_{\B}  D_s [\partial F_{\delta}(x,Du_{\delta})] \cdot D\varphi \dx = - \int_{\B} f_{\delta}\cdot  D_s\varphi \dx\,.
$$
We then take $\eta\in C_0^\infty(\B/2;[0,1])$ with $\eta \equiv 1$ in $\B/4$, $\|D\eta\|_{L^\infty(\B)}\lesssim 1/ [\rad(\B)]$, and we define $\varphi \equiv \varphi_s := \eta^2D_s u_{\delta}$ so that $\varphi  \in W^{1,2}_0(\B; \er^N)$ and has compact support in $\B$. Using $\varphi$ as test function in \rif{usingusing}, summing over $s\in \{1,\cdots n\}$ and taking~\eqref{l0} into account, yields
\begin{flalign*}
&\int_{\B}  (|Du_{\delta}|^2+\mu_{\delta}^2)^{\frac{\gamma-2}{2}} |D^2u_{\delta}|^2\eta^2 \dx\\
&\quad  \leq c \int_{\B} \eta   (|Du_{\delta}|^2+\mu_{\delta}^2)^{\frac{\gamma-2}{2}} |D^2u_{\delta}| |D u_{\delta}| |D\eta| \dx
 \\
 &  \qquad  \quad  + c \int_{\B} h_{\delta} (|Du_{\delta}|^2+\mu_{\delta}^2)^{\frac{\gamma-1}{2}}[\eta^2|D^2u_\delta|+ 
 \eta |D\eta||Du_\delta|]\dx\\ & \qquad \qquad +\sum_{s=1}^n \int_{B} |f_{\delta}||D\varphi_s| \dx\,.
\end{flalign*}
Estimating the third integral in a standard way (see for instance in \cite[pag. 395]{KM}) we get
\begin{flalign}
&\nonumber \int_{\B}  (|Du_{\delta}|^2+\mu_{\delta}^2)^{\frac{\gamma-2}{2}} |D^2u_{\delta}|^2\eta^2 \dx
\\
& \qquad  \leq c[\rad(\B)]^{-2} \int_{\B} \eta [1+h_{\delta}^2] (|Du_{\delta}|^2+\mu_{\delta}^2)^{\gamma/2}\dx  \label{bababa3} 
 \\& \hspace{1cm}+c[\rad(\B)]^{-1}\|f\|_{L^2(\B)}\|Du_\delta\|_{L^2(\B/2)}+ c\|f\|_{L^2(\B)}\|\eta^2 D^2u_\delta\|_{L^2(\B)}\,.\notag
\end{flalign}
The involved constant $c$ only depends on $n,N,\nu, \Lambda$ and $\gamma$ and is otherwise independent of $\delta \in (0,1)$.
We now set 
$
M:= \sup_{\delta}\,  \|Du_\delta\|_{L^\infty(\B/2)}+1, 
$
which is a finite quantity by \rif{bababa2}. We start considering the case $\gamma \geq 2$, where we have 
\begin{flalign*}
\mu^{\gamma-2}\|\eta D^2u_\delta\|_{L^2(\B)}^2
& \leq  c\|1+h_{\delta}\|_{L^2(\B)}^2(M^2+\mu_{\delta}^2)^{\gamma/2} + 
c\|f\|_{L^2(\B)}M \\& \qquad + c\|f\|_{L^2(\B)}\|\eta^{2} D^2u_\delta\|_{L^2(\B)}
\end{flalign*}
where $c \equiv c(n,N,\nu_0, \Lambda, \gamma, \rad(\B)) $
and therefore, via Young inequality, we get
\begin{flalign*}
\| D^2u_\delta\|_{L^2(\B/4)}^2
 & \leq  c\mu^{2-\gamma}\|1+h_{\delta}\|_{L^2(\B)}^2[M^2+1]^{\gamma/2} \\
 & \qquad    +c\mu^{2-\gamma}\|f\|_{L^2(\B)}M+ c\mu^{2(2-\gamma)}\|f\|_{L^2(\B)}^2
\end{flalign*}
which is a uniform (with respect to $\delta$) local bound for $\{D^2u_\delta\}$:
\eqn{boundy}
$$
\|D^2u_\delta\|_{L^2(\B/4)}\leq c (n,N,\nu_0, \gamma, \Lambda, \|h\|_{L^2(\B)},  \|f\|_{L^2(\B)}, \rad(\B), M, \mu)\,.
$$
In the case $1< \gamma <2$, we can argue exactly as after \rif{bababa3}, but replacing $\mu$ by $M$, thereby getting again \rif{boundy}. Starting from \rif{boundy}, using the same approximation argument for the proof of Theorem \ref{t1} and in Step 2 here, we can let $\delta \to 0$ (via a subsequence) in \rif{boundy} finally getting a local upper bound for $D^2u$ in $L^2$. The assertion then follows via the usual covering argument.   
\end{proof}

\subsection*{Acknowledgements.}  This work is supported by the Engineering and Physical Sciences Research Council (EPSRC): CDT Grant Ref. EP/L015811/1, and by the University of Parma via the project ``Regularity, Nonlinear Potential Theory and related topics".

\address{
Dipartimento di Matematica "Giuseppe Peano"\\ Universit\`a di Torino\\
Via Carlo Alberto 10, I-10123, Torino, Italy\\
\email{cristiana.defilippis@unito.it}
\and
Dipartimento SMFI\\
Universit\`a di Parma\\
Parco Area delle Scienze 53/a, I-43124, Parma, Italy \\
email:{giuseppe.mingione@unipr.it}}

\end{document}